\definecolor{rouge}{rgb}{0.85,0.1,.4}
\definecolor{bleu}{rgb}{0.1,0.2,0.9}
\definecolor{violet}{rgb}{0.7,0,0.8}
\DeclareMathAlphabet{\mathpzc}{OT1}{pzc}{m}{it}
\theoremstyle{plain}
\newtheorem{theorem}{Theorem}[section]
\newtheorem{lemma}[theorem]{Lemma}
\newtheorem{theo}[theorem]{Theorem}
\newtheorem{coro}[theorem]{Corollary}
\newtheorem{prop}[theorem]{Proposition}
\theoremstyle{definition}
\newtheorem{defi}[theorem]{Definition}
\theoremstyle{remark}
\newtheorem{rema}[theorem]{Remark}
\newtheorem{claim}[theorem]{Claim}
\def\g{{\mathfrak{g}}}  
\def\k{{\Bbbk}}
\def\rg{\ell}               
\def\r{{\rm reg}}
\def\rs{{\rm reg,ss}}              
\def\poie#1#2#3#4#5#6#7#8#9{\def\un{#5#6#7#8#9}\def\deux{#6#7#8#9}\def\trois{#2#4#8#9}
\def\quatre{#8#9}\def\cinq{#5#6#7}\def\six{#6#7}\def\sept{#2#4}
\ifx\un\empty {#1}_{#2}{#3 \hskip 0.15em}{#1}_{#4} \else \ifx\deux\empty 
{#5}(#1_{#2}){#3 \hskip 0.15em}{#5}(#1_{#4})
\else \ifx\trois\empty {#5}_{#6}(#1){#3 \hskip 0.15em}{#5}_{#7}(#1) 
\else \ifx\quatre\empty {#5}_{#6}(#1_#2){#3 \hskip 0.15em}{#5}_{#7}(#1_#4) 
\else \ifx\cinq\empty {#1}_{#2}^{#8}{#3 \hskip 0.15em}#1_#4^{#9} 
\else \ifx\six\empty {#5}(#1_{#2}^{#8}){#3 \hskip 0.15em}{#5}(#1_{#4}^{#9}) 
\else \ifx\sept\empty {#5}_{#6}(#1)^{#8}{#3 \hskip 0.15em}{#5}_{#7}(#1)^{#9} \else
{#5}_{#6}(#1_{#2}^{#8})^{#9}{#3 \hskip 0.15em}{#5}_{#7}(#1_{#4}^{#8})^{#9} 
\fi \fi \fi \fi \fi \fi \fi}
\def\poi#1#2#3#4#5#6#7{\def\un{#5#6#7}\def\deux{#6#7}
\def\trois{#2#4} \def\cinq{#3#4#5}
\ifx\un\empty {#1}_{#2}{#3 \hskip 0.15em}{#1}_{#4} \else
\ifx\deux\empty {#5}(#1_{#2}){#3 \hskip 0.15em}{#5}(#1_{#4}) \else
\ifx\trois\empty {#5}_{#6}(#1){#3 \hskip 0.15em}{#5}_{#7}(#1) \else
{#5_{#6}}(#1_{#2}){#3 \hskip 0.15em}{#5_{#7}}(#1_{#4}) \fi \fi \fi}
\def\rond{\raisebox{.3mm}{\scriptsize$\circ$}}
\def\tens{\raisebox{.3mm}{\scriptsize$\otimes$}}
\def\dv#1#2{\langle {#1},{#2}\rangle}
\def\tk#1#2{{#2}\otimes _{#1}}
\def\no{n$^{\circ}$}
\def\ec#1#2#3#4#5{\def\un{#3#4#5}\def\deux{#3#5}\def\trois{#3}
\def\four{#2#4#5}\def\five{#2#5}\def\six{#2}\def\seven{#3#4}
\def\eight{#2#4} \def\nine{#2#3#4}
\ifx\nine\empty {\rm #1}_{#5} \else
\ifx\un\empty {\rm #1}({\goth #2}) \else
\ifx\deux\empty {\rm #1}({\goth #2}_{#4}) \else
\ifx\trois\empty {\rm #1}_{#5}({\goth #2}_{#4}) \else
\ifx\four\empty {\rm #1}(#3) \else
\ifx\five\empty {\rm #1}(#3_{#4}) \else
\ifx\six\empty {\rm #1}_{#5}(#3_{#4}) \else
\ifx\seven\empty {\rm #1}_{#5} ({\goth#2})\else
\ifx\eight\empty {\rm #1}_{#5}({#3})
\fi \fi \fi \fi \fi \fi \fi \fi \fi}
\def\hec#1#2#3#4#5{\def\un{#3#4#5}\def\deux{#3#5}\def\trois{#3}
\def\four{#2#4#5}\def\five{#2#5}\def\six{#2}\def\seven{#3#4}
\def\eight{#2#4} \def\nine{#2#3#4}
\ifx\nine\empty \hat{{\rm #1}}_{#5} \else
\ifx\un\empty \hat{{\rm #1}}({\goth #2}) \else
\ifx\deux\empty \hat{{\rm #1}}({\goth #2}_{#4}) \else
\ifx\trois\empty \hat{{\rm #1}}_{#5}({\goth #2}_{#4}) \else
\ifx\four\empty \hat{{\rm #1}}(#3) \else
\ifx\five\empty \hat{{\rm #1}}(#3_{#4}) \else
\ifx\six\empty \hat{{\rm #1}}_{#5}(#3_{#4}) \else
\ifx\seven\empty \hat{{\rm #1}}_{#5} ({\goth#2})  \else
\ifx\eight\empty \hat{{\rm #1}}_{#5}({#3})
\fi \fi \fi \fi \fi \fi \fi \fi \fi}
\def\e#1#2{\ec {#1}#2{}{}{}}
\def\es#1#2{\ec {#1}{}{#2}{}{}}
\def\ai#1#2#3{\def\deux{#2#3} \def\trois{#3} \def\quatre{#2} 
\ifx\deux\empty \es S{{\goth #1}}^{{\goth #1}} \else
\ifx\trois\empty \es S{{\goth #1}^{#2}}^{{\goth #1}^{#2}} \else
\ifx\quatre\empty \es S{{\goth #1}_{#3}}^{{\goth #1}_{#3}} \else
\es S{{\goth #1}_{#3}^{#2}}^{{\goth #1}_{#3}^{#2}} \fi \fi \fi}
\def\Bbb{\mathbb}
\def\goth{\mathfrak}
\def\cal{\mathcal}
\def\gi#1#2#3#4{\def\trois{#3#4} \def\quatre{#4}\def\cinq{#3}\ifx\trois\empty {\rm i}_{#1,{\goth #2}}
\else \ifx\quatre\empty {\rm i}_{#1_{#3},{\goth #2}} \else\ifx\cinq\empty {\rm i}_{#1,{\goth #2}_{#4}} \else {\rm i}_{#1_{#3},{\goth #2}_{#4}} \fi \fi \fi}
\def\j#1#2{\def\deux{#2} \ifx\deux\empty {\rm rk}\hskip .125em{{\goth #1}} \else {\rm rk}\hskip .125em{{\goth #1}_{#2}} \fi}
\def\aj#1#2{\def\deux{#2} \ifx\deux\empty {\rm j}_{{\goth #1}} \else {\rm j}_{{\goth #1}_{#2}} \fi}
\def\an#1#2{\def\deux{#2} \ifx\deux\empty {\cal O}_{#1} \else {\cal O}_{#1,#2} \fi }
\def\han#1#2{\def\deux{#2} \ifx\deux\empty {\hat{{\cal O}}}_{#1} \else {\hat{{\cal O}}}_{#1,#2} \fi }
\def\gg#1#2{{\goth #1}_{#2}\times {\goth #1}_{#2}}
\def\lg#1#2{{\goth {#1}}_{#2}\times {\goth g}}
\def\lp{{\goth l}_{*}\times {\goth p}}
\def\lpq#1#2{{\goth #1}_{#2}\times {\goth #1}}
\def\ll{{\goth {l}}_{*}\times {\goth l}}
\def\sgg#1#2{\es S{\gg {#1}{#2}}}
\def\dim{{\rm dim}\hskip .125em}
\def\dd{{\rm d}}
\def\ad{{\rm ad}\hskip .1em}
\def\tr{{\rm tr}\hskip .125em}
\def\det{{\rm det}\hskip .125em}
\def\j#1#2{\ell _{{\goth #1}_{#2}}}
\def\gr{{\rm gr}\hskip .125em}
\def\n{{\rm n}}
\def\s{{\rm s}}
\def\u{{\rm u}}
\def\sx#1#2{{\rm S}^{#1}({\goth #2})}
\def\sy#1#2{{\rm S}^{#1}(#2)}
\def\ex #1#2{\mbox{$\bigwedge^{#1}(#2)$}} 
\def\b#1#2{{\mathrm {b}}_{{\mathfrak{#1}}_{#2}}}
\def\bi#1#2{{\mathrm B}_{{\goth {#1}_{#2}}}}
\def\bk#1#2{{\mathrm C}_{{\goth #1}_{#2}}}
\def\bii{{\mathrm B}}
\def\bjj{{\mathrm E}}
\def\bkk{{\mathrm C}}
\def\oi{\overline{{\mathrm B}}}
\def\ok{\overline{{\mathrm C}}}
\title
[Commuting variety]
{Projective dimension and commuting variety of a reductive Lie algebra}
\author
[J-Y Charbonnel]{Jean-Yves Charbonnel}
\address{Jean-Yves Charbonnel, Universit\'e de Paris - CNRS \\
Institut de Math\'ematiques de Jussieu - Paris Rive Gauche\\
UMR 7586 \\ Groupes, repr\'esentations et g\'eom\'etrie \\
B\^atiment Sophie Germain \\ Case 7012 \\ 
75205 Paris Cedex 13, France}
\email{jean-yves.charbonnel@imj-prg.fr}
\subjclass
{14A10, 14L17, 22E20, 22E46 }
\keywords
{polynomial algebra, complex, commuting variety, Cohen-Macaulay, homology, projective
dimension, depth}
\date\today
\begin{document}

\large

\begin{abstract}
The commuting variety of a reductive Lie algebra ${\goth g}$ is the underlying 
variety of a well defined subscheme of $\gg g{}$. In this note, it is proved that 
this scheme is normal and Cohen-Macaulay. In particular, its ideal of definition is a 
prime ideal. As a matter of fact, this theorem results from a so called Property 
$({\bf P})$ for a simple Lie algebra. This property says that some cohomology complexes 
are exact.
\end{abstract}

\maketitle

\setcounter{tocdepth}{1}
\tableofcontents

\section{Introduction} \label{int}
In this note, the base field $\k$ is algebraically closed of characteristic $0$, 
${\goth g}$ is a reductive Lie algebra of finite dimension, $\rg$ is its rank
and $G$ is its adjoint group.    

\subsection{} \label{int1}
The dual of ${\goth g}$ identifies with ${\goth g}$ by a non degenerate symmetric 
bilinear form on ${\goth g}$ extending the Killing form of the derived  algebra of 
${\goth g}$. Denote by $(v,w)\mapsto \dv vw$ this bilinear form and $I_{{\goth g}}$ the 
ideal of $\k[\gg g{}]$ generated by the functions 
$(x,y)\mapsto \dv v{[x,y]}, \, v \in {\goth g}$. The commuting variety 
${\cal C}({\goth g})$ of ${\goth g}$ is the subvariety of elements $(x,y)$ of $\gg g{}$ 
such that $[x,y]=0$. It is the underlying variety of the subscheme ${\cal S}({\goth g})$ 
of $\gg g{}$ defined by $I_{{\goth g}}$. 
It is a well known and long standing open question whether or not this scheme is reduced,
that is ${\cal C}({\goth g})={\cal S}({\goth g})$. According to Richardson~\cite{Ric}, 
${\cal C}({\goth g})$ is irreducible and according to Popov~\cite[Theorem 1]{Po}, the 
singular locus of ${\cal S}({\goth g})$ has codimension at least $2$ in 
${\cal C}({\goth g})$. Then, according to Serre's normality criterion, arises the 
question to know whether or not ${\cal C}({\goth g})$ is normal. There are 
many results about the commuting variety. A result of Dixmier~\cite{Di} proves that 
$I_{{\goth g}}$ contains all the elements of the radical of $I_{{\goth g}}$, of degree 
$1$ in the second variable. In \cite{Ga}, Gan and Ginzburg prove that for 
${\goth g}$ simple of type ${\mathrm {A}}$, the invariant elements under $G$ of 
$I_{{\goth g}}$ is a radical ideal of the algebra $\k[\gg g{}]^{G}$ of invariant elements
of $\k[\gg g{}]$ under $G$. In \cite{Gi}, Ginzburg proves that the normalization of 
${\cal C}({\goth g})$ is Cohen-Macaulay.  

\subsection{Main results} \label{int2}    
According to the identification of ${\goth g}$ and its dual, $\k[\gg g{}]$ is equal to 
the symmetric algebra $\sgg g{}$ of $\gg g{}$. The main result of this note is the 
following theorem:

\begin{theo}\label{tint}
The subscheme of $\gg g{}$ defined by $I_{{\goth g}}$ is Cohen-Macaulay and normal. 
Furthermore, $I_{{\goth g}}$ is a prime ideal of $\sgg g{}$.
\end{theo}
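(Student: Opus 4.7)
The plan is to reduce to the case of a simple Lie algebra and then establish Cohen-Macaulayness of $\mathcal{S}(\mathfrak{g})$ by a homological argument whose core is Property $(\mathbf{P})$; the remaining conclusions should then follow by standard arguments. For a reductive $\mathfrak{g}$, write $\mathfrak{g}=\mathfrak{g}_{1}\oplus\cdots\oplus\mathfrak{g}_{r}\oplus\mathfrak{z}$ with $\mathfrak{g}_{i}$ simple ideals and $\mathfrak{z}$ the centre. The scheme $\mathcal{S}(\mathfrak{g})$ splits as the product $\prod_{i}\mathcal{S}(\mathfrak{g}_{i})\times\mathfrak{z}\times\mathfrak{z}$, and Cohen-Macaulayness, normality and primality all transport through such a product, so only the case of a simple $\mathfrak{g}$ has to be treated.

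For a simple $\mathfrak{g}$, the ideal $I_{\mathfrak{g}}\subset\mathrm{S}(\mathfrak{g}\times\mathfrak{g})$ is generated by the $\dim\mathfrak{g}$ linear-in-the-second-variable forms $\varphi_{v}:(x,y)\mapsto\langle v,[x,y]\rangle$. Since $\mathcal{C}(\mathfrak{g})$ has codimension $\dim\mathfrak{g}-\ell$ in $\mathfrak{g}\times\mathfrak{g}$, the Koszul complex on the $\varphi_{v}$ is far too long to be exact. The heart of the approach is to replace it by a finer complex $K_{\bullet}$ of free graded $\mathrm{S}(\mathfrak{g}\times\mathfrak{g})$-modules, of length exactly $\dim\mathfrak{g}-\ell$, built from the adjoint structure of $\mathfrak{g}$; Property $(\mathbf{P})$ is precisely the assertion that $K_{\bullet}$ has no cohomology above degree $0$, so that it is a free resolution of $\mathrm{S}(\mathfrak{g}\times\mathfrak{g})/I_{\mathfrak{g}}$. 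Granted this, the projective dimension of $\mathrm{S}(\mathfrak{g}\times\mathfrak{g})/I_{\mathfrak{g}}$ is at most $\dim\mathfrak{g}-\ell$, and the Auslander--Buchsbaum formula together with the codimension bound forces depth equal to $\dim\mathfrak{g}+\ell=\dim\mathcal{C}(\mathfrak{g})$, which is exactly Cohen-Macaulayness.

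With Cohen-Macaulayness in hand, the rest is immediate. The scheme satisfies Serre's condition $S_{2}$ automatically, and Popov's theorem provides $R_{1}$, since the singular locus of $\mathcal{S}(\mathfrak{g})$ has codimension at least $2$ in $\mathcal{C}(\mathfrak{g})$. Serre's criterion therefore yields normality. A normal Noetherian scheme is reduced, so $I_{\mathfrak{g}}=\sqrt{I_{\mathfrak{g}}}$; combined with the irreducibility of $\mathcal{C}(\mathfrak{g})$ due to Richardson, this identifies $I_{\mathfrak{g}}$ with the prime ideal cutting out the integral variety $\mathcal{C}(\mathfrak{g})$, completing the three assertions.

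The main obstacle, and evidently the bulk of the paper, is the construction of $K_{\bullet}$ and the verification of Property $(\mathbf{P})$. The complex must be designed so that its ranks match the expected Betti numbers of $\mathcal{C}(\mathfrak{g})$, and its exactness has to be established over the singular locus of $\mathcal{C}(\mathfrak{g})$, where the regular semisimple picture that makes everything easy degenerates. This is where the depth and projective dimension calculus indicated by the title of the paper is expected to do the real work, presumably through a careful analysis stratifying $\mathcal{C}(\mathfrak{g})$ by conjugacy data and reducing the exactness statement on each stratum to a simpler polynomial-algebra computation.
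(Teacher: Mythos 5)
Your proposal follows essentially the same route as the paper: reduce to the simple case, deduce Cohen--Macaulayness from a bound on the projective dimension of $\sgg g{}/I_{{\goth g}}$ via Auslander--Buchsbaum together with Richardson's dimension count, then obtain normality from Popov's theorem and Serre's criterion, and primality from the irreducibility of ${\cal C}({\goth g})$. The one inaccuracy is in your description of the homological core: the paper does not produce a free resolution of length $\dim {\goth g}-\rg=2n$, but rather the complex $C_{\bullet}({\goth g})$ of length $n=\b g{}-\rg$ whose terms $\ex i{{\goth g}}\wedge \ex {\b g{}}{\bi g{}}$ are not free yet are shown, via Property $({\bf P})$ for the auxiliary complexes $D_{k}^{\bullet}({\goth g},\bi g{})$, to have projective dimension at most $i$ — which yields the same bound $2n$ on the projective dimension of $\sgg g{}/I_{{\goth g}}$ and hence the same conclusion.
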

 
According to Richardson's result and Popov's result, it suffices to prove that the scheme 
${\cal S}({\goth g})$ is Cohen-Macaulay. The main idea of the proof in the theorem
uses the main argument of the Dixmier's proof: for a finitely generated module $M$ over 
$\sgg g{}$, $M=0$ if the codimension of its support is at least $l+2$ with $l$ the 
projective dimension of $M$ (see Appendix~\ref{pdc}). 

All the complexes considered in this note are localizations of $\sgg g{}$-submodules of
the $\sgg g{}$-algebra $\tk {\k}{\sgg g{}}\tk {\k}{\e Sg}\ex {}{{\goth g}}$. We
introduce the characteristic submodule of ${\goth g}$, denoted by $\bi g{}$. By
definition, $\bi g{}$ is a $\sgg g{}$-submodule of $\tk {\k}{\sgg g{}}{\goth g}$ and an
element $\varphi $ of $\tk {\k}{\sgg g{}}{\goth g}$ is in $\bi g{}$ if and only if for
all $(x,y)$ in a dense subset of $\gg g{}$, $\varphi (x,y)$ is in the sum of the
subspaces ${\goth g}^{ax+by}$ with $(a,b)$ in $\k^{2}\setminus \{0\}$ and ${\goth g}^{ax+by}$ the centralizer of $ax+by$ in ${\goth g}$. According to a Bolsinov's result,
$\bi g{}$ is a free $\sgg g{}$-module of rank $\b g{}$, the dimension of the Borel
subalgebras of ${\goth g}$. Moreover, the orthogonal complement to $\bi g{}$ in 
$\tk {\k}{\sgg g{}}{\goth g}$ with respect to the $\sgg g{}$-bilinear extension of the
symmetric bilinear form on ${\goth g}$ is a free $\sgg g{}$-module of rank $\b g{}-\rg$
and for $\varphi $ in $\bi g{}$, $\dv {\varphi (x,y)}{[x,y]}=0$ for all $(x,y)$ in
$\gg g{}$. Let $\dd$ be the $\sgg g{}$-derivation of the algebra
$\tk {\k}{\sgg g{}}\ex {}{{\goth g}}$ such that for $v$ in ${\goth g}$, $\dd v$ is the
function on $\gg g{}$: $(x,y)\mapsto \dv v{[x,y]}$. Then
$\tk {\k}{\sgg g{}}\ex {}{{\goth g}}$ inherits a structure of differential graded algebra
with respect to the usual gradation of $\ex {}{{\goth g}}$ and the ideal of 
$\tk {\k}{\sgg g{}}\ex {}{{\goth g}}$ generated by the exterior product
$\ex {\b g{}}{\bi g{}}$ over $\sgg g{}$ is a subcomplex. Denote by
$C_{\bullet}({\goth g})$ the graded subcomplex of $\tk {\k}{\sgg g{}}\ex {}{{\goth g}}$
such that $C_{i+\b g{}}({\goth g}) := \ex {i}{{\goth g}}\wedge \ex {\b g{}}{\bi g{}}$. 
Then Theorem~\ref{tint} is a consequence of the following theorem:

\begin{theo}\label{t2int}
The complex $C_{\bullet}({\goth g})$ has no homology of degree bigger than $\b g{}$ and 
$I_{{\goth g}}$ is isomorphic to the space of boundaries of degree $\b g{}$.
\end{theo}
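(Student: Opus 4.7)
The plan is to treat the two assertions of the theorem separately. The identification of the boundaries in degree $\b g{}$ with $I_{\goth g}$ is essentially formal once the freeness of $\bi g{}$ is available; the vanishing of higher homology is the substantial part, and should follow from the Dixmier-style criterion recalled in Appendix~\ref{p}, which says that a finitely generated $\sgg g{}$-module $M$ vanishes whenever $\mathrm{codim}(\mathrm{supp}\,M)\ge \mathrm{pd}(M)+2$.

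For the boundary assertion, I would fix an $\sgg g{}$-basis $\varphi _{1},\ldots,\varphi _{\b g{}}$ of $\bi g{}$ (provided by Bolsinov's theorem) and set $\omega := \varphi _{1}\wedge \cdots \wedge \varphi _{\b g{}}$, a free generator of the rank-one module $\ex {\b g{}}{\bi g{}}$. The defining property $\dv {\varphi _{i}(x,y)}{[x,y]}=0$ means $\dd \varphi _{i}=0$, so $\dd \omega =0$ by the Leibniz rule. Any element of $C_{\b g{}+1}({\goth g})$ is an $\sgg g{}$-combination of terms $v\wedge \omega$ with $v\in {\goth g}$, and the Leibniz rule gives $\dd (v\wedge \omega)=\dv v{[x,y]}\,\omega$. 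Hence the image of $\dd \colon C_{\b g{}+1}({\goth g})\to C_{\b g{}}({\goth g})$ is $I_{\goth g}\cdot \omega$, which is isomorphic to $I_{\goth g}$ through the free generator $\omega$.

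For the vanishing of $H_{k}(C_{\bullet}({\goth g}))$ in degree $k>\b g{}$, the strategy has two ingredients. First, one uses the freeness of $\bi g{}$ and of its orthogonal complement in $\tk \k{\sgg g{}}{\goth g}$ to express each $C_{k}({\goth g})$ in terms of direct summands of free modules, and more generally to bound $\mathrm{pd}_{\sgg g{}} H_{k}(C_{\bullet}({\goth g}))$. Second, one locates the support of this homology: I would exhibit a dense open subset $U\subset \gg g{}$ on which $C_{\bullet}({\goth g})$ is exact in degrees $>\b g{}$. A natural candidate is the set of pairs $(x,y)$ such that $ax+by$ is regular for some $(a,b)\in \k^{2}$, so that the family of centralizers ${\goth g}^{ax+by}$ fills a subspace of ${\goth g}$ of the expected dimension $\b g{}$ and $\bi g{}$ is determined pointwise by these centralizers. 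On $U$, wedging with the top piece of $\bi g{}$ should trivialise the Koszul-like differential $v\mapsto \dv v{[x,y]}$ above degree $\b g{}$, giving the required exactness. The support of $H_{k}(C_{\bullet}({\goth g}))$ therefore lies in $\gg g{}\setminus U$, and matching its codimension against the projective dimension bound triggers the criterion of Appendix~\ref{p}.

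The principal obstacle is the exactness step on $U$: it requires a precise pointwise identification of $\bi g{}$ in terms of the family $\{{\goth g}^{ax+by}\}_{(a,b)}$, together with a codimension estimate for $\gg g{}\setminus U$ sharp enough to meet the projective dimension bound. These two structural inputs — a fine description of the characteristic submodule and control of the locus where the model degenerates — are where the subtle interplay between Bolsinov's result and the regularity theory of elements of ${\goth g}$ enters, and I expect them to absorb most of the technical work in the remainder of the paper.
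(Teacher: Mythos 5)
Your treatment of the boundary assertion is correct and coincides with the paper's Lemma~\ref{lpm}: each basis element $\varepsilon _{i}^{(m)}$ of $\bi g{}$ is a cycle by Theorem~\ref{tsc1}(iii), hence so is the generator $\varepsilon $ of $\ex {\b g{}}{\bi g{}}$, and $\dd (v\wedge \varepsilon )=(\dd v)\varepsilon $ identifies the boundaries of degree $\b g{}$ with $I_{{\goth g}}\varepsilon \cong I_{{\goth g}}$.

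The gap is in your support localization for the higher homology. The correct statement (Lemma~\ref{lpm}(iii)) is that the support of the homology of $C_{\bullet}({\goth g})$ is contained in the commuting variety ${\cal C}({\goth g})$: if $[x_{0},y_{0}]\neq 0$, pick $v$ with $\dv v{[x_{0},y_{0}]}\neq 0$; then $\dd v$ is invertible near $(x_{0},y_{0})$ and $\dd (v\wedge c)=(\dd v)c$ shows every cycle is a boundary there. Your open set $U$ --- pairs $(x,y)$ with some $ax+by$ regular --- is the wrong locus: it contains many commuting pairs (for instance two commuting regular semisimple elements), and at such a point every coefficient $\dv v{[x,y]}$ of the differential vanishes, so no wedging with $\ex {\b g{}}{\bi g{}}$ can trivialise a differential that is identically zero at the point; deciding exactness of the localized complex at precisely these points is the hard content of the paper (Sections~\ref{ne}--\ref{inb}). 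Moreover the complement of your $U$ is not known to have codimension anywhere near the $2n$ required. The numbers that make the criterion of Corollary~\ref{cp} work are: codimension of ${\cal C}({\goth g})$ in $\gg g{}$ equal to $2n$ (Richardson's dimension count), against the bound that $C_{i+\b g{}}({\goth g})$ has projective dimension at most $i\leq n$ --- and this bound is not a formal consequence of the freeness of $\bi g{}$ and of its orthogonal; it is Theorem~\ref{t3int}, the key theorem of the paper, itself resting on Property $({\bf P})$. The paper's derivation is therefore: Theorem~\ref{t3int}, plus Lemma~\ref{lpm}(iii), plus Richardson, plus Corollary~\ref{cp} applied to the truncated complex $0\rightarrow C_{2n+\rg}({\goth g})\rightarrow \cdots \rightarrow C_{n+\rg+2}({\goth g})\rightarrow Z\rightarrow 0$ with $Z$ the cycles of degree $\b g{}+1$. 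Your proposal has the right global mechanism but the wrong locus of degeneration and does not isolate the projective-dimension input that makes the codimension count close.
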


By standard results of homological algebra (see Appendix~\ref{pdc}), Theorem~\ref{t2int}
is a consequence of the key theorem of this note:

\begin{theo}\label{t3int}
For $i=1,\ldots,\b g{}-\rg$, the $\sgg g{}$-module $C_{i+\b g{}}({\goth g})$ has
projective dimension at most $i$.
\end{theo}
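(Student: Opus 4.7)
The plan is to produce an explicit length-$i$ free resolution of $C_{i+\b g{}}({\goth g})$ built from the natural presentation of the quotient $Q:=\tk{\k}{\sgg g{}}{\goth g}/\bi g{}$. Since $\bi g{}$ is a free $\sgg g{}$-module of rank $\b g{}$ by Bolsinov's theorem and $\tk{\k}{\sgg g{}}{\goth g}$ is free of rank $2\b g{}-\rg$, there is a short exact sequence of free modules
$$0\longrightarrow \bi g{}\stackrel{\iota}{\longrightarrow} \tk{\k}{\sgg g{}}{\goth g}\longrightarrow Q\longrightarrow 0,$$
so $Q$ has projective dimension at most $1$ and generic rank $\b g{}-\rg$; in particular $\ex{i}{Q}$ is generically nonzero exactly in the range of the theorem.

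The first step is to identify $C_{i+\b g{}}({\goth g})$ with $\ex{i}{Q}$, up to a twist by the free rank-one module $\ex{\b g{}}{\bi g{}}$. For $\omega$ a generator of $\ex{\b g{}}{\bi g{}}$, $C_{i+\b g{}}({\goth g})$ is by definition the image of $\wedge\omega$ from $\tk{\k}{\sgg g{}}\ex{i}{{\goth g}}$ into $\tk{\k}{\sgg g{}}\ex{i+\b g{}}{{\goth g}}$. One has to show that the kernel of this map coincides with $\iota(\bi g{})\wedge \tk{\k}{\sgg g{}}\ex{i-1}{{\goth g}}$; the non-trivial inclusion is verified on the open locus where $\iota$ splits off a free direct complement --- the orthogonal complement $(\bi g{})^{\perp}$, also free of the expected rank $\b g{}-\rg$, provides such a splitting generically --- and is then extended globally using saturation of $\bi g{}$ in $\tk{\k}{\sgg g{}}{\goth g}$ and torsion-freeness of the image.

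The second step is to write down the length-$i$ Schur--Koszul complex associated to $\iota$,
$$0\longrightarrow \mathrm{S}^{i}(\bi g{})\longrightarrow \mathrm{S}^{i-1}(\bi g{})\otimes_{\sgg g{}}\tk{\k}{\sgg g{}}{\goth g}\longrightarrow \cdots \longrightarrow \bi g{}\otimes_{\sgg g{}}\tk{\k}{\sgg g{}}\ex{i-1}{{\goth g}}\longrightarrow \tk{\k}{\sgg g{}}\ex{i}{{\goth g}}\longrightarrow \ex{i}{Q}\longrightarrow 0,$$
whose $j$-th term is $\mathrm{S}^{j}(\bi g{})\otimes_{\sgg g{}}\tk{\k}{\sgg g{}}\ex{i-j}{{\goth g}}$ and whose differentials wedge a single factor of $\bi g{}$ into the exterior power of ${\goth g}$ through $\iota$. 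All terms to the left of $\ex{i}{Q}$ are free $\sgg g{}$-modules of finite rank, so once the complex is shown to be exact one obtains $\mathrm{pd}(\ex{i}{Q})\leq i$, and hence $\mathrm{pd}(C_{i+\b g{}}({\goth g}))\leq i$.

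The hardest part will be to establish acyclicity of this Schur--Koszul complex. Over the open locus where $\iota$ admits a free splitting, acyclicity is classical via the K\"unneth-type decomposition $\ex{}{(\bi g{}\oplus(\bi g{})^{\perp})}\cong\ex{}{\bi g{}}\otimes \ex{}{(\bi g{})^{\perp}}$. Globally, I would apply the Buchsbaum--Eisenbud acyclicity criterion: the rank conditions match because the generic ranks do, and the remaining work is to bound from below the grade of the Fitting ideals of $\iota$ by the expected value. The needed codimension estimate on the degeneracy locus of $\iota$ should come from the symmetric information that both $\bi g{}$ and $(\bi g{})^{\perp}$ are free of the expected ranks $\b g{}$ and $\b g{}-\rg$, together with the description of $\bi g{}$ in terms of centralisers ${\goth g}^{ax+by}$; the constraint $i\leq \b g{}-\rg$ in the statement is exactly what makes the resulting grade inequalities hold up to the required step.
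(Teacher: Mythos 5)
Your reduction is, in outline, the same as the paper's own: the complex with $j$-th term $\mathrm{S}^{j}(\bi g{})\otimes_{\sgg g{}}\tk {\k}{\sgg g{}}\ex {i-j}{{\goth g}}$ is precisely the augmented complex $\overline{K}_{i}^{\bullet}({\goth g},\bi g{})$ of Subsection~\ref{co2}, and Proposition~\ref{pco2} together with Corollary~\ref{c2p} is exactly the statement that acyclicity of these complexes yields the projective dimension bound. The gap is in your final paragraph, where acyclicity is to be obtained from the Buchsbaum--Eisenbud criterion. The complex is split exact wherever $\iota $ has maximal rank, so all the Buchsbaum--Eisenbud ideals $I(\phi _{j})$ cut out subsets of the degeneracy locus $\gg g{}\setminus \Omega _{{\goth g}}$, and that locus has codimension exactly $2$: the pairs $(x,y)$ with $ax+by$ non-regular for some $(a,b)\neq 0$ form a codimension-$2$ family, since ${\goth g}\setminus {\goth g}_{\r}$ has codimension $3$ and one loses one codimension by varying $(a:b)$. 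Hence the only grade bound obtainable from the freeness of $\bi g{}$ and of its orthogonal is $\mathrm{grade}\,I(\phi _{j})\geq 2$, whereas the criterion demands $\mathrm{grade}\,I(\phi _{j})\geq j$ up to $j=i$, and $i$ runs up to $n=\b g{}-\rg$. The constraint $i\leq n$ plays no role in any grade inequality; it is merely the range in which $\ex i{Q}$ has positive generic rank. The codimension estimate that actually closes the argument is that the support of the cohomology lies in the nilpotent bicone ${\cal N}_{{\goth g}}$, of codimension $\b g{}+\rg\geq i+2$; establishing that containment is Theorem~\ref{t4int} (Property $({\bf P})$), i.e.\ the content of Sections~\ref{stp}--\ref{inb}, proved by induction on the rank via parabolic subalgebras, and it cannot be extracted from rank and freeness data of $\bi g{}$ alone.

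There is a second, smaller gap in your first step. For $i\geq 2$ the identification of $\ker (\,\cdot\wedge \omega )$ in $\tk {\k}{\sgg g{}}\ex i{{\goth g}}$ with $\bi g{}\wedge \ex {i-1}{{\goth g}}$ is not a formal consequence of saturation: the submodule $\bi g{}\wedge \ex {i-1}{{\goth g}}$ is torsion-free but need not equal its saturation in $\ex i{{\goth g}}$, so knowing that $\varphi $ restricted to $\Omega _{{\goth g}}$ lies in the restriction of this submodule does not place $\varphi $ in the submodule itself. In the paper this statement is Proposition~\ref{p2pt3} (see also Lemma~\ref{l2ne2}), and its proof already invokes Property $({\bf P})$ for the simple factors. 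Only for $i=1$ does the saturation argument go through unaided, because there the target submodule is $\bi g{}$ itself, which is free by Theorem~\ref{tsc1}.
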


As a matter of fact, it is easy to see that the support of the homology of 
$C_{\bullet}({\goth g})$ is contained in ${\cal C}({\goth g})$. Then, by 
Theorem~\ref{t3int}, $C_{\bullet}({\goth g})$ has no homology of degree bigger than 
$\b g{}$ and $I_{{\goth g}}$ has projective dimension at most $2(\b g{}-\rg)-1$.
So, by Auslander-Buchsbaum's theorem, ${\cal S}({\goth g})$ is Cohen-Macaulay.

For the proof of Theorem~\ref{t3int}, we consider the algebra 
$\tk {\k}{\e Sg}\ex {}{{\goth g}}$ and the $\ex {}{{\goth g}}$-derivation $\dd $ such that
$$ \dd v\tens a = v\wedge a \quad  \text{with} \quad
v \in {\goth g}, \quad a \in \ex {}{{\goth g}}.$$
Then $(\tk {\k}{\e Sg}\ex {}{{\goth g}},\dd)$ is a differential graded algebra with
respect to the usual gradation of $\ex {}{{\goth g}}$. Denote by $D^{\bullet}({\goth g})$
the graded complex so defined. For $k$ nonnegative integer, 
let $D_{k}^{\bullet}({\goth g})$ be the graded subcomplex of $D^{\bullet}({\goth g})$ 
such that
$$ D_{k}^{i}({\goth g}) := \tk {\k}{\sx {k-i}g}\ex {i}{{\goth g}} $$
and $D_{k}^{\bullet}({\goth g},\bi g{})$ the graded subcomplex of 
$\tk {\k}{\sgg g{}}D^{\bullet}({\goth g})$ such that
$$ D_{k}^{i+\b g{}}({\goth g},\bi g{}) := 
\tk {\k}{\sx {k-i}g}\ex {i}{{\goth g}}\wedge \ex {\b g{}}{\bi g{}} .$$

\begin{defi}\label{dint}
Let $n := \b g{}-\rg$. We say that ${\goth g}$ has Property $({\bf P})$ if
$D_{k}^{\bullet}({\goth g},\bi g{})$ has no cohomology of degree different from $\b g{}$
for $k=1,\ldots,n$.
\end{defi}

By an induction argument, Theorem~\ref{t3int} is a consequence of the following Theorem:

\begin{theo}\label{t4int}
Each simple Lie algebra has Property $({\bf P})$.
\end{theo}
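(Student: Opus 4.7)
The natural plan is induction on $j$, proving Property $({\bf P}_j)$ assuming $({\bf P}_{j-1})$ (with $({\bf P}_0)$ vacuous, since then only one term is left). For the base case $({\bf P}_1)$ the complex has only two non-trivial terms, namely $\tk{\k}{\g}\ex{\b g{}}{\bi g{}}$ in degree $\b g{}$ and $\g\wedge\ex{\b g{}}{\bi g{}}$ in degree $\b g{}+1$, and vanishing of the cohomology reduces to surjectivity of $\dd$ between them, which I would deduce from Bolsinov's freeness of $\bi g{}$ together with the generic transversality of $\g$ and $\bi g{}$ inside $\tk{\k}{\sgg g{}}\g$.

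For the inductive step I would compare $D_j^\bullet(\g,\bi g{})$ to the ambient complex $\tk{\k}{\sgg g{}}D_j^\bullet(\g)$, the latter being a base change of the Koszul complex of $\g$ and therefore acyclic in positive degrees. The resulting short exact sequence
\[
0 \to D_j^\bullet(\g,\bi g{}) \to \tk{\k}{\sgg g{}}D_j^\bullet(\g) \to Q_j^\bullet \to 0
\]
gives a long exact sequence in cohomology whose middle term vanishes in positive degree, so the cohomology of $D_j^\bullet(\g,\bi g{})$ is identified, up to a single degree shift, with that of $Q_j^\bullet$. A second short exact sequence relating $Q_j^\bullet$ to $Q_{j-1}^\bullet$, obtained by peeling off one factor of $\bi g{}$ from $\ex{\b g{}}{\bi g{}}$, would then allow the inductive hypothesis $({\bf P}_{j-1})$ to control most of $H^\bullet(Q_j^\bullet)$, leaving a single boundary term to be handled directly.

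That remaining boundary term I would control via the projective-dimension/codimension argument already employed by the author in Appendix~\ref{p}. The support of $H^\bullet(D_j^\bullet(\g,\bi g{}))$ is contained in the closed locus of $\gg g{}$ where the pencil $\{ax+by\}$ fails to be generically regular; on the complement, Bolsinov's description of $\bi g{}$ as a sum of pointwise centralizers makes acyclicity of the complex essentially Koszul-type exactness at regular elements. For simple $\g$ this bad locus has codimension exceeding the bound needed by Auslander-Buchsbaum's criterion, forcing the cohomology to vanish.

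The main obstacle lies in obtaining this codimension estimate with the exact sharpness required. For the classical types $\mathrm{A}, \mathrm{B}, \mathrm{C}, \mathrm{D}$, explicit matrix realizations and the structure of the Jordan stratification of $\gg g{}$ make the estimate essentially computational; for the exceptional types $G_2, F_4, E_6, E_7, E_8$, one resorts to case-by-case verification using explicit root-system and Weyl-group data, and this is where the bulk of the technical labor resides. A second delicate point is the precise identification of the short exact sequence for $Q_j^\bullet$, which requires a carefully chosen generator of $\ex{\b g{}}{\bi g{}}$ coming from the Mishchenko-Fomenko construction.
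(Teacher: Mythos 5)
Your opening moves match the paper: Property $({\bf P}_{1})$ holds for every reductive ${\goth g}$ by the normality/big-open-subset argument (Proposition~\ref{ppt3}(ii)), and the inductive step $({\bf P}_{k-1})\Rightarrow({\bf P}_{k})$ is indeed driven by a codimension bound on the support $S_{k}$ of the offending cohomology together with the projective-dimension criterion of the appendix (Proposition~\ref{pstp}, Corollary~\ref{c2p}). The comparison with the ambient acyclic complex and the ``peeling off one factor of $\bi g{}$'' step also correspond to real ingredients (Proposition~\ref{pco2}, Proposition~\ref{p2pt3}).

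The gap is in the codimension estimate, which is where the entire difficulty of the theorem lives. A priori the support of the cohomology of $D_{k}^{\bullet}({\goth g},\bi g{})$ is only known to lie in $\gg g{}\setminus \Omega _{{\goth g}}$, the locus where $\bi g{}$ drops rank (Lemma~\ref{l3co1}); this is merely a set of codimension $2$, far short of the $k+2$ you need, and there is no direct argument placing $S_{k}$ inside the locus where the whole pencil $\{ax+by\}$ is non-regular, let alone one giving that locus codimension $\geq k+2$. The paper's route is different and uniform: $S_{k}$ is closed and invariant under $G\times {\mathrm {GL}}_{2}(\k)$, so it lies in the nilpotent bicone ${\cal N}_{{\goth g}}$ --- whose codimension $n+2\rg \geq k+2$ is known from \cite[Theorem 1.2]{CMo} with no case analysis --- \emph{provided} $\varpi _{1}(S_{k})$ contains no nonzero semisimple element. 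Establishing $\varpi _{1}(S_{k})\cap {\goth h}=\{0\}$ is the bulk of the note (Sections~\ref{ne}--\ref{inb}): it is an induction on the rank of ${\goth g}$, not on $k$ alone, in which one localizes at a nonzero $z\in {\goth h}_{{\goth b}}$, passes to the Levi factor ${\goth l}={\goth g}^{z}$ of the parabolic ${\goth g}^{z}+{\goth b}$, invokes Property $({\bf P})$ for the simple factors of ${\goth l}$ (which have smaller rank), and then runs a completion/filtration argument on $D_{k,\#}^{\bullet}({\goth g},\widehat{\bii})$ to conclude via flat descent that $z\notin \varpi _{1}(S_{k})$ (Propositions~\ref{pin3}, \ref{pinb2}, \ref{pne2}). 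Your proposal replaces all of this by an unproved containment plus a suggested case-by-case verification over the classical and exceptional types; the latter is not carried out, is not obviously feasible (the stratification of $S_{k}$ is not accessible by Jordan-type combinatorics), and is in any case not how the result is obtained --- the paper's proof is type-free.
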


As a matter of fact, the proof of this theorem is the main part of this note.

\subsection{Sketch of proofs} \label{int3}
We suppose ${\goth g}$ simple and we prove Theorem~\ref{t4int} by induction on the 
rank of ${\goth g}$. For $k=1,\ldots,n$, we denote by $S_{k}$ the support in $\gg g{}$ of
the cohomology of $D_{k}^{\bullet}({\goth g},\bi g{})$ of degree different from $\b g{}$.
This subset of $\gg g{}$ is invariant under the diagonal action of $G$ and the 
canonical action of ${\mathrm {GL}}_{2}(\k)$ since $\bi g{}$ is an invariant 
module under these actions. As a result, the image of $S_{k}$ by the first projection 
$$ \xymatrix{ \gg g{} \ar[rr]^-{\varpi _{1}} && {\goth g}}$$ 
is a $G$-invariant closed subset of ${\goth g}$. In particular, if $\varpi _{1}(S_{k})$
does not contain semisimple elements different from $0$, $\varpi _{1}(S_{k})$ is 
contained in the nilpotent cone ${\goth N}_{{\goth g}}$ of ${\goth g}$ and $S_{k}$
is contained in the so-called nilpotent bicone ${\cal N}_{{\goth g}}$ of ${\goth g}$. By 
definition, ${\cal N}_{{\goth g}}$ is the subset of elements $(x,y)$ of $\gg g{}$ such 
that the subspace of ${\goth g}$, generated by $x$ and $y$, is contained in 
${\goth N}_{{\goth g}}$. By \cite[Theorem 1.2]{CMo}, ${\cal N}_{{\goth g}}$ has 
codimension $\b g{}+\rg$ in $\gg g{}$. So ${\goth g}$ has Property $({\bf P})$ if
$\varpi _{1}(S_{k})$ is contained in ${\goth N}_{{\goth g}}$ for $k=1,\ldots,n$.  

Fix a Borel subalgebra ${\goth b}$ of ${\goth g}$ and ${\goth h}$ a Cartan subalgebra of
${\goth g}$, contained in ${\goth b}$. Then the main step of the proof of
Theorem~\ref{t4int} is the equality $\varpi _{1}(S_{k})\cap {\goth h}=\{0\}$ for
$k=1,\ldots,n$. Let $z$ be in ${\goth h}$. Denote by ${\goth g}^{z}$ the centralizer of
$z$ in ${\goth g}$. The orbit of $z$ under the Weyl group contains an element $z'$ such
that ${\goth g}^{z'}+{\goth b}$ is a parabolic subalgbera. So, we can suppose that
${\goth p}:={\goth g}^{z}+{\goth b}$ is an algebra. Then ${\goth l}:={\goth g}^{z}$ is
the reductive factor of ${\goth p}$ containing ${\goth h}$. Denote by ${\goth d}$ the
derived algebra of ${\goth l}$ and ${\goth l}_{*}$ the subset of elements $x$ of
${\goth l}$ such that ${\goth g}^{x}$ is contained in ${\goth l}$. Then ${\goth l}_{*}$
is a principal open subset of ${\goth l}$. Let $\poi {{\goth d}}1{,\ldots,}{\n}{}{}{}$ be
the simple factors of ${\goth d}$, ${\goth z}$ the center of ${\goth l}$, 
${\goth p}_{\pm,\u}$ the sum of root spaces with respect to ${\goth h}$, not contained in
${\goth l}$ and $d$ the half dimension of ${\goth p}_{\pm,\u}$. When $z$ is regular, 
$\n=0$. When $\n$ is positive, for $i=1,\ldots,\n$, denote by $\j di$ the rank of 
${\goth d}_{i}$, $\b di$ the dimension of the Borel subalgebras of ${\goth d}_{i}$ and 
set $n_{i} := \b di - \j di$. For $j$ nonnegative integer, set:
$$ {\Bbb I}_{j} := \{(\poi i{-1}{,\ldots,}{\n}{}{}{}) \in {\Bbb N}^{\n+2} \; \vert \;
i_{1} \leq n_{1},\ldots,i_{\n} \leq n_{\n}, \; \poi i{-1}{+\cdots +}{\n}{}{}{} = j\} ,$$
and for $k=1,\ldots,n$ and $\iota =(\poi i{-1}{,\ldots,}{\n}{}{}{})$ in 
${\Bbb I}_{k}$, denote by $D_{k,\iota ,{\goth p}}^{\bullet}({\goth g})$ the total complex 
deduced from the multicomplex
$$ \tk {\k}{D_{i_{-1}}^{\bullet}({\goth p}_{\pm,\u})}\tk {\k}
{D_{i_{0}}^{\bullet}({\goth z})}\tk {\k}{D_{i_{1}}^{\bullet}({\goth d}_{1})}
\tk {\k}{\cdots }D_{i_{\n}}^{\bullet}({\goth d}_{\n}) $$
and set:
$$ D_{k,{\goth p}}^{\bullet}({\goth g}) := \bigoplus _{\iota \in {\Bbb I}_{k}} 
D_{k,\iota ,{\goth p}}^{\bullet}({\goth g}) .$$ 
Denoting by $\an {}{}$ the local ring of $\k[G]$ at the identity, let $\bii$ be the
restriction of $\bi g{}$ to $\lg l*$ and $\widetilde{\bii}$ the 
$\tk {\k}{\an {}{}}\k[\lg l*]$-submodule of
$\tk {\k}{\an {}{}}\tk {\k}{\k[\lg l*]}{\goth g}$ generated by the maps 
$$ (g,x,y) \longmapsto g.\varphi (x,y) \quad  \text{with} \quad \varphi \in \bii.$$
Then $\bii$ and $\widetilde{\bii}$ are free modules of rank $\b g{}$. As usual, let
${\goth m}$ be the maximal ideal of $\an {}{}$ and $\han {}{}$ the completion of
$\an {}{}$ for the ${\goth m}$-adic topology. Set
$\widehat{\bii} := \tk {\an {}{}}{\han {}{}}\widetilde{\bii}$ and denote by
$D_{k,{\goth p}}^{\bullet}({\goth g},\widehat{\bii})$ the graded subcomplex of
$\tk {\k}{\han {}{}}\tk {\k}{\k[\lg l*]}D^{\bullet}({\goth g})$,
$$ D_{k,{\goth p}}^{\bullet}({\goth g},\widehat{\bii}) := 
D_{k,{\goth p}}^{\bullet}({\goth g})[-\b g{}]\wedge \ex {\b g{}}{\widehat{\bii}} .$$
An important step of the proof of Theorem~\ref{t4int} is the following proposition:

\begin{prop}\label{pint}
Suppose that the simple factors of ${\goth l}$ have Property $({\bf P})$. Then, for 
$k=1,\ldots,n$, $D_{k,{\goth p}}^{\bullet}({\goth g},\widehat{\bii})$ has no cohomology
of degree different from $\b g{}$.
\end{prop}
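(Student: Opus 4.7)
The plan is to prove the proposition by a K\"unneth-type argument applied to the tensor product structure built into the definition of $D_{k,\#}^{\bullet}(\g)$. By construction, each summand $D_{k,\iota,\#}^{\bullet}(\g)$ is the total (simple) complex of the multicomplex formed from the Koszul-type complexes $D_{i_{j}}^{\bullet}$ attached to each factor in the decomposition
$$\g = \mathfrak{p}_{-,\mathrm{u}} \oplus \mathfrak{p}_{+,\mathrm{u}} \oplus \z \oplus \mathfrak{d}_{1} \oplus \cdots \oplus \mathfrak{d}_{\n}.$$
The first step is to exhibit a tensor factorization of $\ex{\b g{}}{\widetilde{\bii}}$ compatible with this decomposition. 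The restriction $\bii$ of $\bi g{}$ to $\lg l *$ should split along the Levi datum: its rank $\b g{}$ distributes as $\b d{j}$ for each simple factor $\mathfrak{d}_{j}$, $\dim \z$ for the center, and $d$ for the $\mathfrak{p}_{\pm,\mathrm{u}}$ part (coming, for instance, from the $\mathfrak{p}_{+,\mathrm{u}}$ half of a generic centralizer line). Translating by $G$ then globalizes this to a decomposition of $\widetilde{\bii}$, possibly after a faithfully flat base change.

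Once the splitting is in place, a K\"unneth argument identifies the cohomology of $D_{k,\iota,\#}^{\bullet}(\g,\widetilde{\bii})$ with the graded tensor product of the cohomologies of the factor complexes, each wedged with the corresponding top exterior power. The contributions from the simple factors $\mathfrak{d}_{j}$ are concentrated in degree $\b d{j}$ by Property $(\mathbf{P})$, applicable precisely because of the constraint $i_{j} \leq n_{j}$ built into $\mathbb{I}_{k}$. The contributions from $\z$ and from $\mathfrak{p}_{\pm,\mathrm{u}}$, which enter only as vector spaces, are Koszul-type complexes wedged with the top exterior power of a free module and concentrate in a single degree by a direct computation.

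A degree count then shows that the cohomology of $D_{k,\iota,\#}^{\bullet}(\g,\widetilde{\bii})$ survives only in total degree $\sum_{j} \b d{j} + \dim \z + d = \b g{}$, independently of $\iota \in \mathbb{I}_{k}$. Taking the direct sum over $\iota$ gives the claim. The main obstacle, as I see it, is the first step: writing down the splitting of $\widetilde{\bii}$ explicitly and checking that it is compatible both with the differential \textemdash\ so as to induce a splitting of complexes after a (flat) base change \textemdash\ and with the wedge structure, so that the K\"unneth identification respects the degree shifts. The fact that $\widetilde{\bii}$ is defined using the $G$-action, rather than using $\bii$ alone, is what should allow a pointwise decomposition performed in a neighborhood of $\l_{*}$ to globalize.
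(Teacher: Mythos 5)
Your outline correctly identifies the target structure (Property $({\bf P})$ for the factors of ${\goth l}$ entering through the constraint $i_{j}\leq n_{j}$ in ${\Bbb I}_{k}$, and the degree count $\sum_{j}\b dj+\dim {\goth z}+d=\b g{}$), but the step you yourself flag as ``the main obstacle'' is a genuine gap, and it cannot be closed in the form you propose. The module $\widetilde{\bii}$ does not admit a direct sum decomposition over $G\times \lg l*$ compatible with ${\goth g}={\goth p}_{-,\u}\oplus {\goth p}_{\u}\oplus {\goth z}\oplus {\goth d}_{1}\oplus \cdots \oplus {\goth d}_{\n}$, so $\ex {\b g{}}{\widetilde{\bii}}$ does not factor as a tensor product of top exterior powers attached to the factors, and no K\"unneth identification of the cohomology is available; moreover the coefficient ring $\tk {\k}{\k[G]}\k[\lg l*]$ is not a tensor product of rings attached to the factors. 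What is actually true (Proposition~\ref{prap2} and Proposition~\ref{pneb1}) is only an approximate splitting: after restriction to $\lp$ one has $\oi =\oi _{0}\oplus \oi _{\u}$ with $\oi _{\u}\subset \tk {\k}{J_{-}}{\goth p}_{\u}$, and over $\lg l*$ the complementary generators $\nu _{i}^{(m)}=\varepsilon _{i}^{(m)}-\sum a_{i,m,j,m'}\varepsilon _{j}^{(m')}$ lie in ${\goth p}_{\u}$ only modulo the ideals $J$ and $J_{-}$. Even this much rests on the non-vanishing of the coefficients $a_{i,m,j,m'}$ on the diagonal of ${\goth h}$ (Corollary~\ref{crap2}(iii), which depends on the arithmetic Proposition~\ref{psco}) and on the invertibility of the matrices of Corollary~\ref{cneb2}; none of this is a routine verification.

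The paper replaces the missing tensor factorization by a filtration argument. One passes to the completion $\hat{\an {}{}}$ of the local ring of $G$ at the identity, filters $D_{k,\#}^{\bullet}({\goth g},\widehat{\bii})$ by the powers of the ideal $\hat{J}$ (resp.\ $\hat{J}_{+}$ when $\n\geq 2$), and proves --- this is where Proposition~\ref{psp1}, Proposition~\ref{psp2} and Proposition~\ref{pspb} enter --- that the associated graded complex is $\tk {\k[\lp]}AD_{k,\#}({\goth g},\oi)$, whose acyclicity outside degree $\b g{}$ does follow from Property $({\bf P})$ for the factors of ${\goth l}$ via Lemma~\ref{lin1} and Corollary~\ref{cin1} (this is the only place where something like your K\"unneth step survives, and there it applies to $\oi _{+}=\oi _{0}\oplus \tk {\k}{\k[\lp]}{\goth p}_{\u}$, not to $\widetilde{\bii}$). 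Passing from the associated graded back to the complex itself is also not automatic: it uses Krull's intersection theorem (Corollary~\ref{cin3}) and faithfully flat descent from $\hat{\an {}{}}$ to $\an {}{}$, together with the $G$-equivariance argument of Proposition~\ref{pne2}. So your proposal stops exactly where the proof begins: the ``splitting plus K\"unneth'' you hope for is replaced by ``filtration plus identification of the graded pieces plus completion'', and the identification of the graded pieces is the technical core of the paper.
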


Denote by $D_{k}^{\bullet}({\goth g},\widetilde{\bii})$ the graded complex
$$ D_{k}^{\bullet}({\goth g},\widetilde{\bii}) := 
D_{k}^{\bullet}({\goth g})[-\b g{}]\wedge \ex {\b g{}}{\widetilde{\bii}} .$$
As $\han {}{}$ is a faithfully flat extension of $\an {}{}$, from Proposition~\ref{pint},
Proposition~\ref{prep} and \cite[Theorem 1.1]{Ch}, we deduce that
$D_{k}^{\bullet}({\goth g},\widetilde{\bii})$ has no cohomology of degree
different from $\b g{}$ for $k=1,\ldots,n$. As a matter of fact, \cite[Theorem 1.1]{Ch} is only true for 
simple Lie algebras. The complex $D_{k}^{\bullet}({\goth g},\widetilde{\bii})$ is a 
subcomplex of 
$\tk {\k}{\an {}{}}\tk {\k}{\k[\lg l*]}D_{k}^{\bullet}({\goth g})$ and the morphism 
$$ \xymatrix{ G\times \lg l* \ar[rr] && \gg g{}}, \qquad 
(g,x,y) \longmapsto (g(x),g(y))$$ 
is a flat morphism whose image is the cartesian product of an open neighborhood of $z$ 
and ${\goth g}$. As a consequence, $z$ is not in $\varpi _{1}(S_{k})$ for $k=1,\ldots,n$. 

\bigskip
As a result, one of the main step to prove Theorem~\ref{t4int} is the proof of 
Proposition~\ref{pint}. For that purpose, denote by $\oi$ the restriction of 
$\bii$ to $\lp$ and $\oi_{{\goth l}}$ the restriction of $\bi l{}$ to $\ll$. Then $\oi$ 
and $\oi_{{\goth l}}$ are free modules of rank $\b g{}$ and $\b l{}$ respectively. 
Denoting by ${\goth p}_{\u}$ the nilpotent radical of ${\goth p}$ and ${\goth p}_{-,\u}$ 
the complement to ${\goth p}_{\u}$ in ${\goth p}_{\pm,\u}$, invariant under the adjoint 
action of ${\goth h}$, 
$$ {\goth g} = {\goth p}_{-,\u} \oplus {\goth p} \quad  \text{and} \quad 
{\goth p} = {\goth l} \oplus {\goth p}_{\u}$$
so that $\k[\ll]$ and $\k[\lp]$ are subalgebras of $\k[\lg l*]$. Let $\oi_{+}$ be the 
submodule of the $\k[\lp]$-module $\tk {\k}{\k[\lp]}{\goth g}$ generated by
$\oi_{{\goth l}}$ and ${\goth p}_{\u}$. Then $\oi _{+}$ is a free module of rank $\b g{}$
and $\oi$ is a submodule of $\oi _{+}$ of the same rank. For $M$ free submodule of rank
$\b g{}$ of $\tk {\k}{\k[\lp]}{\goth g}$, denote by
$D_{k,{\goth p}}^{\bullet}({\goth g},M)$ the graded subcomplex of
$\tk {\k}{\k[\lp]}D^{\bullet}({\goth g})$,
$$ D_{k,{\goth p}}^{\bullet}({\goth g},M) := 
D_{k,{\goth p}}^{\bullet}({\goth g})[-\b g{}]\wedge \ex {\b g{}}{M} .$$
By Property $({\bf P})$ for the simple factors of ${\goth l}$, for $k=1,\ldots,n$,
the complexes $D_{k,{\goth p}}^{\bullet}({\goth g},\oi_{+})$ and 
$D_{k,{\goth p}}^{\bullet}({\goth g},\oi)$ have no cohomology of degree different from
$\b g{}$.

Denoting by $J$ the ideal of definition of $\lp$ in $\k[\lg l*]$, let 
$\hat{J}$ be the ideal of $\tk {\k}{\han {}{}}\k[\lg l*]$ generated by
${\goth m}\tens 1$ and  $1\tens J$. For a well defined free $\k[\lp]$-module $\bjj$ of
rank $\b g{}-\rg$ such that
$$ \tk {\k}{\k[\lp]}{\goth p}_{\u} \subset \bjj \subset
\tk {\k[\gg l{}]}{\k[\lp]}\bi l{} + \tk {\k}{\k[\lp]}{\goth p}_{\u} ,$$
we defined a filtration of $D_{k,{\goth p}}({\goth g},\widehat{\bii})$. The main step of
the proof of Proposition~\ref{pint} is the proof of the following result: the associate
graded space to this filtration is isomorphic to
$$ \tk {\k[\lp]}A D_{k,{\goth p}}({\goth g},\oi) \quad  \text{with} \quad 
A = \tk {\k[\lp]}{(\bigoplus _{l\in {\Bbb N}} \hat{J}^{l}/\hat{J}^{l+1})}\ex {}{\bjj} .$$
Then, by the above result on the cohomology of $D_{k,{\goth p}}({\goth g},\oi)$,
$D_{k,{\goth p}}({\goth g},\widehat{\bii})$ has no cohomology of degree different from
$\b g{}$. 

\subsection{Organization of the note.} \label{int5}
In Section~\ref{sc}, the characteristic submodule $\bi g{}$ is introduced and 
some of its properties are given. In particular, its restrictions to parabolic 
subalgebras are considered. In Section~\ref{pm}, we prove that the main theorem
and Theorem~\ref{t2int} result from Theorem~\ref{t3int}. In Section~\ref{co},
we introduce some complexes in a general case and in the particular case of the matter
of the note. Moreover we prove some results in the general case. In Section~\ref{pp},
we define Property $({\bf P})$ and give some consequequences of this property. In
Section~\ref{stp}, we prove that Property $({\bf P})$ for ${\goth g}$ simple results
from Proposition~\ref{pint} for each parabolic subalgebra ${\goth p}$ of ${\goth g}$,
containing ${\goth b}$. In Section~\ref{rcm}, we give some results on the characteristic
module. In Section~\ref{de}, we give some new notations. In particular, we introduce the
module $\bjj$ and we give some results on the modules
$\ex j{\bjj}\wedge \ex {\b g{}}\bii, \, j=1,\ldots,n$. In Section~\ref{sp},
we introduce some spaces and give some important results for the proof
of Proposition~\ref{pint}. In Section~\ref{fc}, we define a filtration of the
complexes $D_{k,{\goth p}}({\goth g},\widehat{\bii}), \, k=1,\ldots,n$ and consider the
associated graded spaces to these filtrations. Then we prove Proposition~\ref{pint} and
Theorem~\ref{t4int}.

The appendix has two sections. In Section~\ref{pdc}, we recall some well known results
of cohomology. In Section~\ref{rep}, we give a property of a ${\goth g}$-submodule of a
rational ${\goth g}$-module under certain hypothesis. This result is used to prove
Theorem~\ref{tstp4}.

\section{General remarks and notations}\label{grem}
The main notations are in the Tables of Notations at the end of the note. For
$k$ positive integer, ${\Bbb N}^{k}$ is ordered by the lexicographic order, induced by
the usual order of ${\Bbb N}$, and its subsets as well. This order is denoted by $\prec$.
As usual $\k^{*} := \k\setminus \{0\}$. In this note, ${\goth g}$ is a reductive Lie
algebra over $\k$ but in many Sections it is supposed to be simple.

$\bullet$ For $V$ a module over a $\k$-algebra, its dual is denoted by $V^{*}$ and
its symmetric and exterior algebras are denoted by $\ec S{}V{}{}$ and $\ex {}V$ 
respectively. For all integer $i$, $\sy iV$ and $\ex iV$ are the spaces of degree $i$
of $\es SV$ and $\ex {}V$ with respect to the usual gradation. In particular, for $i$
negative, $\sy iV$ and $\ex iV$ are equal to $\{0\}$. If $E$ is a subset of $V$, the 
submodule of $V$ generated by $E$ is denoted by span($E$). For $d$ positive integer and
$V$ a vector space over $\k$ of finite dimension, Gr$_d(V)$ is the grassmannian of
subspaces of dimension $d$ of $V$. 

$\bullet$
All topological terms refer to the Zariski topology. If $Y$ is a subset of a topological
space $X$, denote by $\overline{Y}$ the closure of $Y$ in $X$. For $Y$ an open subset
of the algebraic variety $X$, $Y$ is called {\it a big open subset} if the codimension
of $X\setminus Y$ in $X$ is at least $2$. For $Y$ a closed subset of an algebraic 
variety $X$, its dimension is the biggest dimension of its irreducible components and its
codimension in $X$ is the smallest codimension in $X$ of its irreducible components. For 
$X$ an algebraic variety, $\k[X]$ is the algebra of regular functions on $X$, 
$\an X{}$ is its structural sheaf and for $x$ in $X$, $\an Xx$ is the local ring of $X$
at $x$. When $X$ is irreducible, $\k(X)$ is the field of rational functions on $X$. As
usual, for ${\cal F}$ an $\an X{}{}$-module and $Y$ an open subset of $X$,
$\Gamma (Y,{\cal F})$ is the space of sections of ${\cal F}$ over $Y$.

$\bullet$
All the complexes considered in this note are graded complexes over ${\Bbb Z}$
of vector spaces and their differentials are homogeneous of degree $\pm 1$ and they are 
denoted by $\dd $. As usual, the gradation of the complex is denoted by $C_{\bullet}$ 
if the degree of $\dd $ is $-1$ and $C^{\bullet}$ otherwise. 

For $E$ a graded space over ${\Bbb Z}$ and for $i$ integer, $E[i]$ is the graded space 
over ${\Bbb Z}$ whose subspace of degree $n$ is the subspace of degree $n+i$ of $E$. 

$\bullet$
The dual ${\goth g}^{*}$ of ${\goth g}$ identifies with ${\goth g}$ by a given non 
degenerate, invariant, symmetric bilinear form $\dv ..$ on $\gg g{}$
extending the Killing form of $[{\goth g},{\goth g}]$.

$\bullet$ Let ${\goth h}$ be a Cartan subalgebra of ${\goth g}$, ${\goth b}$ a Borel
subalgebra of ${\goth g}$ containing ${\goth h}$, ${\cal R}$ the root system of
${\goth h}$ in ${\goth g}$ and ${\cal R}_{+}$ the positive root system corresponding
to ${\goth b}$. For $\alpha $ in ${\cal R}$, ${\goth g}_{\alpha }$ is the subspace of
weight $\alpha $ of ${\goth g}$ and $x_{\alpha }$ is a generator of the space
${\goth g}_{\alpha }$.

$\bullet$
Let ${\cal P}({\cal R})$ be the set of weights of the root system ${\cal R}$ and 
${\cal P}_{+}({\cal R})$ the subset of dominant weights with respect to ${\cal R}_{+}$.
For $\lambda $ in ${\cal P}_{+}({\cal R})$, denote by $V_{\lambda }$ a simple 
${\goth g}$-module of highest weight $\lambda $.  

$\bullet$
Denote by $\Pi $ the set $\{\poi {\beta }1{,\ldots,}{\rg}{}{}{}\}$ of simple roots of
${\cal R}_{+}$. Let $e$ be the sum of the $x_{\beta }$'s, $\beta $ in $\Pi $, and $h$ the 
element of ${\goth h}\cap [{\goth g},{\goth g}]$ such that $\beta (h)=2$ for all $\beta $
in $\Pi $. The one parameter subgroup of $G$ generated by $\ad h$ is denoted by
$t\mapsto \rho (t)$. Let ${\goth u}$ be the nilpotent radical of ${\goth b}$,
${\goth u}_{-}$ the nilpotent radical of the Borel subalgebra of ${\goth g}$, opposite
to ${\goth b}$, and $B$ the normalizer of ${\goth b}$ in $G$.

\begin{lemma}\label{lint}
Let $O$ be a principal open subset of ${\goth h}$ and $\Sigma $ an irreducible 
hypersurface  of $O+{\goth u}$. Suppose that $\Sigma $ is invariant under
the one-parameter subgroup $t\mapsto \rho (t)$ of $G$. Then $O$ is 
contained in $\Sigma $ or $\Sigma = (\Sigma \cap {\goth h}) + {\goth u}$.
\end{lemma}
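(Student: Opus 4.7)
The plan is to extract a defining equation of $\Sigma $ and analyze it via its weights under the one-parameter subgroup $\rho $.

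\emph{Setup.} I would first observe that $O+{\goth u}$ is affine and isomorphic to $O\times {\goth u}$, so its coordinate ring $\k[O+{\goth u}]=\k[O]\otimes _{\k}\k[{\goth u}]$ is a UFD (a localization of a polynomial ring). The irreducible hypersurface $\Sigma $ is therefore cut out by a single irreducible $f\in \k[O+{\goth u}]$, uniquely determined up to a unit. Next I would record the action: $\rho (t)$ fixes ${\goth h}$ pointwise and acts on ${\goth g}^{\alpha }$ by $t^{\alpha (h)}$, so the induced action on $\k[O+{\goth u}]$ is trivial on $\k[O]$ and multiplies the basis vector $\xi _{\alpha }$ of $({\goth g}^{\alpha })^{*}$ by $t^{-\alpha (h)}$. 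Since $\alpha (h)=2\,{\rm ht}(\alpha )\geq 2$ for $\alpha \in {\cal R}_{+}$, all weights of $\rho $ on $\k[O+{\goth u}]$ are non-positive.

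\emph{$f$ is a weight vector.} The $\rho $-stability of $\Sigma $ gives $\rho (t)^{*}f=c\cdot f$ for some $c\in \k[O+{\goth u}][t,t^{-1}]$; as $\rho (t)^{*}f$ is irreducible, $c$ must be a unit. Since $\k[O+{\goth u}]$ is an integral domain, the units of $\k[O+{\goth u}][t,t^{-1}]$ are $\k[O+{\goth u}]^{*}\cdot t^{{\Bbb Z}}$, and $\k[O+{\goth u}]=\k[O][\xi _{\alpha }:\alpha \in {\cal R}_{+}]$ being a polynomial ring over a reduced ring gives $\k[O+{\goth u}]^{*}=\k[O]^{*}$. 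Hence $c=u\cdot t^{n}$ with $u\in \k[O]^{*}$, and evaluation at $t=1$ forces $u=1$. Thus $\rho (t)^{*}f=t^{n}f$ for some $n\in {\Bbb Z}$, and the non-positivity of weights yields $n\leq 0$.

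\emph{Dichotomy.} To conclude I would split on the sign of $n$. If $n=0$, then $f$ lies in the weight-zero subalgebra, which is $\k[O]$; in that case $\Sigma =V(f)+{\goth u}$ and $\Sigma \cap {\goth h}=V(f)\subset O$, so $\Sigma =\Sigma \cap {\goth h}+{\goth u}$. If $n<0$, every monomial of $f$ carries at least one $\xi _{\alpha }$, so $f$ vanishes on $O\times \{0\}=O$, which gives $O\subset \Sigma $. I do not anticipate a real obstacle: the slightly delicate point is identifying the units of $\k[O+{\goth u}][t,t^{-1}]$ so as to conclude that the semi-invariance of $f$ is the clean monomial $\rho (t)^{*}f=t^{n}f$ rather than a more general cocycle valued in $\k[O]^{*}$.
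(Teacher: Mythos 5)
Your proof is correct, but it runs along a different track from the paper's after the common first step (factoriality of $\k[O+{\goth u}]$, hence $\Sigma$ is the zero set of a single irreducible $f$). The paper argues on the side of points: for $(x,y)\in\Sigma$ one has $\lim_{t\to 0}\rho(t).(x+y)=x$, and since $\Sigma$ is closed and $\rho$-stable this puts the projection of $\Sigma$ onto $O$ inside $\Sigma\cap{\goth h}$; it then compares the two irreducible hypersurfaces $\Sigma$ and $\Sigma\cap{\goth h}+{\goth u}$ to get equality (using that $\Sigma\cap{\goth h}$ is irreducible as the image of an irreducible set, and is a hypersurface of $O$ precisely when $O\not\subset\Sigma$). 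You instead argue on the side of the equation: $f$ is a $\rho$-semi-invariant, i.e.\ a weight vector of some weight $n$ of one fixed sign, and the dichotomy $n=0$ versus $n\neq 0$ translates directly into $f\in\k[O]$ (giving the product decomposition) versus $f|_{O}=0$ (giving $O\subset\Sigma$). Your route buys a cleaner endgame — no need for the irreducibility-of-the-image and dimension-comparison step — at the cost of the unit bookkeeping in $\k[O+{\goth u}][t,t^{-1}]$, which you handle correctly ($R[t,t^{-1}]^*=R^*\cdot t^{\Bbb Z}$ for $R$ a domain, and $\k[O+{\goth u}]^*=\k[O]^*$); equivalently one could just decompose $f$ into weight components and observe that a principal graded ideal of a ${\Bbb Z}$-graded domain has a homogeneous generator. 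The sign you assign to the weight of $\xi_{\alpha}$ depends on the convention for the contragredient action, but the argument only uses that all these weights are nonzero of one sign while $\k[O]$ sits in weight zero, so nothing is affected.
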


\begin{proof}
As $O$ is a principal open subset of ${\goth h}$, $\k[O+{\goth u}]$ is a factorial ring.
Hence $\Sigma $ is the nullvariety in $O+{\goth u}$ of an element of $\k[O+{\goth u}]$.
As a result, $O$ is contained in $\Sigma $ or $\Sigma \cap {\goth h}$ is an
hypersurface of $O$. Suppose that $O$ is not contained in $\Sigma $. For $(x,y)$ in
${\goth h}\times {\goth u}$,
$$ \lim _{t\rightarrow 0} \rho (t).(x+y) = x .$$
Hence $\Sigma \cap {\goth h}$ is the image of $\Sigma $ by the canonical projection 
$\xymatrix{O +{\goth u}\ar[r] & O}$ and $\Sigma $ is contained in 
$\Sigma \cap {\goth h} + {\goth u}$. Moreover, $\Sigma \cap {\goth h}$ is a nonempty
irreducible hypersurface of $O$ as the image of an irreducible subset. Then 
$(\Sigma \cap {\goth h})+{\goth u}$ is an irreducible hypersurface of $O+{\goth u}$
containing $\Sigma $, whence the lemma.
\end{proof}

$\bullet$ 
For $x \in \g$, denote by $x_{\s}$ its semisimple component, $x_{\n}$ its nilpotent 
component and ${\goth g}^{x}$ its centralizer in ${\goth g}$. The set of regular elements 
of $\g$ is 
$$\g_{\r} \ := \ \{ x\in \g \ \vert \ \dim \g^x=\rg \} .$$
We denote by $\g_{\rs}$ the set of regular semisimple elements of $\g$. Then $\g_{\r}$
and $\g_{\rs}$ are $G$-invariant dense open subsets of ${\goth g}$.
According to \cite{Ve}, ${\goth g}\setminus {\goth g}_{\r}$ is equidimensional of 
codimension $3$.  

$\bullet$
Denote by $\poi m1{,\ldots,}{\rg}{}{}{}$ the exponents of the root system ${\cal R}$.
Let $\poi p1{,\ldots,}{\rg}{}{}{}$ be a generating family of the $\k$-algebra $\e Sg^{G}$
such that $p_{i}$ is homogeneous of degree $d_{i}=m_{i}+1$ for $i=1,\ldots,\rg$.
For $(x,y)\in\g \times \g$, consider a shift of $p_i$ in direction $y$: $p_i(x+ty)$ with
$t\in\k$. Expanding $p_i(x+ty)$ as a polynomial in $t$, we obtain
\begin{eqnarray}\label{eq:pi}
p_i(x+ty)=\sum\limits_{m=0}^{d_i} p_{i}^{(m)} (x,y) t^m,  && \forall
(t,x,y)\in\k\times\g\times\g
\end{eqnarray}
where $y \mapsto (m!)p_{i}^{(m)}(x,y)$ is the derivative at $x$ of $p_i$ at the order
$m$ in the direction $y$. The elements $p_{i}^{(m)}$ defined by~(\ref{eq:pi}) are
invariant elements of $\sgg g{}$ under the diagonal action of $G$ in
$\gg g{}$. Remark that $p_i^{(0)}(x,y)=p_i(x)$ while $p_i^{(d_i)}(x,y)=p_i(y)$  for 
all $(x,y)$ in $\g\times \g$.

\begin{rema}\label{rint}
The family
$\mathcal{P}_x  :=
\{p_{i}^{(m)}(x,.); \ 1 \leq i \leq \rg, 0 \leq m \leq d_i  \}$ for $x\in\g$,
is a Poisson-commutative family of $\e Sg$ by Mishchenko-Fomenko~\cite{MF}.
One says that the family $\mathcal{P}_x$ is constructed by
the \emph{argument shift method}.
\end{rema}

$\bullet$
Let $i \in\{1,\ldots,\rg\}$. For $x$ in $\g$, denote by $\varepsilon _i(x)$ the 
element of $\g$ given by
$$ \dv {\varepsilon _{i}(x)}y := \frac{\dd }{\dd t} p_{i}(x+ty) \left \vert _{t=0} \right.
= p_{i}^{(1)}(x,y) .$$
for all $y$ in ${\goth g}$. Thereby, $\varepsilon _{i}$ is an invariant element of 
$\tk {\k}{\e Sg}\g$ under the canonical action of $G$. According to 
\cite[Theorem 9]{Ko}, for $x$ in ${\goth g}$, $x$ is in ${\goth g}_{\r}$ if and only if 
$\poi x{}{,\ldots,}{}{\varepsilon }{1}{\rg}$ are linearly independent. In this case, 
$\poi x{}{,\ldots,}{}{\varepsilon }{1}{\rg}$ is a basis of ${\goth g}^{x}$. 

Denote by $\varepsilon _{i}^{(m)}$, for $0\leq m\leq d_i-1$, the elements of 
$\tk{\k}{\sgg g{}}{\goth g}$ defined by the equality:
\begin{eqnarray}\label{eq:phi}
\varepsilon _i (x+ty) =\sum\limits_{ m=0}^{d_i-1} t^m \varepsilon _i^{(m)}(x,y)  , &&
\forall (t,x,y)\in\k\times\g\times\g
\end{eqnarray}
and set:
$$ V_{x,y} := 
{\mathrm {span}}(\{\varepsilon _{i}^{(m)}(x,y), \ 1 \leq i \leq \rg,
0 \leq m \leq d_i - 1\}) $$ 
for $(x,y)$ in $\gg g{}$. 

$\bullet$ Let ${\goth N}_{{\goth g}}$ be the nilpotent cone of ${\goth g}$. For $(x,y)$
in $\gg g{}$, denote by $P_{x,y}$ the subspace of ${\goth g}$ generated by $x$ and $y$. 
Let ${\cal N}_{{\goth g}}$ be the nilpotent bicone of ${\goth g}$. By definition, 
${\cal N}_{{\goth g}}$ is the subset of elements $(x,y)$ of $\gg g{}$ such that
$P_{x,y}$ is contained in ${\goth N}_{{\goth g}}$. In particular, ${\cal N}_{{\goth g}}$
is invariant under the diagonal action of $G$ in $\gg g{}$ and the canonical action of 
${\mathrm {GL}}_{2}(\k)$ in $\gg g{}$.

\section{Characteristic module} \label{sc}
Set:
$$ I_{0} := \{(i,m)\in \{1,\ldots,\ell\}\times {\Bbb N} \, \vert \,
0\leq m \leq m_{i} \} \quad  \text{and} \quad
V'_{x,y} = \sum_{(a,b) \in \k^{2}\setminus \{0\}} {\goth g}^{ax+by} $$
for $(x,y)$ in $\gg g{}$. By definition, the characteristic module $\bi g{}$ of
${\goth g}$ is the submodule of elements $\varphi $ of $\tk {\k}{\sgg g{}}{\goth g}$ such
that $\varphi (x,y)$ is in $V'_{x,y}$ for all $(x,y)$ in a dense subset of $\gg g{}$. In
this section, some properties of $\bi g{}$ are given.

\subsection{First properties of \texorpdfstring{$\bi g{}$}{}.} \label{sc1}
Denote by $\Omega _{{\goth g}}$ the subset of elements $(x,y)$ of $\gg g{}$ such that
$P_{x,y}$ has dimension $2$ and $P_{x,y}\setminus \{0\}$ is contained in 
${\goth g}_{\r}$. According to~\cite[Corollary 10]{CMo}, $\Omega _{{\goth g}}$ is a big 
open subset of $\gg g{}$. We recall that for $(x,y)$ in $\gg g{}$, $V_{x,y}$ is the
subspace of ${\goth g}$ generated by
$\varepsilon _{i}^{(m)}(x,y), \ (i,m) \in I_{0} $. 

\begin{prop}\label{psc1}
Let $(x,y)$ be in $\gg g{}$ such that $P_{x,y}\cap {\goth g}_{\r}$ is not empty.  

{\rm (i)} Let $O$ be a nonempty open subset of $\k^{2}$ such that $ax+by$ is in
${\goth g}_{\r}$ for all $(a,b)$ in $O$. Then $V_{x,y}$ is the sum of the
${\goth g}^{ax+by}, \, (a,b) \in O$.

{\rm (ii)} The spaces $[x,V_{x,y}]$ and $[y,V_{x,y}]$ are equal.

{\rm (iii)} The space $V_{x,y}$ has dimension at most $\b g{}$ and the equality holds if 
and only if $(x,y)$ is in $\Omega _{{\goth g}}$.

{\rm (iv)} The space $[x,V_{x,y}]$ is orthogonal to $V_{x,y}$. Furthermore, $(x,y)$ is 
in $\Omega _{{\goth g}}$ if and only if $[x,V_{x,y}]$ is the orthogonal complement to 
$V_{x,y}$ in ${\goth g}$.

{\rm (v)} The space $V_{x,y}$ is contained in $V'_{x,y}$. Moreover, $V_{x,y}=V'_{x,y}$ 
if $(x,y)$ is in $\Omega _{{\goth g}}$.

{\rm (vi)} For $(i,m)$ in $I_{0}$, $\varepsilon _{i}^{(m)}$ is a $G$-equivariant map.
\end{prop}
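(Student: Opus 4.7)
The plan is to prove the six parts roughly in the order (vi), (ii), (i), (iv), (iii), (v), so that earlier items can feed into later ones. Part (vi) is immediate: since each $p_{i}$ is $G$-invariant, its gradient $\varepsilon _{i}$ is $G$-equivariant, and applying $g\in G$ to the identity $\varepsilon _{i}(x+ty)=\sum _{m}t^{m}\varepsilon _{i}^{(m)}(x,y)$ and equating coefficients of $t^{m}$ shows each $\varepsilon _{i}^{(m)}$ is $G$-equivariant. For (ii), I start from the identity $[x+ty,\varepsilon _{i}(x+ty)]=0$, which holds as a polynomial identity in $t$ since it holds on the dense set where $x+ty\in{\goth g}_{\r}$; expanding in powers of $t$ produces the recursion $[x,\varepsilon _{i}^{(m)}]+[y,\varepsilon _{i}^{(m-1)}]=0$ for $1\leq m\leq d_{i}-1$, together with $[x,\varepsilon _{i}^{(0)}]=[y,\varepsilon _{i}^{(d_{i}-1)}]=0$, so the spanning sets of $[x,V_{x,y}]$ and $[y,V_{x,y}]$ coincide. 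For (i), Kostant's theorem gives ${\goth g}^{z}=\mathrm{span}(\varepsilon _{j}(z):j)$ for $z\in{\goth g}_{\r}$, and the homogeneity relation $\varepsilon _{i}(ax+by)=\sum _{m}a^{d_{i}-1-m}b^{m}\varepsilon _{i}^{(m)}(x,y)$ gives ${\goth g}^{ax+by}\subseteq V_{x,y}$ for $(a,b)\in O$; conversely, a Vandermonde inversion on the chart $a\neq 0$ of $O$ (picking $d_{i}$ distinct ratios $b_{j}/a_{j}$) recovers each $\varepsilon _{i}^{(m)}(x,y)$ as a combination of $\varepsilon _{i}(a_{j}x+b_{j}y)$'s.

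Orthogonality in (iv) comes from Mishchenko-Fomenko (Remark~\ref{rint}): the shifts $\{p_{i}^{(m)}(x,\cdot):1\leq m\leq d_{i}\}$ Poisson-commute in the Lie-Poisson bracket. Differentiating the defining expansion gives $\nabla _{y}p_{i}^{(m)}(x,y)=\varepsilon _{i}^{(m-1)}(x,y)$ for $m\geq 1$, so the vanishing of the Poisson bracket at $y$ reads $\dv{y}{[\varepsilon _{i}^{(m-1)},\varepsilon _{j}^{(m'-1)}]}=0$, which by invariance of $\dv{\cdot}{\cdot}$ becomes $\dv{[y,\varepsilon _{i}^{(m-1)}]}{\varepsilon _{j}^{(m'-1)}}=0$, i.e.\ $[y,V_{x,y}]\perp V_{x,y}$. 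Combined with (ii) this yields $[x,V_{x,y}]\perp V_{x,y}$. The inclusion $V_{x,y}\subseteq V'_{x,y}$ in (v) then follows immediately from (i), since ${\goth g}^{ax+by}\subseteq V'_{x,y}$ by definition for each $(a,b)\in O$.

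The main obstacle is the characterization in (iii). The upper bound $\dim V_{x,y}\leq\b g{}$ is immediate from $|I_{0}|=\sum _{i}d_{i}=\b g{}$. For the ``only if'' direction, linear independence of the $\varepsilon _{i}^{(m)}(x,y)$ forces linear independence of the $\varepsilon _{i}(ax+by)$ for every $(a,b)\neq 0$: a vanishing combination $\sum _{i}c_{i}\varepsilon _{i}(ax+by)=0$ expands via homogeneity as $\sum _{i,m}c_{i}a^{d_{i}-1-m}b^{m}\varepsilon _{i}^{(m)}=0$, giving $c_{i}=0$ by checking the cases $ab\neq 0$, $a=0$, and $b=0$ separately. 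By Kostant each such $ax+by$ is regular and $x,y$ are independent, so $(x,y)\in\Omega _{{\goth g}}$. The harder converse requires $\dim V_{x,y}=\b g{}$ on all of $\Omega _{{\goth g}}$. I would deduce this from Mishchenko-Fomenko completeness: the shifts $\{p_{i}^{(m)}(x,\cdot):m\geq 1\}$ form $\b g{}$ algebraically independent Poisson-commuting functions whose differentials at $y$ are the generators of $V_{x,y}$, so the wedge $\omega(x,y):=\bigwedge _{(i,m)\in I_{0}}\varepsilon _{i}^{(m)}(x,y)\in\ex{\b g{}}{{\goth g}}$ does not vanish identically. Combining the $G\times\mathrm{GL}_{2}(\k)$-equivariance of $\omega$ with Bolsinov's characterization (or the pencil analysis in \cite{CMo}) of the maximal-rank locus of the shift family as precisely the set where all nonzero $ax+by$ are regular, one concludes $\omega\neq 0$ on the whole of $\Omega _{{\goth g}}$.

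The remaining items are dimension counts. For the ``furthermore'' in (iv), if $(x,y)\in\Omega _{{\goth g}}$ then $x\in{\goth g}_{\r}$ and ${\goth g}^{x}\subseteq V_{x,y}$ (take $(a,b)=(1,0)$ in (i)); since $\dim V_{x,y}=\b g{}$ by (iii) and $\ker(\ad x|_{V_{x,y}})={\goth g}^{x}$ has dimension $\rg$, one gets $\dim[x,V_{x,y}]=\b g{}-\rg=n$, which with the orthogonality forces $[x,V_{x,y}]=V_{x,y}^{\perp}$ for dimensional reasons; the converse is the same count run backwards combined with (iii). For (v), equality under $(x,y)\in\Omega _{{\goth g}}$ holds because every nonzero $ax+by$ is then regular, so ${\goth g}^{ax+by}=\mathrm{span}(\varepsilon _{i}(ax+by))\subseteq V_{x,y}$, hence $V'_{x,y}\subseteq V_{x,y}$.
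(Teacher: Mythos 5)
Your overall route coincides with the paper's: Kostant's theorem plus homogeneity and a Vandermonde inversion for (i), the bound $\sum_{i}d_{i}=\b g{}$ and Bolsinov's theorem for (iii), and the same dimension counts for (iv) and (v). Two steps take a slightly different but valid path: for (ii) you expand the polynomial identity $[x+ty,\varepsilon _{i}(x+ty)]=0$ in $t$ instead of applying (i) on an open set of $(a,b)$ with $ab\neq 0$, and for the isotropy in (iv) you derive $[y,V_{x,y}]\perp V_{x,y}$ directly from Poisson-commutativity of the Mishchenko--Fomenko shifts rather than quoting Bolsinov's total-isotropy statement; these are equivalent external inputs and your computations ($\nabla _{y}p_{i}^{(m)}(x,y)=\varepsilon _{i}^{(m-1)}(x,y)$, the recursion $[x,\varepsilon _{i}^{(m)}]+[y,\varepsilon _{i}^{(m-1)}]=0$) are correct.

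There is, however, one genuine gap: the converse implication in (iv). If $[x,V_{x,y}]$ is the orthogonal complement to $V_{x,y}$, then $\dim [x,V_{x,y}]=\dim {\goth g}-\dim V_{x,y}$, and combining with $\dim [x,V_{x,y}]=\dim V_{x,y}-\dim (V_{x,y}\cap {\goth g}^{x})$ gives $2\dim V_{x,y}=\dim {\goth g}+\dim (V_{x,y}\cap {\goth g}^{x})$. To reach $\dim V_{x,y}\geq \b g{}$, and hence $(x,y)\in \Omega _{{\goth g}}$ via (iii), you need the lower bound $\dim (V_{x,y}\cap {\goth g}^{x})\geq \rg$; this is not automatic, because under the standing hypothesis $x$ itself need not be regular, and without it the count only yields $\dim V_{x,y}\geq \b g{}-\rg/2$. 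So ``the same count run backwards'' does not close. The paper supplies the missing input by semicontinuity: for $(a,b)$ in a dense open subset of $\k^{2}$ the element $ax+by$ is regular, so ${\goth g}^{ax+by}\subseteq V_{x,y}$ and $\dim ({\goth g}^{ax+by}\cap V_{x,y})=\rg$, and specializing to $(a,b)=(1,0)$ gives $\dim ({\goth g}^{x}\cap V_{x,y})\geq \rg$. You should add this step. A smaller remark on (iii): your observation that the wedge of the $\varepsilon _{i}^{(m)}$ does not vanish identically only controls a dense open subset of $\Omega _{{\goth g}}$; what actually closes that direction, for you as for the paper, is the citation of Bolsinov's theorem, so the surrounding discussion is dispensable.
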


\begin{proof}
(i) For $z$ in ${\goth g}_{\r}$, $\poi z{}{,\ldots,}{}{\varepsilon }{1}{\rg}$ is a basis 
of ${\goth g}^{z}$ by~\cite[Theorem 9]{Ko}. Hence ${\goth g}^{ax+by}$ is contained in 
$V_{x,y}$ for all $(a,b)$ in $O$ since the maps 
$\poi {\varepsilon }1{,\ldots,}{\rg}{}{}{}$ are homogeneous. For pairwise different 
elements $\poi t{i,0}{,\ldots,}{i,d_{i}-1}{}{}{}$, $i=1,\ldots,\rg$ of $\k$, 
$\varepsilon _{i}^{(m)}(x,y)$ is a linear combination of 
$\varepsilon _{i}(x+t_{i,j}y), \; j=0,\ldots,d_{i}-1$ for $m=0,\ldots,d_{i}-1$. We can 
choose $\poi t{i,0}{,\ldots,}{i,d_{i}-1}{}{}{}$ so that
$(a_{i},a_{i}t_{i,0}),\ldots,(a_{i},a_{i}t_{i,d_{i}-1}))$ are in $O$ for some $a_{i}$ in 
$\k^{*}$, whence the assertion since the maps $\poi {\varepsilon }1{,\ldots,}{\rg}{}{}{}$
are homogeneous.

(ii) Let $O$ be an open subset of ${\k^{*}}^{2}$ such that 
$ax+by$ is in ${\goth g}_{\r}$ for all $(a,b)$ in $O$. For all $(a,b)$ in $O$, 
$[x,{\goth g}^{ax+by}]=[y,{\goth g}^{ax+by}]$ since $[ax+by,{\goth g}^{ax+by}]=0$ and 
$ab\neq 0$, whence the assertion by (i).  

(iii) According to~\cite[Ch. V, \S 5, Proposition 3]{Bou}, 
$$ \poi d1{+\cdots +}{\rg}{}{}{} = \b g{} .$$
So $V_{x,y}$ has dimension at most $\b g{}$. By \cite[Theorem 2.1]{Bol}, $V_{x,y}$
has dimension $\b g{}$ if and only if $(x,y)$ is in $\Omega _{{\goth g}}$.

(iv) According to \cite[Theorem 2.1]{Bol}, $V_{x,y}$ is a totally isotropic subspace 
with respect to the skew bilinear form on ${\goth g}$
$$ (v,w) \longmapsto \dv {ax+by}{[v,w]}$$
for all $(a,b)$ in $\k^{2}$. As a result, by invariance of $\dv ..$, $V_{x,y}$ is 
orthogonal to $[x,V_{x,y}]$. If $(x,y)$ is in $\Omega _{{\goth g}}$, ${\goth g}^{x}$ has 
dimension $\rg$ and it is contained in $V_{x,y}$. Hence, by (iii),
$$ \dim [x,V_{x,y}] = \b g{}-\rg = \dim {\goth g} - \dim V_{x,y}$$
so that $[x,V_{x,y}]$ is the orthogonal complement to $V_{x,y}$ in ${\goth g}$. 
Conversely, if $[x,V_{x,y}]$ is the orhogonal complement to $V_{x,y}$ in ${\goth g}$, 
then
$$ \dim V_{x,y} + \dim [x,V_{x,y}] = \dim {\goth g} .$$
Since $P_{x,y}\cap {\goth g}_{\r}$ is not empty, ${\goth g}^{ax+by}\cap V_{x,y}$ has 
dimension $\rg$ for all $(a,b)$ in a dense open subset of $\k^{2}$. By continuity,
${\goth g}^{x}\cap V_{x,y}$ has dimension at least $\rg$ so that 
$$ 2\dim V_{x,y} - \rg \geq \dim {\goth g} .$$
Hence, by (iii), $(x,y)$ is in $\Omega _{{\goth g}}$.

(v) By (i), $V_{x,y}\subset V'_{x,y}$. Suppose that $(x,y)$ is in $\Omega _{{\goth g}}$. 
According to \cite[Theorem 9]{Ko}, for all $(a,b)$ in $\k ^{2}\setminus \{0\}$, 
$\poi {ax+by}{}{,\ldots,}{}{\varepsilon }{1}{\rg}$ is a basis of ${\goth g}^{ax+by}$. 
Hence ${\goth g}^{ax+by}$ is contained in $V_{x,y}$, whence the assertion. 

(vi) Let $i$ be in $\{1,\ldots,\rg\}$. Since $p_{i}$ is $G$-invariant, 
$\varepsilon _{i}$ is a $G$-equivariant map. As a result, its $2$-polarizations 
$\poie {\varepsilon }i{,\ldots,}{i}{}{}{}{(0)}{(d_{i}-1)}$ are $G$-equivariant under the
diagonal action of $G$ in $\gg g{}$. 
\end{proof}

A part of the following theorem is ~\cite[Theorem 11]{CMo}.

\begin{theo}\label{tsc1}
{\rm (i)} The module $\bi g{}$ is a free module of rank $\b g{}$ whose a basis is the 
sequence $\varepsilon _{i}^{(m)}, \, (i,m) \in I_{0}$.

{\rm (ii)} For $\varphi $ in $\tk {\k}{\sgg g{}}{\goth g}$, $\varphi $ is in 
$\bi g{}$ if and only if $p\varphi \in \bi g{}$ for some $p$ in $\sgg g{}\setminus \{0\}$.

{\rm (iii)} For all $\varphi $ in $\bi g{}$ and for all $(x,y)$ in $\gg g{}$, 
$\varphi (x,y)$ is orthogonal to $[x,y]$.
\end{theo}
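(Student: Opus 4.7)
The plan is to establish the three assertions in succession, using Proposition~\ref{psc1} and the fact that $\Omega _{{\goth g}}$ is a big open subset of $\gg g{}$.

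For (i), I first observe that each $\varepsilon _{i}^{(m)}$ lies in $\bi g{}$, because Proposition~\ref{psc1}(i) and (v) give $\varepsilon _{i}^{(m)}(x,y)\in V_{x,y}=V'_{x,y}$ for all $(x,y)\in \Omega _{{\goth g}}$, and $\Omega _{{\goth g}}$ is dense. Next, $\vert I_{0}\vert =d_{1}+\cdots +d_{\rg}=\b g{}$ by \cite[Ch.~V, \S 5, Prop.~3]{Bou}, and Proposition~\ref{psc1}(iii) gives $\dim V_{x,y}=\b g{}$ throughout $\Omega _{{\goth g}}$. Hence the $\b g{}$ vectors $\varepsilon _{i}^{(m)}(x,y)$, $(i,m)\in I_{0}$, are a $\k$-basis of $V_{x,y}$ at every point of $\Omega _{{\goth g}}$, which already yields their $\sgg g{}$-linear independence (any syzygy $\sum a_{i,m}\varepsilon _{i}^{(m)}=0$ forces each $a_{i,m}$ to vanish on the dense set $\Omega _{{\goth g}}$). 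For generation, take $\varphi \in \bi g{}$: the defining dense-set condition combined with Proposition~\ref{psc1}(v) gives a dense subset of $\Omega _{{\goth g}}$ on which $\varphi (x,y)\in V_{x,y}$, and the vanishing on a dense set of the polynomial $(\b g{}+1)$-minors that detect non-membership propagates this inclusion to all of $\Omega _{{\goth g}}$. Cramer's rule then produces unique regular coefficients $a_{i,m}$ on $\Omega _{{\goth g}}$ with $\varphi =\sum a_{i,m}\varepsilon _{i}^{(m)}$, and these extend to elements of $\sgg g{}$ because $\Omega _{{\goth g}}$ is a big open subset of the smooth affine variety $\gg g{}$.

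For (ii), the forward direction is immediate. Conversely, if $p\varphi \in \bi g{}$ with $p\in \sgg g{}\setminus \{0\}$, pick a dense subset $D\subset \gg g{}$ on which $(p\varphi )(x,y)\in V'_{x,y}$; then on the dense subset $D\cap \{p\neq 0\}$ one has $\varphi (x,y)=p(x,y)^{-1}(p\varphi )(x,y)\in V'_{x,y}$ because $V'_{x,y}$ is a $\k$-linear subspace of ${\goth g}$, so $\varphi \in \bi g{}$. For (iii), by (i) it suffices to prove the orthogonality for $\varphi =\varepsilon _{i}^{(m)}$; since $(x,y)\mapsto \dv {\varepsilon _{i}^{(m)}(x,y)}{[x,y]}$ is polynomial, continuity reduces the claim to $\Omega _{{\goth g}}$. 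There $\varepsilon _{i}^{(m)}(x,y)\in V'_{x,y}$ decomposes as a sum of vectors $v\in {\goth g}^{ax+by}$ with $(a,b)\in \k^{2}\setminus \{0\}$, so it remains to check $\dv v{[x,y]}=0$ for such $v$. Assuming $b\neq 0$, the relation $a[x,v]+b[y,v]=0$ and invariance of $\dv \cdot \cdot $ give $\dv {[x,y]}v=\dv x{[y,v]}=-(a/b)\dv x{[x,v]}=-(a/b)\dv {[x,x]}v=0$; the case $a\neq 0$ is symmetric.

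The main obstacle will be the generation step in (i): promoting the pointwise decomposition of $\varphi $ on a dense subset of $\Omega _{{\goth g}}$ to globally polynomial coefficients. This rests on viewing the family $\{V_{x,y}\}$ as a trivialized vector subbundle of rank $\b g{}$ of the trivial bundle $\gg g{}\times {\goth g}$ over $\Omega _{{\goth g}}$, so that Cramer's rule applies, and on the codimension-two-complement property of $\Omega _{{\goth g}}$ (\cite[Corollary~10]{CMo}) together with the smoothness of $\gg g{}$, which ensure that regular functions on $\Omega _{{\goth g}}$ extend uniquely to polynomials on $\gg g{}$. This is essentially Bolsinov's freeness result recalled in the introduction.
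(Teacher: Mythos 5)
Your proof is correct, and for the central part (i) it follows the same route as the paper: membership and linear independence of the $\varepsilon _{i}^{(m)}$ via Proposition~\ref{psc1}, propagation of the pointwise inclusion $\varphi (x,y)\in V_{x,y}$ from a dense subset to all of $\Omega _{{\goth g}}$ (your vanishing-of-minors argument is the same in substance as the paper's appeal to the regularity of $(x,y)\mapsto V_{x,y}$ into the Grassmannian), and extension of the Cramer coefficients using normality of $\gg g{}$ and the fact that $\Omega _{{\goth g}}$ is a big open subset. Where you genuinely diverge is in (ii) and (iii). For (ii) the paper deduces the statement as a byproduct of the basis argument (it runs the whole extension machinery on $p\varphi $), whereas you simply divide by $p$ pointwise on the dense set $\{p\neq 0\}$ and use that $V'_{x,y}$ is a linear subspace; your argument is shorter and works directly from the definition of $\bi g{}$. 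For (iii) the paper invokes Proposition~\ref{psc1}(iv) together with the fact that $y\in V_{x,y}$ to get $\dv {[x,\varphi (x,y)]}y=0$, while you decompose $\varphi (x,y)$ into centralizer components $v\in {\goth g}^{ax+by}$ and verify $\dv v{[x,y]}=0$ by a direct invariance computation; this is more elementary and does not need the isotropy statement of \ref{psc1}(iv), at the cost of redoing by hand a computation that \ref{psc1}(iv) packages. Both alternatives are valid.
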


\begin{proof}
(i) and (ii) According to Proposition~\ref{psc1}(v), $\varepsilon _{i}^{(m)}$ is in 
$\bi g{}$ for all $(i,m)$. Moreover, according to Proposition~\ref{psc1}(iii), these 
elements are linearly independent over $\sgg g{}$. Let $\varphi $ be an element of 
$\tk {\k}{\sgg g{}}{\goth g}$ such that $p\varphi $ is in $\bi g{}$ for some $p$
in $\sgg g{}\setminus \{0\}$. Then $\varphi (x,y)$ is in $V_{x,y}$ for all $(x,y)$ in a 
dense open subset of $\Omega _{{\goth g}}$ by Proposition~\ref{psc1}(v). According to 
Proposition~\ref{psc1}(iii), the map
$$\xymatrix{\Omega _{{\goth g}} \ar[rr] && \ec {Gr}g{}{}{\b g{}}}, \qquad
(x,y) \longmapsto V_{x,y} $$
is regular. So $\varphi (x,y)$ is in $V_{x,y}$ for all $(x,y)$ in $\Omega _{{\goth g}}$.

\begin{claim}\label{clsc1}
Let $X$ be an irreducible variety, $V$ a vector space of dimension $m$ and
$\poi {\lambda }1{,\ldots,}{k}{}{}{}$ in $\tk {\k}{\k[X]}V$ such that
$\poi x{}{,\ldots,}{}{\lambda }{1}{k}$ are linearly independent for all $x$ in $X$.
Denote by $M$ the $\k[X]$-submodule of $\tk {\k}{\k[X]}V$ generated by
$\poi {\lambda }1{,\ldots,}{k}{}{}{}$. For $\varphi $ in $\tk {\k}{\k[X]}V$, $\varphi $
is in $M$ if and only if $\varphi (x)$ is in
${\mathrm {span}}_{\k}(\{\poi x{}{,\ldots,}{}{\lambda }{1}{k}\})$ for all $x$ in $X$.
\end{claim}

\begin{proof}{[Proof of Claim~\ref{clsc1}]}
The condition is clearly necessary. Suppose
$$ \varphi (x) \in {\mathrm {span}}_{\k}(\{\poi x{}{,\ldots,}{}{\lambda }{1}{k}\}) \qquad
\forall x \in X.$$
Then
$$\varphi = a_{1}\lambda _{1} + \cdots + a_{k}\lambda _{k}$$
for some $\poi a1{,\ldots,}{k}{}{}{}$ in $\k(X)$. The variety $X$ has a cover by affine
open subsets $Y$ such that for some $\poi v1{,\ldots,}{n-k}{}{}{}$ in $V$,
$\poi v1{,\ldots,}{n-k}{}{}{},\poi x{}{,\ldots,}{}{\lambda }{1}{k}$ is a basis of $V$
for all $x$ in $Y$. Then
$\poi v1{,\ldots,}{n-k}{}{}{},\poi {\lambda }1{,\ldots,}{k}{}{}{}$ is a basis of the
$\k[Y]$-module $\tk {\k}{\k[Y]}V$. As a result, $\poi a1{,\ldots,}{k}{}{}{}$ are in
$\k[Y]$, whence the claim since $X$ is covered by such open subsets $Y$.
\end{proof}

By Claim~\ref{clsc1}, for some regular functions $a_{i,m}, \, (i,m) \in I_{0}$ on
$\Omega _{{\goth g}}$,
$$ \varphi (x,y) = \sum_{(i,m) \in I_{0}} a_{i,m}(x,y)\varepsilon _{i}^{(m)}(x,y) $$
for all $(x,y)$ in $\Omega _{{\goth g}}$. Since $\Omega _{{\goth g}}$ is a big open subset
of $\gg g{}$ and $\gg g{}$ is normal, the $a_{i,m}$'s have a regular extension to 
$\gg g{}$. Hence $\varphi $ is a linear combination of the 
$\varepsilon _{i}^{(m)}$'s with coefficients in $\sgg g{}$. As a result, the sequence 
$\varepsilon _{i}^{(m)}, \, (i,m) \in I_{0}$ is a basis of the module
$\bi g{}$ and $\bi g{}$ is the subset of elements $\varphi $ of 
$\tk {\k}{\sgg g{}}{\goth g}$ such that $p\varphi \in \bi g{}$ for some $p$ in 
$\sgg g{}\setminus \{0\}$. 

(iii) Let $\varphi $ be in $\bi g{}$. According to (i) and Proposition~\ref{psc1}(iv),
for all $(x,y)$ in $\Omega _{{\goth g}}$, $[x,\varphi (x,y)]$ is orthogonal 
to $V_{x,y}$. Then, since $y$ is in $V_{x,y}$, $[x,\varphi (x,y)]$ is orthogonal to 
$y$ and $\dv {\varphi (x,y)}{[x,y]}=0$, whence the assertion.
\end{proof}

\subsection{Orthogonal complement to $\bi g{}$.} \label{sc2}
Denote again by $\dv ..$ the canonical $\sgg g{}$-bilinear extension of $\dv ..$ to the
module $\tk {\k}{\sgg g{}}{\goth g}$. Set:
$$ I_{*,0} := I_{0} \setminus \{1,\ldots,\rg\}\times \{0\}.$$

\begin{prop}\label{psc2}
Let $\bk g{}$ be the orthogonal complement to $\bi g{}$ in $\tk {\k}{\sgg g{}}{\goth g}$.

{\rm (i)} For $\varphi $ in $\tk {\k}{\sgg g{}}{\goth g}$, $\varphi $ is in
$\bk g{}$ if and only if $\varphi (x,y)$ is in $[x,V_{x,y}]$ for all $(x,y)$ in 
a nonempty open subset of $\gg g{}$.

{\rm (ii)} The module $\bk g{}$ is free of rank $\b g{}-\rg$. Furthermore, the sequence
of maps
$$ (x,y) \mapsto [x,\varepsilon _{i}^{(m)}(x,y)], \, (i,m) \in I_{*,0}$$
is a basis of $\bk g{}$.

{\rm (iii)} The orthogonal complement to $\bk g{}$ in $\tk {\k}{\sgg g{}}{\goth g}$ is
equal to $\bi g{}$.
\end{prop}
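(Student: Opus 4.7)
The approach rests squarely on Proposition~\ref{psc1}(iv), which identifies $[x,V_{x,y}]$ as the orthogonal complement to $V_{x,y}$ in ${\goth g}$ at each $(x,y)\in \Omega _{{\goth g}}$, together with the generation of $\bi g{}$ by the $\varepsilon _{i}^{(m)}$, $(i,m)\in I_{0}$, given by Theorem~\ref{tsc1}(i). All three parts will be reduced to this pointwise orthogonality plus the fact that $\Omega _{{\goth g}}$ is a big open subset of the normal variety $\gg g{}$.

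For (i), if $\varphi \in \bk g{}$ I would evaluate the polynomial identities $\dv \varphi {\varepsilon _{i}^{(m)}}=0$ at $(x,y)$ to conclude $\varphi (x,y)\perp V_{x,y}$ for every $(x,y)$; Proposition~\ref{psc1}(iv) then forces $\varphi (x,y)\in [x,V_{x,y}]$ on the nonempty open subset $\Omega _{{\goth g}}$. Conversely, if $\varphi (x,y)\in [x,V_{x,y}]$ on some nonempty open subset, then $\varphi (x,y)\perp V_{x,y}$ there by the same proposition, hence the polynomials $\dv \varphi {\varepsilon _{i}^{(m)}}$ vanish on this open set and therefore identically.

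For (ii), set $\psi _{i,m}(x,y):=[x,\varepsilon _{i}^{(m)}(x,y)]$, which lies in $\bk g{}$ by (i); note that $\psi _{i,0}=0$ because $\varepsilon _{i}^{(0)}(x,y)=\varepsilon _{i}(x)\in {\goth g}^{x}$, and $\vert I_{*,0}\vert = \b g{}-\rg$ matches the expected rank. The pointwise fact I would establish is that on $\Omega _{{\goth g}}$ the map $[x,\cdot ]\colon V_{x,y}\to {\goth g}$ has kernel ${\mathrm {span}}(\varepsilon _{1}^{(0)}(x,y),\ldots,\varepsilon _{\rg}^{(0)}(x,y))={\goth g}^{x}$ and image $[x,V_{x,y}]$ of dimension $\b g{}-\rg$, by a dimension count from Proposition~\ref{psc1}(iii)-(iv). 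Hence $(\psi _{i,m}(x,y))_{(i,m)\in I_{*,0}}$ is a basis of $[x,V_{x,y}]$ at every point of $\Omega _{{\goth g}}$: linear independence over $\sgg g{}$ follows by evaluating a hypothetical relation at a generic point, and given $\varphi \in \bk g{}$ I would decompose it uniquely in this basis on $\Omega _{{\goth g}}$ with regular coefficients, then extend the coefficients to $\gg g{}$ by normality, using that $\Omega _{{\goth g}}$ is a big open subset (the same argument as for Theorem~\ref{tsc1}(i)).

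For (iii), the inclusion $\bi g{}\subset (\bk g{})^{\perp }$ is tautological from the definition of $\bk g{}$. For the reverse, if $\varphi $ is orthogonal to every $\psi _{i,m}$ of (ii), then $\varphi (x,y)\perp [x,V_{x,y}]$ on $\Omega _{{\goth g}}$, so $\varphi (x,y)\in V_{x,y}=V'_{x,y}$ there by Proposition~\ref{psc1}(iv)-(v), and hence $\varphi \in \bi g{}$ by the very definition of $\bi g{}$ as a module of maps valued in the $V'_{x,y}$ on a dense subset. The main technical point in the whole plan is the pointwise basis argument in (ii); the rest is a direct application of Proposition~\ref{psc1} together with the normality and big open subset machinery already exploited in the proof of Theorem~\ref{tsc1}(i).
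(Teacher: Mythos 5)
Your proposal is correct and follows essentially the same route as the paper: both parts rest on Proposition~\ref{psc1}(iv) for the pointwise identification of $[x,V_{x,y}]$ as the orthogonal complement to $V_{x,y}$, the basis $\varepsilon _{i}^{(m)}$, $(i,m)\in I_{0}$, from Theorem~\ref{tsc1}(i), and the extension of regular coefficients from the big open subset $\Omega _{{\goth g}}$ by normality of $\gg g{}$. Your explicit dimension count for the kernel of $[x,\cdot ]$ on $V_{x,y}$ in (ii) just spells out what the paper leaves implicit.
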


\begin{proof}
(i) Let $\varphi $ be in $\tk {\k}{\sgg g{}}{\goth g}$. If $\varphi $ is in
$\bk g{}$, then $\varphi (x,y)$ is orthogonal to $V_{x,y}$ for all $(x,y)$ in 
$\Omega _{{\goth g}}$. Then, according to Proposition~\ref{psc1}(iv), $\varphi (x,y)$
is in $[x,V_{x,y}]$ for all $(x,y)$ in $\Omega _{{\goth g}}$. Conversely,  
suppose that $\varphi (x,y)$ is in $[x,V_{x,y}]$ for all $(x,y)$ in a nonempty open 
subset $O$ of $\gg g{}$. By Proposition~\ref{psc1}(iv) again, for all $(x,y)$ in 
$O\cap \Omega _{{\goth g}}$, $\varphi (x,y)$ is orthogonal to  
$\varepsilon _{i}^{(m)}(x,y)$ for all $(i,m)$ in $I_{0}$, whence the assertion by 
Theorem~\ref{tsc1}(i).

(ii) Let $\bkk $ be the submodule of $\tk {\k}{\sgg g{}}{\goth g}$ generated by the maps
$$ (x,y) \mapsto [x,\varepsilon _{i}^{(m)}(x,y)], \, (i,m) \in I_{*,0} .$$
By Proposition~\ref{psc1}, (iii) and (iv), for all $(x,y)$ in $\Omega _{{\goth g}}$,
the space $[x,V_{x,y}]$ has dimension $\b g{}-\rg$. Hence the maps
$$ (x,y) \mapsto [x,\varepsilon _{i}^{(m)}(x,y)], \, (i,m) \in I_{*,0}$$ are linearly
independent over $\sgg g{}$ since $\vert I_{*,0} \vert=\b g{}-\rg$. As a result,
$\bkk$ is a free module of rank $\b g{}-\rg$. According to (i), $\bkk $ is a submodule of
$\bk g{}$. Let $\varphi $ be in $\bk g{}$. For all $(x,y)$ in $\Omega _{{\goth g}}$,
$\varphi (x,y)$ is a linear combination of
$[x,\varepsilon _{i}^{(m)}(x,y)], \, (i,m)\in I_{*,0}$ by Proposition~\ref{psc1}(iv).
Then, by Claim~\ref{clsc1}, for some regular functions $a_{i,m}, \, (i,m)\in I_{*,0}$ on
$\Omega _{{\goth g}}$,
$$ \varphi (x,y) = \sum_{(i,m)\in I_{*,0}} a_{i,m}(x,y)[x,\varepsilon _{i}^{(m)}(x,y)]$$
for all $(x,y)$ in $\Omega _{{\goth g}}$. Since $\Omega _{{\goth g}}$ is a big open
subset of $\gg g{}$ and $\gg g{}$ is normal, the $a_{i,m}$'s have a regular extension to
$\gg g{}$. As a result, $\varphi $ is in $\bkk$, whence the assertion.

(iii) Let $\varphi $ be in the orthogonal complement to $\bk g{}$ in 
$\tk {\k}{\sgg g{}}{\goth g}$. According to (ii), for all $(x,y)$ in 
$\Omega _{{\goth g}}$, $\varphi (x,y)$ is orthogonal to $[x,V_{x,y}]$. Hence by 
Proposition~\ref{psc1}(iv), $\varphi (x,y)$ is in $V_{x,y}$ for all $(x,y)$ in 
$\Omega _{{\goth g}}$. So, by Proposition~\ref{psc1}(v), $\varphi $ is in $\bi g{}$,
whence the assertion.
\end{proof}

\begin{rema}\label{rsc2}
For ${\goth g}$ simple, $[\varepsilon _{1},\varepsilon _{i}^{(m)}], \, (i,m) \in I_{*,0}$
is a basis of $\bk g{}$ since $\varepsilon _{1}(x)\wedge x = 0$ for all $x$ in
${\goth g}$.
\end{rema}

\subsection{Restriction to a parabolic subalgebra.} \label{sc3}
For ${\goth a}$ subalgebra of ${\goth g}$, set 
${\goth a}_{\r}:= {\goth a}\cap {\goth g}_{\r}$.

\begin{lemma}\label{lsc3}
Let ${\goth a}$ be an algebraic subalgebra of ${\goth g}$.

{\rm (i)} Suppose that ${\goth a}$ contains ${\goth g}^{x}$ for all $x$ in a dense 
open subset of ${\goth a}$ and suppose that ${\goth a}_{\r}$ is not empty. 
Then $V_{x,y}$ is contained in ${\goth a}$ for all $(x,y)$ in $\gg a{}$.

{\rm (ii)} Suppose that ${\goth a}$ contains a Cartan subalgebra of ${\goth g}$. Then 
$V_{x,y}$ is contained in ${\goth a}$ for all $(x,y)$ in $\gg a{}$.
\end{lemma}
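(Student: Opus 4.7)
My plan is to prove (i) by a direct closedness-plus-density argument and then reduce (ii) to (i) by verifying its hypotheses.

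For (i), the key observation is that the locus
\[
Z := \{(x,y) \in \mathfrak{a} \times \mathfrak{a} : V_{x,y} \subset \mathfrak{a}\}
\]
is Zariski-closed in $\mathfrak{a} \times \mathfrak{a}$: indeed, since $\mathfrak{a}$ is a linear subspace of $\mathfrak{g}$ and each $\varepsilon_i^{(m)}$ is regular, each condition $\varepsilon_i^{(m)}(x,y) \in \mathfrak{a}$ for $(i,m) \in I_0$ is closed. It then suffices to exhibit a dense subset of $\mathfrak{a} \times \mathfrak{a}$ inside $Z$. Let $U$ be the given dense open subset of $\mathfrak{a}$ on which $\mathfrak{g}^{x} \subset \mathfrak{a}$. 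Since $\mathfrak{a}_{\r}$ is a nonempty open subset of the irreducible variety $\mathfrak{a}$, it is dense, and thus $U \cap \mathfrak{a}_{\r}$ is a dense open subset of $\mathfrak{a}$. For $(x, y) \in (U \cap \mathfrak{a}_{\r}) \times \mathfrak{a}$, the set $O := \{(a,b) \in \mathbb{k}^2 : ax+by \in U \cap \mathfrak{a}_{\r}\}$ is nonempty (it contains $(1,0)$) and open, as the preimage of an open set under a continuous map. By Proposition~\ref{psc1}(i), $V_{x,y} = \sum_{(a,b) \in O} \mathfrak{g}^{ax+by}$, and every summand is contained in $\mathfrak{a}$ by choice of $U$. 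Hence $(U \cap \mathfrak{a}_{\r}) \times \mathfrak{a} \subset Z$, and closedness of $Z$ forces $Z = \mathfrak{a} \times \mathfrak{a}$.

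For (ii), I would reduce to (i) by producing a dense open subset of $\mathfrak{a}$ on which $\mathfrak{g}^{x} \subset \mathfrak{a}$. Write $\mathfrak{h}'$ for the Cartan contained in $\mathfrak{a}$ and let $\mathfrak{h}'_{\r} := \mathfrak{h}' \cap \mathfrak{g}_{\r}$. Then $\mathfrak{h}'_{\r} \subset \mathfrak{a}_{\r}$, so $\mathfrak{a}_{\r}$ is automatically nonempty. Let $A \subset G$ be the connected algebraic subgroup with Lie algebra $\mathfrak{a}$ (which exists by algebraicity of $\mathfrak{a}$); then $\mathrm{Ad}(A)$ preserves $\mathfrak{a}$. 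I would consider the orbit map
\[
\mu : A \times \mathfrak{h}'_{\r} \longrightarrow \mathfrak{a}, \qquad (g,h) \longmapsto \mathrm{Ad}(g)(h),
\]
whose differential at $(e, h_0)$ with $h_0 \in \mathfrak{h}'_{\r}$ is $(\xi, h) \mapsto h - \mathrm{ad}(h_0)(\xi)$. Since $\mathfrak{a}$ is $\mathrm{ad}(\mathfrak{h}')$-stable and contains $\mathfrak{h}'$, it decomposes as $\mathfrak{a} = \mathfrak{h}' \oplus \bigoplus_{\alpha \in \mathcal{R}_{\mathfrak{a}}} \mathfrak{g}^{\alpha}$ for some subset $\mathcal{R}_{\mathfrak{a}}$ of roots; as $\alpha(h_0) \neq 0$ for every root $\alpha$, $\mathrm{ad}(h_0)$ is bijective on $\bigoplus_{\alpha \in \mathcal{R}_{\mathfrak{a}}} \mathfrak{g}^{\alpha}$, and the differential surjects onto $\mathfrak{a}$. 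Therefore $\mu$ is dominant and, being a morphism of algebraic varieties, its image contains a dense open subset $W \subset \mathfrak{a}$. For $x = \mathrm{Ad}(g)(h)$ in $W$ we have $\mathfrak{g}^{x} = \mathrm{Ad}(g)(\mathfrak{h}') \subset \mathrm{Ad}(g)(\mathfrak{a}) = \mathfrak{a}$. This verifies the hypotheses of (i), whence (ii).

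The only genuinely delicate point is the dominance of $\mu$ in part (ii): this is where the assumption that $\mathfrak{a}$ contains a Cartan is essential, via the $\mathrm{ad}(\mathfrak{h}')$-weight decomposition of $\mathfrak{a}$ and the invertibility of $\mathrm{ad}(h_0)$ on nonzero weight spaces. The rest of the argument is a routine application of closedness and density combined with Proposition~\ref{psc1}(i).
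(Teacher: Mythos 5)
Your proof is correct and follows essentially the same strategy as the paper's: part (i) is proved by exhibiting a dense subset of $\mathfrak{a}\times\mathfrak{a}$ on which $V_{x,y}\subset\mathfrak{a}$ (using regularity of $x$ to express $V_{x,y}$ as a sum of centralizers via Proposition~\ref{psc1}(i)) and concluding by Zariski closedness, and part (ii) is reduced to (i) by producing a dense open subset of $\mathfrak{a}$ on which $\mathfrak{g}^{x}\subset\mathfrak{a}$. The only, harmless, divergence is in (ii): the paper gets the dense locus from the conjugacy of semisimple elements of $\mathfrak{a}$ under its adjoint group applied to the dense open set $\mathfrak{g}_{\mathrm{reg,ss}}\cap\mathfrak{a}$, while you establish dominance of the orbit map $A\times\mathfrak{h}'_{\mathrm{reg}}\to\mathfrak{a}$ by a differential computation; both are standard and valid.
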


\begin{proof}
(i)  By hypothesis, for all $x$ in a dense open subset of ${\goth a}$, $x$ is a 
regular element and ${\goth g}^{x}$ is contained in ${\goth a}$. So by 
\cite[Theorem 9]{Ko}, $\poi x{}{,\ldots,}{}{\varepsilon }{1}{\rg}$ 
are in ${\goth a}$ for all $x$ in a dense open subset of ${\goth a}$. Then, so is it
for all $x$ in ${\goth a}$ by continuity. As a result, for all $(x,y)$ in $\gg a{}$, 
$\varepsilon _{i}^{(m)}(x,y), \, (i,m) \in I_{0}$ is in ${\goth a}$, whence the assertion.

(ii) Let ${\goth c}$ be a Cartan subalgebra of ${\goth g}$ contained in ${\goth a}$. 
Since ${\goth a}$ is an algebraic subalgebra of ${\goth g}$, all semisimple element of 
${\goth a}$ is conjugate under the adjoint group of ${\goth a}$ to an element of 
${\goth c}$. Hence for all $x$ in ${\goth g}_{\rs}\cap {\goth a}$, ${\goth g}^{x}$ is 
contained in ${\goth a}$, whence the assertion by (i) since 
${\goth g}_{\rs}\cap {\goth a}$ is a dense open subset of ${\goth a}$.
\end{proof}

Let ${\goth p}$ be a parabolic subalgebra of ${\goth g}$ containing ${\goth b}$.  
Denote by ${\goth l}$ its reductive factor containing ${\goth h}$, ${\goth p}_{\u}$ 
its nilpotent radical and $\varpi $ the canonical projection 
$\xymatrix{{\goth p} \ar[r] & {\goth l}}$.

\begin{coro}\label{csc3}
For all $(x,y)$ in $\gg p{}$, $V_{x,y}$ is contained in ${\goth p}$. In particular, 
for all $(x,y)$ in a dense open subset of $\gg b{}$, $V_{x,y}={\goth b}$.
\end{coro}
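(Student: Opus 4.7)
\smallskip
My plan is to reduce both assertions to results already established. For the first assertion, I would simply observe that a parabolic subalgebra $\mathfrak{p}$ of $\mathfrak{g}$ is an algebraic subalgebra containing the Cartan subalgebra $\mathfrak{h}$, so Lemma~\ref{lsc3}(ii) applies directly and gives $V_{x,y}\subset \mathfrak{p}$ for all $(x,y)\in \mathfrak{p}\times\mathfrak{p}$. No additional work is needed here.

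For the second assertion, the inclusion $V_{x,y}\subset\mathfrak{b}$ for every $(x,y)\in\mathfrak{b}\times\mathfrak{b}$ follows from the first assertion applied to the Borel subalgebra $\mathfrak{b}$ (which is itself parabolic). To upgrade this to equality on a dense open subset, I would use the dimension bound of Proposition~\ref{psc1}(iii): $\dim V_{x,y}\leq \b g{}$ with equality if and only if $(x,y)\in \Omega_{\mathfrak{g}}$. Since $\dim\mathfrak{b}=\b g{}$, it suffices to prove that $\Omega_{\mathfrak{g}}\cap(\mathfrak{b}\times\mathfrak{b})$ is a dense open subset of $\mathfrak{b}\times\mathfrak{b}$; on that intersection, the inclusion $V_{x,y}\subset\mathfrak{b}$ with matching dimensions forces $V_{x,y}=\mathfrak{b}$.

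The only point needing a short verification is the nonemptyness of $\Omega_{\mathfrak{g}}\cap(\mathfrak{b}\times\mathfrak{b})$ (its openness and density then follow since $\Omega_{\mathfrak{g}}$ is open in $\mathfrak{g}\times\mathfrak{g}$). My plan is to exhibit an explicit pair: take $x_{0}$ in the regular locus $\mathfrak{h}\cap\mathfrak{g}_{\r}$ (nonempty since it is the complement of finitely many root hyperplanes) and $y_{0}=e$, the principal nilpotent. Then $x_{0}$ and $e$ are linearly independent, so $\dim P_{x_{0},e}=2$. For $(a,b)\in\k^{2}\setminus\{0\}$: the cases $b=0$ and $a=0$ give regular semisimple and regular nilpotent elements respectively; for $ab\neq 0$, $ax_{0}+be=a(x_{0}+(b/a)e)$ lies in $x_{0}+\mathfrak{u}$. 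Because $x_{0}$ is regular semisimple the map $\ad x_{0}\colon\mathfrak{u}\to\mathfrak{u}$ is invertible, so $x_{0}+\mathfrak{u}$ is a single $U$-orbit of $x_{0}$ and in particular consists of regular elements. Hence $(x_{0},e)\in \Omega_{\mathfrak{g}}$, completing the argument.

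I do not expect any serious obstacle: the only mild subtlety is the density argument, and it is handled by the explicit pair above. Everything else is a direct quotation of Lemma~\ref{lsc3}(ii) and Proposition~\ref{psc1}(iii).
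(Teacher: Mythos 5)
Your proof is correct and follows essentially the same route as the paper's: Lemma~\ref{lsc3}(ii) gives the inclusion $V_{x,y}\subset{\goth p}$, and Proposition~\ref{psc1}(iii) combined with the density of $\Omega_{{\goth g}}\cap({\goth b}\times{\goth b})$ yields the equality $V_{x,y}={\goth b}$ on a dense open set. The only difference is cosmetic: the paper simply exhibits $(h,e)$ as a point of $\Omega_{{\goth g}}\cap({\goth b}\times{\goth b})$ without further comment, whereas you verify the analogous membership for $(x_{0},e)$ with $x_{0}$ regular semisimple in ${\goth h}$ — a correct and slightly more detailed version of the same step.
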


\begin{proof}
Since ${\goth h}$ is contained in ${\goth p}$, for all $(x,y)$ in $\gg p{}$, $V_{x,y}$
is contained in ${\goth p}$ by Lemma~\ref{lsc3}(ii). Since $(h,e)$ is in  
$\Omega _{{\goth g}}$, $\Omega _{{\goth g}}\cap \gg b{}$ is a dense open subset of 
$\gg b{}$, whence the corollary by Proposition~\ref{psc1}(iii). 
\end{proof}

Let $\bi l{}$ be the characteristic module of ${\goth l}$, ${\goth l}_{{\goth l,}\r}$ the
subset of regular elements of ${\goth l}$ and $\Omega _{{\goth l}}$ the subset of
elements $(x,y)$ of $\gg l{}$ such that 
$P_{x,y}\setminus \{0\}$ is contained in ${\goth l}_{{\goth l},\r}$. For $(x,y)$ in
$\gg l{}$, the image of $\bi l{}$ by the evaluation map at $(x,y)$ is denoted by
$V^{{\goth l}}_{x,y}$. Set:
$$ R_{{\goth p}} := \varpi ^{-1}({\goth l}_{{\goth l},\r}) \cap {\goth g}_{\r} .$$

\begin{lemma}\label{l2sc3}
Let $R'_{{\goth p}}$ be the subset of elements $x$ of $R_{{\goth p}}$ such 
that ${\goth g}^{x}\cap {\goth p}_{\u}=\{0\}$.

{\rm (i)} The sets ${\goth g}_{\r}\cap {\goth p}$ and $R_{{\goth p}}$ are big open subsets
of ${\goth p}$.

{\rm (ii)} For all $x$ in $R_{{\goth p}}$, $\varpi ({\goth g}^{x})={\goth l}^{\varpi (x)}$
if and only if ${\goth g}^{x}\cap {\goth p}_{\u}=\{0\}$.

{\rm (iii)} The set $R'_{{\goth p}}$ is a dense open subset of ${\goth p}$.

{\rm (iv)} For all $(x,y)$ in ${\goth p}\times {\goth p}$, $V_{x,y}$ is contained in 
$V^{{\goth l}}_{\varpi (x),\varpi (y)}+{\goth p}_{\u}$. 

{\rm (v)} For all $(x,y)$ in $R'_{{\goth p}}\times {\goth p}$, 
$\varpi (V_{x,y})=V^{{\goth l}}_{\varpi (x),\varpi (y)}$.
\end{lemma}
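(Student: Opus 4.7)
The plan is to handle the five parts in order, with (ii) feeding into (iii) and (v), and (iv) serving as the technical pillar for (v). Parts (i)--(iii) are essentially general-position arguments, while (iv) and (v) connect $V_{x,y}$ with $V^{\goth l}_{\varpi(x),\varpi(y)}$ through the projection $\varpi$.

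For (i), openness of both sets in ${\goth p}$ is immediate from openness of ${\goth g}_\r$ and ${\goth l}_\r$ together with continuity of $\varpi$. Bigness of $\varpi^{-1}({\goth l}_\r)$ follows from Veldkamp's codimension-$3$ theorem applied to ${\goth l}$, since $\varpi$ is an affine fibration of constant fiber dimension. Bigness of ${\goth g}_\r\cap {\goth p}$ is the most delicate point: one estimates the dimension of each irreducible component of ${\goth p}\cap ({\goth g}\setminus {\goth g}_\r)$, using the $G$-orbit stratification of the non-regular locus. Part (ii) is linear algebra: $\varpi$ restricts to a map ${\goth g}^x\to {\goth l}^{\varpi(x)}$ with kernel ${\goth g}^x\cap {\goth p}_\u$, and both source and target have dimension $\rg$ (since ${\goth h}\subset {\goth l}$ forces the rank of ${\goth l}$ to equal $\rg$, while $x\in {\goth g}_\r$ and $\varpi(x)\in {\goth l}_\r$); hence surjectivity and injectivity coincide. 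For (iii), $R'_{\goth p}$ is open because $x\mapsto {\goth g}^x$ is regular on ${\goth g}_\r$ into the Grassmannian and transversality to a fixed subspace is an open condition; nonemptiness follows from any regular semisimple $x\in {\goth h}$ (for which ${\goth g}^x={\goth h}\subset {\goth l}$); density follows from irreducibility of ${\goth p}$.

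The decisive step is (iv): the identity $\varpi(\varepsilon_i(x))=\eta_i(\varpi(x))$ for $x\in {\goth p}$, where $\eta_i$ denotes the gradient in ${\goth l}$ (relative to the restriction of $\dv ..$) of $p_i|_{\goth l}$. Its proof rests on two facts. First, $p_i$ is constant on the fibers of $\varpi$: for $x_\l$ regular semisimple in ${\goth g}$ the coset $x_\l+{\goth p}_\u$ is a single orbit of the unipotent radical of $P$, by invertibility of $\ad x_\l$ on ${\goth p}_\u$, so $p_i(x_\l+n)=p_i(x_\l)$; by density of such $x_\l$ in ${\goth l}$ and polynomiality, $p_i(x)=p_i(\varpi(x))$ for every $x\in {\goth p}$. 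Second, ${\goth p}_\u$ is orthogonal to ${\goth l}$ under $\dv ..$, so $\dv{\varpi(\varepsilon_i(x))}{y}=\dv{\varepsilon_i(x)}{y}$ for $y\in {\goth l}$. Combining these and differentiating $p_i(x+ty)=p_i|_{\goth l}(\varpi(x)+ty)$ at $t=0$ yields the identity, which polarizes in $y$ to $\varpi(\varepsilon_i^{(m)}(x,y))=\eta_i^{(m)}(\varpi(x),\varpi(y))$. Since $\eta_i\in \bi l{}$ (the ${\goth l}$-gradient of an ${\goth l}$-invariant lies pointwise in the corresponding ${\goth l}$-centralizer, hence in $V'^{\goth l}_{x,y}$ generically), each $\eta_i^{(m)}(\varpi(x),\varpi(y))$ lies in $V^{\goth l}_{\varpi(x),\varpi(y)}$ by definition. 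Combined with $V_{x,y}\subset {\goth p}$ from Lemma~\ref{lsc3}(ii), this proves (iv).

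For (v), the inclusion $\varpi(V_{x,y})\subset V^{\goth l}_{\varpi(x),\varpi(y)}$ is immediate from (iv). For the reverse, use the openness of $R'_{\goth p}$: since $x\in R'_{\goth p}$, the set $O'$ of $(a,b)\in \k^2$ with $ax+by\in R'_{\goth p}$ is an open neighborhood of $(1,0)$; and since $\varpi(x)\in {\goth l}_\r$, Proposition~\ref{psc1}(i) applied to ${\goth l}$ writes $V^{\goth l}_{\varpi(x),\varpi(y)}=\sum_{(a,b)\in O}{\goth l}^{a\varpi(x)+b\varpi(y)}$ for an open $O\subset \k^2$. For $(a,b)\in O\cap O'$, part (ii) applied to $ax+by\in R'_{\goth p}$ gives ${\goth l}^{a\varpi(x)+b\varpi(y)}=\varpi({\goth g}^{ax+by})$, while Proposition~\ref{psc1}(i) for ${\goth g}$ gives ${\goth g}^{ax+by}\subset V_{x,y}$; summing over the dense open $O\cap O'$ of $O$ yields $V^{\goth l}_{\varpi(x),\varpi(y)}\subset \varpi(V_{x,y})$. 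The bigness estimate in (i) remains the most delicate step of the proof, since Veldkamp's codimension-$3$ bound in ${\goth g}$ does not directly give codimension $2$ in the proper subspace ${\goth p}$.
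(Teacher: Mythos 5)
Parts (ii)--(v) of your argument are sound, and your route to (iv) is genuinely different from the paper's: you prove the pointwise identity $p_{i}(x)=p_{i}(\varpi (x))$ on ${\goth p}$ (via the unipotent-radical orbit argument on $x_{\l}+{\goth p}_{\u}$ for $x_{\l}$ regular semisimple in ${\goth l}$) and deduce $\varpi \rond \varepsilon _{i}^{(m)}(x,y)=\eta _{i}^{(m)}(\varpi (x),\varpi (y))$ directly, whereas the paper expresses $\varpi \rond \varepsilon _{i}$ in a basis of Dixmier's module $L_{{\goth l}}$ with coefficients defined a priori only on $R_{{\goth p}}$ and extended to ${\goth p}$ by normality --- an extension that itself requires the bigness statement of (i). Your version of (iv) is cleaner and does not depend on (i), which is a real advantage. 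Your proofs of (ii), (iii) and (v) coincide with the paper's.

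However, part (i) has a genuine gap. You assert that ${\goth g}_{\r}\cap {\goth p}$ is a big open subset of ${\goth p}$ by "estimating the dimension of each irreducible component of ${\goth p}\cap ({\goth g}\setminus {\goth g}_{\r})$ using the $G$-orbit stratification," and you then concede at the end that this "remains the most delicate step." That is precisely the step that needs a proof: Veldkamp's codimension-$3$ bound in ${\goth g}$ gives nothing for the intersection with the proper subspace ${\goth p}$, and the orbit stratification of the irregular locus does not by itself control the dimension of the intersections of orbit closures with ${\goth p}$. The paper closes this by contradiction: if $\Sigma $ were an irreducible component of ${\goth p}\setminus {\goth g}_{\r}$ of codimension $1$, then, $\Sigma $ being a closed cone invariant under $B$ and $\k[{\goth p}]$ factorial, $\Sigma $ is the nullvariety of a homogeneous $B$-semiinvariant $p$; hence $\Sigma \cap {\goth b}$ is an equidimensional hypersurface of ${\goth b}$ (as ${\goth b}\cap {\goth g}_{\r}\neq \emptyset$), and Lemma~\ref{lint}, applied to its irreducible components (which are cones invariant under $t\mapsto \rho (t)$), forces $\Sigma '=\Sigma '\cap {\goth h}+{\goth u}$; since $0\in \Sigma '\cap {\goth h}$ this puts ${\goth u}$ inside $\Sigma $, contradicting ${\goth u}\cap {\goth g}_{\r}\neq \emptyset$ (a principal nilpotent element). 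Without this (or an equivalent) argument your proof of (i) is incomplete; note that the rest of your lemma survives, since your (iii) only uses openness of $R_{{\goth p}}$, not its bigness.
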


\begin{proof}
(i) According to \cite{Ve}, ${\goth l}_{{\goth l},\r}$ is a big open subset of
${\goth l}$. Hence $\varpi ^{-1}({\goth l}_{{\goth l},\r})$ is a big open subset of
${\goth p}$. As a result, it remains to prove that ${\goth g}_{\r}\cap {\goth p}$ is a
big open subset of ${\goth p}$. Suppose that ${\goth p}\setminus {\goth g}_{\r}$ has an
irreducible component $\Sigma $ of codimension $1$ in ${\goth p}$. A contradiction is
expected. As ${\goth g}_{\r}\cap {\goth p}$ is a cone invariant under $B$, $\Sigma $ is a
closed cone invariant under $B$. Since $\k[{\goth p}]$ is a factorial ring, for some $p$
in $\k[{\goth p}]$, homogeneous and relatively invariant under $B$, the
nullvariety of $p$ in ${\goth p}$ is equal to $\Sigma $. As a result, 
$\Sigma \cap {\goth b}$ is an equidimensional closed cone of codimension $1$ of 
${\goth b}$ since ${\goth b}\cap {\goth g}_{\r}$ is not empty. So, by Lemma~\ref{lint}, 
$\Sigma = \Sigma \cap {\goth h}+{\goth u}$ and ${\goth u}$ is contained in $\Sigma $
since $0$ is in $\Sigma \cap {\goth h}$ and ${\goth h}$ is not contained in $\Sigma $,
whence a contradiction since ${\goth g}_{\r}\cap {\goth u}$ is not empty. 

(ii) Let $x$ be in $R_{{\goth p}}$. By Lemma~\ref{lsc3}(ii), ${\goth g}^{x}$ is contained
in ${\goth p}$. As $\varpi $ is a surjective morphism of Lie algebra, 
$\varpi ({\goth g}^{x})$ is contained in ${\goth l}^{\varpi (x)}$. Furthermore, 
$\dim \varpi ({\goth g}^{x})=\rg$ if and only if 
${\goth g}^{x}\cap {\goth p}_{\u}=\{0\}$, whence the assertion since ${\goth l}$ has rank
$\rg$.

(iii) For $x$ regular semisimple in a Cartan subalgebra, contained in ${\goth l}$, 
$x$ is in $R'_{{\goth p}}$ since the elements of ${\goth g}^{x}$ are semisimple. So
$R'_{{\goth p}}$ is not empty. The map $x\mapsto {\goth g}^{x}$ from $R_{{\goth p}}$ to 
$\ec {Gr}g{}{}{\rg}$ is regular. So $R'_{{\goth p}}$ is an open subset of $R_{{\goth p}}$
and ${\goth p}$ by (i).

(iv) Let $L_{{\goth l}}$ be the submodule of elements $\varphi $ of 
$\tk {\k}{\e Sl}{{\goth l}}$ such that $[\varphi (x),x]=0$ for all $x$ in ${\goth l}$.
Then $L_{{\goth l}}$ is a free module of rank $\rg$ according to \cite{Di}. Denote by
$\poi {\varphi }1{,\ldots,}{\rg}{}{}{}$ a basis of $L_{{\goth l}}$. For $x$ in 
$R_{{\goth p}}$ and for $i=1,\ldots,\rg$, $\varpi \rond \varepsilon _{i}(x)$ is in 
${\goth l}^{\varpi (x)}$. So there exists a unique element 
$(\poi x{}{,\ldots,}{}{a}{i,1}{i,\rg})$ of $\k ^{\rg}$ such that
$$ \varpi \rond \varepsilon _{i}(x) = 
a_{i,1}(x)\varphi _{1}\rond \varpi (x)+\cdots + 
a_{i,\rg}(x)\varphi _{\rg}\rond \varpi (x) .$$ 
By Claim~\ref{clsc1}, the functions $\poi a{i,1}{,\ldots,}{i,\rg}{}{}{}$ so defined on
$R_{{\goth p}}$ are regular. Hence they have a regular extension to ${\goth p}$ since
${\goth p}$ is normal and $R_{{\goth p}}$ is a big open subset of ${\goth p}$ by (i). As
a result, for all $(x,y)$ in $\gg p{}$ and for all $(a,b)$ in $\k ^{2}$, 
$\varpi \rond \varepsilon _{i}(ax+by)$ is a linear combination of the elements
$\poi {\varpi (ax+by)}{}{,\ldots,}{}{\varphi }{1}{\rg}$. Hence $\varpi (V_{x,y})$ 
is contained in $V_{\varpi (x),\varpi (y)}^{{\goth l}}$ for all $(x,y)$ in $\gg p{}$, 
whence the assertion. 

(v) Let $(x,y)$ be in $R'_{{\goth p}}\times {\goth p}$. By (iii), for all $z$ in a 
dense open subset of $P_{x,y}$, $z$ is in $R'_{{\goth p}}$. So by (ii), 
${\goth l}^{\varpi (z)}$ is contained in $\varpi (V_{x,y})$ for all $z$ in a dense open 
subset of $P_{x,y}$. As a result, according to Proposition~\ref{psc1}(i), 
$V^{{\goth l}}_{\varpi (x),\varpi (y)}$ is contained in $\varpi (V_{x,y})$, whence the 
assertion by (iv).
\end{proof}

\begin{coro}\label{c2sc3}
For all $(x,y)$ in $\Omega _{{\goth g}}\cap \gg p{}$,  
$V_{x,y} = V^{{\goth l}}_{\varpi (x),\varpi (y)}+{\goth p}_{\u}$.
\end{coro}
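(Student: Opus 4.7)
The plan is to combine the containment from Lemma~\ref{l2sc3}(iv) with a dimension count that uses $\b g{}=\b l{}+\dim {\goth p}_{\u}$ to force equality. Since Lemma~\ref{l2sc3}(iv) already gives
$$V_{x,y} \subset V^{\goth l}_{\varpi(x),\varpi(y)} + {\goth p}_{\u}$$
for every $(x,y)$ in $\gg p{}$, the task reduces to showing that $V_{x,y}$ is large enough to fill this sum when $(x,y)\in \Omega_{\goth g}\cap \gg p{}$.

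First I would note that since $(x,y)\in \Omega_{\goth g}$, Proposition~\ref{psc1}(iii) gives $\dim V_{x,y}=\b g{}$. Because $V_{x,y}\subset {\goth p}$ (Corollary~\ref{csc3}) and $\varpi$ has kernel ${\goth p}_{\u}$, the short exact sequence
$$0 \longrightarrow V_{x,y}\cap {\goth p}_{\u} \longrightarrow V_{x,y} \longrightarrow \varpi(V_{x,y}) \longrightarrow 0$$
yields $\b g{}=\dim \varpi(V_{x,y}) + \dim(V_{x,y}\cap {\goth p}_{\u})$. By Lemma~\ref{l2sc3}(iv), $\varpi(V_{x,y})\subset V^{\goth l}_{\varpi(x),\varpi(y)}$, and the ${\goth l}$-analog of Proposition~\ref{psc1}(iii) bounds the latter by $\b l{}$; trivially $\dim(V_{x,y}\cap {\goth p}_{\u})\leq \dim {\goth p}_{\u}$. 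Summing these and using $\b g{}=\b l{}+\dim {\goth p}_{\u}$, both inequalities must be equalities.

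Consequently $\varpi(V_{x,y})=V^{\goth l}_{\varpi(x),\varpi(y)}$ and $V_{x,y}\cap {\goth p}_{\u}={\goth p}_{\u}$, i.e.\ ${\goth p}_{\u}\subset V_{x,y}$ and $\varpi(V_{x,y})$ already saturates $V^{\goth l}_{\varpi(x),\varpi(y)}$. Combined with the inclusion of Lemma~\ref{l2sc3}(iv), the two sides have the same dimension $\b g{}$ and one is contained in the other, so $V_{x,y}=V^{\goth l}_{\varpi(x),\varpi(y)}+{\goth p}_{\u}$, as required.

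I do not anticipate a real obstacle: the argument is purely a dimension balance, and the only input beyond what has been proved is the identity $\b g{}=\b l{}+\dim {\goth p}_{\u}$ coming from the standard decomposition ${\goth p}={\goth l}\oplus {\goth p}_{\u}$ together with ${\goth b}={\goth h}\oplus {\goth u}\subset {\goth p}$, and the application of Proposition~\ref{psc1}(iii) inside the reductive algebra ${\goth l}$. The one point requiring slight care is that $(\varpi(x),\varpi(y))$ need not lie in $\Omega_{\goth l}$, so $V^{\goth l}_{\varpi(x),\varpi(y)}$ may have dimension strictly smaller than $\b l{}$; however the tightness of the overall dimension count rules this out a posteriori, forcing $\dim V^{\goth l}_{\varpi(x),\varpi(y)}=\b l{}$.
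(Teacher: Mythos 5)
Your proof is correct, and it is leaner than the one in the paper. Both arguments ultimately rest on the same two ingredients: the containment $V_{x,y}\subset V^{\goth l}_{\varpi (x),\varpi (y)}+{\goth p}_{\u}$ from Lemma~\ref{l2sc3}(iv) (equivalently $\varpi (V_{x,y})\subset V^{\goth l}_{\varpi (x),\varpi (y)}$, which is what its proof actually establishes) and the identity $\b g{}=\b l{}+\dim {\goth p}_{\u}$ feeding a dimension count via $\dim V_{x,y}=\b g{}$. The difference is in the logical route: the paper first works on the dense open subset $\Omega _{{\goth g}}\cap R'_{{\goth p}}\times {\goth p}$, where Lemma~\ref{l2sc3}(v) gives the equality $\varpi (V_{x,y})=V^{{\goth l}}_{\varpi (x),\varpi (y)}$ outright, runs the dimension count there, and then extends the conclusions ${\goth p}_{\u}\subset V_{x,y}$ and $\dim \varpi (V_{x,y})=\b l{}$ to all of $\Omega _{{\goth g}}\cap \gg p{}$ by regularity of $(x,y)\mapsto V_{x,y}$ into the Grassmannian. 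You instead run the count pointwise at an arbitrary $(x,y)$ of $\Omega _{{\goth g}}\cap \gg p{}$, using only the unconditional bounds $\dim V^{\goth l}_{\varpi (x),\varpi (y)}\leq \b l{}$ (the module $\bi l{}$ has rank $\b l{}$, so its evaluation at any point has dimension at most $\b l{}$) and $\dim (V_{x,y}\cap {\goth p}_{\u})\leq \dim {\goth p}_{\u}$; the equality $\b g{}=\b l{}+\dim{\goth p}_{\u}$ then forces both to be equalities, which yields everything at once. This makes Lemma~\ref{l2sc3}(v) and the continuity step unnecessary for this corollary. Your closing remark is also right: one cannot assume $(\varpi (x),\varpi (y))\in \Omega _{{\goth l}}$ a priori, but the saturation of the dimension count shows a posteriori that $\dim V^{{\goth l}}_{\varpi (x),\varpi (y)}=\b l{}$.
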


\begin{proof}
As $(h,e)$ is in $\gg p{}$, $\Omega _{{\goth g}}\cap \gg p{}$ is
a dense open subset of $\gg p{}$. Let $(x,y)$ be in  
$\Omega _{{\goth g}}\cap R'_{{\goth p}}\times {\goth p}$. By Lemma~\ref{l2sc3}(v), 
$\varpi (V_{x,y})=V^{{\goth l}}_{\varpi (x),\varpi (y)}$. Furthermore, 
$\dim V_{x,y}=\b g{}$ since $(x,y)$ is in $\Omega _{{\goth g}}$. Hence 
${\goth p}_{\u}$ is contained in $V_{x,y}$ and 
$\dim V^{{\goth l}}_{\varpi (x),\varpi (y)}=\b l{}$ since   
$\b g{}=\b l{}+\dim {\goth p}_{\u}$. According to Lemma~\ref{lsc3}(ii), the map 
$(x,y)\mapsto V_{x,y}$ is a regular map from $\Omega _{{\goth g}}\cap \gg p{}$ to 
$\ec {Gr}p{}{}{\b g{}}$. So, for all $(x,y)$ in $\Omega _{{\goth g}}\cap \gg p{}$, 
${\goth p}_{\u}$ is contained in $V_{x,y}$ and $\dim \varpi (V_{x,y})=\b l{}$, whence
the assertion by Lemma~\ref{l2sc3}(iv) since $V^{{\goth l}}_{\varpi (x),\varpi (y)}$ has 
dimension at most $\b l{}$.
\end{proof}

\section{Proof of the main theorem} \label{pm}
In this section, we prove that Theorem~\ref{tint} and Theorem~\ref{t2int} 
results from Theorem~\ref{t3int}. So we suppose that Theorem~\ref{t3int} is true 
for ${\goth g}$. Let $\dd $ be the $\sgg g{}$-derivation of the algebra
$\tk {\k}{\sgg g{}}\ex {}{{\goth g}}$ such that for $v$ in ${\goth g}$, $\dd v$ is the
function on $\gg g{}$, $(x,y)\mapsto \dv v{[x,y]}$. Then
$\tk {\k}{\sgg g{}}\ex {}{{\goth g}}$ inherits a structure of differential graded algebra
with respect to the usual gradation of $\ex {}{{\goth g}}$. By Definition,
$C_{\bullet}({\goth g})$ is the graded $\sgg g{}$-submodule of 
$\tk {\k}{\sgg g{}}\ex {}{{\goth g}}$ such that 
$C_{i+\b g{}}({\goth g}):= \ex i{{\goth g}}\wedge \ex {\b g{}}{\bi g{}}$ for 
$i=0,\ldots,n$.

\begin{lemma}\label{lpm}
{\rm (i)} The graded module $C_{\bullet}({\goth g})$ is a graded subcomplex of 
$\tk {\k}{\sgg g{}}\ex {}{{\goth g}}$.

{\rm (ii)} The ideal $I_{{\goth g}}$ is isomorphic to the space of boundaries of
degree $\b g{}$ of $C_{\bullet}({\goth g})$. 

{\rm (iii)} The support of the homology of $C_{\bullet}({\goth g})$ is contained in 
${\cal C}({\goth g})$.
\end{lemma}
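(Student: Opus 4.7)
My plan is to derive all three items from two ingredients: the graded Leibniz rule for the $\sgg g{}$-derivation $\dd $, and the orthogonality $\dv {\varphi (x,y)}{[x,y]}=0$ established in Theorem~\ref{tsc1}(iii) for every $\varphi \in \bi g{}$.

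First, I fix a basis $\poi \varepsilon 1{,\ldots,}{\b g{}}{}{}{}$ of $\bi g{}$ (which exists by Theorem~\ref{tsc1}(i)) and set $\omega :=\varepsilon _{1}\wedge \cdots \wedge \varepsilon _{\b g{}}$, a free generator of the rank-one $\sgg g{}$-module $\ex {\b g{}}{\bi g{}}$. Since $\dd \varepsilon _{j}=0$ in $\sgg g{}$ for each $j$ by Theorem~\ref{tsc1}(iii), iterated Leibniz yields $\dd \omega =0$. Hence for any $\eta \in \tk {\k}{\sgg g{}}\ex {i}{{\goth g}}$ one has $\dd (\eta \wedge \omega )=\dd \eta \wedge \omega \in \ex {i-1}{{\goth g}}\wedge \ex {\b g{}}{\bi g{}}$, which gives (i). For (ii), the boundaries of degree $\b g{}$ are the images $\dd (v\wedge \omega )=\dd v\cdot \omega $ with $v\in \tk {\k}{\sgg g{}}{{\goth g}}$. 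As $v=\sum _{j}f_{j}v_{j}$ varies with $f_{j}\in \sgg g{}$ and $v_{j}\in {\goth g}$, the element $\dd v=\sum _{j}f_{j}\dv {v_{j}}{[x,y]}$ runs over all of $I_{{\goth g}}$, and multiplication by the free generator $\omega $ identifies the space of these boundaries with $I_{{\goth g}}$.

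The main content is (iii), for which I would build a local contracting homotopy outside ${\cal C}({\goth g})$. Fix $(x_{0},y_{0})\in \gg g{}\setminus {\cal C}({\goth g})$: since $[x_{0},y_{0}]\neq 0$, there exists $v\in {\goth g}$ with $\dv {v}{[x_{0},y_{0}]}\neq 0$, so $\dd v$ is a unit in the local ring $\an {\gg g{}}{(x_{0},y_{0})}$. On the stalk of $C_{\bullet }({\goth g})$ at $(x_{0},y_{0})$, I define the degree $+1$ map
$$ h(\xi ) := (\dd v)^{-1}\,v\wedge \xi ,$$
which sends $\ex {i}{{\goth g}}\wedge \ex {\b g{}}{\bi g{}}$ into $\ex {i+1}{{\goth g}}\wedge \ex {\b g{}}{\bi g{}}$ and therefore preserves the subcomplex $C_{\bullet }({\goth g})$. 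The Leibniz rule, together with $\dd \omega =0$ and the fact that $\dd v\in \sgg g{}$ is central, gives the homotopy identity $\dd h+h\dd ={\mathrm {id}}$ on the stalk, so the homology of $C_{\bullet }({\goth g})$ vanishes at $(x_{0},y_{0})$.

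I expect no serious obstacle. The only points that require care are the role of Theorem~\ref{tsc1}(iii) in establishing $\dd \omega =0$, where the specific shape of $\bi g{}$ is essential, and the sign bookkeeping for the degree $-1$ derivation in the identity $\dd (v\wedge \xi )=\dd v\cdot \xi -v\wedge \dd \xi $ underlying part (iii); both reduce to a direct application of the Leibniz rule once the conventions are fixed.
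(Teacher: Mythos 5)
Your proposal is correct and follows essentially the same route as the paper: (i) and (ii) are exactly the paper's argument (the generator $\omega$ of $\ex {\b g{}}{\bi g{}}$ is a cycle by Theorem~\ref{tsc1}, so the ideal it generates is a subcomplex, and the degree-$\b g{}$ boundaries are $I_{{\goth g}}\omega$), and for (iii) the paper likewise inverts $\dd v$ on an affine neighborhood of a point outside ${\cal C}({\goth g})$ and observes $\dd (v\wedge c)=(\dd v)c$ for a cycle $c$, which is precisely your contracting homotopy $h=(\dd v)^{-1}v\wedge(\cdot)$ applied to cycles.
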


\begin{proof}
(i) Set:
\begin{eqnarray}\label{eqpm}
\varepsilon := \wedge _{(i,m)\in I_{0}} \varepsilon _{i}^{(m)}, 
\end{eqnarray}
where the order of the product is induced by the order of $I_{0}$. 
Then $C_{\bullet}({\goth g})$ is the ideal of $\tk {\k}{\sgg g{}}\ex {}{{\goth g}}$ 
generated by $\varepsilon $ since $\varepsilon _{i}^{(m)}, \, (i,m) \in I_{0}$ is a
basis of $\bi g{}$ by Theorem~\ref{tsc1}(i). According to Theorem~\ref{tsc1}(iii), 
for $(i,m)$ in $I_{0}$, $\varepsilon _{i}^{(m)}$ is a cycle of the complex 
$\tk {\k}{\sgg g{}}\ex {}{{\goth g}}$. Hence so is $\varepsilon $ and 
$C_{\bullet}({\goth g})$ is a subcomplex of $\tk {\k}{\sgg g{}}\ex {}{{\goth g}}$ as
an ideal generated by a cycle.

(ii) By definition, $I_{{\goth g}}$ is the ideal of $\sgg g{}$ generated by
$\dd {\goth g}$. Hence $I_{{\goth g}}\varepsilon $ is the space of boundaries of degree
$\b g{}$ of $C_{\bullet}({\goth g})$.

(iii) Let $(x_{0},y_{0})$ be in $\gg g{}\setminus {\cal C}({\goth g})$ and  $v$ in 
${\goth g}$ such that $\dv v{[x_{0},y_{0}]}\neq 0$. For some affine 
open subset $O$ of $\gg g{}$, containing $(x_{0},y_{0})$, $\dv v{[x,y]}\neq 0$ for all
$(x,y)$ in $O$. Then $\dd v$ is an invertible element of $\k[O]$. For 
$c$ a cycle of $\tk {\sgg g{}}{\k[O]}C_{\bullet}({\goth g})$, 
$$\dd (v\wedge c) = (\dd v)c $$
so that $c$ is a boundary of $\tk {\sgg g{}}{\k[O]}C_{\bullet}({\goth g})$, whence the
assertion. 
\end{proof}

\begin{theo}\label{tpm}
{\rm (i)} The complex $C_{\bullet}({\goth g})$ has no homology of degree bigger than 
$\b g{}$.

{\rm (ii)} The ideal $I_{{\goth g}}$ has projective dimension $2n-1$.

{\rm (iii)} The algebra $\sgg g{}/I_{{\goth g}}$ is Cohen-Macaulay.

{\rm (iv)} The projective dimension of the module 
$\ex {n}{{\goth g}}\wedge \ex {\b g{}}{\bi g{}}$ is equal to $n$. 
\end{theo}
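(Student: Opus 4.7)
The plan is to deduce the four assertions successively from Theorem~\ref{t3int} by combining Lemma~\ref{lpm} with the homological machinery gathered in Appendices~\ref{co} and~\ref{p}. The overarching principle, recalled in Appendix~\ref{p}, is the vanishing criterion: a finitely generated $\sgg g{}$-module $M$ is zero as soon as
$\mathrm{codim}\,\mathrm{supp}\,M \geq \mathrm{proj.dim}\,M + 2$.

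For (i), Lemma~\ref{lpm}(iii) confines the support of every homology module $H_j(C_\bullet({\goth g}))$ to ${\cal C}({\goth g})$, whose codimension in $\gg g{}$ equals $2n$. I would proceed by descending induction on $j$ from $\b g{}+n$ down to $\b g{}+1$. Assuming that $H_k = 0$ for $k > j$, the truncated complex becomes an exact sequence
$$ 0 \longrightarrow C_{\b g{}+n}({\goth g}) \longrightarrow \cdots \longrightarrow C_{j+1}({\goth g}) \longrightarrow B_j \longrightarrow 0 , $$
where $B_j$ is the image of $\dd$ inside $C_j({\goth g})$. Splicing this resolution into short exact sequences and applying the standard inequality $\mathrm{proj.dim}\,C \leq \max(\mathrm{proj.dim}\,A+1,\mathrm{proj.dim}\,B)$ attached to $0 \to A \to B \to C \to 0$, together with the bound $\mathrm{proj.dim}\,C_{\b g{}+k}({\goth g}) \leq k$ of Theorem~\ref{t3int}, supplies an upper bound on $\mathrm{proj.dim}\,H_j$ that is strictly less than $2n$. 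The vanishing criterion then forces $H_j = 0$, closing the induction. The precise form of the splicing argument is the content of the general lemmas of Appendix~\ref{co}.

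For (ii), assertion (i) together with Lemma~\ref{lpm}(ii) produces the exact sequence
$$ 0 \longrightarrow C_{\b g{}+n}({\goth g}) \longrightarrow \cdots \longrightarrow C_{\b g{}+1}({\goth g}) \longrightarrow I_{{\goth g}} \longrightarrow 0 . $$
Running the same splicing argument to the bottom yields $\mathrm{proj.dim}\,I_{{\goth g}} \leq 2n-1$. The reverse inequality follows from the short exact sequence $0 \to I_{{\goth g}} \to \sgg g{} \to \sgg g{}/I_{{\goth g}} \to 0$, which gives $\mathrm{proj.dim}\,\sgg g{}/I_{{\goth g}} \leq 2n$, combined with the general inequality $\mathrm{proj.dim}\,M \geq \mathrm{grade}\,M$ valid in the Cohen-Macaulay ring $\sgg g{}$ and with $\mathrm{grade}\,\sgg g{}/I_{{\goth g}} = \mathrm{codim}\,{\cal C}({\goth g}) = 2n$.

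For (iii), Auslander-Buchsbaum applied to $\sgg g{}/I_{{\goth g}}$ gives
$$ \mathrm{depth}(\sgg g{}/I_{{\goth g}}) = \dim\sgg g{} - \mathrm{proj.dim}(\sgg g{}/I_{{\goth g}}) = (4n+2\rg) - 2n = 2n+2\rg = \dim{\cal C}({\goth g}), $$
so $\sgg g{}/I_{{\goth g}}$ is Cohen-Macaulay. For (iv), Theorem~\ref{t3int} already gives $\mathrm{proj.dim}\,C_{\b g{}+n}({\goth g}) \leq n$; were this inequality strict, rerunning the splicing computation of (ii) would improve the bound to $\mathrm{proj.dim}\,I_{{\goth g}} \leq 2n-2$, contradicting (ii). Equality must therefore hold.

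The most delicate step is the projective-dimension bookkeeping in~(i): turning the bounds of Theorem~\ref{t3int} into a sufficiently strong bound on $\mathrm{proj.dim}\,H_j$ to trigger the vanishing criterion. The arithmetic is elementary, but is properly packaged only in the general lemmas of Appendix~\ref{co}, which must be invoked carefully.
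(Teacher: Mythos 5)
Your proposal is correct and follows essentially the same route as the paper: confine the support of the homology to ${\cal C}({\goth g})$ via Lemma~\ref{lpm}(iii), feed the projective-dimension bounds of Theorem~\ref{t3int} into the splicing/vanishing machinery (Corollary~\ref{cp}, Proposition~\ref{pp}, Lemma~\ref{l2p} of Appendix~\ref{p}), and finish with Auslander--Buchsbaum; your grade argument for $\mathrm{proj.dim}\,I_{{\goth g}}\geq 2n-1$ is equivalent to the paper's use of $\mathrm{depth}\leq\dim$. Only cosmetic imprecisions remain (the relevant lemmas live in Appendix~\ref{p} rather than~\ref{co}, and the vanishing criterion requires $\mathrm{proj.dim}\,H_{j}\leq 2n-2$ rather than merely $<2n$, which the bound $l+d-j-1$ of Corollary~\ref{cp} indeed delivers).
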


\begin{proof}
(i) Let $Z$ be the space of cycles of degree $\b g{}+1$ of $C_{\bullet}({\goth g})$, 
whence a graded subcomplex of $C_{\bullet}({\goth g})$,
$$ \xymatrix{0 \ar[r] & C_{2n+\rg}({\goth g)} \ar[r] & \cdots \ar[r] & 
C_{n+\rg+2}({\goth g}) \ar[r] & Z \ar[r] & 0 }.$$
According to Lemma~\ref{lpm}(iii), the support of its homology is contained in 
${\cal C}_{{\goth g}}$. In particular, its codimension in $\gg g{}$ is at least
$$ 4n + 2\rg - (2n+2\rg) = 2n =  n + n-1 +1$$
According to Theorem~\ref{t3int}, for $i=n+\rg+2,\ldots,2n+\rg$, 
$C_{i}({\goth g})$ has projective dimension at most $n$. Hence, by 
Corollary~\ref{cpdc}, this complex is acyclic and $Z$ has projective dimension at most 
$2n-2$, whence the assertion.

(ii) and (iii) Since $\bi g{}$ is a free module of rank $\b g{}{}$, 
$\ex {\b g{}{}}{\bi g{}}$ is a free module of rank $1$. By definition, the short sequence
$$ \xymatrix{0 \ar[r] & Z \ar[r] & {\goth g}\wedge \ex {\b g{}}{\bi g{}}
\ar[r] & I_{{\goth g}}\ex {\b g{}{}}{\bi g{}} \ar[r] & 0}$$ 
is exact, whence the short exact sequence
$$ \xymatrix{0 \ar[r] & Z \ar[r] & {\goth g}\wedge \ex {\b g{}}{\bi g{}}
\ar[r] & I_{{\goth g}} \ar[r] & 0 }.$$ 
Moreover, by Theorem~\ref{t3int}, ${\goth g}\wedge \ex {\b g{}}{\bi g{}}$ 
has projective dimension at most $1$. Then, by (i) and Lemma~\ref{l2pdc}, 
$I_{{\goth g}}$ has projective dimension at most $2n-1$. As a result the 
$\sgg g{}$-module $\sgg g{}/I_{{\goth g}}$ has projective dimension at most $2n$. Then by
Auslander-Buchsbaum's theorem \cite[\S 3, \no 3, Th\'eor\`eme 1]{Bou1}, the depth of the 
graded $\sgg g{}$-module $\sgg g{}/I_{{\goth g}}$ is at least
$$ 4\b g{}-2\rg - 2n = 2\b g{} $$
so that, according to \cite[\S 1, \no 3, Proposition 4]{Bou1}, the depth of
the graded algebra $\sgg g{}/I_{{\goth g}}$ is at least $2\b g{}$. In other words, 
$\sgg g{}/I_{{\goth g}}$ is Cohen-Macaulay since it has dimension $2\b g{}$. Moreover, 
since the graded algebra $\sgg g{}/I_{{\goth g}}$ has depth $2\b g{}$, the 
graded $\sgg g{}$-module $\sgg g{}/I_{{\goth g}}$ has projective dimension $2n$. Hence 
$I_{{\goth g}}$ has projective dimension $2n-1$.

(iv) As $I_{{\goth g}}$ has projective dimension $2n-1$, 
$\ex {n}{{\goth g}}\wedge \ex {\b g{}}{\bi g{}}$ has projective dimension $n$ by 
(i), Lemma~\ref{l2pdc} and Theorem~\ref{t3int}. 
\end{proof}

Theorem~\ref{t2int} is given by Theorem~\ref{tpm}(i) and Lemma~\ref{lpm}(ii) and 
Theorem~\ref{tint} is a corollary of Theorem~\ref{tpm}.

\begin{coro}\label{cpm}
The subscheme of $\gg g{}$ defined by $I_{{\goth g}}$ is Cohen-Macaulay and normal. 
Furthermore, $I_{{\goth g}}$ is a prime ideal.
\end{coro}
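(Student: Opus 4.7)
The plan is that this corollary is essentially a packaging of Theorem~\ref{tpm}(iii) together with the results of Richardson and Popov cited in the introduction, combined via Serre's normality criterion. All the analytic work has been done; what remains is to chain the implications correctly.

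First, I would invoke Theorem~\ref{tpm}(iii), which asserts that the ring $\sgg g{}/I_{{\goth g}}$ is Cohen-Macaulay. This directly gives the Cohen-Macaulay statement of the corollary. Moreover, any Cohen-Macaulay Noetherian ring satisfies Serre's condition $S_{2}$ (in fact $S_{n}$ for every $n$). Next, I would use Popov's result, quoted in \S\ref{int1}: the singular locus of the scheme ${\cal S}({\goth g})$ has codimension at least $2$ in ${\cal C}({\goth g})$. Because ${\cal C}({\goth g})$ is the underlying variety of ${\cal S}({\goth g})$ and has the same dimension as $\sgg g{}/I_{{\goth g}}$, this translates into the statement that at every codimension $\le 1$ point, the local ring of $\sgg g{}/I_{{\goth g}}$ is regular. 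That is precisely Serre's condition $R_{1}$.

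With $R_{1}$ and $S_{2}$ in hand, Serre's normality criterion applies and yields that $\sgg g{}/I_{{\goth g}}$ is a normal ring, proving the normality assertion. To upgrade normality to primality of $I_{{\goth g}}$, I would invoke Richardson's theorem that ${\cal C}({\goth g})$ is irreducible. A normal Noetherian ring is the finite direct product of its quotients by the minimal primes, each of which is a normal domain; irreducibility of ${\cal C}({\goth g})=\mathrm{Spec}(\sgg g{}/I_{{\goth g}})_{\mathrm{red}}$ says there is only one such minimal prime. Hence $\sgg g{}/I_{{\goth g}}$ is itself a (normal) domain, and so $I_{{\goth g}}$ is prime.

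There is no genuine obstacle here; the only subtle point is the last step, where one has to be explicit that Serre's criterion produces a normal ring (a finite product of normal domains) and that irreducibility cuts the product down to a single factor. Once Theorem~\ref{tpm}(iii) is granted, the remaining input is purely formal, and the corollary falls out in a few lines.
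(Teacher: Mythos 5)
Your proposal is correct and follows exactly the paper's own argument: Theorem~\ref{tpm}(iii) gives Cohen--Macaulayness (hence $S_{2}$), Popov's result gives $R_{1}$, Serre's criterion gives normality, and Richardson's irreducibility of ${\cal C}({\goth g})$ then forces $I_{{\goth g}}$ to be prime. The only difference is that you spell out the ``normal ring $=$ product of normal domains'' step, which the paper phrases more briefly as reducedness plus irreducibility.
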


\begin{proof}
According to Theorem~\ref{tpm}(iii), the subscheme of $\gg g{}$ defined by
$I_{{\goth g}}$ is Cohen-Macaulay. According to \cite[Theorem 1]{Po}, it
is smooth in codimension $1$. So by Serre's normality criterion \cite[\S 1, \no 10,
Th\'eor\`eme 4]{Bou1}, it is normal. In particular, it is reduced and $I_{{\goth g}}$
is radical. According to ~\cite{Ric}, ${\cal C}({\goth g})$ is irreducible. Hence 
$I_{{\goth g}}$ is a prime ideal.
\end{proof}

\section{Some complexes}\label{co} 
Let $X$ be an affine irreducible variety and $V$ a vector space over $\k$ of finite
dimension. Let $D(V)$ be the algebra $\tk {\k}{\es SV}\ex {}V$ and $\dd$ the
$\ex {}V$-derivation of $D(V)$ such that $\dd v\tens 1=1\tens v$ for $v$ in $V$. Hence
$D(V)$ inherits a structure of differential graded algebra with respect to the gradation
of $\ex {}V$. We denote by $D^{\bullet}(V)$ the graded complex so defined. The structure
of graded differential algebra on $D(V)$ induces a structure of graded differential
algebra on $\tk {\k}{\k[X]}D(V)$ whence a graded complex
$\tk {\k}{\k[X]}D^{\bullet}(V)$. 

\subsection{General facts} \label{co1}
For $k$ nonnegative integer, set:
$$ D_{k}(V) := \bigoplus _{i=0}^{k} \tk {\k}{\sy {k-i}V}\ex iV ,$$
so that $D_{k}(V)$ is a graded subcomplex of $D^{\bullet}(V)$.

\begin{defi}\label{dco1}
Let $L$ be a free submodule of positive rank $r$ of $\tk {{\k}}{{\k}[X]}V$. For $k$ 
nonnegative integer, denote by $D_{k}^{\bullet}(V,L)$ the graded subcomplex of 
$\tk {\k}{\k[X]}D^{\bullet}(V)$:
$$ D_{k}^{\bullet}(V,L) := D_{k}^{\bullet}(V)[-r]\wedge \ex rL .$$ 
The restriction to $D_{k}^{\bullet}(V,L)$ of the derivation of 
$\tk {\k}{\k[X]}D^{\bullet}(V)$ is also denoted by $\dd$.

For $W$ subspace of $V$, let $D_{k}^{\bullet}(W,L)$ be the graded subspace of 
$D_{k}^{\bullet}(V,L)$ such that
$$ D_{k}^{i+r}(W,L) := \tk {\k}{\sy {k-i}W}\ex iW\wedge \ex rL .$$
Then $D_{k}^{\bullet}(W,L)$ is a graded subcomplex of $D_{k}^{\bullet}(V,L)$.
\end{defi}

The embedding of $\tk {\k[X]}{\sy kL}\ex rL$ into $\tk {\k}{\sy kV}\ex rL$ is an 
augmentation of $D_{k}^{\bullet}(V,L)$. Denote by $\overline{D}_{k}^{\bullet}(V,L)$ this 
augmented complex. In particular, $\overline{D}_{0}^{\bullet}(V,L)$ is acyclic. 

\begin{lemma} \label{lco1}
Let $k$ be a positive integer.

{\rm (i)} The cohomology of $D^{\bullet}(V)$ is equal to ${\k}$. 

{\rm (ii)} The complex $D_{k}^{\bullet}(V)$ is acyclic.

{\rm (iii)} For $\k[X]=\k$, $\overline{D}^{\bullet}_{k}(V,L)$ is an acyclic complex.
\end{lemma}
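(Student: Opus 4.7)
The plan is to establish the acyclicity in (ii) first via an explicit Koszul-type contracting homotopy, and then to deduce (i) and (iii) from it; the only real piece of work is checking the homotopy identity in (ii), everything else being formal bookkeeping.

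For (ii), I fix a basis $\poi e1{,\ldots,}{n}{}{}{}$ of $V$ and write $x_i := e_i \tens 1$, $\xi _i := 1 \tens e_i$, so that $\dd x_i = \xi _i$, $\dd \xi _i = 0$. I introduce the $\k$-linear operator $\theta $ on $\tk {\k}{\es SV}\ex {}V$ of bidegree $(+1,-1)$ given on monomials by
$$\theta (x^{\alpha } \xi _{i_1} \wedge \cdots \wedge \xi _{i_p}) =
\sum _{j=1}^{p} (-1)^{j-1} x^{\alpha } x_{i_j} \xi _{i_1} \wedge \cdots \wedge
\widehat{\xi _{i_j}} \wedge \cdots \wedge \xi _{i_p} .$$
A direct Leibniz computation then shows that $(\dd \theta + \theta \dd )$ acts on such a monomial as multiplication by $|\alpha | + p$, i.e.\ by $k$ on $D_k^{\bullet }(V)$. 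Since $\k$ has characteristic $0$, $\theta /k$ is a contracting homotopy of $D_k^{\bullet }(V)$ for every $k \geq 1$, giving (ii). Assertion (i) is then immediate from the $\dd $-stable decomposition $D^{\bullet }(V) = \bigoplus _{k \geq 0} D_k^{\bullet }(V)$: the summand $D_0^{\bullet }(V) = \k$ contributes the cohomology $\k$ in degree $0$, and every other summand is acyclic by (ii).

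For (iii), I pick a vector-space complement $W$ of $E$ in $V$ and a generator $\omega $ of $\ex rE$. The factorisations $\es SV = \tk {\k}{\es SE}\es SW$ and $\ex {}V = \tk {\k}{\ex {}E}\ex {}W$, together with the observation that any element of $\ex {}V$ with positive $E$-exterior component dies when wedged with $\omega $, give an identification of graded spaces
$$D_k^{i+r}(V,E) \; \cong \; \bigoplus _{a+b=k-i} \sy aE \tens \sy bW \tens \ex iW .$$
Similarly, the component of $\dd $ that raises the $E$-exterior degree is killed by $\omega $, so only the $W$-differential survives after wedging, producing an isomorphism of complexes
$$D_k^{\bullet +r}(V,E) \; \cong \; \bigoplus _{a=0}^{k} \sy aE \tens D_{k-a}^{\bullet }(W) .$$
Under this isomorphism the augmentation $\sy kE \tens \ex rE \hookrightarrow D_k^r(V,E)$ becomes the identity on the $a=k$ summand $\sy kE \tens D_0^{\bullet }(W) = \sy kE$, so the augmented $a=k$ piece is the trivially acyclic two-term complex $\sy kE \xrightarrow{\mathrm{id}} \sy kE$. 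For $a < k$ the factor $D_{k-a}^{\bullet }(W)$ is acyclic by (ii) since $k-a \geq 1$, and these summands receive no augmentation. Summing over $a$ yields the acyclicity of $\overline{D}_k^{\bullet }(V,E)$, proving (iii).
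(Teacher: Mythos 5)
Your proposal is correct. Part (iii) is essentially the paper's own argument: you choose a complement $W$ of $E$, observe that wedging with a generator of the top exterior power of $E$ kills everything with positive $E$-exterior degree, and decompose the augmented complex as $\bigoplus_{a=0}^{k}\mathrm{S}^{a}(E)\otimes D_{k-a}^{\bullet}(W)$ (suitably shifted), with the augmentation absorbed by the $a=k$ summand and the $a<k$ summands acyclic by (ii); this is word for word the paper's decomposition. Where you genuinely diverge is in (i) and (ii): the paper proves (i) first, by induction on $\dim V$, splitting $V=W\oplus\k v$, expanding a cocycle in powers of $v$ and repeatedly correcting by coboundaries, and then deduces (ii) from the direct-sum decomposition $D^{\bullet}(V)=\bigoplus_{k}D_{k}^{\bullet}(V)$; you instead prove (ii) directly via the standard Koszul contracting homotopy $\theta$ satisfying $\dd\theta+\theta\dd=(|\alpha|+p)\,\mathrm{id}$, divide by $k$ (legitimate since $\k$ has characteristic $0$ — note the paper's inductive proof also divides by integers, so neither argument is characteristic-free), and then read off (i) from the graded decomposition. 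Your route is shorter and replaces a page of bookkeeping by one checkable identity; the paper's route is more pedestrian but self-contained. Both are valid, and the homotopy identity you invoke is the classical one, which does hold for the differential as defined in the paper (the derivation sending $v\otimes 1$ to $1\otimes v$ and vanishing on $1\otimes\bigwedge V$).
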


\begin{proof}
(i) We prove the statement by induction on the dimension of $V$. For $V$ equal to zero,  
$D^{\bullet}(V)$ is equal to ${\k}$ and its differential is equal to $0$. We suppose
the statement true for any vector space whose dimension is strictly smaller than
$\dim V$. Let $W$ be an hyperplane of $V$ and let $v$ be in $V\backslash W$. Let $a$ be 
a homogeneous cocycle of degree $d$ of $D^{\bullet}(V)$. Then $a$ has a unique expansion
$$ a = v^{m}a_{m} + \cdots + a_{0} \mbox{ ,}$$
with $\poi a0{,\ldots,}{m}{}{}{}$ in $\tk {{\k}}{\es SW}\ex {}V$. If $d=0$, then 
$$ mv^{m-1}a_{m}\tens v + \cdots + a_{1}\tens v =0 $$
so that $a=a_{0}$ is in $\k$ by induction hypothesis. Suppose $d>0$. Then, for
$i=0,\ldots,m$, 
$$a_{i} = a'_{i} + a''_{i}\wedge v ,$$
with $a'_{i}$ and  $a''_{i}$ in $\tk {{\k}}{\es SW}\ex dW$ and 
$\tk {{\k}}{\es SW}\ex {d-1}W$ respectively. From the equality
$$ 0 = \sum_{i=0}^{m} v^{i}\dd a'_{i} + \sum_{i=1}^{m} (-1)^{d}iv^{i-1}
a'_{i}\wedge v + \sum_{i=0}^{m} v^{i}\dd a''_{i}\wedge v ,$$
we deduce that $a'_{0},\ldots,a'_{m}$ are cocycles. So, by the induction hypothesis, 
for $i=0,\ldots,m$, $a'_{i} = \dd b_{i}$ for some $b_{i}$ in 
$\tk {{\k}}{\es SW}\ex {d-1}W$. Then 
$$ a - \dd(\sum_{i=0}^{m} v^{i}b_{i}) = 
v^{m}a''_{m}\wedge v + 
\sum_{i=0}^{m-1} v^{i}((-1)^{d-1}(i+1)b_{i+1}+a''_{i})\wedge v \mbox{ .}$$
Hence $a''_{m}$ and $(-1)^{d-1}(i+1)b_{i+1}+a''_{i}$ are cocycles of 
degree $d-1$ for $i=0,\ldots,m-1$. If $d=1$,
$$ a = \dd(\sum_{i=0}^{m} v^{i}b_{i} + 
\frac{1}{m+1}v^{m+1}a''_{m} + \sum_{i=0}^{m-1} 
\frac{1}{i+1}v^{i+1}((-1)^{d-1}(i+1)b_{i+1}+a''_{i})) \mbox{ .}$$ 
For $d$ bigger than $1$, by induction hypothesis,
$a''_{m}$ is the coboundary of an element $c_{m}$ in 
$\tk {{\k}}{\es SW}\ex {d-2}W$ and for $i=0,\ldots,m-1$,
$(-1)^{d-1}(i+1)b_{i+1}+a''_{i}$ is the coboundary of an element $c_{i}$ in 
$\tk {{\k}}{\es SW}\ex {d-2}W$ so that
$$ a =  \dd(\sum_{i=0}^{m} v^{i}b_{i} + \sum_{i=0}^{m} v^{i}c_{i}\wedge v) . $$

(ii) As $D^{\bullet}(V)$ is the direct sum of $D_{i}^{\bullet}(V),i\in {\Bbb N}$, 
the assertion results from (i).

(iii) Let $F$ be a complement to $L$ in $V$. For $i=0,\ldots,k$,
$$ D_{k}^{i+r}(V,L) = \bigoplus _{j=0}^{k-i}
\tk {\k}{\sy {k-i-j}F}\tk {\k}{\sy jL}\ex {i}F\wedge \ex rL ,$$
whence  
$$D_{k}^{\bullet}(V,L) =   
\bigoplus _{j=0}^{k} \tk {\k}{\sy {j}L}D_{k-j}^{\bullet}(F)[-r]\wedge \ex rL  .$$
By (ii), for $j<k$, $D_{k-j}^{\bullet}(F)$ is acyclic. As a result, 
$D_{k}^{\bullet}(V,L)$ has no cohomology of degree different from $r$, whence the 
assertion since for $v$ in $V$, $v\wedge \ex rL = \{0\}$ if and only if $v$ is in $L$.
\end{proof}
 
For $\pi $ automorphism of $X$, denote by $\pi ^{\#}$ the automorphism of the 
algebra $\tk {{\k}}{{\k}[X]}\tk {{\k}}{\es SV}\ex {}V$ induced by the comorphism of
$\pi $. Let $L$ be a free submodule of rank $r$ of $\tk {\k}{\k[X]}V$.

\begin{lemma}\label{l2co1}
Let $k$ be a positive integer and $\pi $ an automorphism of $X$. 

{\rm (i)} The restriction of $\pi ^{\#}$ to $D^{\bullet}_{k}(V,L)$ is an isomorphism from 
$D^{\bullet}_{k}(V,L)$ onto $D^{\bullet}_{k}(V,\pi ^{\#}(L))$.

{\rm (ii)} For any positive integer $j$, the image by $\pi ^{-1}$ of the support in $X$ of
the cohomology of degree $j$ of $D^{\bullet}_{k}(V,L)$ is the support in $X$ of the 
cohomology of degree $j$ of $D^{\bullet}_{k}(V,\pi ^{\#}(L))$. 
\end{lemma}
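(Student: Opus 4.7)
The plan is to reduce both parts to the single observation that $\pi^{\#}$ commutes with the differential $\dd$, and then deduce (ii) from (i) via the standard behavior of support under the comorphism $\pi^{\#}$.

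First, for (i), I would verify that $\pi^{\#}$ preserves $\dd$. Recall that $\dd$ is the $\k[X]$-linear derivation of $\tk{\k}{\k[X]}\tk{\k}{\es SV}\ex{}V$ that restricts on $\tk{\k}{\es SV}\ex{}V$ to the canonical derivation defined in Subsection~\ref{co1}. By construction $\pi^{\#}$ acts only on the $\k[X]$-factor and is the identity on the $\tk{\k}{\es SV}\ex{}V$-factor; since $\dd$ is $\k[X]$-linear and only involves the second factor, $\pi^{\#}\rond\dd = \dd\rond\pi^{\#}$. So $\pi^{\#}$ is an automorphism of the graded complex $\tk{\k}{\k[X]}D^{\bullet}(V)$. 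Moreover, the subcomplex $D_{k}^{\bullet}(V)$ sits inside $\tk{\k}{\es SV}\ex{}V$ and is therefore fixed by $\pi^{\#}$, while the definition $D_{k}^{\bullet}(V,L) = D_{k}^{\bullet}(V)[-r]\wedge \ex{r}L$ shows that $\pi^{\#}$ sends $D_{k}^{\bullet}(V,L)$ onto $D_{k}^{\bullet}(V)[-r]\wedge\ex{r}{\pi^{\#}(L)} = D_{k}^{\bullet}(V,\pi^{\#}(L))$. Since $\pi$ is an automorphism of $X$, its comorphism $\pi^{\#}$ on $\k[X]$ is invertible with inverse $(\pi^{-1})^{\#}$, so the restricted map is an isomorphism of graded complexes, giving (i).

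For (ii), the isomorphism of complexes from (i) induces an isomorphism on cohomology modules $H^{j}(D_{k}^{\bullet}(V,L)) \to H^{j}(D_{k}^{\bullet}(V,\pi^{\#}(L)))$ that intertwines the two $\k[X]$-module structures via $\pi^{\#}$: for $f\in\k[X]$ and a cohomology class $[\varphi]$, one has $\pi^{\#}(f\cdot[\varphi]) = \pi^{\#}(f)\cdot\pi^{\#}([\varphi])$. Writing $S_{j}$ and $S'_{j}$ for the supports of these two cohomology modules in $X$, the standard fact on supports under a ring automorphism gives $S'_{j} = \pi^{-1}(S_{j})$: a maximal ideal ${\goth m}_{x}$ annihilates a class iff $\pi^{\#}({\goth m}_{x}) = {\goth m}_{\pi^{-1}(x)}$ annihilates its image. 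This is exactly the asserted relation.

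No real obstacle is expected; the content of the lemma is almost purely formal. The only point requiring a moment of care is the clean identification of how $\pi^{\#}$ sits on the tensor product $\tk{\k}{\k[X]}\tk{\k}{\es SV}\ex{}V$, namely that it acts on the leftmost factor only; once this is in place, the commutation with $\dd$ and the translation of supports via $\pi^{-1}$ are immediate.
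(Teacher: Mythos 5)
Your proposal is correct and follows essentially the same route as the paper: part (i) is the observation that $\pi^{\#}$ is an automorphism of the ambient complex carrying $\ex rL$ to $\ex r{\pi ^{\#}(L)}$, and part (ii) transports the support through the resulting $\pi^{\#}$-semilinear isomorphism on cohomology. The only cosmetic difference is that the paper proves the support statement by hand with the ideals of definition (showing $p^{m}a$ a coboundary forces $\pi^{\#}(p)^{m}\pi^{\#}(a)$ a coboundary, hence $J_{j,\pi }=\pi ^{\#}(J_{j})$), whereas you cite the standard behavior of annihilators/supports under a ring automorphism — the same fact, merely invoked rather than rederived.
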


\begin{proof}
(i) For $i$ positive integer, $\pi ^{\#}(D_{k}^{i}(V,L))$ is equal to 
$D_{k}^{i}(V,\pi ^{\#}(L))$. Hence $\pi ^{\#}(D_{k}^{\bullet}(V,L))$
is equal to $D^{\bullet}_{k}(V,\pi ^{\#}(L))$. As $\pi ^{\#}$ is an automorphism of the 
complex $\tk {\k}{{\k}[X]}D^{\bullet}(V)$, the restriction of
$\pi ^{\#}$ to $D^{\bullet}_{k}(V,L)$ is an isomorphism of the complex
$D^{\bullet}_{k}(V,L)$ onto the complex $D^{\bullet}_{k}(V,\pi ^{\#}(L))$. 
 
(ii) Let $j$ be a positive integer, $J_{j}$ and $J_{j,\pi }$ the ideals of 
definition in ${\k}[X]$ of the supports of the cohomology of degree $j$ of
$D^{\bullet}_{k}(V,L)$ and $D^{\bullet}_{k}(V,\pi ^{\#}(L))$ respectively. If $a$ is a
cocycle of degree $j$ of $D^{\bullet}_{k}(V,L)$ and  $p$ is in $J_{j}$, for $m$
sufficiently big positive integer, $p^{m}a$ is a coboundary of $D^{\bullet}_{k}(V,L)$.
Hence by (i), $\pi ^{\#}(p)^{m}\pi ^{\#}(a)$ is a coboundary of
$D^{\bullet}_{k}(V,\pi ^{\#}(L))$. So $J_{j,\pi }$ contains $\pi ^{\#}(J_{j})$. By the
same argument, $J_{j}$ contains $(\pi ^{-1})^{\#}(J_{j,\pi })$. Hence $J_{j,\pi }$ is
equal to $\pi ^{\#}(J_{j})$, whence the assertion.
\end{proof}

For any $x$ in $X$, denote by $L(x)$ the image of $L$ by the map 
$\varphi \mapsto \varphi (x)$.

\begin{lemma}\label{l3co1}
Let $X'$ be the subset of elements $x$ of $X$ such that $L(x)$ has dimension $r$ and  
${\cal L} := \tk {\k[X]}{\an X{}}L$.

{\rm (i)} The subset $X'$ of $X$ is open and nonempty. Moreover, $X'$ has a finite cover 
by affine open subsets $Y$ which have the following property: 
\begin{itemize}
\item there exists a subspace $E$ of $V$ which is a complement to $L(x)$ in 
$V$ for all $x$ in $Y$.
\end{itemize}

{\rm (ii)} For all positive integer $k$, the support in $X$ of the cohomology of 
$\overline{D}_{k}^{\bullet}(V,L)$ has an empty intersection with $X'$.

{\rm (iii)} Suppose that $X$ is normal and $X'$ is a big open subset of $X$. Then 
$\overline{D}_{k}^{\bullet}(V,L)$ has no cohomology of degree $r$.
\end{lemma}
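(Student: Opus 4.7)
The plan is to establish (i), (ii), (iii) in sequence, with (ii) as the main technical step and (iii) an extension argument.

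For (i), I fix a $\k[X]$-basis $\phi_1,\dots,\phi_r$ of the free module $L$ and a basis of $V$. Then $X'$ is the complement in $X$ of the common zero locus of the $r\times r$ minors of the matrix $(\phi_1(x)\mid\dots\mid\phi_r(x))$, hence open. If every such minor vanished on the irreducible $X$, the $\phi_i$ would be $\k(X)$-linearly dependent, contradicting freeness of $L$, so $X'$ is non-empty. For the cover: given $x_0\in X'$, choose a linear subspace $E\subset V$ complementary to $L(x_0)$; the determinant of the matrix whose columns are the $\phi_i(x)$ together with a basis of $E$ is a regular function on $X$ non-vanishing at $x_0$, whose principal open locus $Y$ is an affine neighbourhood of $x_0$ inside $X'$ on which $L(y)\oplus E=V$ for every $y\in Y$. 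Quasi-compactness of $X'$ then yields a finite subcover.

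For (ii), it suffices to prove acyclicity of $\overline{D}_k^{\bullet}(V,L)$ after tensoring with $\k[Y]$ for any $Y$ as in (i). Over $\k[Y]$, the decomposition $\k[Y]\otimes V=L|_Y\oplus E_Y$, with $E_Y:=\k[Y]\otimes E$, is an isomorphism of free $\k[Y]$-modules of equal rank (iso pointwise on $Y$). It induces a bi-grading on $\sy{k-i}{V}\otimes\ex{i}{V}$ along the $L|_Y$- and $E_Y$-parts. Wedging with a generator of $\ex{r}{L|_Y}$ kills every positive exterior power of $L|_Y$, and in particular kills the $L$-contribution to the derivation $\mathrm{d}$, because $\mathrm{d}(\ell)=\ell$ and $\ex{r+1}{L|_Y}=0$. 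Grouping by the $L|_Y$-symmetric degree $k-j$ one obtains an isomorphism of complexes
\[
D_k^{\bullet}(V,L)\otimes_{\k[X]}\k[Y]\;\cong\;\bigoplus_{j=0}^{k}\sy{k-j}{L|_Y}\otimes_{\k[Y]}D_j^{\bullet}(E_Y)[-r]\wedge\ex{r}{L|_Y}.
\]
By Lemma~\ref{lco1}(ii), $D_j^{\bullet}(E_Y)$ is acyclic for $j\geq 1$. The $j=0$ summand equals $\sy{k}{L|_Y}\wedge\ex{r}{L|_Y}$, concentrated in degree $r$, and is identified with the source of the augmentation. Hence the augmented complex $\overline{D}_k^{\bullet}(V,L)$ is acyclic over $Y$.

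For (iii), part (ii) implies that the only possibly non-zero cohomology of $\overline{D}_k^{\bullet}(V,L)$ is $H^r=Z^r/\mathrm{Im}(\varepsilon)$, where $Z^r:=\ker(\mathrm{d}\colon D_k^r\to D_k^{r+1})$ and $\mathrm{Im}(\varepsilon)\subset Z^r$ automatically since $\ex{r+1}{L}=0$; moreover $H^r$ is supported on $X\setminus X'$. Both $D_k^r(V,L)=\sy{k}{V}\wedge\ex{r}{L}$ and $\mathrm{Im}(\varepsilon)\cong\sy{k}{L}\wedge\ex{r}{L}$ are free $\k[X]$-modules of finite rank. Given a cocycle $c\in Z^r$, part (ii) gives $c|_{X'}=\varepsilon(s')$ for some $s'\in(\sy{k}{L}\wedge\ex{r}{L})\otimes_{\k[X]}\k[X']$; since $X$ is normal and $X'$ is a big open subset, Hartogs' extension yields $\k[X']=\k[X]$, so writing $s'$ in a $\k[X]$-basis of $\sy{k}{L}\wedge\ex{r}{L}$ shows that it lifts to some $s\in\sy{k}{L}\wedge\ex{r}{L}$ with $s|_{X'}=s'$. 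Then $c-\varepsilon(s)$ is an element of the free $\k[X]$-module $D_k^r$ that vanishes on the dense open $X'$, hence is zero, so $c=\varepsilon(s)$ and $H^r=0$.

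The main obstacle is part (ii): one must check carefully that the additive decomposition over $Y$ is in fact a decomposition of complexes, which relies on the fact that the $L|_Y$-part of the derivation is annihilated by the omnipresent factor $\ex{r}{L|_Y}$ at the top exterior degree. Once the splitting is cleanly set up, Lemma~\ref{lco1}(ii) does the rest. Part (i) is a standard rank-of-matrix argument and part (iii) is a routine combination of Hartogs' extension on normal varieties with torsion-freeness of the ambient free module.
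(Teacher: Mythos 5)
Your proof is correct, and on parts (i) and (iii) it coincides with the paper's: (i) is the same rank-of-evaluations and pointwise-complement argument, and (iii) is the same extension of the local augmentation preimages across the big open subset $X'$ of the normal variety $X$, using that $\tk {{\k}[X]}{\sy kL}\ex rL$ and $D_{k}^{r}(V,L)$ are free, hence torsion free. The only real divergence is in (ii). The paper fixes $x_{0}$ in $Y$ and introduces the pointwise linear automorphism $\tau (x)$ of $V$ that is the identity on $E$ and carries $L(x)$ onto $L(x_{0})$ modulo $E$; the induced automorphism $\overline{\tau }$ of $\tk {{\k}}{{\k}[Y]}\tk {{\k}}{\es SV}\ex {}V$ commutes with $\dd$ and carries $\overline{D}_{k}^{\bullet}(V,L_{Y})$ onto $\tk {{\k}}{{\k}[Y]}\overline{D}_{k}^{\bullet}(V,L(x_{0}))$, so the constant-subspace case, Lemma~\ref{lco1}(iii), applies. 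You instead split $\tk {\k}{{\k}[Y]}V=L\vert _{Y}\oplus E_{Y}$ and rerun the proof of Lemma~\ref{lco1}(iii) with the free module $L\vert _{Y}$ in place of a constant subspace, using that the $L$-component of $\dd$ is killed by $\ex r{L\vert _{Y}}$. Both devices are sound; yours avoids verifying that $\overline{\tau }$ is a morphism of complexes, the paper's avoids redoing the splitting over a nonconstant $L$. One small point you should make explicit in (iii): part (ii) produces a preimage $s'_{Y}$ on each $Y$ of the cover, and these glue to a section over all of $X'$ only because the augmentation is injective, so that $s'_{Y}$ is unique on overlaps; the paper is equally terse here, so this is a remark rather than a gap.
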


\begin{proof}
(i) Let $\poi {\eta }1{,\ldots,}{r}{}{}{}$ be a basis of $L$. For all $x$ in $X$,
$L(x)$ is the subspace of $V$ generated by $\poi x{}{,\ldots,}{}{\eta }{1}{r}$. 
Then $X'$ is a nonempty open subset of $X$. Let $x$ be in $X'$. Let $E$ be a complement
to $L(x)$ in $V$. Then, for all $y$ in an open neighborhood $Y_{x}$ of $x$ in $X$,
$L(y)$ has dimension $r$ and $E$ is a complement to $L(y)$ in $V$. In particular,
$Y_{x}$ is contained in $X'$. 

(ii) Let $k$ be a positive integer and $Y$ an affine open subset of $X'$ which 
satisfies the condition of (i). Denoting by $L_{Y}$ the space of sections of 
${\cal L}$ above $Y$, we have to prove that $\overline{D}_{k}^{\bullet}(V,L_{Y})$ is 
acyclic.

Let $x_{0}$ be in $Y$. For $x$ in $Y$, denote by $\tau (x)$ the linear automorphism of 
$V$ such that $\tau (x)(v)=v$ for all $v$ in $E$ and for $w$ in $L(x)$, $\tau (x)(w)$
is the element of $L(x_{0})$ such that $w-\tau (x)(w)$ is in $E$. Let $\overline{\tau }$ 
be the automorphism of the algebra $\tk {{\k}}{{\k}[Y]}\tk {{\k}}{\es SV}\ex {}V$ 
such that $\overline{\tau }(\varphi )$ is the map 
$$ \xymatrix{ Y \ar[rr] &&  \tk {{\k}}{\es SV}\ex {}V}, \qquad 
x \longmapsto \tau (x)(\varphi (x)) .$$
The images of $L_{Y}$ and $\overline{D}_{k}^{\bullet}(V,L_{Y})$ by $\overline{\tau }$ are
equal to $\tk {{\k}}{{\k}[Y]}L_{Y}(x_{0})$ and
$\tk {{\k}}{{\k}[Y]}\overline{D}_{k}^{\bullet}(V,L_{Y}(x_{0}))$ respectively. Moreover, 
the map
$$ \xymatrix{ \overline{D}_{k}^{\bullet}(V,L_{Y}) \ar[rr]^-{\overline{\tau }} &&
\tk {{\k}}{{\k}[Y]}\overline{D}_{k}^{\bullet}(V,L_{Y}(x_{0}))}$$
is an isomorphism of graded complexes. Hence by lemma \ref{lco1}(iii), 
$\overline{D}_{k}^{\bullet}(V,L_{Y})$ is acyclic.

(iii) Let $a$ be a cocycle of degree $r$ of $\overline{D}_{k}^{\bullet}(V,L)$. By (ii), 
the restriction of $a$ to $Y$ is the image by $\dd $ of a unique element $\varphi $ 
of $\tk {{\k}[Y]}{\sy k{L_{Y}}}\ex r{L_{Y}}$. So, by (i), the restriction of $a$ to $X'$ 
is the image by $\dd $ of a section above $X'$ of
$\tk {\an X{}}{\sy k{\cal L}}\ex r{{\cal L}}$. As $L$ is a free module, 
$\tk {{\k}[X]}{\sy kL}\ex rL$ is a free module and any section above $X'$ of
$\tk {\an X{}}{\sy k{\cal L}}\ex r{{\cal L}}$ is the restriction to $X'$ of an element of
$\tk {{\k}}{\sy kL}\ex rL$ since $X'$ is a big open subset of the normal variety 
$X$. Hence $a$ is a coboundary.
\end{proof}

\subsection{Some equivalence} \label{co2}
Let $L$ be a free submodule of rank $r$ of $\tk {\k}{\k[X]}V$ such that $r < \dim V$.
For $k$ nonnegative integer, denote by $K_{k}^{\bullet}(V,L)$ the graded subcomplex of 
$\tk {\k}{\k[X]}D^{\bullet}(V)$ whose subspace of degree $i$ is
$$ K_{k}^{i}(V,L) := \tk {\k}{\sy {k-i}L}\ex iV $$
for $i=0,\ldots,k$. In particular, for $k=0$ and $i$ positive, $K_{k}^{i}(V,L)=\{0\}$
and for $k$ positive, $K_{k}^{i}(V,L)=\{0\}$ for $i>k$ or $i>\dim V$. The map 
$$ \xymatrix{ K_{k}^{k}(V,L) \ar[rr]^-{\theta _{k}} && D_{k}^{k+r}(V,L)}, \qquad
\varphi \longmapsto \varphi \wedge \eta $$ 
with $\eta $ a generator of $\ex rL$, is an augmentation of $K_{k}^{\bullet}(V,L)$.
Denote by $\overline{K}_{k}^{\bullet}(V,L)$ the augmented complex so defined.

\begin{lemma}\label{lco2}
Let $X'$ be the subset of elements $x$ of $X$ such that $L(x)$ has dimension $r$.

{\rm (i)} For $k=1,\ldots,\dim V-r$, the support of the cohomology of 
$\overline{K}_{k}^{\bullet}(V,L)$ is contained in $X\setminus X'$.

{\rm (ii)} The complex $K_{k}^{\bullet}(V,L)$ has no cohomology of degree $0$.
\end{lemma}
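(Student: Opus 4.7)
The strategy is to reduce part (i) to the acyclicity statement of Lemma~\ref{lco1} by a local trivialization of $L$, and to settle part (ii) by a direct computation using a global $\k[X]$-basis of $L$.

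For part (i), I would first invoke Lemma~\ref{l3co1}(i) to cover $X'$ by affine open subsets $Y$ on which $L$ admits a basis $\eta_{1},\ldots,\eta_{r}$ and $V$ admits a fixed subspace $E$ which is a complement to $L(x)$ for every $x\in Y$. Then, exactly as in the proof of Lemma~\ref{l3co1}(ii), the automorphism $\overline{\tau}$ of $\tk {\k}{\k[Y]}\tk {\k}{\es SV}\ex {}V$ which fixes $E$ and transports $L_{Y}$ onto $\tk {\k}{\k[Y]}L(x_{0})$ reduces the situation to the constant case where $L$ is a fixed $r$-dimensional subspace $L_{0}$ of $V$ with $V=L_{0}\oplus E$. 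In that setting, the decomposition $\ex {}V = \ex {}L_{0}\otimes \ex {}E$ yields, in degree $i$,
$$ K_{k}^{i}(V,L_{0}) \;=\; \bigoplus_{q=0}^{i} \ex {q}E \otimes D_{k-q}^{i-q}(L_{0}) ,$$
and since $\dd$ vanishes on $\ex {}E$ and sends $L_{0}$ into $\ex {1}L_{0}$, the differential $\delta$ preserves each $\ex {q}E$-factor and acts on the second factor as the $D$-complex differential on $D_{k-q}^{\bullet}(L_{0})$. By Lemma~\ref{lco1}(ii), $D_{k-q}^{\bullet}(L_{0})$ is acyclic for $q<k$, while $D_{0}^{\bullet}(L_{0})=\k$ sits in degree $0$, so the cohomology of $K_{k}^{\bullet}(V,L_{0})$ is concentrated in degree $k$ and equals the summand $\ex {k}E \subset \ex {k}V = K_{k}^{k}$.

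To finish (i), it remains to analyze the augmentation. With $\eta$ a generator of $\ex {r}L_{0}$, wedging with $\eta$ annihilates every summand $\ex {p}L_{0}\wedge \ex {k-p}E$ with $p\geq 1$ (because $\ex {p+r}L_{0}=0$) and restricts to an isomorphism $\ex {k}E \xrightarrow{\sim} \ex {k}E\wedge \eta = D_{k}^{k+r}(V,L_{0})$. Combined with the cohomology computation, this shows that $\ker \theta_{k}$ equals $\text{image}(\delta)$ in degree $k$ and that $\theta_{k}$ is surjective onto $D_{k}^{k+r}(V,L_{0})$; hence $\overline{K}_{k}^{\bullet}(V,L_{Y})$ is acyclic on $Y$. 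Since such $Y$'s cover $X'$, assertion (i) follows.

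For part (ii), I would fix a global $\k[X]$-basis $\eta_{1},\ldots,\eta_{r}$ of $L$ and expand a cocycle $a\in K_{k}^{0}(V,L)=\sy {k}L$ as $a = \sum_{|I|=k} c_{I}\eta^{I}$ with $c_{I}\in \k[X]$. A direct Leibniz computation using $\dd \eta_{j}=\eta_{j}\in \ex {1}V$ gives
$$ \delta(a) \;=\; \sum_{|J|=k-1}\sum_{j=1}^{r} (J_{j}+1)\,c_{J+e_{j}}\;\eta^{J}\otimes \eta_{j} \;\in\; \sy {k-1}L\otimes L .$$
I would then extend scalars to the function field $K=\k(X)$: since $L(x)$ has dimension $r$ at a generic point, the $\eta_{j}$ are $K$-linearly independent in $K\otimes V$, so they extend to a $K$-basis and in particular the elements $\eta^{J}\otimes \eta_{j}$ (for $|J|=k-1$, $1\leq j\leq r$) are $K$-linearly independent in $K\otimes \sy {k-1}V\otimes \ex {1}V$. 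From $\delta(a)=0$ and characteristic zero one concludes that $c_{J+e_{j}}=0$ for every $(J,j)$, hence $c_{I}=0$ for every $I$ of weight $k\geq 1$, so $a=0$.

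The only mildly delicate point is the identification $K_{k}^{\bullet}(V,L_{0})\cong \bigoplus_{q} \ex {q}E\otimes D_{k-q}^{\bullet-q}(L_{0})$ in part (i) as complexes, and in particular the verification that $\delta$ respects the $\ex {q}E$-factor; this is purely formal once one recalls that $\dd$ is a derivation vanishing on $\ex {}V$ and that $\dd$ sends $L_{0}\subset \es SV$ into $L_{0}\subset \ex {1}V$. Everything else reduces to Lemma~\ref{lco1} and to the elementary linear-algebra fact recalled above.
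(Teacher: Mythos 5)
Your proof of (i) is correct and is essentially the paper's own argument: the same local decomposition $K_{k}^{\bullet}(V,L_{Y})\cong\bigoplus_{q}\ex qE\otimes D_{k-q}^{\bullet}(L_{Y})[-q]$, the same appeal to Lemma~\ref{lco1}(ii), and the same identification of $\ker\theta_{k}$ with the coboundaries in degree $k$.

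For (ii) you take a genuinely different route. The paper deduces (ii) in one line from (i): the degree-$0$ cohomology is a submodule of $\sy kL$ supported in the proper closed subset $X\setminus X'$, and $\sy kL$ is torsion free over the domain $\k[X]$, so it vanishes. Your argument instead computes $\delta$ explicitly on monomials in a basis $\poi {\eta }1{,\ldots,}{r}{}{}{}$ of $L$ and uses linear independence of the $\eta ^{J}\tens \eta _{j}$ over $\k(X)$ together with characteristic zero to kill all coefficients. Both are valid; yours is self-contained and does not rely on (i), while the paper's is shorter and reuses the support statement it has just proved. (Note that the linear independence you need already follows from $L$ being a free submodule of rank $r$ of the torsion-free module $\tk {\k}{\k[X]}V$; the generic $r$-dimensionality of $L(x)$, i.e.\ $X'\neq \emptyset$ from Lemma~\ref{l3co1}(i), gives the same thing.)
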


\begin{proof}
(i) Let $Y$ be an affine open subset of $X'$ and $E$ a subspace of $V$ satisfying the 
condition of Lemma~\ref{l3co1}(i). Set
$$ L_{Y} := \tk {\k[X]}{\k[Y]}L \quad  \text{and} \quad
D(L_{Y}) := \tk {\k[Y]}{\es S{L_{Y}}}\ex {}{L_{Y}}$$
so that $D(L_{Y})$ is a differential graded subalgebra of $\tk {\k}{\k[Y]}D(V)$. For
$k$ nonnegative integer, set:
$$D_{k}^{\bullet}(L_{Y}) := \bigoplus _{i=0}^{k} \tk {\k[Y]}{\sy {k-i}{L_{Y}}}
\ex i{L_{Y}}.$$ 
Then the complex $K_{k}^{\bullet}(V,L_{Y})$ is isomorphic to
$$ \bigoplus _{j=0}^{k} \tk {\k}{D_{k-j}^{\bullet}(L_{Y})[-j]}{\ex jE} .$$
Hence $K_{k}^{\bullet}(V,L_{Y})$ has no cohomology of degree different from $k$ by 
Lemma~\ref{lco1}(ii) since $L_{Y}$ is a free module. Moreover, the space of cocycles of 
degree $k$ of $\overline{K}_{k}^{\bullet}(V,L_{Y})$ is equal to
$$ \bigoplus _{j=1}^{k} \ex j{L_{Y}}\wedge \ex {k-j}E.$$ 
Hence $\overline{K}_{k}^{\bullet}(V,L_{Y})$ is acyclic and the support of 
the cohomology of $\overline{K}_{k}^{\bullet}(V,L)$ is contained in $X\setminus X'$
since $X'$ has a cover by affine open subsets satisfying the condition of 
Lemma~\ref{l3co1}(i).

(ii) By (i), $K_{k}^{\bullet}(V,L)$ has no cohomology of degree $0$ since 
$\sy kL$ is a torsion free module.
\end{proof}

Denote by $\poi {\eta }1{,\ldots,}{r}{}{}{}$ a basis of $L$. Let ${\goth I}$ and
${\goth I}_{j}$ be as in the Tables of Notations (the sets) for $n=r$. For
$\iota   = \poi i1{ < \cdots < }{j}{}{}{}$ in ${\goth I}_{j}$, 
set:
$$ \{\iota \} := \{\poi i1{,\ldots,}{j}{}{}{}\}, \qquad \vert \iota   \vert = j, \qquad
\eta _{\iota } := \poi {\eta }{i_{1}}{ \wedge \cdots \wedge }{i_{j}}{}{}{} .$$
For $s=(\poi s1{,\ldots,}{r}{}{}{})$ in ${\Bbb N}^{r}$, set:
$$ \vert s \vert := \poi s1{+\cdots +}{r}{}{}{} \quad  \text{and} \quad
\eta ^{s} := \poie {\eta }1{\cdots }{r}{}{}{}{{s_{1}}}{{s_{r}}}.$$
For $k=1,\ldots,\dim V-r$ and $j$ nonnegative integer, denote by
$K_{k,j}^{\bullet}(V,L)$ the graded subcomplex of $K_{k}^{\bullet}(V,L)$ whose subspace
of degree $i$ is
$$ K_{k,j}^{i}(V,L) := \tk {\k}{\sy {k-i}L}{\ex {i-j}V}\wedge \ex jL .$$
In particular, $K_{k,j}^{i}(V,L)=\{0\}$ for $i>k$ or $i<j$ or $j>\sup\{r,k\}$.
Let $Z_{k}^{i}$ and $B_{k}^{i}$ be the spaces of cocycles and coboundaries of degree $i$
of $K_{k}^{\bullet}(V,L)$. Denote by $Z_{k,j}^{i}$ the space of elements $\varphi $
of $K_{k,j}^{i}(V,L)$ such that $\dd \varphi $ is in $K_{k,j+2}^{i+1}(V,L)$. 

\begin{lemma}\label{l2co2}
Let $k=1,\ldots,\dim V -r$. Suppose that $\ex {i-1}V\wedge L$ is the kernel of 
$\theta _{i}$ for $i=1,\ldots,k$. 

{\rm (i)} For $j=0,\ldots,k$ and $i=j,\ldots,k$, $Z_{k,j}^{i}$ is contained in
$B_{k}^{i}+K_{k,j+1}^{i}(V,L)$.

{\rm (ii)} The complex $\overline{K}_{k}^{\bullet}(V,L)$ is acyclic.
\end{lemma}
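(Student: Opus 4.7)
My plan is to prove (i) first by an inductive argument that exploits the hypothesis on $\ker\theta_i$ together with a canonical filtration of $K_k^i(V,L)$, and then to deduce (ii) from (i) combined with the acyclicity of a naturally occurring diagonal subcomplex. The key structural observation is the following: a direct computation of $\delta(\omega\wedge\ell\otimes\mu)$ for $\omega\in\ex{i-j}V$, $\ell\in\ex{j}L$, $\mu\in\sy{k-i}L$ shows that $\delta$ sends $K_{k,j}^i(V,L)$ into $K_{k,j+1}^{i+1}(V,L)$, since $\dd\mu\in\sy{k-i-1}L\otimes L$ contributes the extra $L$-factor to the exterior part. Consequently the diagonal subspace $\bigoplus_{j=0}^{k}K_{k,j}^{j}(V,L)$ is a subcomplex of $K_{k}^{\bullet}(V,L)$ isomorphic to $D_{k}^{\bullet}(L)$, which is acyclic by Lemma~\ref{lco1}(ii) as $k\geq 1$.

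For part (i), I fix $i\in\{1,\ldots,k-1\}$ and prove by induction on $j$ from $0$ to $i$ that $Z_k^i\subset B_k^i+K_{k,j}^i(V,L)$; the base case $j=0$ is trivial since $K_{k,0}^i(V,L)=K_k^i(V,L)$. For the inductive step, given a representative $\gamma\in Z_k^i\cap K_{k,j}^i(V,L)$, I decompose $\gamma=\sum_{s}\gamma_s\otimes\mu_s$ in a basis $\{\mu_s\}$ of $\sy{k-i}L$ with $\gamma_s\in\ex{i-j}V\wedge\ex{j}L$. The vanishing of $\delta\gamma$, which a priori lands in $K_{k,j+1}^{i+1}(V,L)$, produces constraints of the form $\gamma_s\wedge\lambda=0$ for the various $\lambda\in L$ appearing in the expansion of $\dd\mu_s$; wedging these further with the complementary factors needed to reconstruct the generator $\eta$ of $\ex{r}L$ and invoking the hypothesis $\ker\theta_{i-j}=\ex{i-j-1}V\wedge L$ forces $\gamma_s\in\ex{i-j-1}V\wedge L\wedge\ex{j}L=\ex{i-j-1}V\wedge\ex{j+1}L$, so that $\gamma\in K_{k,j+1}^i(V,L)$ modulo a boundary correction absorbing the off-diagonal cross-terms.

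For part (ii), I combine (i) with four observations. In degree $0$, $\overline{K}_k^\bullet(V,L)$ has trivial cohomology by Lemma~\ref{lco2}(ii). The augmentation $\theta_k\colon\ex{k}V\to\ex{k}V\wedge\ex{r}L$, $\varphi\mapsto\varphi\wedge\eta$, is manifestly surjective, giving acyclicity in the augmented top position. In degree $k$, the direct computation $\delta(\omega\otimes\lambda)=\pm\,\omega\wedge\lambda$ yields $\delta(\ex{k-1}V\otimes L)=\ex{k-1}V\wedge L$, which by the hypothesis equals $\ker\theta_k$, so $Z_k^k=B_k^k$ in the augmented complex. Finally, for $1\leq i\leq k-1$, any cocycle $\alpha\in Z_k^i$ decomposes by (i) as $\alpha=\beta+\gamma$ with $\beta\in B_k^i$ and $\gamma\in K_{k,i}^i(V,L)$; since $\delta\alpha=0=\delta\beta$, we have $\delta\gamma=0$, so $\gamma$ is a cocycle of the diagonal subcomplex $\bigoplus_j K_{k,j}^{j}(V,L)\cong D_k^\bullet(L)$. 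By acyclicity of the latter, $\gamma=\delta\gamma'$ for some $\gamma'\in K_{k,i-1}^{i-1}(V,L)\subset K_k^{i-1}(V,L)$, whence $\alpha\in B_k^i$.

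The main obstacle will be making the inductive step in (i) fully rigorous. Extracting the pointwise conclusion that each $\gamma_s$ acquires an extra $L$-factor from the coupled system of equations produced by $\delta\gamma=0$ requires a careful indexing of how $\eta$ is rebuilt from partial wedgings with the $\ell_s$'s, together with a clean procedure for absorbing the cross-terms that arise when distinct $\mu_s$ share common symmetric factors into a coboundary correction before invoking the hypothesis on $\ker\theta_{i-j}$.
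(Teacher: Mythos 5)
Your overall strategy coincides with the paper's: filter $K_{k}^{i}(V,L)$ by the number of $L$-factors in the exterior part, prove (i) by pushing a cocycle one step up this filtration at a time using the hypothesis on $\ker\theta _{i-j}$, and deduce (ii) from (i) together with the acyclicity of the diagonal subcomplex $\bigoplus _{j}K_{k,j}^{j}(V,L)\cong D_{k}^{\bullet}(L)$ (your phrasing of this last point is in fact cleaner than the paper's appeal to Lemma~\ref{lco1}(ii)). Part (ii) of your argument is complete modulo (i). But the inductive step of (i) is the entire content of the lemma, and your sketch of it would fail as written.

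Concretely: writing $\gamma =\sum_{s}\gamma _{s}\tens \eta ^{s}$ over the monomial basis of $\sy {k-i}L$, the condition $\delta \gamma =0$ does \emph{not} produce relations of the form $\gamma _{s}\wedge \lambda =0$ for individual $s$; it produces, for each monomial $\eta ^{s'}$ of degree $k-i-1$, one coupled relation $\sum_{l}(s'_{l}+1)\,\eta _{l}\wedge \gamma _{s'+e_{l}}=0$, $e_{l}$ denoting the $l$-th coordinate vector of ${\Bbb N}^{r}$. Moreover, even granted a relation $\gamma _{s}\wedge \eta =0$, it is vacuous for $j\geq 1$ since $\gamma _{s}$ already contains a factor from $\ex jL$; and the hypothesis concerns $\theta _{i-j}$ acting on $\ex {i-j}V$, so it must be applied to the coefficients $\varphi _{s,\iota }\in \ex {i-j}V$ of an expansion $\gamma _{s}=\sum_{\iota \in {\goth I}_{j}}\eta _{\iota }\wedge \varphi _{s,\iota }$, not to $\gamma _{s}$ itself. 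The paper decouples the system by wedging each relation with the complementary products $\eta _{\upsilon (l,\iota )}$ and, crucially, by restricting to the affine cover of $X'$ furnished by Lemma~\ref{l3co1}(i), where $V=E\oplus L(x)$: there $\cdot \wedge \eta $ kills the $L$-component of each $\varphi _{s,\iota }$ and is injective on $\ex {i-j}E$, which is what converts the coupled relations into the vanishing of the $E$-components, and the global conclusion $\varphi _{s,\iota }\wedge \eta =0$ then follows by torsion-freeness. This local splitting appears nowhere in your proposal. Finally, the conclusion is not that every term lands in $K_{k,j+1}^{i}(V,L)$ "modulo a boundary correction": the terms $\eta _{\iota }\wedge \varphi _{s,\iota }\tens \eta ^{s}$ for which every index $l$ with $s_{l}\neq 0$ already lies in $\{\iota \}$ receive no constraint whatsoever from $\delta \gamma =0$ and are disposed of separately by an explicit primitive, $\pm\,\eta _{\iota }\wedge \varphi _{s,\iota }\tens \eta ^{s}=\dd \bigl(\tfrac{1}{s_{l}+1}\eta _{\iota '}\wedge \varphi _{s,\iota }\tens \eta _{l}\eta ^{s}\bigr)$ with $\{\iota '\}=\{\iota \}\setminus \{l\}$. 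Distinguishing these two disjoint classes of terms and treating them by different mechanisms is the missing idea, not a technicality to be deferred.
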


\begin{proof}
(i) For $i=0,\ldots,k$ and $\varphi := (\varphi _{s,\iota }, \, (s,\iota ) \in
{\Bbb N}^{r}_{k-i}\times {\goth I}_{j})$ in $\tk {\k}{\k[X]}\ex {i-j}V$, denote by
$\nu _{1}(\varphi )$ the biggest integer such that for some $(s,\iota )$,
$$ \varphi _{s,\iota } \neq 0 \quad  \text{and} \quad
\nu _{1}(\varphi ) = k-i - \sup \{\poi s1{,\ldots,}{r}{}{}{}\} .$$
Let $E(\varphi )$ be the subset of elements $(s,\iota )$ sayisfying these two conditions.
Set $\nu _{2}(\varphi ) := \vert E(\varphi ) \vert$ and
$\nu (\varphi ) := (\nu _{1}(\varphi ),\nu _{2}(\varphi ))$.
For $\psi $ in $K_{k,j}^{i}(V,L)$, denote by
$\nu (\psi )$ the smallest element of ${\Bbb N}^{2}$ such that for some
$\varphi := (\varphi _{s,\iota }, \, (s,\iota ) \in {\Bbb N}^{r}_{k-i}
\times {\goth I}_{j})$ in $\tk {\k}{\k[X]}\ex {i-j}V$,
$$\nu (\varphi )=\nu (\psi ) \quad  \text{and} \quad
\psi = \sum_{(s,\iota )\in {\Bbb N}^{r}_{k-i}\times {\goth I}_{j}}
\eta ^{s}\tens \varphi _{s,\iota }\wedge \eta _{\iota } .$$

For $\iota $ in ${\goth I}_{j}$ and $l$ in $\{1,\ldots,r\}\setminus \{\iota \}$,
denote by $\upsilon (l,\iota )$ the element of ${\goth I}_{r-j-1}$ such that
$\{\upsilon (l,\iota )\}$ is the complement to $\{l\}\cup \{\iota \}$ in
$\{1,\ldots,r\}$. Let $\varepsilon (l,\iota )$ be the element of $\{\pm 1\}$ such that
$$ \eta _{\iota }\wedge \eta _{l}\wedge \eta _{\upsilon (l,\iota )}=
\varepsilon (l,\iota ) \eta .$$

Prove by induction on $\nu =(\nu _{1},\nu _{2})$ that for $\varphi $ in $Z_{k,j}^{i}$
such that $\nu (\varphi )= \nu $, $\varphi $ is in $B_{k}^{i}+K_{k,j+1}^{i}(V,L)$. Let
$\tilde{\varphi } := (\varphi _{s,\iota }, \, (s,\iota ) \in
{\Bbb N}^{r}_{k-i}\times {\goth I}_{j}$) be such that ,
$$ \varphi = \sum_{(s,\iota )} \eta ^{s}\tens \varphi _{s,\iota }\wedge \eta _{\iota }
\quad  \text{and} \quad \nu (\varphi ) = \nu (\tilde{\varphi }) = \nu .$$
Suppose $\nu _{1}= 0$. For $(s,\iota )$ in ${\Bbb N}^{r}_{k-i}\times {\goth I}_{j}$
such that $\varphi _{s,\iota }\neq 0$, for some $m$ in $\{1,\ldots,r\}$,
$\eta ^{s}=\eta _{m}^{k-i}$. For such $(s,\iota )$,
$$ \eta ^{s}\tens \varphi _{s,\iota }\wedge \eta _{\iota } = \pm
\frac{1}{k-i+1} \dd \eta _{m}^{k-i+1}\tens \varphi _{s,\iota }\wedge \eta _{\iota '} $$
if $m$ is in $\iota $ and $\iota '$ is the element of ${\goth I}_{j-1}$ such that
$\{\iota \}=\{m\}\cup \{\iota '\}$. Otherwise,
$$ (k-i)\eta _{m}^{k-i-1}\tens \varphi _{s,\iota }\wedge \eta _{\iota }\wedge \eta _{m}
\wedge \eta _{\upsilon (m,\iota )} = 0 .$$
So, by hypothesis,
$\varphi _{s,\iota } \in \ex {i-j-1}V\wedge L$. As a result, $\varphi $ is in
$B_{k}^{i}+K_{k,j+1}^{i}(V,L)$.

Suppose $\nu _{1}>0$ and the assertion true for the elements of ${\Bbb N}^{2}$ smaller
than $\nu $. Let $(s_{*},\iota _{*})$ be in ${\Bbb N}^{r}_{k-i}\times {\goth I}_{j}$
such that $\nu _{1} = k-i - \sup \{\poi s{*,1}{,\ldots,}{*,r}{}{}{}\}$ and
$\varphi _{s_{*},\iota _{*}}\neq 0$. Denote by $\Lambda $ the subset of elements
$\lambda $ of $K_{k,j}^{i}(V,L)$ such that
$\nu (\lambda )$ is smaller than $\nu $. Let $m$ be an integer such that
$s_{*,m}=k-i-\nu _{1}$. If $m$ is in $\{\iota _{*}\}$ then for some $\lambda $ in
$\Lambda $,
$$ \eta ^{s_{*}}\tens \varphi _{s_{*},\iota _{*}}\wedge \eta _{\iota _{*}} =
\pm \frac{1}{s_{*,m}+1} \dd (\eta _{m}\eta ^{s_{*}}\tens \varphi _{s_{*},\iota _{*}}
\wedge \eta _{\iota '})  + \lambda $$
with $\iota '$ the element of ${\goth I}_{j-1}$ such that
$\{\iota _{*}\}=\{m\}\cup \{\iota '\}$. So, by induction hypothesis, $\varphi $ is in
$B^{i}_{k}+K^{i}_{k,j+1}(V,L)$ since
$$ \nu (\varphi - \pm \frac{1}{s_{*,m}+1}
\dd (\eta _{m}\eta ^{s_{*}}\tens \varphi _{s_{*},\iota _{*}}\wedge \eta _{\iota '}))
< \nu .$$

Suppose that $m$ is not in $\{\iota _{*}\}$ and denote by $s'$ the element of
${\Bbb N}^{r}_{k-i-1}$ such that $s'_{m}=s_{*,m}-1$ and
$s'_{*,l}=s_{l}$ for $l\neq m$. Let $E_{*}$ be the set of elements $(s,\iota )$ of
${\Bbb N}^{r}_{k-i}\times {\goth I}_{j}\setminus \{(s_{*},\iota _{*})\}$ such that
$$ \varphi _{s,\iota }\neq 0, \exists ! q_{\iota } \in \{1,\ldots,r\}\setminus \{m\},
{\mathrm {such \, that }} $$
$$ p\neq q_{\iota } \Rightarrow s_{p} = s'_{p}, \, 
s_{q_{\iota }} = s'_{q_{\iota }}+1, \quad \{m\}\cup \{\iota _{*}\}=
\{q_{\iota }\}\cup \{\iota \} .$$
As $\varphi $ is in $Z^{i}_{k,j}$,
$$s_{*,m}\varphi _{s_{*},\iota _{*}}\wedge \eta _{\iota _{*}}\wedge \eta _{m}
+ \sum_{(s,\iota )\in E_{*}} s_{q_{\iota }}\varphi _{s,\iota }\wedge \eta _{\iota }
\wedge \eta _{q_{\iota }} \in \ex {i-j-1}V\wedge \ex {j+2}L .$$
As a result, after a multiplication of the left hand side by 
$\eta _{\upsilon (m,\iota _{*})}$,
$$ \varepsilon (m,\iota _{*})s_{*,m}\varphi _{s_{*},\iota _{*}}\wedge
\eta + \sum_{(s,\iota )\in E_{*}} \varepsilon (q_{\iota },\iota )s_{q_{\iota }}
\varphi _{s,\iota }\wedge \eta = 0 .$$
So, by hypothesis, for some $\psi $ in $\ex {i-j-1}V\wedge L$,
$$ \varphi _{s_{*},\iota _{*}} = -\varepsilon (m,\iota _{*})\frac{1}{s_{*,m}}
\sum_{(s,\iota )\in E_{*}} \varepsilon (q_{\iota },\iota )s_{q_{\iota }}
\varphi _{s,\iota } + \psi ,$$
whence
$$ \eta ^{s_{*}}\tens \varphi _{s_{*},\iota _{*}}\wedge \eta _{\iota _{*}} +
\sum_{(s,\iota )\in E_{*}} \eta ^{s}\tens \varphi _{s,\iota }\wedge \eta _{\iota } =
\sum_{(s,\iota )\in E_{*}} \mu _{s,\iota } +
\eta ^{s_{*}}\tens \psi \wedge \eta _{\iota _{*}}$$
with
$$ \mu _{s,\iota } := \frac{1}{s_{*,m}}\eta ^{s'}(
-\varepsilon (m,\iota _{*})\varepsilon (q_{\iota },\iota )s_{q_{\iota }}\eta _{m}\tens
\varphi _{s,\iota }\wedge \eta _{\iota _{*}} + s_{*,m}\eta _{q_{\iota }}\tens
\varphi _{s,\iota }\wedge \eta _{\iota }) $$
for $(s,\iota )$ in $E_{*}$. Let $\iota '$ be the element of ${\goth I}_{j-1}$ such that
$\{\iota _{*}\}=\{\iota '\}\cup \{q_{\iota }\}$ and
$$ \tilde{\mu }_{s,\iota } := (-1)^{i-j}\frac{1}{s_{*,m}}
\varepsilon (m,\iota _{*})\varepsilon (q_{\iota },\iota )\eta ^{s'}\eta _{m}
\eta _{q_{\iota }}\tens \varphi _{s,\iota }\wedge \eta _{\iota '}.$$
Then 
$$ \mu _{s,\iota } = \dd \tilde{\mu }_{s,\iota } + \lambda _{s,\iota } 
- \tau \eta ^{s}\tens \varphi _{s,\iota }\wedge \eta _{\iota },$$
for some $\lambda _{s,\iota }$ in $\Lambda $ and $\tau $ in $\{0,2\}$. As a matter of
fact, $\tau = 0$ if 
$$ -\varepsilon (m,\iota _{*})\varepsilon (q_{\iota },\iota )
\eta _{\iota '}\wedge \eta _{m} = \eta _{\iota }$$
and $2$ otherwise. As a result,
$$ \nu (\varphi - \sum_{(s,\iota )\in E_{*}}
\dd \tilde{\mu }_{s,\iota } -
\eta ^{s_{*}}\tens \psi \wedge \eta _{\iota _{*}}) < \nu .$$
Then, by induction hypothesis, $\varphi \in B^{i}_{k} + K^{i}_{k,j+1}(V,L)$
since $\eta ^{s_{*}}\tens \psi \wedge \eta _{\iota _{*}}$ is in $K^{i}_{k,j+1}(V,L)$.

(ii) Let $i=0,\ldots,k$. Since $Z_{k}^{i}\cap K_{k,j}^{i}(V,L)$ is contained in
$Z_{k,j}^{i}$ for $j=0,\ldots,i$, $Z_{k}^{i}$ is contained in
$B_{k}^{i}+K_{k,i}^{i}(V,L)$ by (i) and an induction on $j$. As $L$ is a free module,
the complex $D_{k}^{\bullet}(L)$ is acyclic by Lemma~\ref{lco1}(ii). Hence
$Z_{k}^{i}=B_{k}^{i}$ and $\overline{K}_{k}^{\bullet}(V,L)$ is an acyclic complex.
\end{proof}

\begin{prop}\label{pco2}
Suppose that $X$ is normal and $X'$ is a big open subset of $X$. Let
$k=1,\ldots,\dim V-r$. Then the following conditions are equivalent:

{\rm (i)} For $i=1,\ldots,k$, the complex $\overline{K}_{i}^{\bullet}(V,L)$ is acyclic.  

{\rm (ii)} For $i=1,\ldots,k$, the complex $D_{i}^{\bullet}(V,L)$ has no cohomolgy of
degree different from $r$.

{\rm (iii)} For $i=1,\ldots,k$, $\ex {i-1}V\wedge L$ is the kernel of $\theta _{i}$.
\end{prop}

\begin{proof}
Setting
$$ E_{k}^{i,j} := \left \{ \begin{array}{ccc} 
\tk {\k}{\sy {k-i}V}\tk {\k}{\sy {i-j}L}{\ex jV} & \mbox{ if } & j\leq i \leq k \\
\tk {\k}{\sy {k-j}V}\ex {j}V\wedge \ex rL & \mbox{ if } & i=j-1<k \\
0 & \mbox{ otherwise } & \end{array}\right. ,$$
we have canonical isomorphisms 
$$ E_{k}^{j-1,j} \sim D_{k}^{j+r}(V,L), \quad 
E_{k}^{i,j} \sim \tk {\k}{D_{k-i+j}^{j}(V)}\sy {i-j}L \sim 
\tk {\k}{\sy {k-i}V}K_{i}^{j}(V,L) $$ 
for $j\leq i \leq k$. Denote by $\dd _{*}$ the maps
$$ \xymatrix{ E_{k}^{i,j} \ar[rr]^-{\dd _{*}} && E_{k}^{i,j+1}}, \qquad
a\tens b \tens \omega \longmapsto a \tens \dd (b\tens \omega ),$$
$$ \xymatrix{ E_{k}^{j,j} \ar[rr]^-{\dd _{*}} && E_{k}^{j-1,j}}, \qquad
a\tens \omega \longmapsto a \tens \omega \wedge \eta $$
and again by $\dd $ the map
$$ \xymatrix{ E_{k}^{i,j} \ar[rr]^-{\dd } && E_{k}^{i+1,j+1}}, \qquad
a\tens b \tens \omega \longmapsto \psi ^{-1}(b \tens \dd (a\tens \omega ))$$
with $a$ in $\es SV$, $b$ in $\es SL$, $\omega $ in $\ex {}V$ and $\psi $ the isomorphism
$$ a\tens b\tens \omega \longmapsto b\tens a \tens \omega .$$
We have the double complex
$$ \xymatrix{ & E_{k}^{i,j+1} &  \\ 
\ar[r]^-{\dd } & E_{k}^{i,j} \ar[r]^-{\dd} \ar[u]^{\dd _{*}} & E_{k}^{i+1,j+1} 
 \\ & \ar[u]^{\dd _{*}} &   } .$$
Along a line, $i-j$ is constant and a line corresponding to a nonnegative constant 
is acyclic by Lemma~\ref{lco1}(ii). The map
$\xymatrix{E_{k}^{i,i} \ar[r]^-{\dd _{*}} & E_{k}^{i-1,i}}$ is an augmentation of
the column $E_{k}^{i,\bullet}$ so that we have an augmented double complex.

We prove by induction on $k$ the proposition. By Lemma~\ref{l3co1}(iii), the proposition
is true for $k=1$. Suppose that it is true for $k-1$. If
$\overline{K}_{i}^{\bullet}(V,L)$ is acyclic for $i=1,\ldots,k$, then the columns of
the augmented double complex $E_{k}^{\bullet,\bullet}$ are acyclic. Hence
$D_{k}^{\bullet}(V,L)$ has no cohomology of degree different from $r$. Conversely,
if $D_{i}^{\bullet}(V,L)$ has no cohomology of degree different from $r$ for
$i=1,\ldots,k$ then $\overline{D}_{i}^{\bullet}(V,L)$ is acyclic for $i=1,\ldots,k$ by
Lemma~\ref{l3co1}(iii). By induction hypothesis, the augmented column of
$E_{k}^{i,\bullet}$ is acyclic for $i<k$. Hence so is the augmented column
$E_{k}^{k,\bullet}$ that is $\overline{K}_{k}^{\bullet}(V,L)$ is acyclic. 

Suppose that $\overline{K}_{i}^{\bullet}(V,L)$ is acyclic for $i=1,\ldots,k$. Then
$D_{i}^{\bullet}(V,L)$ has no cohomology of degree different from $r$ for $i=1,\ldots,k$.
Let $a$ be in the kernel of $\theta _{k}$. For some $b$ in $E_{k}^{k-1,k-1}$, $a=\dd b$
and for some $c$ in $E_{k}^{k-1,k-2}$, $\dd _{*}b=\dd \dd_{*}c$ since $\dd _{*}a=0$
and $D_{k}^{\bullet}(V,L)$ has no cohomology of degree $k-1$. Then, for some $c_{1}$
in $E_{k}^{k-1,k-2}$, $b-\dd c=\dd _{*}c_{1}$ since $\overline{K}_{k}^{\bullet}(V,L)$ is
acyclic. As a result $a$ is in $\ex {k-1}V\wedge L$ since $a=\pm \dd _{*}\dd c_{1}$.
Conversely, by Lemma~\ref{l2co2}(ii), $\overline{K}_{k}^{\bullet}(V,L)$ is acyclic
if $\ex {i-1}V\wedge L$ is the kernel of $\theta _{i}$ for $i=1,\ldots,k$. 
\end{proof}

\subsection{Examples of complexes} \label{co3}
Let ${\goth l}$ be a reductive Lie algebra, ${\goth z}$ its center and
$\poi {{\goth d}}1{,\ldots,}{\n}{}{}{}$ its simple components. Set:
$$ n_{1} := \b d1 - \j d1,\ldots,n_{\n} := \b d\n - \j d\n, \quad
{\Bbb I}' := \{(\poi i0{,\ldots,}{\n}{}{}{}) \in {\Bbb N}^{\n+1} \; \vert \;
i_{1} \leq n_{1},\ldots,i_{\n} \leq n_{\n}\},$$
$$ n_{{\goth l}} = \poi n1{+\cdots +}{\n}{}{}{}, \quad
{\Bbb I}'' := {\Bbb I}'\cap \{0\}\times {\Bbb N}^{\n}, \quad 
{\Bbb I}'_{k} := {\Bbb N}_{k}^{\n+1}\cap {\Bbb I}', 
\quad {\Bbb I}''_{k} := {\Bbb I}'_{k} \cap {\Bbb I}''$$
for $k$ nonnegative integer. The sets ${\Bbb I}''$ and ${\Bbb I}''_{k}$ identify with 
subsets of ${\Bbb N}^{\n}$. For $I$ subset of ${\Bbb I}'_{k}$ and $i=0,\ldots,k$, 
let $I_{i}$ be the subset of elements $\iota $ of ${\Bbb I}''_{i}$ such that 
$(k-i,\iota )$ is in $I$. For $k=0,\ldots,n_{{\goth l}}$ and $I$ subset of
${\Bbb I}''_{k}$, denote by $D_{k,I,\#}^{\bullet}({\goth l})$ the total complex deduced
from the multicomplex
$$ \bigoplus _{(\poi j1{,\ldots,}{\n}{}{}{}) \in I} 
\tk {\k}{D_{j_{1}}^{\bullet}({\goth d}_{1})}
\tk {\k}{\cdots }D_{j_{\n}}^{\bullet}({\goth d}_{\n}) .$$
Then $D_{k,I,\#}^{\bullet}({\goth l})$ is a graded subcomplex of 
$D_{k}^{\bullet}({\goth l})$. For $k=0,\ldots,n_{{\goth l}}$ and $I$ subset of
${\Bbb I}'_{k}$, the total complex $D_{k,I,\#}^{\bullet}({\goth l})$ deduced from the
double complex  
$$ \bigoplus _{i=0}^{k} \tk {\k}{D_{k-i}^{\bullet}({\goth z})}
D_{i,I_{i},\#}^{\bullet}({\goth l})$$
is a graded subcomplex of $D_{k}^{\bullet}({\goth l})$. For simplicity, we set:
$$D_{k,\#}^{\bullet}({\goth l}) := D_{k,{\Bbb I}'_{k},\#}^{\bullet}({\goth l}) .$$

By Theorem~\ref{tsc1}(i), $\bi l{}$ is a free submodule of rank $\b l{}$ of the
$\sgg l{}$-module $\tk {\k}{\sgg l{}}{\goth l}$. By Definition~\ref{dco1}, for
$k=0,\ldots,n_{{\goth l}}$, $D_{k}^{\bullet}({\goth l},\bi l{})$ is a graded subcomplex
of $\tk {\k}{\sgg l{}}D^{\bullet}({\goth l})$. Then, for $I$ subset of ${\Bbb I}'_{k}$,
$$ D_{k,I,\#}^{\bullet}({\goth l},\bi l{}) := 
D_{k,I,\#}^{\bullet}({\goth l})[-\b l{}]\wedge \ex {\b l{}}{\bi l{}}$$
is a graded subcomplex of $D_{k}^{\bullet}({\goth l},\bi l{})$. For simplicity, we set:
$$ D_{k,\#}^{\bullet}({\goth l},\bi l{}) :=
D_{k,{\Bbb I}'_{k},\#}^{\bullet}({\goth l},\bi l{}) .$$

\begin{lemma}\label{lco3}
Let $k=0,\ldots,n_{{\goth l}}$. Denote by ${\goth d}$ the derived algebra of ${\goth l}$.

{\rm (i)} For $I$ subset of $I''_{k}$, the complex
$D^{\bullet}_{k,I,\#}({\goth d},\bi d{})$ is the total complex
deduced from the multicomplex
$$ \bigoplus _{(\poi i1{,\ldots,}{\n}{}{}{}) \in I} 
\tk {\k}{D_{i_{1}}^{\bullet}({\goth d}_{1},\bi d1{})}
\tk {\k}{\cdots }D_{i_{\n}}^{\bullet}({\goth d}_{\n},\bi d\n) .$$

{\rm (ii)} For $I$ subset of ${\Bbb I}'_{k}$, 
$$D_{k,I,\#}^{\bullet}({\goth l},\bi l{}) = \tk {\k}{\sgg z{}}
(\bigoplus _{i=0}^{k}
\tk {\k}{\sx {k-i}z}D_{i,I_{i},\#}^{\bullet}({\goth d},\bi d{})
[-\dim {\goth z}]\wedge \ex {\dim {\goth z}}{{\goth z}}).$$

{\rm (iii)}  For $I$ subset of ${\Bbb I}'_{k}$, the subcomplex
$D_{k,I,\#}^{\bullet}({\goth l})$ is a direct factor of $D_{k}^{\bullet}({\goth l})$.
In particular, it is acyclic for $k>0$.
\end{lemma}

\begin{proof}
(i) By definition ${\goth d}$ is the direct sum of
$\poi {{\goth d}}1{,\ldots,}{\n}{}{}{}$ and for $x$ in ${\goth d}$, its centralizer
in ${\goth d}$ is the direct sum of its centralizers in
$\poi {{\goth d}}1{,\ldots,}{\n}{}{}{}$. So, by definition,
$$ \bi d{} = \tk {\sgg d1}{\sgg d{} }\bi d1
\oplus \cdots \oplus \tk {\sgg d\n}{\sgg d{}}\bi d\n ,$$
whence
$$ \ex {\b d{}}{\bi d{}} =
\tk {\k}{\ex {\b d1}{\bi d1}}\tk {\k}{\cdots }\ex {\b d\n}{\bi d\n}$$
and the assertion.

(ii) As ${\goth l}$ is the direct sum of ${\goth d}$ and its center ${\goth z}$,
$$ \bi l{} = \tk {\k}{\sgg l{}}{\goth z} \oplus
\tk {\sgg d1}{\sgg l{}}\bi d1
\oplus \cdots \oplus \tk {\sgg d\n}{\sgg l{}}\bi d\n,$$
whence
$$ \ex {\b l{}}{\bi l{}} = \tk {\k}{\sgg z{}}\ex {\b d{}}{\bi d{}}\wedge
\ex {\dim {\goth z}}{\goth z}$$
and the assertion.

(iii) Let $J$ be a subset of ${\Bbb I}''_{k}$. Denote by $\tilde{J}$ the subset of
elements $(\poi i1{,\ldots,}{\n}{}{}{})$ of ${\Bbb N}_{k}^{\n}$ such that $i_{j}>n_{j}$
for at least some $j$ in $\{1,\ldots,\n\}$. Then
$$D_{k}({\goth d}) = \bigoplus _{(\poi j1{,\ldots,}{\n}{}{}{}) \in J} 
\tk {\k}{D_{j_{1}}({\goth d}_{1})}
\tk {\k}{\cdots }D_{j_{\n}}({\goth d}_{\n}) \oplus
\bigoplus _{(\poi j1{,\ldots,}{\n}{}{}{}) \in \tilde{J}} 
\tk {\k}{D_{j_{1}}({\goth d}_{1})}\tk {\k}{\cdots }D_{j_{\n}}({\goth d}_{\n})$$
and the second sum of the right hand side is a subcomplex of $D_{k}({\goth d})$.
As a result, the complex $D_{k,I,\#}^{\bullet}({\goth l})$ is a direct factor of
$D_{k}^{\bullet}({\goth l})$, whence the assertion by Lemma~\ref{lco1}(ii).
\end{proof}

\subsection{Other examples of complexes} \label{co4}
Let ${\goth p}$ be a parabolic subalgebra of ${\goth g}$ containing ${\goth b}$. Denote
by ${\goth l}$ its reductive factor containing ${\goth h}$, ${\goth p}_{\u}$ its
nilpotent radical, ${\goth z}$ the center of ${\goth l}$ and
$\poi {{\goth d}}1{,\ldots,}{\n}{}{}{}$ the simple factors of ${\goth l}$. Set:
$$ d := \dim {\goth p}_{\u}, \quad d_{0} = \dim {\goth z}, \quad
n_{1} := \b d1-\rg_{{\goth d}_{1}},\ldots,n_{\n} := \b d\n-\rg_{{\goth d}_{\n}}, $$ $$
{\Bbb I} := \{(\poi i{-1}{,\ldots,}{\n}{}{}{}) \in {\Bbb N}^{\n+2} \, \vert \,
i_{1}\leq n_{1},,\ldots,i_{\n}\leq n_{\n}\}, \quad
{\Bbb I}_{k} := {\Bbb N}^{\n+2}_{k} \cap {\Bbb I}$$
for $k$ nonnegative integer. The sets ${\Bbb I}'$ and ${\Bbb I}'_{k}$ of
Subsection~\ref{co3} identify with the intersections of ${\Bbb I}$ and ${\Bbb I}_{k}$
with $\{0\}\times {\Bbb N}^{\n+1}$. For $I$ subset of ${\Bbb I}_{k}$ and $i=0,\ldots,k$,
denote by $I_{i}$ the subset of elements $\iota $ of ${\Bbb I}'$ such that
$(k-i,\iota )$ is in $I$.

Let ${\goth p}_{-}$ be the parabolic subalgebra of ${\goth g}$, containing ${\goth b}$
and opposite to ${\goth p}$, ${\goth p}_{-,\u}$ the nilpotent radical of ${\goth p}_{-}$
and ${\goth p}_{\pm ,\u}$ the sum ${\goth p}_{\u}+{\goth p}_{-,\u}$. For $k=0,\ldots,n$
and $I$ subset of ${\Bbb I}_{k}$, denote by $D_{k,I,{\goth p}}^{\bullet}({\goth g})$ and
$D_{k,I,{\goth p}}^{\bullet}({\goth p}_{-})$ the total graded subcomplexes of
$D_{k}^{\bullet}({\goth g})$ deduced from the doube complexes,
$$ \bigoplus _{i=0}^{k} \tk {\k}{D_{k-i}^{\bullet}({\goth p}_{\pm,\u})}
D_{i,I_{i},\#}^{\bullet}({\goth l}) \quad  \text{and} \quad
\bigoplus _{i=0}^{k} \tk {\k}{D_{k-i}^{\bullet}({\goth p}_{-,\u})}
D_{i,I_{i},\#}^{\bullet}({\goth l})$$
respectively. For simplicity, we set:
$$D_{k,{\goth p}}^{\bullet}({\goth g}) :=
D_{k,{\Bbb I}_{k},{\goth p}}^{\bullet}({\goth g}) \quad  \text{and} \quad
D_{k,{\goth p}}^{\bullet}({\goth p}_{-}) :=
D_{k,{\Bbb I}_{k},{\goth p}}^{\bullet}({\goth p}_{-}).$$

Let ${\goth l}_{*}$ be the subset of elements $x$ of ${\goth l}$ such that
$$ \det \ad _{{\goth g}/{\goth l}} x \neq 0 .$$
Denote by $\thetaup $ the map
$$ \xymatrix{ G\times \lg l* \ar[rr]^-{\thetaup } && \gg g{}},
\qquad (g,x,y) \longmapsto (g(x),g(y)) .$$

\begin{lemma}\label{lco4}
Set $\Omega _{*} := \Omega _{{\goth g}}\cap \lg l*$.

{\rm (i)} The subset $G({\goth l}_{*})$ of ${\goth g}$ is open and $\thetaup $ is a 
faithfully flat morphism.  

{\rm (ii)} The subset $\Omega _{*}$ of $\lg l*$ is a big open subset.
\end{lemma}

\begin{proof}
(i) Since the map $(g,x)\mapsto g(x)$ from $G\times {\goth l}_{*}$ to ${\goth g}$ is a 
submersion, $G({\goth l}_{*})$ is an open subset of ${\goth g}$ and 
this map is a smooth surjective morphism from $G\times {\goth l}_{*}$ to 
$G({\goth l}_{*})$. As a result, $\thetaup $ is a faithfully flat morphism from 
$G\times \lg l*$ onto the open subset $G({\goth l}_{*})\times {\goth g}$ of $\gg g{}$
since the endomorphism of $G\times {\goth l}_{*}\times {\goth g}$,
$(g,x,y)\mapsto (g,x,g(y))$ is an isomorphism.

(ii) By (i), the fibers of $\thetaup $ are equidimensional of dimension 
$\dim {\goth l}$. Hence $\Omega _{*}$ is a big open subset of $\lg l*$ since 
$\Omega _{{\goth g}}$ is a $G$-invariant big open subset of $\gg g{}$.  
\end{proof}

Let $\bii$ and $\oi$ be the restrictions of $\bi g{}$ to $\lg l*$ and $\lp$ respectively.
Since $\bi g{}$ is a free $\sgg g{}$-module of rank $\b g{}$ generated by
$\varepsilon _{i}^{(m)}, \, (i,m)\in I_{0}$, $\bii$ and $\oi$ are free modules of rank
$\b g{}$ over $\k[\lg l*]$ and $\k[\lp]$ respectively since $(h,e)$ is in
$\Omega _{*}\cap \lp$. By definition~\ref{dco1}, for $k$ nonnegative integer,
$D_{k}^{\bullet}({\goth g},\bii)$ and $D_{k}^{\bullet}({\goth g},\oi)$ are graded
subcomplexes of $\tk {\k}{\k[\lg l*]}D_{k}^{\bullet}({\goth g})$ and
$\tk {\k}{\k[\lp]}D_{k}^{\bullet}({\goth g})$ respectively. Moreover,
the graded subspace $D_{k}^{\bullet}({\goth p}_{-})[-\b g{}]\wedge \ex {\b g{}}\oi$ of
$D_{k}^{\bullet}({\goth g},\oi)$ is a subcomplex. Denote by
$D_{k,{\goth p}}^{\bullet}({\goth g},\bii)$ the graded subcomplex of
$D_{k}^{\bullet}({\goth g},\bii)$:
$$ D_{k,{\goth p}}^{\bullet}({\goth g},\bii) :=
D_{k,{\goth p}}^{\bullet}({\goth g})[-\b g{}]\wedge \ex {\b g{}}{\bii},$$
and 
$D_{k,{\goth p}}^{\bullet}({\goth p}_{-},\oi)$ the graded subcomplex of
$D_{k}^{\bullet}({\goth g},\oi)$:
$$ D_{k,{\goth p}}^{\bullet}({\goth p}_{-},\oi) :=
D_{k,{\goth p}}^{\bullet}({\goth p}_{-})[-\b g{}]\wedge \ex {\b g{}}{\oi}.$$

Let $\an {}{}$ be the local ring of $\k[G]$ at the identity, ${\goth m}$ its maximal
ideal and $\han {}{}$ the ${\goth m}$-adic completion of $\an {}{}$. Set:
$$ \widetilde{\bii} := \tk {\k[G]}{\an {}{}}\thetaup ^{*}(\bi g{}) \quad  \text{and} \quad
\widehat{\bii} := \tk {\an {}{}}{\han {}{}}\widetilde{\bii} .$$
By Lemma~\ref{lco4}(i), $\widetilde{\bii}$ and $\widehat{\bii}$ are free submodules of
rank $\b g{}$ of $\tk {\k}{\an {}{}}\tk {\k}{\k[\lg l*]}{\goth g}$ and
$\tk {\k}{\han {}{}}\tk {\k}{\k[\lg l*]}{\goth g}$ respectively. Denote by
$D_{k,{\goth p}}^{\bullet}({\goth g},\widetilde{\bii})$ and
$D_{k,{\goth p}}^{\bullet}({\goth g},\widehat{\bii})$ the graded subcomplexes of
$\tk {\k}{\han {}{}}\tk {\k}{\k[\lg l*]}D_{k}^{\bullet}({\goth g})$:
$$ D_{k,{\goth p}}^{\bullet}({\goth g},\widetilde{\bii}) :=
D_{k,{\goth p}}^{\bullet}({\goth g})[-\b g{}]\wedge \ex {\b g{}}{\widetilde{\bii}}
\quad  \text{and} \quad
D_{k,{\goth p}}^{\bullet}({\goth g},\widehat{\bii}) :=
D_{k,{\goth p}}^{\bullet}({\goth g})[-\b g{}]\wedge \ex {\b g{}}{\widehat{\bii}} .$$

\section{The Property \texorpdfstring{$({\bf P})$}{}} \label{pp}
According to Definition~\ref{dint}, the reductive algebra ${\goth g}$ has Property
$({\bf P})$ if the complex $D_{k}^{\bullet}({\goth g},\bi g{})$ has no cohomology of
degree different from $\b g{}$ for $k=0,\ldots,n$. As it is clear that ${\goth g}$
has not Property $({\bf P})$ if ${\goth g}$ has at least two simple factors, the main
question is:

{\it Does each simple Lie algebra have Property $({\bf P})$ ?}

\noindent In this section we prove that a positive answer to this question gives a proof
of Theorem~\ref{t3int} (see Corollary~\ref{cpp}). 

According to the notations of Section~\ref{pm} and Subsection~\ref{co1},
$C_{i+\b g{}}=D_{i}^{i+\b g{}}({\goth g},\bi g{})$. The embedding
of $\tk {\sgg g{}}{\sy k{\bi g{}}}\ex {\b g{}}{\bi g{}}$ in 
$D_{k}^{\b g{}}({\goth g},\bi g{})$ is an augmentation of 
$D_{k}^{\bullet}({\goth g},\bi g{})$. Denote by 
$\overline{D}_{k}^{\bullet}({\goth g},\bi g{})$ this augmented complex.

\begin{prop}\label{ppp}
Let $k$ be a nonnegative integer.

{\rm (i)} The complex $\overline{D}_{k}^{\bullet}({\goth g},\bi g{})$ has no cohomology 
of degree smaller than $\b g{}+1$. 

{\rm (ii)} For $k=0,1$, $\overline{D}_{k}^{\bullet}({\goth g},\bi g{})$ is acyclic. In 
particular, ${\goth {sl}}_{2}(\k)$ has Property $({\bf P})$.

{\rm (iii)} If ${\goth g}$ is simple and has Property $({\bf P})$, then for 
$i=1,\ldots,n$, $C_{i+\b g{}}({\goth g})$ has projective dimension at most $i$.
\end{prop}

\begin{proof}
(i) By definition, $\overline{D}_{k}^{\bullet}({\goth g},\bi g{})$ has no cohomology of 
degree smaller than $\b g{}$. According to Theorem~\ref{tsc1}, $\bi g{}$ is a free module
of rank $\b g{}$. Since $\Omega _{{\goth g}}$ is a big open subset of $\gg g{}$ and 
$\gg g{}$ is normal, $\overline{D}_{k}^{\bullet}({\goth g},\bi g{})$ has no cohomology of
degree $\b g{}$ for all positive integer $k$ by Lemma~\ref{l3co1}(iii).

(ii) By definition $\overline{D}_{0}^{\bullet}({\goth g},\bi g{})$ is acyclic.  
As $\overline{D}_{1}^{\bullet}({\goth g},\bi g{})$ has no cohomology of degree bigger  
than $\b g{}$ by definition, $\overline{D}_{1}^{\bullet}({\goth g},\b g{})$ is acyclic
by (i). For ${\goth g}={\goth {sl}}_{2}(\k)$, $\b g{}=2$ and $\rg =1$. Hence 
${\goth {sl}}_{2}(\k)$ has Property $({\bf P})$.

(iii) Prove the proposition by induction on $i$. By (i), $C_{1+\b g{}}({\goth g})$ has
projective dimension at most $1$. Suppose that $C_{j+\b g{}}({\goth g})$ has projective 
dimension at most $j$ for $j<i$. By (i) and Property $({\bf P})$, the complex 
$\overline{D}_{i}^{\bullet}({\goth g},\bi g{})$ is acyclic. Then, by induction hypothesis
and Corollary~\ref{c2pdc}, $C_{i+\b g{}}({\goth g})$ has projective dimension at most $i$.
\end{proof}

\begin{coro}\label{cpp}
Suppose that each simple factors of ${\goth g}$ has Property $({\bf P})$. Then, for 
$i=1,\ldots,n$, $C_{i+\b g{}}({\goth g})$ has projective dimension at most $i$.
\end{coro}

\begin{proof}
Let ${\goth z}$ be the center of ${\goth g}$ and ${\goth d}$ the derived algebra of 
${\goth g}$. Denote by $\j d{}$ the rank of ${\goth d}$. As ${\goth z}$ is 
contained in $\bi g{}$, for $i=1,\ldots,n$, we have an isomorphism 
$$ \xymatrix{\tk {\k}{\sgg z{}}\tk {\k}{\ex {\dim {\goth z}}{{\goth z}}}
C_{i+{\mathrm {b}}_{{\goth d}}}({\goth d}) \ar[rr] && 
C_{i+\b g{}}}({\goth g}) .$$ 
Hence the proposition for ${\goth g}$ results from the proposition for 
${\goth g}={\goth d}$ since ${\mathrm {b}}_{{\goth d}} - \j d{}=n$. 

Denote by $\poi {{\goth d}}1{,\ldots,}{\n}{}{}{}$ the simple factors of 
${\goth g}={\goth d}$ and prove the proposition by induction on $\n$. For $\n=1$, the 
proposition results from the hypothesis by Proposition~\ref{ppp}(iii). Suppose 
$\n\geq 2$ and the proposition true for $\n-1$. Let ${\goth a}$ be the direct product of 
$\poi {{\goth d}}1{,\ldots,}{\n-1}{}{}{}$. From the equalities:
\begin{eqnarray*}
\ex {\b g{}}{\bi g{}} = & \ex {\b a{}}{\bi a{}}\wedge 
\ex {\b g{\n}}{\bii _{{\goth d}_{\n}}} \\
\ex i{{\goth g}} = & \bigoplus _{j=0}^{i} \ex j{{\goth a}}\wedge \ex {i-j}{{\goth d}_{\n}}
\\
\b g{} - \rg = & \b a{} - \j a{} + \b d{\n} - \j d{\n} 
\end{eqnarray*}
we deduce an isomorphism 
$$ \xymatrix{ \bigoplus _{j=0}^{i} \tk {\k}{C_{j+\b a{}}({\goth a})}
C_{i-j+\b g\n}({\goth d}_{\n}) \ar[rr] && C_{i+\b g{}}({\goth g})}$$
for $i=1,\ldots,n$. By induction hypothesis, $C_{j+\b a{}}({\goth a})$ has projective 
dimension at most $j$. By the hypothesis and Proposition~\ref{ppp}(iii), 
$C_{i-j+\b g{\n}}({\goth d}_{\n})$ has projective dimension at most $i-j$. Hence 
$C_{i+\b g{}}({\goth g})$ has projective dimension at most $i$.
\end{proof}

Let ${\goth d}$, ${\goth z}$, $\poi {{\goth d}}1{,\ldots,}{\n}{}{}{}$ be as in the proof 
of Corollary~\ref{cpp}. 

\begin{lemma}\label{lpp}
Suppose that $\poi {{\goth d}}1{,\ldots,}{\n}{}{}{}$ have Property $({\bf P})$. 
Then for $k=0,\ldots,n$ and $I$ subset of ${\Bbb I}'_{k}$, 
$D_{k,I,\#}^{\bullet}({\goth g},\bi g{})$ has no cohomology of degree different from 
$\b g{}$.
\end{lemma}

\begin{proof}
By Lemma~\ref{lco3}(i), for $j=0,\ldots,n$ and $J$ subset of ${\Bbb I}''_{j}$, the
complex $D_{j,J,\#}^{\bullet}({\goth d},\bi d{})$ has no cohomology of degree different
from $\b d{}$ since $\poi {{\goth d}}1{,\ldots,}{\n}{}{}{}$ have Property $({\bf P})$.
Then, by Lemma~\ref{lco3}(ii), $D_{k,I,\#}^{\bullet}({\goth g},\bi g{})$ has no
cohomology of degree different from $\b g{}$ since $\b g{}=\b d{}+\dim {\goth z}$.
\end{proof}

For $k=0,\ldots,n$, $I$ subset of ${\Bbb I}'_{k}$ and $j=0,\ldots,\n$, let $I_{j,*}$ be
the subset of elements $i$ of $I$ such that $i_{j}>0$ and $I_{j,-}$ the image of
$I_{j,*}$ by the map
$$ \xymatrix{ {\Bbb N}^{\n+1} \ar[rr] && {\Bbb N}^{\n+1}}, \qquad
i \longmapsto (\poi i{0}{,\ldots,}{j-1}{}{}{},i_{j}-1,\poi i{j+1}{,\ldots,}{\n}{}{}{}).$$
As ${\goth g}$ is the direct sum of ${\goth z}$, $\poi {{\goth d}}1{,\ldots,}{\n}{}{}{}$,
$\k[{\goth d}_{j}]$ is a subalgebra of $\k[{\goth g}]$ for $j=1,\ldots,\n$. Set:
$$ \tilde{K}^{\bullet}_{k,I}({\goth g},\bi g{}) :=
\tk {\k}{\sgg g{}}D^{\bullet}_{k-1,I_{0,-},\#}({\goth g})[-1]\wedge {\goth z} +
\sum_{j=1}^{\n}
\tk {\sgg dj}{\sgg g{}}D^{\bullet}_{k-1,I_{j,-},\#}({\goth g})[-1]\wedge
\bi {{\goth d}}{j} .$$
Denote by $K_{k,I}^{\bullet}({\goth g},\bi g{})$ the kernel of the
morphism 
$$\xymatrix{ \tk {\k}{\sgg g{}}D_{k,I,\#}^{\bullet}({\goth g}) \ar[rr] && 
D_{k,I,\#}^{\bullet}({\goth g},\bi g{})[\b g{}]}, \qquad \varphi \longmapsto 
\varphi \wedge \varepsilon .$$ 
In particular, $\tilde{K}^{\bullet}_{k,I}({\goth g},\bi g{})$ and
$K_{k,I}^{\bullet}({\goth g},\bi g{})$ are graded subcomplexes of 
$\tk {\k}{\sgg g{}}D_{k,I,\#}^{\bullet}({\goth g})$.

\begin{prop}\label{p2pp}
Suppose that $\poi {{\goth d}}1{,\ldots,}{\n}{}{}{}$ have Property $({\bf P})$. Let 
$k=1,\ldots,n$ and $I\subset {\Bbb I}'_{k}$. Then $K_{k,I}^{\bullet}({\goth g},\bi g{})$
is equal to $\tilde{K}^{\bullet}_{k,I}({\goth g},\bi g{})$.
\end{prop}
 
\begin{proof}
Since $\bi g{}$ is the $\sgg g{}$-module generated by ${\goth z}$ and
$\bi d1,\ldots,\bi d{\n}$, $\tilde{K}^{\bullet}_{k,I}({\goth g},\bi g{})$ is contained
in $K^{\bullet}_{k,I}({\goth g},\bi g{})$. Then it remains to prove that
$K_{k,I}^{\bullet}({\goth g},\bi g{})$ is contained in
$\tilde{K}_{k,I}^{\bullet}({\goth g},\bi g{})$. Prove the assertion by induction on $k$.
For $k=1$, it is true by Proposition~\ref{ppp}(i). Suppose $k>1$ and
$K_{j,J}^{\bullet}({\goth g},\bi g{})$ contained in
$\tilde{K}_{j,J}^{\bullet}({\goth g},\bi g{})$ for $j<k$ and $J\subset {\Bbb I}'_{j}$. 

Let $j=1,\ldots,k-1$. For $\upsilon  = (\poi i{0}{,\ldots,}{\n}{}{}{})$ in 
${\Bbb N}^{\n+1}_{k-j}$, set:
\begin{eqnarray*}
V_{\upsilon } := & \tk {\k}{\sx {i_{0}}z}\tk {\k}{\sy {i_{1}}{{\goth d}_{1}}}
\tk {\k}{\cdots }\sy {i_{\n}}{{\goth d}_{\n}} \\
I_{\upsilon } := &
\{(\poi l{0}{,\ldots,}{\n}{}{}{}) \in {\Bbb N}^{\n+1} \; \vert \;
(i_{0}+l_{0},\ldots,i_{\n}+l_{\n}) \in I\} .
\end{eqnarray*}
In particular, $I_{\upsilon }$ is contained in ${\Bbb I}'_{j}$ when it is not empty. Then
$$ D_{k,I,\#}^{j}({\goth g}) = 
\bigoplus _{\upsilon \in {\Bbb N}^{\n+1}_{k-j}} \tk {\k}{V_{\upsilon }}
D_{j,I_{\upsilon },\#}^{j}({\goth g}).$$
So, by induction hypothesis, $K_{k,I}^{j}({\goth g},\bi g{})$ is contained in 
$\tilde{K}^{j}_{k,I}({\goth g},\bi g{})$.

We have a commutative diagram
$$\xymatrix{ & & 0 & 0 & \\
0 \ar[r] & K_{k,I}^{k}({\goth g},\bi g{}) \ar[r] & 
\tk {\k}{\sgg g{}}D_{k,I,\#}^{k}({\goth g}) \ar[u] \ar[r] & 
D_{k,I,\#}^{k+\b g{}}({\goth g},\bi g{}) \ar[u]\ar[r] & 0 \\ 
0 \ar[r] &  K_{k,I}^{k-1}({\goth g},\bi g{}) \ar[u]^{\dd} 
\ar[r] & \tk {\k}{\sgg g{}}D_{k,I,\#}^{k-1}({\goth g}) \ar[u]^{\dd} 
\ar[r] & D_{k,I,\#}^{k-1+\b g{}}({\goth g},\bi g{})  \ar[u]^{\dd} \ar[r] & 0 \\
0 \ar[r] & K_{k,I}^{k-2}({\goth g},\bi g{}) \ar[u]^{\dd} 
\ar[r] & \tk {\k}{\sgg g{}}D_{k,I,\#}^{k-2}({\goth g}) \ar[u]^{\dd} 
\ar[r] & D_{k,I,\#}^{k-2+\b g{}}({\goth g},\bi g{})  \ar[u]^{\dd} \ar[r] & 0 } .$$
By definition, the rows are exact, by Lemma~\ref{lpp}, the right column is exact and by 
Lemma~\ref{lco3}(iii), the middle column is exact since the complex
$D_{k,I,\#}^{\bullet}({\goth g})$ is a direct factor of the complex
$D_{k}^{\bullet}({\goth g})$. Denoting by $\delta $ the horizontal 
arrows, for $a$ in $K_{k,I}^{k}({\goth g},\bi g{})$, $a=\dd b$ for some $b$ in 
$\tk {\k}{\sgg g{}}D_{k,I,\#}^{k-1}({\goth g})$, $\delta b = \dd c$ for $c$ in 
$D_{k,I,\#}^{k-2+\b g{}}({\goth g},\bi g{})$ and $c=\delta c'$ 
for some $c'$ in $\tk {\k}{\sgg g{}}D_{k,I,\#}^{k-2}({\goth g})$, whence 
$b-\dd c'=\delta b'$ for some $b'$ in $K_{k,I}^{k-1}({\goth g},\bi g{})$ and $a=\dd b'$. 
As $K^{k-1}_{k,I}({\goth g},\bi g{})=\tilde{K}^{k-1}_{k,I}({\goth g},\bi g{})$ and
$\dd \tilde{K}^{k-1}_{k,I}({\goth g},\bi g{}) \subset
\tilde{K}^{k}_{k,I}({\goth g},\bi g{})$, $a$ is in
$\tilde{K}^{k}_{k,I}({\goth g},\bi g{})$, whence the proposition.
\end{proof}

\section{First step to the proof of Theorem~\ref{t4int}} \label{stp}
From now on, we assume that ${\goth g}$ is simple of rank at least $2$. According to
the notations of Subsection~\ref{co4}, we suppose that
$D_{k,{\goth p}}^{\bullet}({\goth g},\widehat{\bii})$ has no cohomology of degree
different from $\b g{}$ for $k=1,\ldots,n$ and any parabolic subalgebra ${\goth p}$
of ${\goth g}$ containing ${\goth b}$.

\subsection{Some remarks on supports} \label{stp1}
Let $\varpi _{1}$ be the first projection $\xymatrix{\gg g{} \ar[r] & {\goth g}}$.
For $k=1,\ldots,n$, denote by $S_{k}$ the support in $\gg g{}$ of the cohomology of 
$D_{k}^{\bullet}({\goth g},\bi g{})$ of degree different from $\b g{}$. 

\begin{lemma}\label{lstp1}
Let $k=1,\ldots,n$.

{\rm (i)} The set $S_{k}$ is a closed subset of $\gg g{}$ invariant under 
the actions of $G$ and ${\mathrm {GL}}_{2}(\k)$ in $\gg g{}$.

{\rm (ii)} The subset $\varpi _{1}(S_{k})$ of ${\goth g}$ is closed and $G$-invariant. 

{\rm (iii)} If $\varpi _{1}(S_{k})\cap {\goth h} = \{0\}$, then 
$S_{k}$ has codimension at least $n+2\rg$ in $\gg g{}$.  
\end{lemma}

\begin{proof}
(i) According to Proposition~\ref{psc1}(vi), $\bi g{}$ is a free module generated by 
a basis of $G$-equivariant elements. Moreover, by definition, $\bi g{}$ is invariant
under the action of ${\mathrm {GL}}_{2}(\k)$. Hence $\ex {\b g{}}{\bi g{}}$ is generated
by a  $G\times {\mathrm {GL}}_{2}(\k)$-semi-invariant element of
$\tk {\k}{\sgg g{}}\ex {\b g{}}{{\goth g}}$. Then, as the differential
of $D^{\bullet}({\goth g})$ is $G$-invariant, the differential of
$D_{k}^{\bullet}({\goth g},\bi g{})$ is $G\times {\mathrm {GL}}_{2}(\k)$-semi-invariant.
Hence $S_{k}$ is invariant under $G\times {\mathrm {GL}}_{2}(\k)$.  

(ii) As $S_{k}$ is invariant under $\k^{*}\times \k^{*}$, 
$\varpi _{1}(S_{k})\times \{0\}=S_{k}\cap {\goth g}\times \{0\}$ so that 
$\varpi _{1}(S_{k})$ is a closed subset of ${\goth g}$. As $S_{k}$ is $G$-invariant so is
$\varpi _{1}(S_{k})$. 

(iii) Suppose $\varpi _{1}(S_{k})\cap {\goth h} = \{0\}$. By (ii), 
$\varpi _{1}(S_{k})$ is contained in the nilpotent cone ${\goth N}_{{\goth g}}$ of 
${\goth g}$. Then $S_{k}$ is contained in the nilpotent bicone ${\cal N}_{{\goth g}}$
since $S_{k}$ is invariant under the action of ${\mathrm {GL}}_{2}(\k)$. As a result, 
$S_{k}$ has codimension at least $n+2\rg$ in $\gg g{}$ since ${\cal N}_{{\goth g}}$ has 
dimension $3n$ by \cite[Theorem 1.2]{CMo}.
\end{proof}

\begin{prop}\label{pstp1}
Let $k=2,\ldots,n$. Suppose that $D_{i}({\goth g},\bi g{})$ has no cohomology of degree
different from $\b g{}$ for $i=1,\ldots,k-1$. If
$\varpi _{1}(S_{k})\cap {\goth h}=\{0\}$ then $D_{k}({\goth g},\bi g{})$ has no
cohomology of degree different from $\b g{}$.
\end{prop}

\begin{proof}
Suppose $\varpi _{1}(S_{k})\cap {\goth h}=\{0\}$. By Lemma~\ref{lstp1}(iii), $S_{k}$ has
codimension at least $k+2$ in $\gg g{}$. According to the hypothesis and
Proposition~\ref{ppp}(i), for $i=1,\ldots,k-1$, the augmented complex
$\overline{D}_{i}^{\bullet}({\goth g},\bi g{})$ is acyclic. So, by
Corollary~\ref{c2pdc}, for $i=1,\ldots,k-1$,
$\ex i{{\goth g}}\wedge \ex {\b g{}}{\bi g{}}$ has projective dimension at most $i$.
Again by Proposition~\ref{ppp}(i), $S_{k}$ is the support in $\gg g{}$ of the cohomology
of $\overline{D}_{k}^{\bullet}({\goth g},\bi g{})$. Then, by Corollary~\ref{c2pdc},
$\overline{D}_{k}^{\bullet}({\goth g},\bi g{})$ is acyclic, whence the proposition. 
\end{proof}

\subsection{An equality of modules} \label{stp2}
Let ${\goth p}$ be a parabolic subalgebra of ${\goth g}$ containing ${\goth b}$ and
${\goth l}$ its reductive factor containing ${\goth h}$.
Denote by $\hat{\varepsilon }$ the map
$$ \xymatrix{G\times \lg l* \ar[rr]^-{\hat{\varepsilon }} && \ex {\b g{}}{{\goth g}}},
\qquad (g,x,y)\longmapsto g.\varepsilon (x,y) .$$
Then $\hat{\varepsilon }$ is a generator of the $\tk {\k}{\han {}{}}\k[\lg l*]$-module
$\ex {\b g{}}{\widehat{\bii}}$. For $k=1,\ldots,n$, let
$K_{k,{\goth p}}({\goth g},\widehat{\bii})$ be the kernel of the morphism 
$$\xymatrix{\tk {\k}{\han {}{}}\tk {\k}{\k[\lg l*]}D_{k,{\goth p}}^{k}({\goth g})
\ar[rr] && D_{k,{\goth p}}^{k}({\goth g},\widehat{\bii})[\b g{}]}, \qquad
\varphi \longmapsto \varphi \wedge \hat{\varepsilon }.$$ 

\begin{prop}\label{pstp2}
For $k=1,\ldots,n$, $K_{k,{\goth p}}({\goth g},\widehat{\bii})$ is the
intersection of $\ex {k-1}{{\goth g}}\wedge \widehat{\bii}$ and 
$\tk {\k}{\han {}{}}\tk {\k}{\k[\lg l*]}D_{k,{\goth p}}^{k}({\goth g})$.
\end{prop}

\begin{proof}
Since $\tk {\k}{\han {}{}}\tk {\k}{\k[\lg l*]}D_{k,{\goth p}}^{k}({\goth g})\cap 
\ex {k-1}{{\goth g}}\wedge \widehat{\bii}$ is clearly contained in 
$K_{k,{\goth p}}({\goth g},\widehat{\bii})$, it is sufficient to prove that 
$K_{k,{\goth p}}({\goth g},\widehat{\bii})$ is contained in 
$\ex {k-1}{{\goth g}}\wedge \widehat{\bii}$. Prove the assertion by 
induction on $k$. For $k=1$, it is true by Lema~\ref{l3co1}(iii) since
$\Omega _{{\goth g}}\cap \lg l*$ is a big open subset of $\lg l*$ by Lemma~\ref{lco4}(ii).

Suppose $k>1$ and $K_{j,{\goth p}}({\goth g},\widehat{\bii})$ contained in 
$\ex {j-1}{{\goth g}}\wedge \widehat{\bii}$ for $j<k$. By induction hypothesis,
the kernel $K_{k,{\goth p}}^{k-1}({\goth g},\widehat{\bii})$ of the map
$$\xymatrix{ \tk {\k}{\han {}{}}\tk {\k}{\k[\lg l*]}D_{k,{\goth p}}^{k-1}({\goth g})
\ar[rr] && D_{k,{\goth p}}^{k-1}({\goth g},\widehat{\bii})}, \qquad
\varphi \longmapsto \varphi \wedge \hat{\varepsilon }$$
is the intersection of
$\tk {\k}{\han {}{}}\tk {\k}{\k[\lg l*]}D_{k,{\goth p}}^{k-1}({\goth g})$ and
$D_{k-1}^{k-2}({\goth g})\wedge \widehat{\bii}$. 

We have a commutative diagram
$$\xymatrix{ & & 0 & 0 & \\
0 \ar[r] & K_{k,{\goth p}}({\goth g},\widehat{\bii}) \ar[r] & 
\tk {\k}{\han {}{}}\tk {\k}{\k[\lg l*]}D_{k,{\goth p}}^{k}({\goth g}) \ar[u] \ar[r] & 
D_{k,{\goth p}}^{k+\b g{}}({\goth g},\widehat{\bii}) \ar[u]\ar[r] & 0 \\ 
0 \ar[r] &  K_{k,{\goth p}}^{k-1}({\goth g},\widehat{\bii}) \ar[u]^{\dd} 
\ar[r] & \tk {\k}{\han {}{}}\tk {\k}{\k[\lg l*]}D_{k,{\goth p}}^{k-1}({\goth g})
\ar[u]^{\dd} \ar[r] & D_{k,{\goth p}}^{k-1+\b g{}}({\goth g},\widehat{\bii})
\ar[u]^{\dd} \ar[r] & 0 \\
& & \ar[u]^{\dd} 
\ar[r] \tk {\k}{\han {}{}}\tk {\k}{\k[\lg l*]}D_{k,{\goth p}}^{k-2}({\goth g})
\ar[u]^{\dd} \ar[r] & D_{k,{\goth p}}^{k-2+\b g{}}({\goth g},\widehat{\bii})
\ar[u]^{\dd} \ar[r] & 0 } .$$
By definition, the top row is exact. As already observed, so is the middle row.
By hypothesis, the right column is exact and by 
Lemma~\ref{lco3}(iii), the middle column is exact since the complex
$D_{k,{\goth p}}^{\bullet}({\goth g})$ is a direct factor of the complex
$D_{k}^{\bullet}({\goth g})$. Denoting by $\delta $ the
horizontal arrows, for $a$ in $K_{k,{\goth p}}({\goth g},\widehat{\bii})$, $a=\dd b$ for
some $b$ in $\tk {\k}{\han {}{}}\tk {\k}{\k[\lg l*]}D_{k,{\goth p}}^{k-1}({\goth g})$,
$\delta b =\dd c$ for some $c$ in
$D_{k,{\goth p}}^{k-2+\b g{}}({\goth g},\widehat{\bii})$ and $c=\delta c'$ 
for some $c'$ in
$\tk {\k}{\han {}{}}\tk {\k}{\k[\lg l*]}D_{k,{\goth p}}^{k-2}({\goth g})$, whence 
$b-\dd c'=\delta b'$ for some $b'$ in $K_{k,{\goth p}}^{k-1}({\goth g},\widehat{\bii})$
and $a=\dd b'$ is in $\ex {k-1}{{\goth g}}\wedge \widehat{\bii}$ since
$\dd D_{k-1,{\goth p}}^{k-2}({\goth g})\wedge \widehat{\bii} \subset 
D_{k-1,{\goth p}}^{k-1}({\goth g})\wedge \widehat{\bii}$, whence the proposition.
\end{proof}

\subsection{Equality of supports} \label{stp3}
Let ${\goth p}$ and ${\goth l}$ be as in Subsection~\ref{stp2}. We recall that
$\thetaup $ is the map
$$\xymatrix{ G\times \lg l* \ar[rr]^-{\thetaup } && \gg g{} }, \qquad
(g,x,y) \longmapsto (g(x),g(y)) .$$
By Lemma~\ref{lco4}(i), $\thetaup $ is a faithfully flat morphism. For any
$\sgg g{}$-module $M$, denote by ${\cal M}$ the restriction to 
$G({\goth l}_{*})\times {\goth g}$ of $\tk {\sgg g{}}{\an {\gg g{}}{}}M$ and 
$\overline{M}$ the space of global sections of $\thetaup ^{*}({\cal M})$.

\begin{lemma}\label{lstp3}
Let $M$ be a $\sgg g{}$-module and $N$ a submodule of $M$. The
$\tk {\k}{\an {}{}}\k[\lg l*]$-modules $\tk {\k[G]}{\an {}{}}\overline{M}$ 
and $\tk {\k[G]}{\an {}{}}\overline{N}$ are equal if and only if $\lg l*$ has an 
empty intersection with the support of $M/N$.
\end{lemma}

\begin{proof}
Denote by ${\cal M}'$ the restriction to $G({\goth l}_{*})\times {\goth g}$ of
$\tk {\sgg g{}}{\an {\gg g{}}{}}M/N$. As the localization functor is exact, we have a
short exact sequence
$$\xymatrix{ 0 \ar[r] & {\cal N} \ar[r] & {\cal M} \ar[r] & {\cal M}' \ar[r] & 0}.$$
Since $\thetaup $ is a faithfully flat morphism from 
$G\times {\goth l}_{*}\times {\goth g}$ to $G({\goth l}_{*})\times {\goth g}$
the short sequence
$$\xymatrix{ 0 \ar[r] & \thetaup ^{*}({\cal N}) \ar[r] & 
\thetaup ^{*}({\cal M}) \ar[r] & \thetaup ^{*}({\cal M}') \ar[r] & 0}$$
is exact. Then the short sequence
$$\xymatrix{ 0 \ar[r] & \tk {\k[G]}{\an {}{}}\overline{N} \ar[r] & 
\tk {\k[G]}{\an {}{}}\overline{M} \ar[r] &
\tk {\k[G]}{\an {}{}} \ar[r]\Gamma (G\times \lg l*,\thetaup ^{*}({\cal M}')) & 0}$$
is exact since $G\times \lg l*$ is affine and the localization functor is exact.
Hence $\tk {\k[G]}{\an {}{}}\overline{M}=\tk {\k[G]}{\an {}{}}\overline{N}$ if and only
if $\tk {\k[G]}{\an {}{}}\Gamma (G\times \lg l*,\thetaup ^{*}({\cal M}'))=0$, whence the
lemma since this last equality holds if and only if the support of ${\cal M}'$ has an
empty intersection with $\lg l*$ by faithfully flatness.
\end{proof}

For $k=1,\ldots,n$, let $\widetilde{K}_{k}({\goth g},\widetilde{\bii})$ be the
kernel of the map
$$ \xymatrix{ \tk {\k}{\an {}{}}\tk {\k}{\k[\lg l*]}D_{k}^{k}({\goth g})
\ar[rr] &&
D_{k}^{k+\b g{}}({\goth g},\widetilde{\bii})}, \qquad
\varphi \longmapsto \varphi \wedge \hat{\varepsilon }.$$

\begin{prop}\label{pstp3}
For $k=1,\ldots,n$, $\widetilde{K}_{k}({\goth g},\widetilde{\bii})$ is equal to
$\ex {k-1}{{\goth g}}\wedge \widetilde{\bii}$.
\end{prop}

\begin{proof}
We consider the action of $G$ in $\tk {\k}{\k[G\times \lg l*]}{\goth g}$ given by 
$$ k.a\tens v (g,x,y) := a(k^{-1}g,x,y) k.v .$$
This action has a natural extension to $\tk {\k}{\k[G\times \lg l*]}D({\goth g})$
and induces an action of ${\goth g}$ in
$\tk {\k}{{\cal O}}\tk {\k}{\k[\lg l*]}D({\goth g})$. Under this action,
$\hat{\varepsilon }$ is invariant. As a result,
$\ex {k-1}{{\goth g}}\wedge \ex {\b g{}}{\widetilde{\bii}}$ and
$\widetilde{K}_{k}({\goth g},\widetilde{\bii})$ are invariant ${\goth g}$-sumodules of
$\tk {\k}{{\cal O}}\tk {\k}{\k[\lg l*]}D({\goth g})$. Moreover,
$\ex {k-1}{{\goth g}}\wedge \ex {\b g{}}{\widetilde{\bii}}$ is contained in
$\widetilde{K}_{k}({\goth g},\widetilde{\bii})$. Set:
$$ E' := \tk {\k}{{\cal O}}\tk {\k}{\k[\lg l*]}D_{k,{\goth p}}^{k}({\goth g}), \quad 
N' := \widetilde{K}_{k}({\goth g},\widetilde{\bii}) ,$$
$$M := \tk {\k}{\k[G]}\tk {\k}{\k[\lg l*]}\ex {k}{{\goth g}}, \quad
M' := \tk {\k}{\k[G]}\tk {\k}{\k[\lg l*]}D_{k,{\goth p}}^{k}({\goth g}), \quad
N := N'\cap M .$$
Then $M$ is a rational ${\goth g}$-module since so is $\k[G]$, $M'$ is a
${\goth l}$-submodule of $M$, $N$ is a ${\goth g}$-submodule of $M$ such that
$N'=\tk {\k[G]}{\an {}{}}N$. By \cite[Theorem 1.1]{Ch}, $D_{k,{\goth p}}^{k}({\goth g})$
generates the $G$-module $\ex k{{\goth g}}$. So $M'$ generates the $G$-module $M$ since
$\tk {\k}{\k[G]}\k[\lg l*]$ is a $G$-module. Then, by Proposition~\ref{prep},
$N\cap M'$ generates the $G$-module $N$. Since $\han {}{}$ is a faithfully flat
extension of $\an {}{}$, $N'\cap \tk {\k[G]}{\an {}{}}M'$ is contained in
$\ex {k-1}{{\goth g}}\wedge \widetilde{\bii}$ by Proposition~\ref{pstp2}. Then
$N'=\ex {k-1}{{\goth g}}\wedge \widetilde{\bii}$ since
$N'\cap \tk {\k[G]}{\an {}{}}M'=\tk {\k[G]}{\an {}{}}(N\cap M')$.
\end{proof}

For $k=1,\ldots,n$, let $K_{k}({\goth g})$ be the kernel of the map
$$ \xymatrix{\tk {\k}{\sgg g{}}\ex k{{\goth g}} \ar[rr] &&
\ex k{{\goth g}}\wedge \ex {\b g{}}{\bi g{}}}, \qquad
\varphi \longmapsto \varphi \wedge \varepsilon .$$
Then $\ex {k-1}{{\goth g}}\wedge \ex {\b g{}}{\bi g{}}$ is a $\sgg g{}$-submodule of
$K_{k}({\goth g})$.

\begin{coro}\label{cstp3}
For $k=1,\ldots,n$, denote by $S_{k,k}$ the support in $\gg g{}$ of the quotient of
$K_{k}({\goth g})$ by $\ex {k-1}{{\goth g}}\wedge \ex {\b g{}}{\bi g{}}$. Then
$S_{k,k}\cap G({\goth l}_{*})\times {\goth g}$ is empty.
\end{coro}

\begin{proof}
As $\bi g{}$ is a $G$-invariant $\sgg g{}$-submodule of $\tk {\k}{\sgg g{}}{\goth g}$,
$K_{k}({\goth g})$ and $\ex {k-1}{{\goth g}}\wedge \ex {\b g{}}{\bi g{}}$ are
$G$-invariant submodules of $\tk {\k}{\sgg g{}}\ex {}{{\goth g}}$. Then $S_{k,k}$ is
a $G$-invariant subvariety of $\gg g{}$.

Let ${\cal K}_{k}$ and ${\cal K}'_{k}$ be the restrictions to
$G({\goth l}_{*})\times {\goth g}$ of
$\tk {\sgg g{}}{\an {\gg g{}}{}}K_{k}({\goth g})$ and
$\tk {\sgg g{}}{\an {\gg g{}}{}}\ex {k-1}{{\goth g}}\wedge \ex {\b g{}}{\bi g{}}$
respectively. By Proposition~\ref{pstp3},
$$ \tk {\k[G]}{\an {}{}}\Gamma (G\times \lg l*,\thetaup ^{*}({\cal K}_{k})) =
\tk {\k[G]}{\an {}{}}\Gamma (G\times \lg l*,\thetaup ^{*}({\cal K}'_{k})) .$$
So, by Lemma~\ref{lstp3}, $S_{k,k}\cap G({\goth l}_{*})\times {\goth g}$ is empty since
$S_{k,k}$ is $G$-invariant, whence the corollary. 
\end{proof}

\subsection{Theorem of reduction} \label{stp4}
For $z$ in ${\goth h}$, the centralizer ${\goth g}^{z}$ of $z$ in ${\goth g}$ is a
reductive subalgebra of ${\goth g}$.

\begin{lemma}\label{lstp4}
Let $z$ be in ${\goth h}\setminus \{0\}$. Then, for some $g$ in the normalizer of
${\goth h}$ in $G$, ${\goth g}^{g(z)}$ is the reductive factor containing ${\goth h}$ of
a parabolic subalgebra of ${\goth g}$ containing ${\goth b}$.
\end{lemma}

\begin{proof}
As $z$ is in ${\goth h}$, ${\goth h}$ is contained in ${\goth g}^{z}$. For some
Borel subalgebra ${\goth b}_{z}$ of ${\goth g}$, containing ${\goth h}$,
${\goth b}_{z}\cap {\goth g}^{z}$ is a Borel subalgebra of ${\goth g}^{z}$. Since
${\goth h}$ is contained in ${\goth b}_{z}$, for some $g$ in the normalizer of
${\goth h}$ in $G$, $g({\goth b}_{z})$ is equal to ${\goth b}$. Denoting by
${\cal R}_{+,g(z)}$ the set of positive roots $\alpha $ such that $\alpha (g(z))\neq 0$
and setting
$${\goth u}_{g(z)} := \bigoplus _{\alpha \in {\cal R}_{+,g(z)}} {\goth g}_{\alpha },$$
${\goth g}^{g(z)}$ normalizes ${\goth u}_{g(z)}$ since ${\goth g}^{g(z)}$ is the
subalgebra of ${\goth g}$ generated by ${\goth h}$ and the root subspaces
${\goth g}_{\pm \beta }, \, \beta \in \Pi \setminus {\cal R}_{+,g(z)}$. As a result,
${\goth g}^{g(z)}+{\goth u}_{g(z)}$ is a subalgebra of ${\goth g}$ containing
${\goth b}$, whence the lemma.
\end{proof}

\begin{theo}\label{tstp4}
Suppose that $D_{k,{\goth p}}^{\bullet}({\goth g},\widehat{\bii})$ has no cohomology
of degree different from $\b g{}$ for $k=1,\ldots,n$ and each parabolic subalgebra
${\goth p}$ of ${\goth g}$ containing ${\goth b}$. Then ${\goth g}$ has Property
\rm $({\bf P})$.
\end{theo}
 
\begin{proof}
Let $z$ be in ${\goth h}\setminus \{0\}$ and $g$ as in Lemma~\ref{lstp4}. By hypothesis
and Corollary~\ref{cstp3}, for some affine open subset $O$ of $\gg g{}$, containing
$\{g(z)\}\times {\goth g}$,
$$ \tk {\sgg g{}}{\k[O]}K_{k}({\goth g})=
\tk {\sgg g{}}{\k[O]}\ex {k-1}{{\goth g}}\wedge \bi g{},$$
for $k=1,\ldots,n$. Then, by Proposition~\ref{pco2},
$\tk {\sgg g{}}{\k[O]}D_{k}^{\bullet}({\goth g},\bi g{})$ has no cohomology of
degree different from $\b g{}$ for $k=1,\ldots,n$. As a result,
$\varpi _{1}(S_{k})\cap {\goth h}=\{0\}$ for $k=1,\ldots,n$. By
Proposition~\ref{ppp}(ii), $D_{1}^{\bullet}({\goth g},\bi g{})$ has no cohomology
of degree different from $\b g{}$. Then, by induction on $k$ and Proposition~\ref{pstp1},
$D_{k}^{\bullet}({\goth g},\bi g{})$ has no cohomology
of degree different from $\b g{}$ for $k=2,\ldots,n$, whence the theorem.
\end{proof}

\section{Some results on the characteristic module} \label{rcm}
From now on ${\goth p}$ is a parabolic subalgebra of ${\goth g}$ containing ${\goth b}$.
Let ${\goth l}$ be the reductive factor of ${\goth p}$ containing ${\goth h}$,
${\goth d}$ the derived algebra of ${\goth l}$ and ${\goth z}$ its center.
Denote by ${\cal R}_{{\goth l}}$ the set of roots $\alpha $ such that
${\goth g}_{\alpha }$ is contained in ${\goth l}$ and set:
$$ d_{0} := \dim {\goth z}, \quad {\goth p}_{\u} := 
\bigoplus _{\alpha \in {\cal R}_{+}\setminus {\cal R}_{{\goth l}}} {\goth g}_{\alpha },
\qquad d := \dim {\goth p}_{\u} ,$$
$${\goth p}_{-,\u} := \bigoplus _{\alpha \in {\cal R}_{+}\setminus {\cal R}_{{\goth l}}}
{\goth g}_{-\alpha }, \quad {\goth p}_{-} := {\goth l}\oplus {\goth p}_{-,\u},
\quad {\goth p}_{\pm,\u} := {\goth p}_{\u} \oplus {\goth p}_{-,\u} $$
so that ${\goth p}_{\u}$ is the nilpotent radical of ${\goth p}$, ${\goth p}_{-}$
is the parabolic subalgebra opposite to ${\goth p}$ and ${\goth p}_{-,\u}$ is its
nilpotent radical. Let $L$ be the centralizer of ${\goth z}$ in $G$.
According to~\cite[\S 3.2, Lemma 5]{Ko}, $L$ is connected. When ${\goth b}$ is strictly
contained in ${\goth p}$, we denote by $\poi {{\goth d}}1{,\ldots,}{\n}{}{}{}$ the simple
factors of ${\goth d}$.    

Let ${\goth l}_{*}$ be the open subset of ${\goth l}$ as in Subsection~\ref{lco4}. The
usual gradation of $\k[{\goth p}]$ induces a gradation of the polynomial algebra $\k[\lp]$
over $\k[{\goth l}_{*}]$. Let $\varpi $ be the canonical projection 
$\xymatrix{ {\goth p} \ar[r] & {\goth l}}$ and set:
$$\oi := \tk {\sgg g{}}{\k[\lp]}\bi g{}, \quad  
\oi _{{\goth l}} := \tk {\sgg l{}}{\k[\ll]}\bi l{} .$$
As $(h,e)$ is in $\Omega _{{\goth g}}$, $\Omega _{{\goth g}}\cap \lp$ is a dense open 
subset of $\lp$ so that $\oi$ is a free submodule of rank $\b g{}$ of 
$\tk {\k}{\k[\lp]}{\goth p}$ by Proposition~\ref{psc1}(iii), Theorem~\ref{tsc1}(i) and
Corollary~\ref{c2sc3}. Again by Proposition~\ref{psc1}(iii), $\oi _{{\goth l}}$ is a
free $\k[\ll]$-module of rank $\b l{}$. From the direct sums 
$$ {\goth p} = {\goth l}\oplus {\goth p}_{\u} \quad  \text{and} \quad
{\goth g} = {\goth p}\oplus {\goth p}_{-,\u}$$
we deduce the inclusions
$$ \k[{\goth l}] \subset \k[{\goth p}] \subset \k[{\goth g}] \quad  \text{and} \quad
\k[\ll] \subset \k[\lp] \subset \k[\lg l*].$$
For $(i,m)$ in $I_{0}$, the restriction of $\varepsilon _{i}^{(m)}$ to 
$\lp$ is again denoted by $\varepsilon _{i}^{(m)}$. According to this convention, 
$\varepsilon $ is a generator of the $\k[\lp]$-module $\ex {\b g{}}{\oi}$ 
(see Equality~\ref{eqpm}). As in Subsection~\ref{sc3}, for $(x,y)$ 
in $\gg l{}$, the image of $\bi l{}$ by the evaluation map at $(x,y)$ is denoted by 
$V_{x,y}^{{\goth l}}$.  

Denote by $\poi {\beta }1{,\ldots,}{{d_{0}}}{}{}{}$ the simple roots in 
$\Pi \setminus {\cal R}_{{\goth l}}$ and $\poi h1{,\ldots,}{d_{0}}{}{}{}$ the basis of 
${\goth z}$ dual of $\poi {\beta }1{,\ldots,}{d_{0}}{}{}{}$. Let 
$\poi q1{,\ldots,}{\rg}{}{}{}$ be homogeneous generators of $\e Sl^{L}$ and 
$\poie d1{,\ldots,}{\rg}{}{}{}{\prime}{\prime}$ their respective degrees, chosen so that 
\begin{itemize}
\item [{\rm (1)}] $\poie d1{ \leq \cdots \leq }{\rg}{}{}{}{\prime}{\prime}$,
\item [{\rm (2)}] for $i=1,\ldots,\rg$, $q_{i}\in 
\e Sz \cup \ec Sd{}1{}^{L}\cup \cdots \cup \ec Sd{}{\n}{}^{L}$,
\item [{\rm (3)}] for $i=1,\ldots,d_{0}$, $q_{i}=h_{i}$.
\end{itemize}
For $i=1,\ldots,\rg$, denote by $\eta _{i}$ the element of
$\tk {\k}{\k[{\goth l}]}{\goth l}$ equal to the differential of $q_{i}$. 

\begin{lemma}\label{lrcm}
{\rm (i)} For $i=1,\ldots,\rg$, there exists a unique sequence 
$c_{i,j}, \, j=1,\ldots,\rg$ in $\k[{\goth l}]$ such that 
$$ \varepsilon _{i}(x) = \sum_{j=1}^{\rg} c_{i,j}(x) \eta _{j}(x) $$
for all $x$ in ${\goth l}$. Moreover, $c_{i,j}$ is invariant under $L$ and homogeneous of
degree $d_{i}-d'_{j}$.

{\rm (ii)} For $x$ in ${\goth l}_{*}$, the matrix
$$ (c_{i,j}(x), \, 1\leq i,j\leq \rg)$$
is invertible.

{\rm (iii)} For all $x$ in a dense open subset of ${\goth h}$,
$\poi x{}{,\ldots,}{}{\varepsilon }{1}{\rg}$ are regular elements of ${\goth g}$. In
particular, for $i=1,\ldots,\rg$ and $x$ in a dense open subset of ${\goth l}_{*}$,
$\varepsilon _{i}(x)$ has a non zero component on each simple factor of ${\goth d}$. 
\end{lemma}

\begin{proof}
(i) Let $i=1,\ldots,\rg$. Denote by ${\goth l}_{{\goth l},\r}$ the set of regular
elements in ${\goth l}$. For $x$ in ${\goth l}_{{\goth l},\r}$, 
$\poi x{}{,\ldots,}{}{\eta }{1}{\rg}$ is a basis of ${\goth l}^{x}$ by 
\cite[Theorem 9]{Ko}. By Lemma~\ref{lsc3}(i), for all $x$ in ${\goth l}$, 
$\varepsilon _{i}(x)$ is in ${\goth l}^{x}$. Then, by Claim~\ref{clsc1}, there exists a
unique sequence $c_{i,j},\, j=1,\ldots,\rg$ in $\k[{\goth l}_{{\goth l},\r}]$ such that 
$$ \varepsilon _{i}(x) = \sum_{j=1}^{\rg} c_{i,j}(x) \eta _{j}(x) $$
for all $x$ in ${\goth l}_{{\goth l},\r}$. By \cite{Ve}, ${\goth l}_{{\goth l},\r}$ is a
big open subset of ${\goth l}$. Then $\poi c{i,1}{,\ldots,}{i,\rg}{}{}{}$ have a regular
extension to ${\goth l}$ since ${\goth l}$ is normal, whence 
$$ \varepsilon _{i}(x) = \sum_{j=1}^{\rg} c_{i,j}(x) \eta _{j}(x) $$
for all $x$ in ${\goth l}$. As $\varepsilon _{i}$ and $\eta _{j}$ are equivariant under
$L$ and homogeneous of degree $d_{i}-1$ and $d'_{j}-1$ respectively, $c_{i,j}$ is 
invariant under $L$ and homogeneous of degree $d_{i}-d'_{j}$.

(ii) For $x$ in ${\goth l}_{*}$, ${\goth g}^{x}$ is contained in ${\goth l}$. Then 
${\goth l}_{*}\cap {\goth l}_{{\goth l},\r}$ is contained in ${\goth g}_{\r}$. As a
result, by Claim~\ref{clsc1}, for $i=1,\ldots,\rg$, there exists a unique sequence
$c'_{i,j}, \, j=1,\ldots,\rg$ in $\k[{\goth l}_{*}\cap {\goth l}_{{\goth l},\r}]$ such
that
$$ \eta _{i}(x) = \sum_{j=1}^{\rg} c'_{i,j}(x) \varepsilon _{j}(x)$$
for all $x$ in ${\goth l}_{*}\cap {\goth l}_{{\goth l},\r}$ since 
$\poi x{}{,\ldots,}{}{\varepsilon }{1}{\rg}$ is a basis of ${\goth g}^{x}$ fo $x$ in 
${\goth g}_{\r}$ by \cite[Theorem 9]{Ko}. As ${\goth l}_{*}\cap {\goth l}_{{\goth l},\r}$
is a big open subset of ${\goth l}_{*}$, 
$\poie c{{i,1}}{,\ldots,}{{i,\rg}}{}{}{}{\prime}{\prime}$ have a regular extension to 
${\goth l}_{*}$. Then, for $i=1,\ldots,\rg$ and $x$ in ${\goth l}_{*}$, 
$$ \eta _{i}(x) = \sum_{j=1}^{\rg} c'_{i,j}(x) \varepsilon _{j}(x) ,$$
whence the assertion.

(iii) Suppose that $\varepsilon _{i}(x)$ is not regular for all $x$ in ${\goth h}$ for
some $i=1,\ldots,\rg$. A contradiction is expected. Then, for some root $\alpha $, 
$\alpha \rond \varepsilon _{i}(x)=0$ for all $x$ in ${\goth h}$. In particular, for $x$ in
${\goth h}$ and $g$ in $W({\cal R})$, $\alpha \rond \varepsilon _{i}(g(x)) = 0$. As 
$\varepsilon _{i}$ is $G$-equivariant, $\varepsilon _{i}(x)$ is in the kernel of 
$g^{-1}(\alpha )$. As ${\goth g}$ is simple, ${\goth h}$ is a simple $W({\cal R})$-module
so that the orthogonal complement in ${\goth h}$ to $g(\alpha ), \, g\in W({\cal R})$ is 
equal to $\{0\}$, whence a contradiction since for $x$ in ${\goth h}_{\r}$, 
$\varepsilon _{i}(x) \neq 0$ as an element of a basis of ${\goth h}$ by 
\cite[Theorem 9]{Ko}.

As a result, for $i=1,\ldots,\rg$ and all $x$ in a dense open subset of ${\goth l}_{*}$,
$\varepsilon _{i}(x)$ is in ${\goth g}_{\r}\cap {\goth l}_{*}$. Let $x$ be in
${\goth l}_{*}$ such that $\varepsilon _{i}(x)$ is a regular element of ${\goth g}$. As
${\goth l}^{\varepsilon _{i}(x)}$ is a commutative algebra,
$\poi {{\goth d}}1{,\ldots,}{\n}{}{}{}$ are not contained in
${\goth l}^{\varepsilon _{i}(x)}$. Hence the components of
$\varepsilon _{i}(x)$ on $\poi {{\goth d}}1{,\ldots,}{\n}{}{}{}$ are different from $0$.  
\end{proof}

For $a$ homogeneous of degree $d_{a}$ in $\k[{\goth l}]$ and for $k=0,\ldots,d_{a}$, 
denote by $a^{(k)}$ the $2$-polarization of bidegree $(d_{a}-k,k)$ of $a$. Set: 
$$ I'_{0} := \{(i,m) \in \{1,\ldots,\rg\}\times {\Bbb N} \; \vert \; 
0\leq m \leq d'_{i}-1\} .$$
Then $\vert I'_{0} \vert = \b l{}$. For $(i,m)$ in $I'_{0}$, let $\eta _{i}^{(m)}$ be the
$2$-polarization of $\eta _{i}$ of bidegree $(d'_{i}-1-m,m)$. For $(i,m)$ in $I_{0}$ and
$(j,m')$ in $I'_{0}$, set $c_{i,m,j,m'} := c_{i,j}^{(m-m')}$. In particular,
$c_{i,m,j,m'}=0$ if $m'>m$ or $m-m'>d_{i}-d'_{j}$ since $c_{i,j}$ is homogeneous of
degree $d_{i}-d'_{j}$.

\begin{lemma}\label{l2rcm}
{\rm (i)} For $(i,m)$ in $I_{0}$, 
$$ \varepsilon _{i}^{(m)}(x,y) = \sum_{(j,m') \in I'_{0}} 
c_{i,m,j,m'}(x,y) \eta _{j}^{(m')}(x,y)$$
for all $(x,y)$ in $\gg l{}$. 

{\rm (ii)} For $(i,m)$ in $I_{0}$ and $(j,m')$ in $I'_{0}$, the function 
$c_{i,m,j,m'}$ is invariant under the diagonal action of $L$ in $\gg l{}$.

{\rm (iii)} For $(i,m)$ in $I_{0}$ and $y'$ in ${\goth p}_{\u}$, 
$$\varpi \rond \varepsilon _{i}^{(m)}(x,y+y') = \varepsilon _{i}^{(m)}(x,y)$$
for all $(x,y)$ in $\gg l{}$.
\end{lemma}

\begin{proof}
(i) By Lemma~\ref{lrcm}(i), for $(x,y)$ in $\gg l{}$,
$$ \varepsilon _{i}(x+sy) = \sum_{j=1}^{\rg} c_{i,j}(x+sy) \eta _{j}(x+sy)$$
for all $s$ in $\k$, whence
$$ \sum_{m=0}^{d_{i}-1} s^{m}\varepsilon _{i}^{(m)}(x,y) = 
\sum_{l=0}^{d_{i}-d'_{j}} \sum_{k=0}^{d'_{j}-1} s^{l+k} c_{i,j}^{(l)}(x,y)
\eta _{j}^{(k)}(x,y) = \sum_{m=0}^{d_{i}-1} s^{m} \sum_{m'=0}^{\inf\{m,d'_{j}-1\}} 
c_{i,j}^{(m-m')}(x,y)\eta _{j}^{(m')}(x,y) .$$
As a result,
$$ \varepsilon _{i}^{(m)}(x,y) = \sum_{(j,m')\in I'_{0}} 
c_{i,m,j,m'}(x,y)\eta _{j}^{(m')}(x,y) $$
for all $(x,y)$ in $\gg l{}$.

(ii) By Lemma~\ref{lrcm}(i), $c_{i,m,j,m'}$ is invariant under the diagonal action of 
$L$ in $\gg l{}$.

(iii) According to Corollary~\ref{csc3}, for all $x$ in ${\goth p}$, 
$\varepsilon _{i}(x)$ is in ${\goth p}$. For $x$ in ${\goth l}$ and $y$ in 
${\goth p}_{\u}$, 
$$ \lim _{t\rightarrow 0} \rho (t)(x+y) = x \quad  \text{whence} \quad 
p_{i}(x+y) = p_{i}(x) \quad  \text{and} \quad \dv {\varepsilon _{i}(x)}v =
\dv {\varepsilon _{i}(x+y)}v$$
for all $v$ in ${\goth l}$. As a result, for all $x$ in ${\goth l}$ and for all $y$ in 
${\goth p}_{\u}$, $\varepsilon _{i}(x)-\varepsilon _{i}(x+y)$ is in ${\goth p}_{\u}$
since ${\goth p}_{\u}$ is the orthogonal complement to ${\goth l}$ in ${\goth p}$, 
whence the assertion.
\end{proof}

The order of $I_{0}$ induces an order of $I'_{0}$ and we get a square matrix of order
$\vert I'_{0} \vert$,
$$ M_{0} := (c_{i,m,j,m'}, \; ((i,m),(j,m')) \in I'_{0}\times I'_{0}),$$
with coefficients in $\k[\gg l{}]$.

\begin{coro}\label{crcm}
For all $(x,y)$ in $\ll$, $\det M_{0}(x,y)\neq 0$.
\end{coro}

\begin{proof}
Set:
$$ \varepsilon _{0} := \wedge _{(i,m) \in I'_{0}} \varepsilon _{i}^{(m)} 
\quad  \text{and} \quad \eta _{0} := \wedge _{(j,m')\in I'_{0}} \eta _{j}^{(m')}.$$
In these equalities, the order of the products is induced by the order of $I'_{0}$. Let 
$\overline{\varepsilon _{0}}$ be the restriction of $\varepsilon _{0}$ to $\ll$. 
By Lemma~\ref{l2rcm}(i), $\overline{\varepsilon _{0}}$ is in 
$\tk {\k}{\k[\ll]}\ex {\b l{}}{{\goth l}}$ and 
$$ \overline{\varepsilon _{0}} = \det M_{0} \eta _{0} .$$
As $\overline{\varepsilon _{0}}$ and $\eta _{0}$ are homogeneous of degree
$$ \sum_{i=1}^{\rg} \frac{d'_{i}(d'_{i}-1)}{2},$$
$\det M_{0}$ is in $\k[{\goth l}_{*}]$.  

Denote by $\Sigma $ the nullvariety of $\det M_{0}$ in ${\goth l}_{*}$. Suppose that 
$\Sigma $ is nonempty. A contradiction is expected. As the maps $\varepsilon _{i}^{(m)}$
and $\eta _{i}^{(m)}$ are homogeneous and invariant under $L$ for all $(i,m)$ in 
$I'_{0}$, $\Sigma $ is a closed cone of ${\goth l}_{*}$, invariant under $L$.  

\begin{claim}\label{clrcm}
For some $x$ in ${\goth h}\cap {\goth l}_{*}$, $x+\varpi (e)$ is in $\Sigma $.
\end{claim}

\begin{proof}{[Proof of Claim~\ref{clrcm}]}
As $\Omega _{{\goth l}}\cap \ll$ contains $(h,\varpi (e))$, for all $t$ in $\k$,
$h+t\varpi (e)$ is a regular element of ${\goth g}$ and ${\goth l}$. Then, by
Lemma~\ref{lrcm} and Claim~\ref{clsc1}, for some sequence
$p_{i,j}, \, 1\leq i,j\leq \rg$ of polynomials of one indeterminate over $\k$,
$$ \eta _{i}(h+t\varpi (e)) = p_{i,1}(t)\varepsilon _{1}(h+t\varpi (e)) + \cdots +
p_{i,\rg}(t)\varepsilon _{\rg}(h+t\varpi (e)$$
for all $t$ in $\k$ and $i=1,\ldots,\rg$. As a result, for all $(i,m)$ in $I'_{0}$,
$\eta _{i}^{(m)}(h,\varpi (e))$ is a linear combination of
$\varepsilon _{j}^{(m')}(h,\varpi (e), \ j=1,\ldots,\rg,\, 0\leq m'\leq m$. Hence
${\goth h}\cap {\goth l}_{*}$ is not contained in $\Sigma $. Let $\Sigma '$ be an
irreducible component of $\Sigma \cap {\goth b}$. Then $\Sigma '$ is an hypersurface of
${\goth b}\cap {\goth l}_{*}$ as an irreducible component of the nullvariety of
$\det M_{0}$ in ${\goth b}\cap {\goth l}_{*}$. Then, by lemma~\ref{lint},
$$ \Sigma ' = \Sigma '\cap {\goth h} + {\goth u}\cap {\goth l} $$
since ${\goth b}\cap {\goth l}_{*}={\goth h}\cap {\goth l}_{*}+{\goth u}\cap {\goth l}$
and $\Sigma $ and $\Sigma '$ are invariant under the one parameter subgroup
$t\mapsto \rho (t)$ of $L$. As a result, for $x$ in $\Sigma '\cap {\goth h}$,
$x+\varpi (e)$ is in $\Sigma '$, whence the claim.
\end{proof}

From the equalities
$$ \lim _{t\rightarrow \infty } t^{-2}\rho (t).(x+\varpi (e)) = \varpi (e)
\quad  \text{and} \quad \lim _{t\rightarrow 0} \rho (t).(x+\varpi (e)) = x,$$
we deduce $x+\varpi (e)\in {\goth l}_{{\goth l},\r}\cap {\goth l}_{*}$ since  
${\goth l}_{{\goth l},\r}$ and ${\goth l}_{*}$ are open subsets of ${\goth l}$,
invariant under $H$, containing $\varpi (e)$ and $x$ respectively. Hence $x+\varpi (e)$
is a regular element of ${\goth g}$. Then, from the equalities
$$ \lim _{t\rightarrow 0} \rho (t).(h+e-\varpi (e)) = h \quad  \text{and} $$ $$
\lim _{t\rightarrow \infty } t^{-2}\rho (t).(x+\varpi (e)+a(h+e-\varpi (e))) = 
\varpi (e)+a(e-\varpi (e)),$$ 
for $a$ in $\k$, we deduce  
$$h+e-\varpi (e) \in {\goth g}_{\r} \quad  \text{and} \quad
x+\varpi (e)+a(h+e-\varpi (e)) \in {\goth g}_{\r}$$
for all $a$ in $\k^{*}$ since $\varpi (e)+a(e-\varpi (e))$ is a regular element of 
${\goth g}$. Hence 
$$ (x+\varpi (e),h+e-\varpi (e)) \in \Omega _{{\goth g}} .$$
As a result, by Corollary~\ref{c2sc3} and Lemma~\ref{l2rcm}(iii), the elements
$$ \varepsilon _{i}^{(m)}(x+\varpi (e),h), \, (i,m) \in I'_{0}$$
are linearly independent, whence the contradiction.
\end{proof}

Let $\oi _{0}$ be the submodule of $\oi$ generated by 
$\varepsilon _{i}^{(m)}, \, (i,m) \in I'_{0}$. For $(i,m)$ in $I'_{0}$, denote by 
$\overline{\varepsilon _{i}^{(m)}}$ the restriction of $\varepsilon _{i}^{(m)}$ to $\ll$.
By Lemma~\ref{lsc3}(ii), $\overline{\varepsilon _{i}^{(m)}}$ is in 
$\tk {\k}{\k[\ll]}{\goth l}$.

\begin{prop}\label{prcm}
Let $\oi _{+}$ be the $\k[\lp]$-submodule of $\tk {\k}{\k[\lp]}{\goth g}$ generated by
$\oi_{{\goth l}}$ and $1\tens {\goth p}_{\u}$.

{\rm (i)} The $\k[\lp]$-modules $\oi$ and $\oi _{0}$ are free of rank $\b g{}$ and 
$\b l{}$ respectively.

{\rm (ii)} The module $\oi _{{\goth l}}$ is the image of $\oi _{0}$ by the restriction 
map from $\lp$ to $\ll$. In particular, 
$\overline{\varepsilon _{i}^{(m)}}, \, (i,m)\in I'_{0}$ is a basis of $\oi _{{\goth l}}$.

{\rm (iii)} The $\k[\lp]$-module $\oi_{+}$ is free of rank $\b g{}$. Moreover, $\oi$ is
contained in $\oi _{+}$.
\end{prop}

\begin{proof}
(i) As $\Omega _{{\goth g}}\cap \lp$ is non empty and $\oi$ is generated by 
$\varepsilon _{i}^{(m)}, \, (i,m) \in I_{0}$, the assertion results from Proposition
\ref{psc1}(iii) since $\vert I_{0} \vert = \b g{}$ and $\vert I'_{0} \vert = \b l{}$. 

(ii) By Lemma~\ref{l2rcm}(i), the restriction of $\oi _{0}$ to $\ll$ is contained in 
$\oi _{{\goth l}}$. By Corollary~\ref{crcm}, the matrix $M_{0}(x,y)$ is invertible for 
all $(x,y)$ in $\ll$. Then, for all $(i,m)$ in $I'_{0}$, $\eta _{i}^{(m)}$ is a linear 
combination with coefficients in $\k[\ll]$ of 
$\overline{\varepsilon _{j}^{(m')}}, \, (j,m') \in I'_{0}$, whence the assertion.

(iii) By definition, $\oi_{+}$ is the direct sum of the free module
$\tk {\k[\ll]}{\k[\lp]}\oi _{{\goth l}}$ of rank $\b l{}$ and the free module
$\tk {\k}{\k[\lp]}{\goth p}_{\u}$ of rank $d$. Hence $\oi_{+}$ is free of rank
$\b g{}$ since $\b g{}=\b l{}+d$. By Lemma~\ref{l2rcm}(iii) and (ii),
$\varepsilon _{i}^{(m)}$ is in $\oi_{+}$ for all $(i,m)$ in $I_{0}$, whence the
assertion.
\end{proof} 

\section{Decomposition and Expansion} \label{de}
According to the notations of Subsection~\ref{co4},
$\bii := \tk {\sgg g{}}{\k[\lg l{*}]}\bi g{}$. Let $I_{*,0}$ be the set
$I_{0}\setminus \{1,\ldots,\rg\}\times \{0\}$.

\subsection{Some notations}\label{de1}
Denote by $\poi {\alpha }{1}{,\ldots,}{n}{}{}{}$ the positive roots ordered so that
$\poi {\alpha }{1}{,\ldots,}{d}{}{}{}$ are not in ${\cal R}_{{\goth l}}$. For
$1\leq i,j \leq n$, set: 
$$v_{i}:= x_{\alpha _{i}}, \quad w_{j}:= x_{-\alpha _{j}} .$$ 
Then $\poi v1{,\ldots,}{d}{}{}{}$, $\poi w1{,\ldots,}{d}{}{}{}$,
$\poi v1{,\ldots,}{n}{}{}{}$, $\poi w1{,\ldots,}{n}{}{}{}$ are basis of ${\goth p}_{\u}$,
${\goth p}_{-,\u}$, ${\goth u}$, ${\goth u}_{-}$ respectively. 

Let $\poi {{\goth d}}1{,\ldots,}{\n}{}{}{}$ be the simple factors of ${\goth d}$
when ${\goth z}$ is strictly contained in ${\goth l}$. Set:
$$ n_{1} := \b d1-\j d1, \; ,\ldots,\; n_{\n} := \b d{\n} - \j d{\n} $$ 
so that 
$$ n-d  = \poi n1{+\cdots +}{\n}{}{}{} .$$
As ${\goth g}$ is the direct sum of ${\goth z}$, 
$\poi {{\goth d}}1{,\ldots,}{\n}{}{}{}$, ${\goth p}_{\u}$, ${\goth p}_{-,\u}$, for 
$i=1,\ldots,\n$, $\k[{\goth d}_{i}]$ is a subalgebra of $\k[{\goth l}]$ and 
$\k[{\goth g}]$ and $\k[{\goth l}_{*}\times {\goth d}_{i}]$ is a subalgebra of 
$\k[\lg l*]$. Set
$\bii _{i} := \tk {\k[\gg di]}{\k[\lp]}\bi di$. Then $\bii _{i}$ is a free submodule
of rank $\b d{i}$ of $\tk {\k}{\k[\lp]}{\goth d}_{i}$. According to 
Lemma~\ref{lrcm}(ii), for some $\poi {\lambda }{i,1}{,\ldots,}{i,n_{i}}{}{}{}$ in 
$\tk {\sgg di}{\k[\lp]}\bi di$, the $\k[\lp]$-module $\bii _{i}$ is generated by
$\poi {\lambda }{i,1}{,\ldots,}{i,n_{i}}{}{}{}$ and $\j di$ elements of
$\tk {\k}{\k[{\goth l}_{*}]}{\goth l}$. 

Let ${\goth I}$ be the union of $\{0\}$ and the set of strictly increasing sequences in
$\{1,\ldots,n\}$. For $\kappa = \poi k1{<\cdots <}{l}{}{}{}$ in ${\goth I}$ and
$j=0,\ldots,n$, set:
$$ \{\kappa \} := \{\poi k1{,\ldots,}{l}{}{}{}\}, \qquad \vert \kappa \vert = l, \qquad
w_{\kappa } := \poi w{k_{1}}{ \wedge \cdots \wedge }{k_{l}}{}{}{}, \qquad
{\goth I}_{j} := \{\iota \in {\goth I} \; \vert \; \vert \iota  \vert = j\}.$$
By definition, $w_{\kappa }$ is in $\ex {\vert \kappa \vert}{{\goth u}_{-}}$ and  for
$\kappa  = 0$, $\vert \kappa \vert := 0$ and $w_{\kappa }=1$.
Denote by $\mu _{+}$ the generator $\poi v1{\wedge \cdots \wedge }{d}{}{}{}$ of
$\ex d{{\goth p}_{\u}}$ and ${\goth I}_{+}$ the subset of elements $\iota $ of
${\goth I}$ such that $\{\iota \}$ is contained in $\{1,\ldots,d\}$.

Let ${\goth J}$ be the union of ${\goth I}_{+}$ and the set of strictly increasing
sequences in $$\{(i,j) \; \vert \; i=1,\ldots,\n, \, j=1,\ldots,n_{i}\} .$$
The order of ${\Bbb N}^{2}$ induces an order of ${\goth J}\setminus {\goth I}_{+}$ and
the set ${\goth J}$ is ordered so that ${\goth I}_{+}$ and
${\goth J}\setminus {\goth I}_{+}$ are ordered subsets and the elements of
${\goth I}_{+}\setminus \{0\}$ are bigger than the elements of
${\goth J}\setminus {\goth I}_{+}$. For $i$ in $\{1,\ldots,d\}$,
$\lambda _{i}$ is the element $1\tens v_{i}$ of $\tk {\k}{\k[\lp]}{\goth p}_{\u}$ and for
$\upsilon = \poi i{1}{<\cdots <}{k}{}{}{}$ in ${\goth J}$ and $j=1,\ldots,n$, set:
$$ \{\upsilon \} := \{\poi i1{,\ldots ,}{k}{}{}{}\}, \quad 
\vert \upsilon  \vert := k, \quad
\lambda _{\upsilon } := \poi {\lambda }{i_{1}}{\wedge \cdots \wedge }{i_{k}}{}{}{}, \quad
{\goth J}_{j} := \{\upsilon \in {\goth J} \; \vert \; \vert \upsilon \vert = j\}.$$
For $\upsilon =0$, $\vert \upsilon  \vert := 0$ and $\lambda _{\upsilon }=1$. Denote
by $\bjj$ the $\k[\lp]$-submodule of $\tk {\k}{\k[\lp]}{\goth g}$ generated by
$\lambda _{i}, \, i \in {\goth J}_{1}$.

\subsection{Some open subset of $\lp$} \label{de2}
We recall that $(h,e)$ is an element of $\lp\cap \Omega _{{\goth g}}$.

\begin{lemma}\label{lde2}
{\rm (i)} For some dense open subset $O$ of $\lp$, containing $(h,e)$ and contained in
$\Omega _{{\goth g}}$, ${\goth u}_{-}$ is a complement to $V_{x,y}$ in ${\goth g}$ for
all $(x,y)$ in $O$. 

{\rm (ii)} For $\varphi $ in $\tk {\k}{\k[O]}\ex {}{{\goth g}}$, there exists a unique
element $\varphi '$ of $\tk {\k}{\k[O]}\ex {}{{\goth u}_{-}}$ such that
$\varphi (x,y)-\varphi '(x,y)$ is in $V_{x,y}\wedge \ex {}{{\goth g}}$ for all
$(x,y)$ in $O$.
\end{lemma}

\begin{proof}
(i) By Corollary~\ref{csc3}, $V_{h,e}={\goth b}$. Then, for some dense open subset $O$
of $\lp$, contained in $\Omega _{{\goth g}}$, ${\goth u}_{-}$ is a complement to
$V_{x,y}$ in ${\goth g}$ for all $(x,y)$ in $O$ since the map
$$ \xymatrix{ \Omega _{{\goth g}} \ar[rr] && \ec {Gr}g{}{}{\b g{}}}, \qquad
(x,y) \longmapsto V_{x,y}$$
is regular.

(ii) Let $i$ be a positive integer and $\varphi $ in $\tk {\k}{\k[O]}\ex i{{\goth g}}$.
By (i), for $(x,y)$ in $O$, $\varphi (x,y)$ has a unique decomposition
$$ \varphi (x,y) = \varphi '(x,y) + \varphi ''(x,y) \quad  \text{with} \quad
\varphi '(x,y) \in \ex i{{\goth u}_{-}} \quad  \text{and} \quad
\varphi ''(x,y) \in \bigoplus _{j=1}^{i} \ex j{V_{x,y}}\wedge \ex {i-j}{{\goth u}_{-}}.$$
As $\varepsilon _{i}^{(m)}(x,y), \, (i,m) \in I_{0}$ is a basis of $V_{x,y}$ for all
$(x,y)$ in $\Omega _{{\goth g}}$, the map $\varphi '$ and $\varphi ''$ so defined are
regular by Claim~\ref{clsc1}, whence the assertion.
\end{proof}

\begin{coro}\label{cde2}
Let $(i,m)$ be in $I_{*,0}$. For some regular maps
$\omega _{i,m,k,-}, \, k=0,\ldots,m$ and $\omega _{i,m,k,+}, \, k=0,\ldots,m$ in
$\tk {\k}{\k[O\times {\goth p}_{-,\u}]}{\goth g}$,
$$ \varepsilon _{i}^{(m)}(x,y+ty') = \sum_{k=0}^{m} t^{k}\omega _{i,m,k,-}(x,y,y')
+ \sum_{k=0}^{m} t^{k}\omega _{i,m,k,+}(x,y,y') \quad  \text{with}  $$ $$
\omega _{i,m,k,-}(x,y,y') \in {\goth u}_{-} \quad  \text{and} \quad
\omega _{i,m,k,+}(x,y,y') \in V_{x,y}$$
for all $(t,x,y,y')$ in $\k\times O\times {\goth p}_{-,\u}$.
\end{coro}

\begin{proof}
As the map $y'\longmapsto \varepsilon _{i}^{(m)}(x,y+y')$ is a polynomial map
of degree $m$ with coefficients in $\tk {\k}{\k[O]}{\goth g}$, the corollary results from
Lemma~\ref{lde2}(ii).
\end{proof}

\subsection{Some expansions} \label{de3}
Let $I_{0,*}$ be the set:
$$ I_{0,*} := \{(i,m,k) \; \vert \; (i,m) \in I_{*,0}, \, k\in \{1,\ldots,m\}\} .$$ 
Set:
$$  \varepsilon _{0,0} := \poi {\varepsilon }1{\wedge \cdots \wedge }{\rg}{}{}{}, \quad
\varepsilon _{*} := \wedge _{(i,m)\in I_{*,0}} \varepsilon _{i}^{(m)} .$$
In these equalities, the order of the products is induced by the order of $I_{0}$. Then 
$\varepsilon _{0,0}\wedge \varepsilon _{*}$ is a generator of $\ex {\b g{}}{\bii}$. 

By Corollary~\ref{cde2}, for $(x,y,y')$ in $O\times {\goth p}_{-,\u}$ and 
$(i,m)$ in $I_{*,0}$, the polynomial map $t \mapsto \varepsilon _{i}^{(m)}(x,y+ty')$ 
has an expansion
$$ \varepsilon _{i}^{(m)}(x,y+ty') = \varepsilon _{i}^{(m)}(x,y) +
\sum_{k=1}^{m} t^{k} \omega _{i,m,k,-}(x,y,y') + 
\sum_{k=1}^{m} t^{k} \omega _{i,m,k,+}(x,y,y') $$
with 
$$ \omega _{i,m,k,-} \in \tk {\k}{\k[O]}\tk {\k}{\sy k{{\goth p}_{\u}}}
{\goth u}_{-}, \quad 
\omega _{i,m,k,+} \in \tk {\k}{\k[O]}\tk {\k}{\sy k{{\goth p}_{\u}}}{\goth g} $$
such that $\omega _{i,m,k,+}(x,y,y')\in V_{x,y}$. For $I$ subset of $I_{*,0}$ and $I'$
subset of $I_{0,*}$, set:
$$ \varepsilon _{I} := \wedge _{(i,m) \in I} \varepsilon _{i}^{(m)}, \quad
S_{I'} := \sum_{(i,m,k)\in I'} k, $$ $$ 
\omega _{I',-} := \wedge _{(i,m,k) \in I'} \omega _{i,m,k,-}, \quad
\omega _{I',+} := \wedge _{(i,m,k) \in I'} \omega _{i,m,k,+}.$$
In these equalities, the order of the products are induced by the orders of $I_{0}$ and 
$I_{0,*}$. In this subsection, the restriction of $\varepsilon _{I}$ to $O$ is denoted
by $\widetilde{\varepsilon _{I}}$ so that $\widetilde{\varepsilon _{I}}(x,y)$ is an
element of $\ex {\vert I \vert}{V_{x,y}}$ for all $(x,y)$ in $O$. For $I$ subset of
$I_{0,*}$, denote by $I^{\#}$ the image of $I$ by the projection $(i,m,k)\mapsto (i,m)$.
When $S_{I}=\vert I \vert$, we identify $I$ and $I^{\#}$ since $I$ is contained in
$I_{*,0}\times \{1\}$.  

For $\upsilon $ in ${\goth J}$ and $j=0,\ldots,n$, set: 
$$\varepsilon _{\upsilon } := \lambda _{\upsilon }\wedge
\varepsilon _{0,0}\wedge \varepsilon _{*}, \quad
{\cal J}_{j} := \{ I \subset I_{*,0} \; \vert \; \vert I \vert = n-j\}, \quad
\Lambda _{j} := {\Bbb N}^{d}_{j}\times {\goth I}_{j}.$$

\begin{lemma}\label{lde3}
Let $j=0,\ldots,n$, $\upsilon $ in ${\goth J}_{j}$ and $(x,y,y')$ in 
$O\times {\goth p}_{-,\u}$. Denote by $c_{\upsilon }(x,y,y')$ the coefficient of $t^{j}$
of the polynomial map $t\mapsto \varepsilon _{\upsilon }(x,y+ty')$.

{\rm (i)} The polynomial map $t\mapsto \varepsilon _{\upsilon }(x,y+ty')$ is
divisible by $t^{j}$ in $\tk {\k}{\k[t]}\ex {\b g{}+j}{\goth g}$.

{\rm (ii)} For a well defined map
$$ \xymatrix{ {\cal J}_{j} \ar[rr]^{\epsilon } && \{-1,1\}} $$
$$ c_{\upsilon }(x,y,y') := \sum_{I\in {\cal J}_{j}}
\epsilon (I) \hskip 0.15em \lambda _{\upsilon }(x,y)\wedge
\varepsilon _{I}(x,y) \wedge \omega _{I_{*,0}\setminus I,-}(x,y,y') .$$

{\rm (iii)} For $I$ in ${\cal J}_{j}$, for well defined functions 
$a_{r,\kappa ,I,\upsilon }, \; (r,\kappa ) \in \Lambda _{j}$ in $\k[\lp]$, 
$$ \epsilon (I) \hskip 0.15em \lambda _{\upsilon }(x,y) \wedge \varepsilon _{0,0}(x)
\wedge \varepsilon _{I}(x,y) \wedge \omega _{I_{*,0}\setminus I,-}(x,y,y') = 
\sum_{(r,\kappa ) \in \Lambda _{j}} v^{r}(y')a_{r,\kappa ,I,\upsilon }(x,y) 
w_{\kappa }\wedge \overline{\varepsilon _{0}}(x,y)\wedge \mu _{+}.$$
\end{lemma}

\begin{proof}
The cofficient of $t^{k}$ of the polynomial function
$t\longmapsto \varepsilon _{\upsilon }(x,y+ty')$  is the sum of the value at
$(x,y,y')$ of products 
$$ \epsilon (I,I_{-},I_{+}) \hskip 0.15em
\lambda _{\upsilon }\wedge \varepsilon _{0,0}
\wedge \widetilde{\varepsilon _{I}}\wedge \omega _{I_{-},-}\wedge \omega _{I_{+},+}$$
with 
$$I\subset I_{*,0}, \quad I_{-}\subset I_{0,*}, \quad I_{+}\subset I_{0,*},
\quad \epsilon (I,I_{-},I_{+}) \in \{-1,1\} $$
such that
$$ I \cup I_{-}^{\#} \cup I_{+}^{\#} = I \sqcup I_{-}^{\#} \sqcup I_{+}^{\#} , \quad 
\vert I \vert + \vert I_{-}^{\#} \vert + \vert I_{+}^{\#}\vert  = n, \quad
S_{I_{-}} + S_{I_{+}}   = k.$$   
According to Corollary~\ref{c2sc3} and Proposition~\ref{prcm}(ii), 
$$ \lambda _{\upsilon }\wedge \varepsilon _{0,0}\wedge \widetilde{\varepsilon _{I}} 
\in \bigoplus _{m=0}^{d} \ex {\vert I \vert+j+\rg -m}{\oi _{0}}
\wedge \ex {m}{{\goth p}_{\u}} .$$
Then 
$$ \vert I \vert + \vert I_{+}^{\#} \vert > n-j \Longrightarrow 
\lambda _{\upsilon }\wedge \varepsilon _{0,0} \wedge \widetilde{\varepsilon _{I}}
\wedge \omega _{I_{-},-} \wedge \omega _{I_{+},+} = 0 $$
since the $\k[\lp]$-module generated by $\oi_{0}$ and ${\goth p}_{\u}$ has rank $\b g{}$
by Proposition~\ref{prcm}.
As a result, if the coefficient of $t^{k}$ is different from $0$ then
$$ \vert I \vert + \vert I_{+}^{\#} \vert \leq n-j , \quad 
j \leq  \vert I_{-}^{\#} \vert .$$ 
Moreover,
$$ \vert I_{-}^{\#} \vert +\vert I_{+}^{\#}  \vert  \leq  k  
\quad  \text{since} \quad
\vert I_{-}^{\#} \vert +\vert I_{+}^{\#} \vert  \leq S_{I_{-}} + S_{I_{+}} .$$
As a result, the coefficient of $t^{k}$ of the polynomial map 
$t\mapsto \varepsilon _{\upsilon }(x,y+ty')$ is equal to $0$ when $k<j$ and
for $k=j$, it is the sum of the value at $(x,y,y')$ of the  products
$$ \epsilon (I,I_{-},I_{+})\hskip 0.15em  
\lambda _{\upsilon }\wedge \varepsilon _{0,0} \wedge \widetilde{\varepsilon _{I}}
\wedge \omega _{I_{-},-}$$ 
with 
$$ \vert I \vert + \vert I_{+}^{\#} \vert  \leq n - j, \quad 
j \leq  \vert I_{-}^{\#} \vert  , \quad
\vert I_{-} \vert = \vert I_{-}^{\#} \vert = S_{I_{-}}, \quad
\vert I_{+} \vert = \vert I_{+}^{\#} \vert = S_{I_{+}}, $$ 
whence 
$$ \vert I_{+} \vert = 0, \quad \vert I \vert = n-j, \quad \vert I_{-} \vert  = j .$$
For $I$ in ${\cal J}_{j}$, set:
$$ \epsilon (I) := \epsilon (I,I'_{*,0}\setminus I,\emptyset).$$ 
Then
$$ c_{\upsilon }(x,y,y') = \sum_{I\in {\cal J}_{j}} \epsilon (I) \hskip 0.15em
\lambda _{\upsilon }(x,y)\wedge \varepsilon _{0,0}(x)\wedge \varepsilon _{I}(x,y) \wedge 
\omega _{I'_{*,0}\setminus I,-}(x,y,y') .$$
By Proposition~\ref{prcm}(iii), for $I$ in ${\cal J}_{j}$,
$\lambda _{\upsilon }(x,y)\wedge \varepsilon _{0,0}(x)\wedge \varepsilon _{I}(x,y)$ is in
$\overline{\varepsilon _{0}}(x,y)\wedge \ex d{{\goth p}_{\u}}$, then for well defined
functions $a_{r,\kappa ,I,\upsilon }, \, (r,\kappa ) \in \Lambda _{j}$ in $\k[O]$, 
$$ \epsilon (I) \hskip 0.15em \lambda _{\upsilon }(x,y)\wedge
\varepsilon _{0,0}(x)\wedge \varepsilon _{I}(x,y) \wedge
\omega _{I_{*,0}\setminus I,-}(x,y,y') = 
\sum_{(r,\kappa ) \in \Lambda _{j}} v^{r}(y')  a_{r,\kappa ,I,\upsilon }(x,y) 
w_{\kappa }\wedge \overline{\varepsilon _{0}}(x,y)\wedge \mu _{+} ,$$
whence the lemma.
\end{proof}

For $j=1,\ldots,n$, $(\upsilon , \kappa )$ in ${\goth J}_{j}\times {\goth I}_{j}$,
denote by $a_{\upsilon ,\kappa }$ the function on $O\times {\goth p}_{-,\u}$,
$$ (x,y,y') \longmapsto a_{\upsilon ,\kappa }(x,y,y') :=
\sum_{I\in {\cal J}_{j}} \sum_{r\in {\Bbb N}^{d}_{j}}
v^{r}(y')a_{r,\kappa ,I,\upsilon }(x,y) .$$

\begin{coro}\label{cde3}
Let $j=0,\ldots,n$, $\upsilon $ in ${\goth J}_{j}$ and $(x,y,y')$ in
$O\times {\goth p}_{-,\u}$. The polynomial map
$t\mapsto \varepsilon _{\upsilon }(x,y+ty')$ is divisible by $t^{j}$ in
$\tk {\k}{\k[t]}\ex {\b g{}+j}{\goth g}$ and the coefficient of $t^{j}$ is equal to
$$ \sum_{\kappa \in {\goth I}_{j}} a_{\upsilon ,\kappa }(x,y,y')
w_{\kappa }\wedge \overline{\varepsilon _{0}}(x,y)\wedge \mu _{+} .$$
\end{coro}

The corollary  results from Lemma~\ref{lde3}.

\section{Some spaces} \label{sp}
In this section we consider some spaces related to ${\goth p}$ and we use the notations
${\goth I}$, ${\goth J}$, ${\goth J}_{j}$, $\upsilon $, $\lambda _{\upsilon }$ of
Subsection~\ref{de1}.

\subsection{Interior derivation} \label{sp1}
We recall that $\bk g{}$ is the orthogonal complement to $\bi g{}$ in
$\tk {\k}{\sgg g{}}{\goth g}$. Let $\bkk$ be the image of $\bk g{}$ by the restriction
morphism from $\gg g{}$ to $\lg l*$ and $\ok$ the image of $\bkk$ by the restriction
morphism from $\lg l*$ to $\lp$. As usual, for $\nu $ in $\bkk$, $\iota _{\nu }$ denotes
the $\tk {\k}{\k[\lg l*]}\e Sg$-derivation of the algebra
$\tk {\k}{\k[\lg l*]}D({\goth g})$
such that $\iota _{\nu }(v) := \dv {\nu }v$ and $\iota _{\nu }(p) := 0$ for all $(v,p)$
in ${\goth g}\times \tk {\k}{\k[\lg l*]}\e Sg$. In the same way, for $\tilde{\nu }$ in
$\ok$, $\iota _{\tilde{\nu }}$ is a $\tk {\k}{\k[\lp]}\e Sg$-derivation of the algebra
$\tk {\k}{\k[\lp]}D({\goth g})$.

\begin{lemma}\label{lsp1}
Let $\upsilon $ be in ${\goth J}$ and $\varphi $ in
$\tk {\k}{\k[\lp]}\tk {\k}{\e Sg}\ex {}{{\goth u}_{-}}$.

{\rm (i)} For all $\nu $ in $\bkk$, $\iota _{\nu }(\lambda _{\upsilon })$ is
in $J\tk {\k}{\e Sg}\ex {\vert \upsilon  \vert-1}{\bjj}$.

{\rm (ii)} If the restriction of $\iota _{\nu }(\varphi )$ to $\lp$ is equal to $0$
for all $\nu $ in $\bkk$ then $\varphi $ is in $\tk {\k}{\k[\lp]}\e Sg$ or $\varphi =0$.
\end{lemma}

\begin{proof}
(i) Let $\nu $ be in $\bkk$ and $\tilde{\nu }$ its restriction to $\lp$. By
Proposition~\ref{prcm}, $\tilde{\nu }$ is orthogonal to $\lambda _{i}$ for all $i$ in
${\goth J}_{1}$. As a result, $\iota _{\nu }(\lambda _{i})$ is in $J$ and by induction on
$\vert \upsilon  \vert$, $\iota _{\upsilon }(\lambda _{\upsilon })$ is in
$J\tk {\k}{\e Sg}\ex {\vert \upsilon  \vert-1}{\bjj}$.

(ii) Suppose that for some $i=1,\ldots,n$, $\varphi $ is in
$\tk {\k}{\k[\lp]}\tk {\k}{\e Sg}\ex i{{\goth u}_{-}}$ and the restriction of
$\iota _{\nu }(\varphi )$ to $\lp$ is equal to $0$ for all $\nu $ in $\bkk$. Prove by
induction on $i$ that $\varphi =0$. Suppose $i=1$ and the restriction of
$\iota _{\nu }(\varphi )$ to $\lp$ equal to $0$ for all $\nu $ in $\bkk$. Let $O$ be
an open subset of $\lp$ as in Lemma~\ref{lde2}. By Proposition~\ref{psc2}, for all
$(x,y)$ in $O$, $V_{x,y}$ is the orthogonal complement to the image of $\bkk$ by the
evaluation map $\nu \mapsto \nu (x,y)$. As a result, $\varphi (x,y)=0$ for all $(x,y)$
in $O$ since $\varphi (x,y)$ is in $\tk {\k}{\e Sg}{\goth u}_{-}$ and ${\goth u}_{-}$ is
the complement to $V_{x,y}$ in ${\goth g}$, whence the assertion
for $i=1$ since $\varphi $ is in $\tk {\k}{\k[\lp]}\tk {\k}{\e Sg}{\goth u}_{-}$.

Suppose $i>1$, the assertion true for $i-1$ and $\varphi \neq 0$. A contradiction is
expected. For $j=1,\ldots,n$, denote by $F_{j}$ the subspace of ${\goth u}_{-}$
generated by $\poi wj{,\ldots,}{n}{}{}{}$. Let $\nu $ be in $\bkk$ and $j_{*}$ the
smallest integer such that $\varphi $ is in
$\tk {\k}{\k[\lp]}\tk {\k}{\e Sg}\ex i{F_{j_{*}}}$. Then
$$ \varphi  = \varphi '\wedge w_{j_{*}} + \varphi '' \quad  \text{with} $$ $$
\varphi '\in \tk {\k}{\k[\lp]}\tk {\k}{\e Sg}\ex {i-1}{F_{j_{*}+1}}
\quad  \text{and} \quad
\varphi '' \in \tk {\k}{\k[\lp]}\tk {\k}{\e Sg}\ex {i}{F_{j_{*}+1}}$$
and
$$ \iota _{\nu }(\varphi ) = \iota _{\nu }(\varphi ')\wedge w_{j_{*}} \pm
\dv {\nu }{w_{j_{*}}}\varphi ' + \iota _{\nu }(\varphi '').$$
By hypothesis, since the spaces $\tk {\k}{\k[\lp]}\tk {\k}{\e Sg}\ex {}{F_{j}}, \,
j=1,\ldots,n$ are invariant by $\iota _{\nu }$, the restriction of
$\iota _{\nu }(\varphi ')$ to $\lp$ is equal to $0$. So, by induction hypothesis,
$\varphi '=0$, whence the contradiction by minimality of $j_{*}$ and the assertion.
\end{proof}

\subsection{An equivalence} \label{sp2}
Set:
$$ {\Bbb N}_{{\goth p}} := {\Bbb N}^{d}\times {\goth J}, \quad
\vert (r,\upsilon ) \vert := \vert r \vert + \vert \upsilon  \vert  , $$ 
for $(r,\upsilon )$ in ${\Bbb N}_{{\goth p}}$. Let ${\cal L}$ be the subspace of
$(\tk {\k}{\k[\lg l*]}D({\goth g}))^{{\Bbb N}_{{\goth p}}}$:
$$ {\cal L} := \{\varphi \in (\tk {\k}{\k[\lg l*]}D({\goth g}))
^{{\Bbb N}_{{\goth p}}} \; \vert \; \exists N_{\varphi } \in {\Bbb N} \quad  
\text{such that} \quad \vert r \vert \geq N_{\varphi } \Longrightarrow 
\varphi _{r,\upsilon } = 0 \}$$
and $\chiup $ the map
$$ \xymatrix{ {\cal L} \ar[rr]^-{\chiup } &&
\tk {\k}{\k[\lg l*]}D({\goth g})}, \qquad
(\varphi _{r,\upsilon }, \, (r,\upsilon ) \in {\Bbb N}_{{\goth p}}) \longmapsto
\sum_{(r,\upsilon )\in {\Bbb N}_{{\goth p}}} v^{r}\varphi _{r,\upsilon }
\wedge \lambda _{\upsilon } .$$
For $k=0,\ldots,n$, denote by ${\cal L}^{(k)}$ the subspace of elements $\varphi $ of
${\cal L}$ such that 
$$ \chiup (\varphi ) \in \tk {\k}{\k[\lg l*]} D^{k}({\goth g}) .$$ 
For $l$ nonnegative integer, let ${\cal P}_{l}$ and ${\cal P}_{l,+}$ be the subspaces of 
${\cal L}$ defined by the following conditions:
$$\varphi \in {\cal P}_{l} \Longleftrightarrow (  
\vert (r,\upsilon ) \vert \neq l \Longrightarrow \varphi _{r,\upsilon } = 0 ), \quad
\varphi \in {\cal P}_{l,+} \Longleftrightarrow (  
\vert (r,\upsilon ) \vert < l \Longrightarrow \varphi _{r,\upsilon } = 0 ),$$
and set:
$${\cal P}_{l,k}:= {\cal P}_{l}\cap {\cal L}^{(k)}, \quad  
{\cal P}_{l,+,k}:= {\cal P}_{l,+}\cap {\cal L}^{(k)}$$
for $k=0,\ldots,n$. Denote by $M_{l}$ the subspace of elements $\varphi $ of 
$\tk {\k}{\k[\lg l*]}D({\goth g})$ such that $\varphi \wedge \varepsilon $ is in 
$\tk {\k}{J^{l}}D({\goth g})$. 

\begin{prop}\label{psp2}
Let $k=0,\ldots,n$, $l$ a nonnegative integer and $\varphi $ in ${\cal P}_{l,k}$.

{\rm (i)} The element $\chiup (\varphi )$ is in $M_{l}$.

{\rm (ii)} The element $\chiup (\varphi )$ is in $M_{l+1}$ if and only if the restriction
of $\varphi _{r,\upsilon }\wedge \overline{\varepsilon _{0}}\wedge \mu _{+}$ to $\lp$ is
equal to $0$ for all $(r,\upsilon )$.
\end{prop}

\begin{proof}
(i) Let $O$ be an open subset of $\lp$ as in Lemma~\ref{lde2}. For $\psi $ in
$\k[\lg l*]$, $\psi $ is in $J^{l}$ if and only if its restriction to
$O\times {\goth p}_{-,\u}$ is in $J^{l}\k[O\times {\goth p}_{-,\u}]$. Hence it is
sufficient to prove that the restriction of $\chiup (\varphi )\wedge \varepsilon $
to $O\times {\goth p}_{-,\u}$ is in
$\tk {\k}{J^{l}\k[O\times {\goth p}_{-,\u}]}D^{k+\b g{}}({\goth g})$.

Let $(x,y,y')$ be in $O\times {\goth p}_{\u}$. According to
Corollary~\ref{cde3}, for $j=1,\ldots,n$ and $\upsilon $ in ${\goth J}_{j}$, the
polynomial map
$$ t\longmapsto \lambda _{\upsilon }\wedge \varepsilon (x,y+ty')$$ 
is divisible by $t^{j}$ in $\tk {\k}{\k[t]}\ex {}{{\goth g}}$ and 
the coefficient $c_{\upsilon }(x,y,y')$ of $t^{j}$ satisfies the equality
$$ c_{\upsilon }(x,y,y') = \sum_{\kappa \in {\goth I}_{j}} 
a_{\upsilon ,\kappa }(x,y,y') w_{\kappa }\wedge \overline{\varepsilon _{0}(x,y)}
\wedge \mu _{+} .$$ 
Hence the polynomial map
$$ t \longmapsto \chiup (\varphi )(x,y+ty')\wedge \varepsilon (x,y+ty')$$
is divisible by $t^{l}$ in $\tk {\k}{\k[t]}D^{k+\b g{}}({\goth g})$ since $\varphi $ is
in ${\cal P}_{l,k}$. As a result, $\chiup (\varphi )$ is in $M_{l}$. 

(ii) Suppose that the restriction of
$\varphi _{r,\upsilon }\wedge \overline{\varepsilon _{0}}\wedge \mu _{+}$ to $\lp$ is
equal to $0$ for all $(r,\upsilon )$. Let $(x,y,y')$ be in $O\times {\goth p}_{\u}$.
By Proposition~\ref{prcm}(iii), the polynomial map
$$ t \longmapsto \chiup (\varphi )(x,y+ty')\wedge \varepsilon (x,y+ty')$$
is divisible by $t^{l+1}$ in $\tk {\k}{\k[t]}D^{k+\b g{}}({\goth g})$ since $\varphi $
is in ${\cal P}_{l,k}$. Hence the restriction of $\chiup (\varphi )\wedge \varepsilon $ to
$O\times {\goth p}_{_,\u}$ is in
$J^{l+1}\tk {\k}{\k[O\times {\goth p}_{-,\u}]}\ex {k+\b g{}}{{\goth g}}$. As a result,
the condition is sufficient.

Suppose that $\chiup (\varphi )\wedge \varepsilon $ is in $M_{l+1}$ and prove by induction
on $k$ that the restriction of
$\varphi _{r,\upsilon }\wedge \overline{\varepsilon _{0}}\wedge \mu _{+}$ to $\lp$ is
equal to $0$ for all $(r,\upsilon )$. For $k=0$, $\chiup (\varphi )$ is in
$\tk {\k}{J^{l}}\e Sg\cap M_{l+1}$ and for $r$ in ${\Bbb N}^{d}_{l}$,
$$ \varphi _{r,0} = \varphi '_{r,0} + \varphi ''_{r,0} \quad  \text{with} \quad
\varphi '_{r,0} \in \tk {\k}{\k[\lp]}\e Sg \quad  \text{and} \quad
\varphi ''_{r,0} \in \tk {\k}{J}\e Sg .$$
By Proposition~\ref{prcm}(iii), for some $p$ in $\k[\lp]\setminus \{0\}$, the restriction
of $\varepsilon $ to $\lp$ is equal to $p\overline{\varepsilon _{0}}\wedge \mu _{+}$.
Then, for all $(x,y,y')$ in $\lg l*$,
$$\sum_{r\in {\Bbb N}^{d}_{l}} \dv v{y'}^{r}\varphi '_{r,0}(x,y) p(x,y)
\overline{\varepsilon _{0}}(x,y)\wedge \mu _{+} = 0.$$
As a result, the restriction of
$\varphi _{r,0}\wedge \overline{\varepsilon _{0}}\wedge \mu _{+}$ to $\lp$ is
equal to $0$ for all $r$ in ${\Bbb N}^{d}_{l}$.

Suppose $k>0$ and the assertion true for $k-1$. By Lemma~\ref{lsp1}(i), for all $\nu $ in
$\bkk$ and $\psi $ in ${\cal P}_{l,k}$, $\iota _{\nu }(\chiup (\psi ))-\chiup (\psi ')$
is in $\chiup ({\cal P}_{l+1,+,k-1})$ for some $\psi '$ in ${\cal P}_{l,k-1}$. Suppose
that for some $(r,\upsilon )$ the restriction of
$\varphi _{r,\upsilon }\wedge \overline{\varepsilon _{0}}\wedge \mu _{+}$
to $\lp$ is different from $0$ and denote by $j$ the biggest integer such that
$\vert \upsilon  \vert = j$ for such $(r,\upsilon )$. A contradiction is expected. As
the condition is sufficient, we can suppose that $\varphi _{r,\upsilon }=0$ for all
$(r,\upsilon )$ such that $\vert \upsilon  \vert > j$. Then, for $\nu $ in $\bkk$,
$\iota _{\nu }(\chiup (\varphi ))-\chiup (\psi )$ is in $\chiup ({\cal P}_{l+1,+,k-1})$
for some $\psi $ in ${\cal P}_{l,k-1}$ such that
$$ \vert \upsilon  \vert > j \Longrightarrow \psi _{r,\upsilon } = 0 \quad  \text{and}
\quad
\vert \upsilon  \vert = j \Longrightarrow \psi _{r,\upsilon } =
\iota _{\nu }(\varphi _{r,\upsilon }).$$
As $\iota _{\nu }(\varepsilon )=0$, $\iota _{\nu }(\chiup (\varphi ))$ is in $M_{l+1}$.
So, by (i), $\chiup (\psi )$ is in $M_{l+1}$. As a result, the restriction of
$\iota _{\nu }(\varphi _{r,\upsilon })\wedge \overline{\varepsilon _{0}}\wedge
\mu _{+}$ to $\lp$ is equal to $0$ for all $\nu $ in $\bkk$ and for all $(r,\upsilon )$
such that $\vert \upsilon  \vert = j$ by induction hypothesis.
 
Let $(r,\upsilon )$ be such that $\vert \upsilon  \vert=j$ and the restriction
of $\varphi _{r,\upsilon }\wedge \overline{\varepsilon _{0}}\wedge \mu _{+}$ to $\lp$ is
different from $0$. The restriction of $\varphi _{r,\upsilon }$ to
$O\times {\goth p}_{\u}$ has a unique decomposition
$$ \varphi _{r,\upsilon } \left \vert \right. _{O\times {\goth p}_{-,\u}} =
\varphi '_{r,\upsilon } + \varphi ''_{r,\upsilon }+\varphi '''_{r,\upsilon }$$
with $\varphi '_{r,\upsilon }$ in $\tk {\k}{\k[O]}\tk {\k}{\e Sg}
\ex {k-\vert \upsilon  \vert}{{\goth u}_{-}}$,
$\varphi ''_{r,\upsilon }$ in $J\tk {\k}{\k[O\times {\goth p}_{-,\u}]}
D^{k-\vert \upsilon  \vert}({\goth g})$ and $\varphi '''_{r,\upsilon }$ in
$\tk {\k}{\k[O]}D^{k-\vert \upsilon  \vert}({\goth g})$ such that
$$\varphi '''_{r,\upsilon }(x,y) \in 
D^{k-\vert \upsilon  \vert-1}({\goth g})\wedge V_{x,y}   \qquad \forall (x,y) \in O.$$
In particular, the restriction to $O$ of $\iota _{\nu }(\varphi ''_{r,\upsilon })$ is
equal to $0$ and for all $(x,y)$ in $O$,
$$ \iota _{\nu }(\varphi '''_{r,\upsilon })(x,y) \in
D^{k-\vert \upsilon  \vert-2}({\goth g})\wedge V_{x,y}.$$
As a result, the restriction of $\iota _{\nu }(\varphi '_{r,\upsilon })$ to $O$ is equal
to $0$ since it is in
$\tk {\k}{\k[O]}\tk {\k}{\e Sg}\ex {k-\vert \upsilon  \vert}{{\goth u}_{-}}$.
Then, by Lemma~\ref{lsp1}(ii), for all $(x,y)$ in $O$,
$$\varphi '_{r,\upsilon }(x,y)=0 \quad  \text{and} \quad
\varphi _{r,\upsilon }(x,y)\wedge \overline{\varepsilon _{0}}(x,y)\wedge \mu _{+} = 0,$$
whence the contradiction.
\end{proof}

\section{Filtration of complexes} \label{fc}
In this section, we suppose that the simple factors of ${\goth d}$ have Property
$({\bf P})$ and we consider the complexes defined in Subsections ~\ref{co3} and
~\ref{co4}. We recall that $\poi v1{,\ldots,}{d}{}{}{}$, $\poi w1{,\ldots,}{d}{}{}{}$,
$\poi w1{,\ldots,}{n}{}{}{}$ are basis of ${\goth p}_{\u}$, ${\goth p}_{-,\u}$,
${\goth u}_{-}$ respectively. For $\iota = \poi i1{<\cdots <}{j}{}{}{}$ in ${\goth I}$
and $r=(\poi r1{,\ldots,}{d}{}{}{})$ in ${\Bbb N}^{d}$,
$$ w_{\iota } := \poi w{{i_{1}}}{\wedge \cdots \wedge }{{i_{j}}}{}{}{}, \quad
v^{r} := \poie v1{\cdots }{d}{}{}{}{{r_{1}}}{{r_{d}}} \quad  \text{and}
\quad w^{r} := \poie w1{\cdots }{d}{}{}{}{{r_{1}}}{{r_{d}}} .$$

According to the notations of Subsection~\ref{de1}, for $i=1,\ldots,\n$, $\bjj_{i}'$ is
the $\k[\lp]$-submodule of $\tk {\k}{\k[\lp]}{\goth l}$ generated by
$\poi {\lambda }{i,1}{,\ldots,}{i,n_{i}}{}{}{}{}$ and $\bjj$ is the
$\k[\lp]$-submodule of $\tk {\k}{\k[\lp]}{\goth g}$ generated by ${\goth p}_{\u}$ and 
$\poie {\bjj}{1}{,\ldots,}{\n}{}{}{}{\prime}{\prime}$. In particular, when
${\goth p}={\goth b}$, $\bjj=\tk {\k}{\k[\lp]}{\goth u}$. We recall that for
$\upsilon = \poi i1{<\cdots <}{k}{}{}{}$ in ${\goth J}$,
$$ \lambda _{\upsilon } = \poi {\lambda }{i_{1}}{\wedge \cdots \wedge }{i_{k}}{}{}{} .$$

Let $\poi z1{,\ldots,}{2n+\rg}{}{}{}$ be a system of coordinates of
the local ring $\an {}{}$ of $G$ at the identity and for
$s=(\poi s1{,\ldots,}{n+2\rg}{}{}{})$ in ${\Bbb N}^{n+2\rg}$,
$$ z^{s} := \poie z1{\cdots }{{n+2\rg}}{}{}{}{{s_{1}}}{{s_{n+2\rg}}} .$$
Let $\hat{J}$ be the ideal of $\tk {\k}{\han {}{}}\k[\lg l*]$ generated by ${\goth m}$
and $J$.

\subsection{Some complexes} \label{fc1}
For $I$ subset of ${\Bbb I}_{k}$ and $(j_{1},j_{2})$ in ${\Bbb N}^{2}$ such that
$j_{1}+j_{2}\leq k$, let $I_{j_{1},j_{2}}$ be the subset of elements $\iota $ of
${\Bbb I}''_{k-j_{1}-j_{2}}$ such that $(j_{1},j_{2},\iota )$ is in $I$. According to
the notations of Subsection~\ref{co3}, for $k=0,\ldots,n$, $I$ subset of
${\Bbb I}_{k}$ and $V\in\{{\goth l},{\goth p}_{-},{\goth g}\}$, denote by
$D_{k,I,{\goth p}}(V)$ the total graded subcomplex of $D_{k}^{\bullet}(V)$ deduced from
the triple complex
$$ \bigoplus _{i=0}^{k} \bigoplus _{(j_{1},j_{2})\in {\Bbb N}^{2}_{k-i}}
\tk {\k}{D_{j_{1}}^{\bullet}(V')}\tk {\k} {D_{j_{2}}^{\bullet}({\goth z})}
D_{i,I_{j_{1},j_{2}},\#}({\goth d}) ,$$
with $V'=\{0\}$ when $V={\goth l}$, $V'={\goth p}_{-,\u}$ when $V={\goth p}_{-}$ and
$V'={\goth p}_{\pm,\u}$ when $V={\goth g}$. For $V={\goth p}_{-}$ or $V={\goth g}$
and $j=(\poi j1{,\ldots,}{\n}{}{}{})$
in ${\Bbb I}''$, set:
$$ {\Bbb I}_{k}^{(j)} := \{(\poi i{-1}{,\ldots,}{\n}{}{}{}) \in
{\Bbb I}_{k-\vert j \vert} \; \vert \;
i_{1} \leq n_{1}-j_{1} ,\ldots, i_{\n} \leq n_{\n}-j_{\n}\} ,$$
$$ D_{k,{\goth p},j}^{\bullet}(V) :=
D_{k-\vert j \vert,{\Bbb I}_{k}^{(j)},{\goth p}}^{\bullet}(V)[-\vert j \vert]
\wedge \ex {j_{1}}{\bjj'_{1}}\wedge \cdots \wedge \ex {j_{\n}}{\bjj'_{\n}},$$
$$ D_{k,{\goth p},j,*}^{\bullet}(V) := \tk {\k}
{\ex {j_{1}}{\bjj'_{1}}\wedge \cdots \wedge \ex {j_{\n}}{\bjj'_{\n}}}
D_{k-\vert j \vert,{\Bbb I}_{k}^{(j)},{\goth p}}^{\bullet}(V)[-\vert j \vert] .$$
For $\upsilon $ in ${\goth J}$,  denote by $j_{\upsilon }$ the element of
${\Bbb I}''$ such that
$$ {j_{\upsilon }}_{i} := \vert \{m \in \{\upsilon \} \, \vert \,
\lambda _{m} \in \bjj '_{i}\} \vert $$
for $i=1,\ldots,\n$ and set
$$k_{\upsilon } := \vert \upsilon  \vert - \vert j_{\upsilon } \vert$$
so that
$$D_{k-k_{\upsilon },{\goth p},j_{\upsilon }}^{\bullet}(V)
[-\vert \lambda _{\upsilon }\vert]\wedge \lambda _{\upsilon } \subset 
\tk {\k}{\k[\lp]}D_{k,{\goth p}}^{\bullet}(V).$$

For $k=0,\ldots,n$ and $j$ non negative integer, denote by
$D_{k,{\goth p},j,*}^{\bullet}({\goth g})$ the graded submodule of
$\tk {\k}{\k[\lp]}\tk {\k}{\ex j{\bjj}}D_{k-j,{\goth p}}^{\bullet}({\goth g})$,
$$ D_{k,{\goth p},j,*}^{\bullet}({\goth g}) := \bigoplus _{i=0}^{j}
\bigoplus _{(\poi j1{,\ldots,}{\n}{}{}{})\in {\Bbb N}^{\n}_{j-i}} 
\ex i{{\goth p}_{\u}}\wedge  
D_{k-i,{\goth p},(\poi j1{,\ldots,}{\n}{}{}{}),*}^{\bullet}({\goth g})[-i] .$$ 

\subsection{Some spaces} \label{fc2}
Let $k=0,\ldots,n$. Set:
$$ {\Bbb N}_{{\goth p},+} := {\Bbb N}^{2n+\rg}\times {\Bbb N}^{d}\times {\goth J}, \quad
\vert (s,r,\upsilon ) \vert :=
\vert s \vert + \vert r \vert + \vert \upsilon  \vert  , $$ 
for $(s,r,\upsilon )$ in ${\Bbb N}_{{\goth p},+}$. For $k=0,\ldots,n$, denote by
${\cal M}_{k}$ the subspace of
$(\tk {\k}{\k[\lg l*]}D({\goth g}))^{{\Bbb N}_{{\goth p},+}}$:
$$ {\cal M}_{k} := \{\varphi \in (\tk {\k}{\k[\lg l*]}D({\goth g}))
^{{\Bbb N}_{{\goth p},+}} \; \vert \; \exists N_{\varphi } \in {\Bbb N} \quad  
\text{such that} \quad \vert r \vert \geq N_{\varphi } \Longrightarrow 
\varphi _{s,r,\upsilon } = 0 \quad  \text{and} \quad $$
$$\varphi _{s,r,\upsilon } \in
\tk {\k[\lp]}{\k[\lg l*]}
D_{k-k_{\upsilon },{\goth p},j_{\upsilon }}({\goth g}).\}$$  
For $\varphi $ in ${\cal M}_{k}$, $\varphi $ is identified with the sequence
$\varphi _{s,r,\upsilon }, \, (s,r,\upsilon ) \in S$ for any subset $S$ of
${\Bbb N}_{{\goth p},+}$ containing the support of $\varphi $. For $l$ nonnegative
integer, let ${\cal M}_{k,l}$ be the subspace of elements
$\varphi _{s,r,\upsilon }, \, (s,r,\upsilon )\in {\Bbb N}_{{\goth p},+}$
such that
$$ \vert (s,r,\upsilon )\vert \neq l \Longrightarrow
\varphi _{s,r,\upsilon } = 0$$
and denothe by ${\cal M}_{k,l,+}$ the subspace of elements
$\varphi _{s,r,\upsilon }, \, (s,r,\upsilon )\in {\Bbb N}_{{\goth p},+}$ such that
$$ \vert (s,r,\upsilon )\vert < l \Longrightarrow
\varphi _{s,r,\upsilon } = 0.$$
Let ${\cal M}$ be the sum of the spaces ${\cal M}_{k}, \, k=0,\ldots,n$ and $\kappa $
the map
$$ \xymatrix{ {\cal M} \ar[rr]^-{\kappa } &&
D({\goth g})\wedge \ex {\b g{}}{\widehat{\bii}}}, \qquad
(\varphi _{s,r,\upsilon }, \, (s,r,\upsilon ) \in {\Bbb N}_{{\goth p},+})
\longmapsto \sum_{(s,r,\upsilon )\in {\Bbb N}_{{\goth p},+}}
z^{s}v^{r}\varphi _{s,r,\upsilon }\wedge \lambda _{\upsilon }\wedge \hat{\varepsilon }$$
with $\hat{\varepsilon }$ as in Subsection~\ref{stp2}. 

\begin{lemma}\label{lfc2}
Let $l$ be a positive integer and $k=1,\ldots,n$. For $i=1,\ldots,\n$, let $j_{i}$ be the
element of ${\Bbb N}^{\n}_{1}$ whose $i$-th coordinate is equal to $1$.

{\rm (i)} For $m=1,\ldots,n-d$ and $\varphi $ in $\tk {\k}{\k[\lp]}D_{m,\#}({\goth l})$ 
such that $\varphi (x,y)\wedge \overline{\varepsilon _{0}}(x,y)=0$ for all $(x,y)$ in 
$\lp$, 
$$ \varphi \in D_{m-1,\#}({\goth l})\wedge \oi _{0,0} +
\sum_{i=1}^{\n} D_{m-1,{\goth p},j_{i}}({\goth l})\wedge \bjj_{i}'.$$

{\rm (ii)} For $\varphi $ in $\tk {\k}{\k[\lp]}D_{k,{\goth p}}({\goth p}_{-})$, 
if the restriction of $\varphi \wedge \varepsilon $ to $\lp$ is equal to $0$, then 
$$ \varphi \in D_{k-1,{\goth p}}({\goth p}_{-})\wedge \oi _{0,0} +
\sum_{i=1}^{\n} D_{k-1,{\goth p},j_{i}}({\goth p}_{-})\wedge \bjj_{i}'.$$

{\rm (iii)} For $\varphi $ in $\tk {\k}{\k[\lg l*]}D_{k,{\goth p}}({\goth g})$, 
the restriction of $\varphi \wedge \varepsilon $ to $\lp$ is equal to $0$ if and 
only if 
$$\varphi \in \tk {\k[\lp]}{\k[\lg l*]}D_{k-1,{\goth p}}({\goth g})\wedge \oi_{0,0} +
\sum_{i=1}^{\n} \tk {\k[\lp]}{\k[\lg l*]}D_{k-1,{\goth p},j_{i}}({\goth g})
\wedge \bjj_{i}'+ $$ $$JD_{k,{\goth p}}({\goth g}) + 
\tk {\k}{\k[\lg l*]}D_{k-1,{\goth p}}({\goth g})\wedge {\goth p}_{\u}.$$
\end{lemma}

\begin{proof}
(i) As the simple factors of ${\goth d}$ have Property $({\bf P})$, the assertion 
results from Proposition~\ref{p2pp} since the $\k[\lp]$-module 
$\tk {\k[\ll]}{\k[\lp]}\oi _{{\goth l}}$ is generated by $\oi _{0,0}$ and
$\poie {\bjj}1{,\ldots,}{\n}{}{}{}{\prime}{\prime}$, whence the assertion.

(ii) Let $\varphi $ be in $\tk {\k}{\k[\lp]}D_{k,{\goth p}}({\goth p}_{-})$ such that 
$\varphi (x,y)\wedge \varepsilon (x,y)=0$ for all $(x,y)$ in $\lp$. The element 
$\varphi $ has an expansion
$$ \varphi = \sum_{\kappa \in {\goth I}_{+}} \varphi _{\kappa }\wedge w_{\kappa }
\quad  \text{with} \quad \varphi _{\kappa } \in \tk {\k}{\k[\lp]}
\bigoplus _{m=0}^{\inf\{n-d,k\}} \tk {\k}{\sy {k-m}{{\goth p}_{-,\u}}}
D_{m,\#}({\goth l}) .$$ 
For $(x,y)$ in $\lp$, 
$$\varepsilon (x,y) \in \overline{\varepsilon _{0}}(x,y)\wedge \ex d{{\goth p}_{\u}}$$
by Proposition~\ref{prcm}(i), whence
$$ \varphi _{\kappa }(x,y)\wedge \overline{\varepsilon _{0}}(x,y) = 0$$
for all $\kappa $ in ${\goth I}_{+}$ and all $(x,y)$ in $\lp$. So, by (i), 
$$ \varphi _{\kappa } \in \bigoplus _{m=0}^{\inf\{n-d,k\}} 
\tk {\k}{\sy {k-m}{{\goth p}_{-,\u}}}D_{m-1,\#}({\goth l})\wedge \oi _{0,0} +
\sum_{i=1}^{\n} D_{m-1,{\goth p},j_{i}}({\goth l})\wedge \bjj_{i}',$$ 
for all $\kappa $, whence the assertion.

(iii) The condition is sufficient since
$$\varepsilon (x,y) \in \overline{\varepsilon _{0}}(x,y)\wedge \ex d{{\goth p}_{\u}}$$
for all $(x,y)$ in $\lp$. Suppose that the restriction of $\varphi \wedge \varepsilon $
to $\lp$ is equal to $0$. Since $\k[\lg l*]=\k[\lp] + J$, we can suppose that $\varphi $
is in $\tk {\k}{\k[\lp]}D_{k,{\goth p}}({\goth g})$. As ${\goth g}$ is the direct sum of 
${\goth p}_{-}$ and ${\goth p}_{\u}$,
$$ \ex {}{{\goth g}} = \ex {}{{\goth p}_{-}} \oplus \ex {}{{\goth g}}\wedge {\goth p}_{\u}
\quad  \text{and} \quad D_{k,{\goth p}}({\goth g}) = 
\bigoplus _{m=0}^{k} \tk {\k}{\sy m{{\goth p}_{\u}}}D_{k-m,{\goth p}}({\goth p}_{-})
\oplus D_{k-1,{\goth p}}({\goth g})\wedge {\goth p}_{\u} .$$ 
Then $\varphi = \varphi _{1} + \varphi _{2}$ with
$$ \varphi _{1} \in \tk {\k}{\k[\lp]}
\bigoplus _{m=0}^{k} \tk {\k}{\sy m{{\goth p}_{\u}}}D_{k-m,{\goth p}}({\goth p}_{-}) 
\quad  \text{and} \quad
\varphi _{2} \in \tk {\k}{\k[\lp]}D_{k-1,{\goth p}}({\goth g})\wedge {\goth p}_{\u} .$$ 
As ${\goth p}_{\u}\wedge \varepsilon (x,y) = \{0\}$ for all $(x,y)$ in $\lp$, 
the restriction of $\varphi _{2}\wedge \varepsilon $ to $\lp$ is equal to $0$. Then
so is the restriction of $\varphi _{1}\wedge \varepsilon $ to $\lp$, whence the
assertion by (ii).
\end{proof}

For $s$ in ${\Bbb N}^{2n+\rg}$, we denote by ${\Bbb N}^{2n+\rg}_{s}$ the subset of
elements $s'$ of ${\Bbb N}^{2n+\rg}$ such that $s$ is smaller than $s'$. 

\begin{coro}\label{cfc2}
Let $k=1,\ldots,n$, $(s,r,\upsilon )$ in ${\Bbb N}_{{\goth p},+}$ and $\varphi $ in
$\tk {\k}{\k[\lg l*]}D_{k,{\goth p}}({\goth g})$
such that the restriction of $\varphi \wedge \varepsilon $ to $\lp$ is equal to $0$.
Set $l:=\vert (s,r,\upsilon ) \vert$ and
$\psi := z^{s}v^{r}\varphi \wedge \lambda _{\upsilon }\wedge \hat{\varepsilon }$. 

{\rm (i)} The space $\kappa ({\cal M}_{k,l+1,+})$ contains $\psi $.

{\rm (ii)} Suppose that $\varphi $ is in $\tk {\k}{\k[\lp]}D_{k,{\goth p}}({\goth g})$.
For some 
$$\psi _{1} := (\varphi _{s,r,\nu }, \, (s,r,\nu ) \in \{s,r\}
\times {\goth J}_{\vert \upsilon \vert+1}) \in {\cal M}_{k,l,+}$$ and
$$\psi _{2} := (\varphi _{s',r,\upsilon }, \, (s',r,\upsilon ) \in {\Bbb N}^{2n+\rg}_{s}
\times \{r,\upsilon \}) \in {\cal M}_{k,l,+},$$
$\psi = \kappa (\psi _{1}+\psi _{2})$.
\end{coro}

\begin{proof}
(i) It is sufficient to prove that $\varphi \wedge \hat{\varepsilon }$ is in
${\cal M}_{k,1,+}$ since $\vert (s,r,\upsilon ) \vert = l$. By Lemma~\ref{lfc2}(iii),
$$ \varphi = \varphi _{1}+\varphi _{2} \quad  \text{with} \quad
\varphi _{1} \in D_{k-1,{\goth p}}({\goth g})\wedge \oi_{0,0} \quad  \text{and} $$
$$ \varphi _{2} \in
\sum_{i=1}^{\n} \tk {\k[\lp]}{\k[\lg l*]}D_{k-1,{\goth p},j_{i}}({\goth g})
\wedge \bjj_{i}'+ JD_{k,{\goth p}}({\goth g}) + 
\tk {\k}{\k[\lg l*]}D_{k-1,{\goth p}}({\goth g})\wedge {\goth p}_{\u}$$
since $\k[\lg l*]=J+\k[\lp]$ and the restriction of $\varphi \wedge \varepsilon $ to
$\lp$ is equal to $0$. By definition, $\varphi _{2}\wedge \hat{\varepsilon }$ is in
$\kappa ({\cal M}_{k,1,+})$

By Proposition~\ref{p2pp}, setting:
$$ {\Bbb I}_{k-1,-} := \{(\poi j{-1}{,\ldots,}{\n}{}{}{}) \in {\Bbb N}^{\n+2}_{k-1} \;
\vert \; j_{1}\leq n_{1}-1,\ldots,j_{\n}\leq n_{\n}-1\},$$
$$ \varphi _{1} = \sum_{j=1}^{\rg} \varphi _{1,j}\wedge \varepsilon _{j}^{(0)}
\quad \text{with} \quad \varphi _{1,j} \in 
\tk {\k}{\k[\lp]}D_{k,{\Bbb I}_{k-1,-},{\goth p}}({\goth g})$$
for $j=1,\ldots,\rg$ since $\varepsilon _{j}(x)$ has a nonzero 
component on ${\goth d}_{l}$ for $l=1,\ldots,\n$ for all $x$ in a dense open subset of 
${\goth l}_{*}$ by Lemma~\ref{lrcm}(iii). As a result,
$$ \varphi _{1}  \in D_{k-1,{\goth p}}({\goth g})\wedge \widehat{\bii} + 
\tk {\k}{{\goth m}}\tk {\k}{\k[\lp]}D_{k,{\goth p}}({\goth g}) $$
since for $j=1,\ldots,\rg$, the map
$$ \xymatrix{G\times \lg l* \ar[rr] && {\goth g}}, \qquad 
(g,x) \longmapsto \varepsilon _{j}(g(x)) - \varepsilon _{j}(x)$$
is in $\tk {\k}{{\goth m}}\tk {\k}{\k[{\goth l}_{*}]}{\goth g}$ by 
Proposition~\ref{psc1}(vi), whence
$\varphi _{1}\wedge \hat{\varepsilon }\in \kappa ({\cal M}_{k,1,+})$ by definition
of $\kappa ({\cal M}_{k,1,+})$ and the assertion.

(ii) As in (i), it is sufficient to prove the assertion for $l=0$. Let $\varphi _{1}$
and $\varphi _{2}$ be as in (i). Since $\varphi $ is in
$\tk {\k}{\k[\lp]}D_{k,{\goth p}}({\goth g})$, by Lemma~\ref{lfc2}(ii),
$$ \varphi _{2} = \sum_{i=1}^{\n} D_{k-1,{\goth p},j_{i}}({\goth g})
\wedge \bjj_{i}'+ \tk {\k}{\k[\lp]}D_{k-1,{\goth p}}({\goth g})\wedge {\goth p}_{\u},$$
so that $\varphi _{2}\wedge \hat{\varepsilon }=\kappa (\psi _{1})$ with $\psi _{1}$
defined above. By (i),
$$ \varphi _{1} \in D_{k-1,{\goth p}}({\goth g})\wedge \widehat{\bii} + 
\tk {\k}{{\goth m}}\tk {\k}{\k[\lp]}D_{k,{\goth p}}({\goth g}) $$
so that $\varphi _{1}\wedge \hat{\varepsilon }$ is in
$(\tk {\k}{{\goth m}}{\k[\lp]})D_{k,{\goth p}}({\goth g})\wedge \hat{\varepsilon }$,
whence the assertion.
\end{proof}

\subsection{On power series} \label{fc3}
As $\han {}{}$ is a power series ring, so is $\tk {\k}{\han {}{}}V$ for any $\k$-vector
$V$. For $\varphi $ in $\tk {\k}{\han {}{}}V$, $\varphi $ has an expansion
$$ \varphi = \sum_{s\in {\Bbb N}^{2n+\rg}} z^{s} \varphi _{s}$$
with $\varphi _{s}$ in $V$. We denote by $\gamma (\varphi )$ the smallest $s$ such that
$\varphi _{s}\neq 0$. For $\varphi $ in ${\cal M}$ and $m$ nonnegative integer, set:
$$ \gamma (\varphi ) := \gamma (\kappa (\varphi )), \quad
Q_{\varphi } := \{(r,\upsilon ) \in {\Bbb N}_{{\goth p}} \, \vert
\varphi _{\gamma (\varphi ),r,\upsilon }\neq 0\}, \quad 
Q_{\varphi ,m} := \{(r,\upsilon ) \in Q_{\varphi } \, \vert \,
\vert (r,\upsilon ) \vert= m \}.$$
Denote by $l_{\varphi }$ the smallest integer such that
$Q_{\varphi ,l_{\varphi }}$ is nonempty and for $i$ nonnegative integer, set:
$$ Q_{\varphi ,l_{\varphi },i} := \{(r,\upsilon ) \in Q_{\varphi ,l_{\varphi }}
\, \vert \, \vert \upsilon  \vert= i \} .$$
Let $i_{\varphi }$ be the smallest integer such that
$Q_{\varphi ,l_{\varphi },i_{\varphi }}$ is nonempty.

\begin{lemma}\label{lfc3}
Let $k=0,\ldots,n$, $l$ a positive integer, $\varphi $ in
$\tk {\k}{\han {}{}}\tk {\k}{\k[\lg l*]}D({\goth g})$ and $\psi $ in ${\cal M}_{k,l,+}$.
Suppose $\varphi  = \kappa (\psi )$ and $\gamma (\psi )$ smaller than
$\gamma (\varphi )$. Then, for some $\psi '$ in ${\cal M}_{k,l,+}$,
$\varphi  = \kappa (\psi ')$ and $\gamma (\psi )$ is smaller than $\gamma (\psi ')$.
\end{lemma}

\begin{proof}
We can suppose that $\psi _{s,r,\upsilon }$ is in $\tk {\k}{\k[\lp]}D({\goth g})$ for
all $(s,r,\upsilon )$ since $\k[\lg l*]$ is the direct sum of $J$ and $\k[\lp]$. By
definition, for some positive integer $N$, $\psi _{s,r,\upsilon }=0$ if
$\vert r \vert > N$. Since ${\goth J}_{i}$ is empty for $i>n$, $\psi _{s,r,\upsilon }=0$
if $\vert (r,\upsilon ) \vert > N+n$ and by induction, it is sufficient to find $\psi '$
in ${\cal M}_{k,l,+}$ such that $\varphi =\kappa (\psi ')$, $(l_{\psi },i_{\psi })$
smaller than $(l_{\psi '},i_{\psi '})$ and $\psi '_{s,r,\upsilon }=0$ if
$\vert r \vert > N$.

Since $\hat{\varepsilon }-\varepsilon $ is in
$\tk {\k}{{\goth m}}\tk {\k}{\k[\lg l*]}\ex {\b g{}}{{\goth g}}$,
$$ \kappa (\psi )_{\gamma (\psi )} = \sum_{(r,\upsilon )\in {\Bbb N}_{{\goth p}}}
v^{r}\psi _{\gamma (\psi ),r,\upsilon }\wedge \lambda _{\upsilon }\wedge
\varepsilon .$$
As $\varphi =\kappa (\psi )$ and $\gamma (\psi )$ is smaller than
$\gamma (\varphi )$,
$$ \sum_{(r,\upsilon )\in {\Bbb N}_{{\goth p}}}
v^{r}\psi _{\gamma (\psi ),r,\upsilon }\wedge \lambda _{\upsilon }\wedge
\varepsilon = 0 .$$
By minimality of $l_{\psi }$ and Proposition~\ref{psp2}(i),
$$ \sum_{(r,\upsilon )\in Q_{\psi ,l_{\psi }}}
v^{r}\psi _{\gamma (\psi ),r,\upsilon }\wedge \lambda _{\upsilon }\wedge
\varepsilon \in M_{l_{\psi }+1}.$$
So, by Proposition~\ref{psp2}(ii),
$$ \psi _{\gamma (\psi ),r,\upsilon }\wedge \overline{\varepsilon _{0}}\wedge \mu _{+}
= 0$$
for all $(r,\upsilon )$ in $Q_{\psi ,l_{\psi }}$. Denote by $Q'_{\psi }$ the subset
of elements $(r,\upsilon )$ of ${\Bbb N}_{{\goth p}}$ such that for some
$(r',\upsilon ')$ in $Q_{\psi ,l_{\psi }}$, $r=r'$ and
$\vert \upsilon  \vert=\vert \upsilon ' \vert + 1$. Then, by Corollary~\ref{cfc2}(ii),
for some $\tilde{\psi }$ and $\tilde{\psi }'$ in ${\cal M}_{k,l,+}$,
$\tilde{\psi }_{s,r,\upsilon }=0$ for $s\neq \gamma (\psi )$ or
$(r,\upsilon  )\notin Q'_{\psi }$, $\tilde{\psi }'_{s,r,\upsilon }=0$ for
$s$ at most equal to $\gamma (\psi )$ or $\vert r \vert > N$ and
$$ \sum_{(r,\upsilon )\in Q_{\psi ,l_{\psi }}}
z^{\gamma (\psi )}v^{r}\psi _{\gamma (\psi ),r,\upsilon }\wedge \lambda _{\upsilon }\wedge
\hat{\varepsilon } = \kappa (\tilde{\psi }+\tilde{\psi }').$$
Denoting by $\psi '$ the element of ${\cal M}_{k,l,+}$ such that
$$ (r,\upsilon ) \in Q_{\psi ,l_{\psi }}
\Longrightarrow \psi '_{\gamma (\psi ),s,\upsilon } =
\tilde{\psi }_{\gamma (\psi ),s,\upsilon }, \quad
(r,\upsilon ) \in Q'_{\psi }\setminus Q_{\psi ,l_{\psi }} \Longrightarrow
\psi '_{\gamma (\psi ),r,\upsilon } = \psi _{\gamma (\psi ),s,\upsilon },$$
$$ (s,r,\upsilon ) \in {\Bbb N}^{2n+\rg}_{\gamma (\psi )}\times {\Bbb N}_{{\goth p}}
\Longrightarrow \psi '_{s,r,\upsilon }=\psi _{s,r,\upsilon }+
\tilde{\psi }'_{s,r,\upsilon }$$
and $\psi '_{s,r,\upsilon }=0$ for $s$ smaller than $\gamma (\psi )$. Then
$\varphi =\kappa (\psi ')$, $\psi '_{s,r,\upsilon }=0$ for $\vert r \vert>N$ and
$\gamma (\psi )$ is smaller than $\gamma (\psi ')$ or $\gamma (\psi )=\gamma (\psi ')$
and $(l_{\psi },i_{\psi })$ is smaller than $(l_{\psi '},i_{\psi '})$, whence the
lemma.
\end{proof}

\begin{prop}\label{pfc3}
Let $l$ be a positive integer and $\varphi $ in ${\cal M}_{k,l}$. Then
$\kappa (\varphi )$ is in $\kappa ({\cal M}_{k,l+1,+})$ if and only if 
the restriction of $\varphi _{s,r,\upsilon }\wedge \varepsilon $ to $\lp$ is
equal to $0$ for all $(s,r,\upsilon )$. 
\end{prop}

\begin{proof}
By Corollary~\ref{cfc2}(i), the condition is sufficient. Suppose that
$\kappa (\varphi )=\kappa (\psi )$ for some $\psi $ in $\kappa ({\cal M}_{k,l+1,+})$ and
the restriction of $\varphi _{s,r,\upsilon }\wedge \varepsilon $ to $\lp$ is different
from $0$ for some $(s,r,\upsilon )$. A contradiction is expected. As the condition is
sufficient, we can suppose that for some $(r,\upsilon )$, the restriction of
$\varphi _{\gamma (\varphi ),r,\upsilon }\wedge \varepsilon $ to $\lp$ is different from
$0$. Since $\hat{\varepsilon }-\varepsilon $ is in
$\tk {\k}{{\goth m}}\tk {\k}{\k[\lg  l*]}{\goth g}$,
$$ \kappa (\varphi )_{\gamma (\varphi )} = \sum_{(r,\upsilon )\in {\Bbb N}_{{\goth p}}}
v^{r}\varphi _{\gamma (\varphi ),r,\upsilon }\wedge \lambda _{\upsilon }\wedge
\varepsilon \quad  \text{and} \quad
\kappa (\psi )_{\gamma (\psi )} = \sum_{(r,\upsilon )\in {\Bbb N}_{{\goth p}}}
v^{r}\psi _{\gamma (\psi ),r,\upsilon }\wedge \lambda _{\upsilon }\wedge
\varepsilon .$$
By Lemma~\ref{lfc3}, for some $\psi '$ in ${\cal M}_{k,l+1,+}$,
$\kappa (\varphi )=\kappa (\psi ')$ and $\gamma (\psi ')$ is at least equal to
$\gamma (\varphi )$. Hence, by Proposition~\ref{psp2}(i),
$$ \sum_{(r,\upsilon )\in {\Bbb N}_{{\goth p}}}
v^{r}\varphi _{\gamma (\varphi ),r,\upsilon }\wedge \lambda _{\upsilon } \in
M_{l-\vert \gamma (\varphi ) \vert+1}$$
since $\psi '$ is in ${\cal M}_{k,l+1,+}$. So, by Proposition~\ref{psp2}(ii),
the restriction of $\varphi _{\gamma (\varphi ),r,\upsilon }\wedge \varepsilon $ to
$\lp$ is equal to $0$ for all $(\gamma (\varphi ),r,\upsilon )$, whence the
contradiction.  
\end{proof}

\subsection{Filtration and associate graded space} \label{fc4}
Let $k=1,\ldots,n$. For $l$ nonnegative integer, set $F_{l}:=\kappa ({\cal M}_{k,l,+})$.
Then the sequence $\poi F0{,}{1}{}{}{},\ldots$ is a decreasing filtration of
$D_{k,{\goth p}}({\goth g},\widehat{\bii})$. Denote by 
$\gr D_{k,{\goth p}}({\goth g},\widehat{\bii})$ the associate graded space to this
filtration and $\gr _{l} D_{k,{\goth p}}({\goth g},\widehat{\bii})$ the subspace of
degree $l$ of $\gr D_{k,{\goth p}}({\goth g},\widehat{\bii})$.

For $j=0,\ldots,k$, let $D_{j,{\goth p}}^{\bullet}({\goth g},\oi)$ be the graded
subcomplex of $\tk {\k}{\k[\lp]}D^{\bullet}({\goth g})$, 
$$ D_{j,{\goth p}}^{\bullet}({\goth g},\oi) := 
D_{j,{\goth p}}^{\bullet}({\goth g})[-\b g{}]\wedge \ex {\b g{}}{\oi} ,$$
and $A$ the algebra 
$$ A := \tk {\k}{\k[\poi z1{,\ldots,}{2n+\rg}{}{}{}]}\tk {\k}{\es S{{\goth p}_{\u}}}
\ex {}{\bjj} .$$
This algebra has a bigradation $A_{\bullet}^{\bullet}$ such that
$$ A^{i} := \tk {\k}{\k[\poi z1{,\ldots,}{2n+\rg}{}{}{}]}\tk {\k}{\es S{{\goth p}_{\u}}}
\ex {i}{\bjj} \quad  \text{and} $$ $$
A_{l} := \bigoplus_{(j_{1},j_{2},j_{3})\in {\Bbb N}^{3}_{l}}
\tk {\k}{\k[\poi z1{,\ldots,}{2n+\rg}{}{}{}]_{j_{1}}}
\tk {\k}{\sy {j_{2}}{{\goth p}_{\u}}}\ex {j_{3}}{\bjj}$$
with $\k[\poi z1{,\ldots,}{2n+\rg}{}{}{}]_{j}$ the space of homogeneous polynomials
of degree $j$ of $\k[\poi z1{,\ldots,}{2n+\rg}{}{}{}]$. Consider on 
$\tk {\k}AD({\goth g})$ the total gradation deduced from the double 
gradation $\tk {\k}{A^{\bullet}}D^{\bullet}({\goth g})$. The structure of complex on
$D({\goth g})$ induces a structure of complex on $\tk {\k}{A}D({\goth g})$. For
$l=0,1,\ldots$ and $i$ nonnegative integer, set:
$$ C^{i}_{l} = \bigoplus _{j=0}^{i} \bigoplus _{(l_{1},l_{2})\in {\Bbb N}^{2}_{l-j}}
\tk {\k}{\k[\poi z1{,\ldots,}{2n+\rg}{}{}{}]_{l_{1}}}\tk {\k}{\sy {l_{2}}{{\goth p}_{\u}}}
\tk {\k[\ll]}{\ex j{\bjj}}D_{k-j,{\goth p}}^{i-j}({\goth g},\oi) .$$
Denoting by $C_{l}^{\bullet}$ the sum of $C_{l}^{i},\, i=0,1,\ldots$, $C_{l}^{\bullet}$
is a graded subcomplex of $\tk {\k}AD({\goth g})$.

\begin{prop}\label{pfc4}
{\rm (i)} For $l=0,1,\ldots$, $F_{l}^{\bullet}$ is a graded subcomplex of the graded 
complex $D_{k,{\goth p}}^{\bullet}({\goth g},\widehat{\bii})$.

{\rm (ii)} For $l=n,n+1,\ldots$ and $i$ a positive integer, $F^{i}_{l}$ is
contained in $\hat{J}^{l-n}D_{k}^{i}({\goth g},\widehat{\bii})$. 

{\rm (iii)} For $l=0,1,\ldots$, the graded complex 
$\gr _{l}D_{k,{\goth p}}^{\bullet}({\goth g},\widehat{\bii})$ is isomorphic to
$C_{l}^{\bullet}$.
\end{prop}

\begin{proof}
(i) By definition, $F_{l}$ is a graded subspace of
$D_{k,{\goth p}}({\goth g},\widehat{\bii})$. For $j=l,l+1,\ldots$,
$(s,r,\upsilon )$ in ${\Bbb N}_{{\goth p},+}$ such that
$\vert (s,r,\upsilon ) \vert=j$ and $\varphi $ in
$\tk {\k}{\k[\lg l*]}D_{k-\vert \upsilon  \vert,{\goth p}}({\goth g})$,
$$ \dd z^{s}v^{r} \varphi \wedge \lambda _{\upsilon } =
z^{s}v^{r}\dd (\varphi )\wedge \lambda _{\upsilon } .$$
Hence $F_{l}$ is a graded subcomplex of $D_{k,{\goth p}}({\goth g},\widehat{\bii})$.

(ii) By definition,
$$F^{i}_{l} \subset \bigoplus _{j=0}^{n} \hat{J}^{l-j}\ex j{\bjj}\wedge
D_{k-j}^{i-j}({\goth g},\widehat{\bii})\subset
\bigoplus _{j=0}^{n} \hat{J}^{l-j}D_{k}^{i}({\goth g},\widehat{\bii}),$$
whence the inclusion since $\hat{J}^{l-j}$ is contained in $\hat{J}^{l-n}$ for
$j=0,\ldots,n$.

(iii) Denote by $\tau _{l}$ the quotient map
$$ \xymatrix{ F_{l} \ar[rr]^-{\tau _{l}} &&
\gr_{l}D_{k,{\goth p}}({\goth g},\widehat{\bii})} .$$
Let $\kappa _{l}$ be map
$$ \xymatrix{ {\cal M}_{k,l} \ar[rr]^-{\kappa _{l}} && C_{l}}, \qquad
(\varphi _{s,r,\upsilon } ) \longmapsto  \sum_{(s,r,\upsilon )}
z^{s}\tens v^{r}\tens \lambda _{\upsilon }\tens \varphi _{s,r,\upsilon }\wedge
\varepsilon .$$
By Proposition~\ref{pfc3}, this map defines through the quotient an injective map
from $\tau _{l}\rond \kappa _{l}({\cal M}_{k,l})$ to $C_{l}$, whence a bijective map 
$$ \xymatrix{ \tau _{l}\rond \kappa _{l} ({\cal M}_{k,l}) \ar[rr] && C_{l}} $$
since it is clearly surjective. Moreover, it is an isomorphism of graded complexes from
$\gr_{l}D_{k,{\goth p}}^{\bullet}({\goth g},\widehat{\bii})$ onto $C_{l}^{\bullet}$.
\end{proof}

\begin{rema}\label{rfc4}
According to Proposition~\ref{pfc4}(iii), the graded complexes
$\gr_{l}D_{k,{\goth p}}^{\bullet}({\goth g},\widehat{\bii})$ and $C_{l}^{\bullet}$
are identified.
\end{rema}

\subsection{Some cohomological results} \label{fc5}
Set:
\begin{eqnarray*}
\oi _{+} := & \oi _{0} \oplus \tk {\k}{\k[\lp]}{\goth p}_{\u}  \\
D_{k,{\goth p}}^{\bullet}(V,\oi_{+}) := &
D_{k,{\goth p}}^{\bullet}(V)[-\b g{}]\wedge \ex {\b g{}}{\oi _{+}} \\ 
D_{l,{\goth p}}^{\bullet}({\goth l},\oi_{+}) := &
D_{l,\#}^{\bullet}({\goth l})[-\b g{}]\wedge \ex {\b g{}}{\oi _{+}}  
\end{eqnarray*}
with $V={\goth p}_{-}$ or $V={\goth g}$ and $l=0,\ldots,n-d$. Then $\oi _{+}$ is a free
module of rank $\b g{}$ containing $\oi$ by Proposition~\ref{prcm}(iii),
$D_{k,{\goth p}}^{\bullet}(V,\oi_{+})$ is a graded subcomplex of
$D_{k}^{\bullet}(V,\oi _{+})$ and $D_{l,\#}^{\bullet}({\goth l},\oi_{+})$ is a graded
subcomplex of $D_{l}^{\bullet}({\goth l},\oi _{+})$. 

\begin{lemma}\label{lfc5}
{\rm (i)} For $k=0,\ldots,n-d$, $D_{k,\#}^{\bullet}({\goth l},\oi _{+})$ has no 
cohomology of degree different from $\b g{}$.

{\rm (ii)} For $k=0,\ldots,n$,
$D_{k,{\goth p}}^{\bullet}({\goth p}_{-},\oi _{{\goth l}})$ has no cohomology of degree
different from $\b l{}$.

{\rm (iii)} For $k=0,\ldots,n$, $D_{k,{\goth p}}^{\bullet}({\goth p}_{-},\oi _{+})$ has
no cohomology of degree different from $\b g{}$.

{\rm (iv)} For $k=0,\ldots,n$, $D_{k,{\goth p}}^{\bullet}({\goth g},\oi _{+})$ has no
cohomology of degree different from $\b g{}$.
\end{lemma}

\begin{proof}
(i) According to Lemma~\ref{lpp}, the complex $D_{k,\#}^{\bullet}({\goth l},\bi l{})$
has no cohomology of degree different from $\b l{}$ since the simple factors of
${\goth d}$ have Property $({\bf P})$. By restriction to the principal open subset
$\lpq l*$ of $\gg l{}$, the subcomplex $D_{k,\#}^{\bullet}({\goth l},\oi _{{\goth l}})$
of $D_{k}^{\bullet}({\goth l},\oi _{{\goth l}})$ has no cohomology of degree different
from $\b l{}$. As $\k[\lp] = \tk {\k}{\k[{\goth p}_{\u}]}\k[\lpq l*]$, $\oi _{+}$ is
the direct sum of $\tk {\k}{\k[{\goth p}_{\u}]}\oi _{{\goth l}}$ and 
$\tk {\k}{\k[\lp]}{\goth p}_{\u}$ by Proposition~\ref{prcm}(ii). As a result,
$$ D_{k,\#}^{\bullet}({\goth l},\oi _{+}) = \tk {\k}{\k[{\goth p}_{\u}]}
D_{k,\#}^{\bullet}({\goth l,\oi _{{\goth l}}})[-d]\wedge \ex d{{\goth p}_{\u}} ,$$
whence an isomorphism of graded compexes
$$ \xymatrix{\tk {\k}{\ex d{{\goth p}_{\u}}}
\tk {\k}{\k[{\goth p}_{\u}]}{D_{k,\#}^{\bullet}({\goth l,\oi _{{\goth l}}})[-d]} 
\ar[rr] && D_{k,\#}^{\bullet}({\goth l},\oi _{+})} ,$$
and the assertion.

(ii) and (iii) As ${\goth p}_{-}$ is the direct sum of ${\goth l}$ and
${\goth p}_{-,\u}$, $D_{k,{\goth p}}^{\bullet}({\goth p}_{-},\oi _{{\goth l}})$ and 
$D_{k,{\goth p}}^{\bullet}({\goth p}_{-},\oi _{+})$ are isomorphic to the 
total complexes deduced from the double complexes:
$$ \bigoplus _{j=\sup\{0,k-n+d\}}^{k} \tk {\k}{D_{j}^{\bullet}({\goth p}_{-,\u})}
D_{k-j,\#}^{\bullet}({\goth l},\oi _{{\goth l}}) \quad  \text{and} \quad 
\bigoplus _{j=\sup\{0,k-n+d\}}^{k} \tk {\k}{D_{j}^{\bullet}({\goth p}_{-,\u})}
D_{k-j,\#}^{\bullet}({\goth l},\oi _{+}) $$
respectively. By Lemma~\ref{lco1}(ii), for $j$ positive integer, 
$D_{j}^{\bullet}({\goth p}_{-,\u})$ is an acyclic complex, whence the assertion by (i)
and its proof.

(iv) As ${\goth p}_{\u}$ is contained in $\oi _{+}$, 
$$ D_{k,{\goth p}}^{\bullet}({\goth g},\oi_{+}) =  \bigoplus_{j=0}^{k} 
\tk {\k}{\sy {k-j}{{\goth p}_{\u}}}
D_{k-j,{\goth p}}^{\bullet}({\goth p}_{-},\oi_{+}) .$$ 
So, by (iii), $D_{k,{\goth p}}^{\bullet}({\goth g},\oi_{+})$ has no cohomology of degree
different from $\b g{}$.
\end{proof}

\begin{coro}\label{cfc5}
For $k=0,\ldots,n$, $D_{k,{\goth p}}^{\bullet}({\goth g},\oi)$ has no cohomology of
degree different from $\b g{}$.
\end{coro}

\begin{proof}
As the modules $\oi _{+}$ and $\oi$ are free modules of rank $\b g{}$ and $\oi$ is 
contained in $\oi _{+}$, for some $p$ in $\k[\lp]\setminus \{0\}$, 
$\ex {\b g{}}{\oi}=p\ex {\b g{}}{\oi _{+}}$. As a result, the map
$$ \xymatrix{ \tk {\k}{\e Sg}\ex {}{{\goth g}}\wedge \ex {\b g{}}{\oi _{+}}
\ar[rr]^{\tau } && 
\tk {\k}{\e Sg}\ex {}{{\goth g}}\wedge \ex {\b g{}}{\oi }}, 
\qquad \varphi \longmapsto p\varphi $$
is an isomorphism of graded complexes such that
$$ \tau (D_{k,{\goth p}}^{\bullet}({\goth g},\oi _{+})) =
D_{k,{\goth p}}^{\bullet}({\goth g}, \oi ) .$$
So, by Lemma~\ref{lfc5}(iv), $D_{k,{\goth p}}^{\bullet}({\goth g}, \oi )$ has no
cohomology of degree different from $\b g{}$.
\end{proof}

For $j,k$ integers such that $0\leq j \leq k \leq n$, denote by 
$D_{k,j,{\goth p}}^{\bullet}({\goth g})$ the graded subspace of 
$D_{k,{\goth p}}^{\bullet}({\goth g})$, 
$$ D_{k,j,{\goth p}}^{\bullet}({\goth g}) := 
\sy j{{\goth p}_{\u}}D_{k-j,{\goth p}}^{\bullet}({\goth p}_{-})$$ 
and $D_{k,j,{\goth p}}^{\bullet}({\goth g},\oi)$ the graded subspace of 
$D_{k,{\goth p}}^{\bullet}({\goth g},\oi)$, 
$$ D_{k,j,{\goth p}}^{\bullet}({\goth g},\oi) := 
D_{k,j,{\goth p}}^{\bullet}({\goth g})[-\b g{}]\wedge \ex {\b g{}}{\oi} .$$
In particular, 
$$ D_{k,0,{\goth p}}({\goth g},\oi) = D_{k,{\goth p}}({\goth p}_{-},\oi) :=
D_{k,{\goth p}}^{\bullet}({\goth p}_{-})[-\b g{}]\wedge \ex {\b g{}}{\oi} .$$
Since ${\goth p}_{\u}\wedge \ex {\b g{}}{\oi}=\{0\}$, 
$D_{k,j,{\goth p}}^{\bullet}({\goth g},\oi)$ is a graded subcomplex of 
$D_{k,{\goth p}}^{\bullet}({\goth g},\oi)$.

\begin{lemma}\label{l2fc5}
Let $k=1,\ldots,n$.

{\rm (i)} For $\varphi $ in $D_{k,{\goth p}}^{\b g{}}({\goth p}_{-},\oi)$, $\varphi $ is
a cocycle of $D_{k,{\goth p}}^{\bullet}({\goth g},\oi)$ if and only if $\varphi $ is in 
$\tk {\k[\ll]}{\sy {k}{\oi _{{\goth l}}}}\ex {\b g{}}{\oi}$.

{\rm (ii)} For $0\leq j\leq k$ and $\varphi $ in
$D_{k,j,{\goth p}}^{\b g{}}({\goth g},\oi)$,
$\varphi $ is a cocycle of $D_{k,{\goth p}}^{\bullet}({\goth g},\oi)$ if and only if 
$$\varphi \in \tk {\k[\ll]}{\sy j{{\goth p}_{\u}}\sy {k-j}{\oi _{{\goth l}}}}
\ex {\b g{}}{\oi}.$$
\end{lemma}

\begin{proof}
Denote again by $\varepsilon $ the restriction of $\varepsilon $ to $\lp$. Then 
$\varepsilon $ is a generator of $\ex {\b g{}}{\oi}$. As
$$ \mu _{+} := \poi v1{\wedge \cdots \wedge }{d}{}{}{},$$
$\overline{\varepsilon _{0}}\wedge \mu _{+}$ is a generator of the $\k[\lp]$-module
$\ex {\b g{}}{\oi_{+}}$.

(i) Let $\psi $ be in $\tk {\k}{\k[\lp]}D_{k,{\goth p}}^{0}({\goth p}_{-})$. Then
$\psi $ has an expansion
$$ \psi = \sum_{j=0}^{k} \sum_{r\in {\Bbb N}^{d}_{j}} w^{r}\psi _{r} \quad  \text{with} 
\quad \psi _{r} \in \tk {\k}{\k[\lp]}D^{0}_{k-\vert r \vert,\#}({\goth l}) .$$ 
Since $\ex {\b g{}}{\oi}$ is a submodule of $\ex {\b g{}}{\oi_{+}}$,
$\psi \wedge \varepsilon $ is a cocycle of $D_{k,{\goth p}}^{\bullet}({\goth g},\oi)$ if
and only if
$\psi \wedge \overline{\varepsilon _{0}}\wedge \mu _{+}$ is a cocycle of
$D_{k,{\goth p}}^{\bullet}({\goth g},\oi_{+})$. In particular, the condition of the
assertion is sufficient by Proposition~\ref{prcm}(ii).

Suppose that $\psi \wedge \varepsilon $ is a cocycle of 
$D_{k,{\goth p}}^{\bullet}({\goth g},\oi)$. Denote by $\nu (\psi )$ the biggest element 
$(\vert r \vert,r)$ of ${\Bbb N}\times {\Bbb N}^{d}$ such that $\psi _{r}\neq 0$. Suppose
that $\nu (\psi )$ is different from $(0,0)$. A contradiction is expected. Let $j$ be 
the smallest indice such that $r_{j}\neq 0$. Denote by $\tilde{r}$ the element of
${\Bbb N}^{d}_{\vert r \vert-1}$ such that $\tilde{r}_{l}=r_{l}$ for
$l\neq j$. Let ${\goth R}_{r,\tilde{r}}$ be the subset of elements $r'$ of
${\Bbb N}^{d}_{\vert r \vert}$ such that $w^{\tilde{r}}$ divides $w^{r'}$. For $r'$ in
${\goth R}_{r,\tilde{r}}$, let $k_{r'}$ be the indice such that
$w^{r'}=w_{k_{r'}}w^{\tilde{r}}$. By Corollary~\ref{csc3}, as $\psi \wedge \varepsilon $
is a cocycle, by maximality of $\nu (\psi )$ and minimality of $j$,
$$ \sum_{r'\in {\goth R}_{r,\tilde{r}}}
r'_{k_{r'}}w^{\tilde{r}}\psi _{r'}\tens w_{k_{r'}} = 0.$$
In particular, $\psi _{r}=0$, whence the contradition. As a result, $\psi =\psi _{0}$.
As a matter of fact, $\psi = 0$ when $k>n-d$. Otherwise,
$\psi \wedge \overline{\varepsilon _{0}}$ is a cocycle of 
$\tk {\k}{\k[{\goth p}_{\u}]}{D_{k,\#}^{\bullet}({\goth l},\oi_{{\goth l}}})$. So,
by Lemma~\ref{l3co1}(iii) and Proposition~\ref{prcm}(ii), 
$\psi $ is in $\tk {\k}{\k[{\goth p}_{\u}]}\sy k{\oi_{{\goth l}}}$, whence the assertion.

(ii) Let $\psi $ be in $\tk {\k}{\k[\lp]}D_{k,j,{\goth p}}^{0}({\goth g})$. Since 
$\psi \wedge \varepsilon $ is a cocycle if and only if 
$\psi \wedge \overline{\varepsilon _{0}}\wedge \mu _{+}$ is a cocycle of degree $\b g{}$
of $D_{k,{\goth p}}^{\bullet}({\goth g},\oi_{+})$, the condition is sufficient. Suppose
that $\psi \wedge \varepsilon $ is a cocycle of
$D_{k,{\goth p}}^{\bullet}({\goth g},\oi)$. The element $\psi $ has an expansion
$$ \psi = \sum_{r\in {\Bbb N}^{d}_{j}} v^{r}\psi _{r} \quad  \text{with} 
\quad \psi _{r} \in \tk {\k}{\k[\lp]}D^{0}_{k-j,{\goth p}}({\goth p}_{-}) .$$
Since ${\goth p}_{\u}\wedge \ex {\b g{}}{\oi}=\{0\}$, for all $r$, 
$\psi _{r}\wedge \varepsilon $ is a cocycle of $D_{k-j,{\goth p}}({\goth p}_{-},\oi)$,
whence the assertion by (i).
\end{proof}

\subsection{Annulation of cohomology} \label{fc6}
Let $k=1,\ldots,n$. For $i$ integer, denote by $Z^{i}$ and $B^{i}$ the spaces of 
cocycles and coboundaries of degree $i$ of
$D_{k,{\goth p}}^{\bullet}({\goth g},\widehat{\bii})$.
For $j=1,\ldots,n$, denote by $D_{j,\#}^{\bullet}(\bjj')$ the total graded submodule of
$\tk {\k}{\k[\lp]}D_{j}^{\bullet}({\goth l})$ deduced from the
multigraded module
$$ \bigoplus _{i\in {\Bbb I}''_{j}} D_{i_{1}}^{\bullet}(\bjj_{1}')\wedge \cdots \wedge
D_{i_{\n}}^{\bullet}(\bjj'_{\n}) .$$
For $l=1,\ldots,n$, let $D_{l,{\goth p}}^{\bullet}(\bjj)$ be the total graded submodule
of $D_{l}^{\bullet}(\bjj)$ deduced from the bigraded module
$$ \bigoplus _{j=0}^{l} \tk {\k}{D_{l-j}^{\bullet}({\goth p}_{\u})}
D_{j,\#}^{\bullet}(\bjj') .$$
Set: 
$$D_{l,*}^{\bullet} := D_{l,{\goth p}}^{\bullet}(\bjj)[-\b g{}]\wedge 
\ex {\b g{}}{\widehat{\bii}} .$$
Then $D_{l,*}^{\bullet}$ is a graded subcomplex of
$D_{l,{\goth p}}^{\bullet}({\goth g},\widehat{\bii})$.

\begin{lemma}\label{lfc6}
Let $l=1,\ldots,n$. The morphism 
$$ \xymatrix{ \tk {\k}{\hat{\an {}{}}}\tk {\k[\lp]}{\k[\lg l*]}
D_{l,{\goth p}}^{\bullet}(\bjj)[-\b g{}] \ar[rr] && D_{l,*}^{\bullet}}, 
\qquad \varphi \longmapsto \varphi \wedge \hat{\varepsilon }$$
is an isomorphism of graded complexes. In particular, $D_{l,*}^{\bullet}$ is acyclic.
\end{lemma}

\begin{proof}
Denote by $\tilde{D}_{l,*}^{\bullet}$ the graded subcomplex of 
$D_{l,{\goth p}}^{\bullet}({\goth g},\widetilde{\bii})$,
$$ \tilde{D}_{l,*}^{\bullet} :=
D_{l,{\goth p}}^{\bullet}(\bjj)[-\b g{}]\wedge \ex {\b g{}}{\widetilde{\bii}}.$$
For $(x,y)$ in $\lg l*$, denote by $\bjj _{x,y}$ the image of $\bjj$ by the evalutaion
map $(x,y)\mapsto \varphi (x,y)$. Recall that $(h,e)$ is an element of
$\Omega _{{\goth g}}\cap (({\goth h}\cap {\goth l}_{*})\times {\goth u})$. For $g$ in
the normalizer of ${\goth h}$ in $G$ such that $g({\goth b})={\goth b}_{-}$, $V_{g(h),g(e)}={\goth b}_{-}$ and $\bjj_{h,e}={\goth u}$ by Corollary~\ref{csc3}. As a result, for all
$(g,x,y)$ in a nonempty open subset of $G\times \lg l*$,
$V_{g(x),g(y)}\cap \bjj _{x,y}=\{0\}$. Hence the morphism
$$ \xymatrix{ \tk {\k}{\an {}{}}\tk {\k[\lp]}{\k[\lg l*]}
D_{l,{\goth p}}^{\bullet}(\bjj)[-\b g{}] \ar[rr] && \tilde{D}_{l,*}^{\bullet}}, 
\qquad \varphi \longmapsto \varphi \wedge \hat{\varepsilon }$$
is an isomorphism of graded complexes since it is surjective. By Lemma~\ref{lco1}(ii), 
$D_{l}^{\bullet}(\bjj)$ is an acyclic complex since $l$ is positive and $\bjj$ is a free
$\k[\lp]$-module. By Lemma~\ref{lco3}(iii), the complex
$D_{l,{\goth p}}^{\bullet}(\bjj)$ is a direct factor of $D_{l}^{\bullet}(\bjj)$.
Hence $\tilde{D}_{l,*}^{\bullet}$ and $D_{l,*}^{\bullet}$ are acyclic
since $\hat{\an {}{}}$ is a faithfully flat extension of $\an {}{}$.
\end{proof}

For $j=0,\ldots,k$, denote by $D^{\bullet}_{k,j,{\goth p}}$ the intersection of 
$\tk {\k}{\k[\lp]}D_{k,{\goth p}}^{\bullet}({\goth g})$
and $\sy {j}{\oi_{0,0}}D_{k-j}^{\bullet}(\bjj)$. 
For $i=\b g{},\ldots,k+\b g{}$ and $l$ nonnegative integer, set:
$$ K^{i} := \sum_{j=0}^{k-i+\b g{}}D^{i-\b g{}}_{k,j,{\goth p}}\wedge 
\ex {\b g{}}{\widehat{\bii}} 
\quad  \text{and} \quad $$
$$K_{l}^{i} = \bigoplus _{(l_{1},l_{2})\in {\Bbb N}^{2}_{l-i+\b g{}}}
\tk {\k}{\k[\poi z1{,\ldots,}{2n+\rg}{}{}{}]_{l_{2}}}
\tk {\k}{\sy {l_{2}}{{\goth p}_{\u}}}\sum_{j=0}^{k-i+\b g{}}
D^{i-\b g{}}_{k,j,{\goth p}}\wedge \hat{\varepsilon }.$$
For $j=0,\ldots,k-i+\b g{}$, denote by $D^{0}_{k,j,i-\b g{},{\goth p},*}$ the image of 
$D_{k,j,{\goth p}}^{i-\b g{}}\wedge \hat{\varepsilon }$ by the quotient morphism
$\xymatrix{F_{i-\b g{}} \ar[r] &
\gr_{i-\b g{}}D_{k,{\goth p}}({\goth g},\widehat{\bii})}$.

\begin{lemma}\label{l2fc6}
Let $i=\b g{}+1,\ldots,k+\b g{}$ and $l$ a nonnegative integer.

{\rm (i)} The space $K^{i}\cap F^{i}_{l}$ is contained in $K^{i}_{l}+F^{i}_{l+1}$.
Moreover the sum $K^{i}_{l}+F^{i}_{l+1}$ is direct.

{\rm (ii)} The subspace $Z^{i}\cap F^{i}_{l}$ of $F^{i}_{l}$ is contained
$\dd F^{i-1}_{l} + K^{i}_{l} + F^{i}_{l+1}$.
\end{lemma}

\begin{proof}
(i) By definition, $K^{i}_{l}$ is contained in $K^{i}\cap F^{i}_{l}$. The image of
$K^{i}\cap F^{i}_{l}$ in $\gr_{l}D_{k,{\goth p}}({\goth g},\widehat{\bii})$ is contained
in the space of bidegree $(i-\b g{},\b g{})$ of $\tk {\k}AD_{k,{\goth p}}({\goth g},\oi)$.
As this space is the image of $K^{i}_{l}$ in
$\gr_{l}D_{k,{\goth p}}({\goth g},\widehat{\bii})$, $K^{i}\cap F^{i}_{l}$ is contained in
$K^{i}_{l}+F^{i}_{l+1}$. For $j=0,\ldots,k-i+\b g{}$ and $\varphi $ in
$D_{k,j,{\goth p}}^{i-\b g{}}$, $\varphi \wedge \varepsilon = 0$  if and only if
$\varphi \wedge \hat{\varepsilon }= 0$ by the proof of Lemma~\ref{lfc6}. Hence the sum
$K^{i}_{l}+F^{i}_{l+1}$ is direct.

(ii) Let $\varphi $ be in $Z^{i}\cap F^{i}_{l}$ and $\overline{\varphi }$ its image in
$\gr _{l}D_{k,{\goth p}}^{i}({\goth g},\widehat{\bii)}$. By Proposition~\ref{pfc4}, 
$\overline{\varphi }$ is a cocycle of degree $i$ of the graded complex $C^{\bullet}_{l}$.
By Corollary~\ref{cfc5}, for $j=0,\ldots,k$, the complex 
$D_{k-j,{\goth p}}^{\bullet}({\goth g},\oi)$ has no cohomolgy of degree different from
$\b g{}$. Then, for some $\overline{\psi }$ in $C^{i-1}_{l}$, 
$$ \overline{\varphi } - \dd \overline{\psi } \in 
\bigoplus _{(l_{1},l_{2})\in {\Bbb N}^{2}_{l-i+\b g{}}}
\tk {\k}{\k[\poi z1{,\ldots,}{2n+\rg}{}{}{}]_{l_{1}}}\tk {\k}{\sy {l_{2}}{{\goth p}_{\u}}}
\tk {\k}{\ex {i-\b g{}}{\bjj}}D_{k-i+\b g{},{\goth p}}^{\b g{}}({\goth g},\oi).$$
Then, by Lemma~\ref{l2fc5}(ii), 
$$ \overline{\varphi } - \dd \overline{\psi } \in 
\bigoplus _{(l_{1},l_{2})\in {\Bbb N}^{2}_{l-i+\b g{}}}
\tk {\k}{\k[\poi z1{,\ldots,}{2n+\rg}{}{}{}]_{l_{1}}}
\tk {\k}{\sy {l_{2}}{{\goth p}_{\u}}}\tk {\k}{\ex {i-\b g{}}{\bjj}} 
\sum_{j=0}^{k-i+\b g{}}D^{0}_{k,j,i-\b g{},{\goth p},*} .$$
So, for a representative $\psi $ of $\overline{\psi }$ in
$F^{i-1}_{l}$, 
$$ \varphi - \dd \psi \in K^{i}_{l}+F^{i}_{l+1} $$
by (i) and Proposition~\ref{pfc4}(iii), whence the assertion.
\end{proof}

Let $\dd _{1}$ and $\dd _{2}$ be the morphisms from $K_{l}^{\bullet}$ to 
$F^{\bullet}_{l}$ such that
$$ \dd _{1} a\tens \omega \nu \wedge \hat{\varepsilon } =
(-1)^{i-\b g{}}a\tens \nu (\dd \omega )\wedge \hat{\varepsilon }, \quad  
\dd _{2} a\tens \omega \nu \wedge \hat{\varepsilon } = 
a\tens \omega \dd \nu \wedge \hat{\varepsilon } \quad  \text{with} $$ $$
a\in \bigoplus _{(l_{1},l_{2})\in {\Bbb N}^{2}_{l-i+\b g{}}}
\tk {\k}{\k[\poi z1{,\ldots,}{2n+\rg}{}{}{}]_{l_{2}}}\sy {l_{2}}{{\goth p}_{\u}}, $$ $$
\omega \in \sy j{\oi_{0,0}}, \quad 
\nu \in D_{k-j}^{i-\b g{}}(\bjj), \quad \omega \nu \in D^{i-\b g{}}_{k,j,{\goth p}} ,
\quad i = \b g{},\ldots,k+\b g{}, \quad j=0,\ldots,k-i+\b g{} .$$

\begin{lemma}\label{l3fc6}
Let $i=\b g{},\ldots,k+\b g{}$ and $l$ a nonnegative integer.

{\rm (i)} The space $\dd K_{l}^{i}$ is contained in $\dd _{2}K_{l}^{i}+F^{i+1}_{l+1}$.

{\rm (ii)} If $i>\b g{}$ then $Z^{i}\cap F^{i}_{l}$ is contained in the sum
$$\dd F^{i-1}_{l} + F^{i}_{l+1}.$$
\end{lemma}

\begin{proof}
(i) By definition the restriction of $\dd$ to $K^{\bullet}_{l}$ is equal to
$\dd_{1}+\dd _{2}$. Let $a$, $\omega $, $\nu $ be as in the above definition. Then 
$$\dd a\tens \omega \nu \wedge \hat{\varepsilon } \in  
\dd_{2} a\tens \omega \nu \wedge \hat{\varepsilon } + 
a\tens D_{k,j-1,{\goth p}}^{i-\b g{}}\wedge \hat{\varepsilon }\wedge \oi_{0,0}
\cap D_{k,{\goth p}}^{i+1}({\goth g},\widehat{\bii}).$$
By Corollary~\ref{cfc2}(i),
$$ a\tens D_{k,j-1,{\goth p}}^{i-\b g{}}
\wedge \hat{\varepsilon }\wedge \oi_{0,0}
\cap D_{k,{\goth p}}^{i+1}({\goth g},\widehat{\bii}) \subset F^{i+1}_{l+1},$$
whence the assertion.

(ii) Let $\varphi $ be in $Z^{i}\cap F^{i}_{l}$. By Lemma~\ref{l2fc6}(ii), for some
$\psi $ in $F^{i-1}_{l}$,
$$ \varphi - \dd \psi \in K^{i}_{l}+F^{i}_{l+1}.$$
By Lemma~\ref{l2fc6}(i), for $i'=i,i+1$, the sum  $K_{l}^{i'} + F^{i'}_{l+1}$ is direct.
Let $\varphi _{1}$ be the component of $\varphi - \dd \psi $ on $K_{l}^{i}$. Then, by
(i), $\dd _{2}\varphi _{1}=0$ since 
$\dd _{2}\varphi _{1}$ is in $K_{l}^{i+1}$ and $\dd F^{i}_{l+1}$ is contained in 
$F^{i+1}_{l+1}$. As a result, by Lemma~\ref{lfc6}, for some $\varphi '_{1}$ in 
$K_{l}^{i-1}$, $\varphi _{1}=\dd_{2}\varphi '_{1}$. Then, by (i),
$$ \psi + \varphi '_{1} \in F^{i-1}_{l} \quad  \text{and} \quad
\varphi - \dd \psi - \dd \varphi '_{1} \in F^{i}_{l+1},$$
whence the assertion.
\end{proof}

\begin{coro}\label{cfc6}
Let $i=\b g{}+1,\ldots,\b g{}+k$.

{\rm (i)} For all nonnegative integer $l$, $Z^{i}$ is contained in $B^{i}+ F^{i}_{l}$.

{\rm (ii)} For some $p$ in $\tk {\k}{{\goth m}}\k[{\goth l}_{*}]$, 
$(1+p)Z^{i}$ is contained in $B^{i}$.
\end{coro}
 
\begin{proof}
(i) By Lemma~\ref{l3fc6}(ii), for $l$ nonnegative integer,
$$ Z^{i}\cap F^{i}_{l} \subset \dd F^{i-1}_{l} + F^{i}_{l+1} .$$ 
Then, by induction on $l$, $Z^{i}$ is contained in $B^{i}+F^{i}_{l}$.

(ii) The natural gradation of $\k[{\goth g}]$ induces a gradation of 
$\tk {\k}{\hat{\an {}{}}}\tk {\k}{\k[\lg l*]}D({\goth g})$. As 
$\widehat{\bii}$ is a graded submodule
of $\tk {\k}{\hat{\an {}{}}}\tk {\k}{\k[\lg l*]}D({\goth g})$ so are
$D_{k}^{\bullet}({\goth g},\widehat{\bii})$,
$D_{k,{\goth p}}^{\bullet}({\goth g},\widehat{\bii})$,
$F^{\bullet}_{l}, \, l=0,1,\ldots$. Then $Z^{i}$ and $B^{i}$ are graded submodules of 
$D_{k,{\goth p}}^{i}({\goth g},\widehat{\bii})$ since the differential of 
$D_{k}^{\bullet}({\goth g},\widehat{\bii})$ is homogeneous of degree $0$ with respect to 
this gradation.  

Let $l$ be a nonnegative integer. Denote by 
$$ \tk {\k}{\hat{\an {}{}}}\k[\lg l*]^{(l)}, \quad 
D^{i,l}_{k}({\goth g},\widehat{\bii}), \quad Z^{i,l}, \quad B^{i,l} $$
the subspaces of degree $l$ of 
$$ \tk {\k}{\hat{\an {}{}}}\k[\lg l*], \quad
D^{i}_{k}({\goth g},\widehat{\bii}), \quad Z^{i}, \quad B^{i} $$
respectively. In particular, these spaces are finitely generated 
$\tk {\k}{\hat{\an {}{}}}\k[{\goth l}_{*}]$-modules. Then, by 
\cite[Ch. 3, Theorem 8.9]{Mat}, for some $p_{l}$ in 
$\tk {\k}{{\goth m}}\k[{\goth l}_{*}]$,
$$ (1+p_{l}) \bigcap _{j\in {\Bbb N}} 
(B^{i,l} + {\goth m}^{j}D^{i,l}_{k}({\goth g},\widehat{\bii})) \subset
B^{i,l} .$$
By (i) and Proposition~\ref{pfc4}(ii),
$$ Z^{i} \subset B^{i} + \hat{J}^{j-n}D_{k}^{i}({\goth g},\widehat{\bii})$$
for $j\geq n$. Then
$$ Z^{i,l} \subset B^{i,l} + 
{\goth m}^{j-l}D^{i,l}_{k}({\goth g},\widehat{\bii})$$
for all integer $j$ bigger than $l+n$ since $J$ is an ideal of $\k[\lg l*]$
generated by elements of positive degree. As a result, 
$$ (1+p_{l}) Z^{i,l} \subset B^{i,l} .$$
Then, for some $p$ in $\tk {\k}{{\goth m}}\k[{\goth l}_{*}]$,
$$ (1+p) Z^{i} \subset B^{i} $$
since $Z^{i}$ is a finitely generated module over 
$\tk {\k}{\hat{\an {}{}}}\k[\lg l*]$.
\end{proof}

\begin{prop}\label{pfc5}
For $k=0,\ldots,n$, $D_{k,{\goth p}}^{\bullet}({\goth g},\widehat{\bii})$ has no
cohomology of degree different from $\b g{}$.
\end{prop}

\begin{proof}
For $i<\b g{}$ or $i>k+\b g{}$, $D_{k,{\goth p}}^{i}({\goth g},\widehat{\bii})=\{0\}$.
By definition, $D_{k,{\goth p}}^{\bullet}({\goth g},\widehat{\bii})$ has no cohomology of
degree $k+\b g{}$. So, it is true for $k=0,1$. Let $k=2,\ldots,n$ and 
$i=\b g{}+1,\ldots,\b g{}+k-1$. For $l$ nonnegative integer, denote by $T_{i,l}$ the
support of $Z^{i,l}/B^{i,l}$ in 
${\mathrm {Spec}}(\tk {\k}{\hat{\an {}{}}}\k[{\goth l}_{*}])$. As 
$Z^{i,l}/B^{i,l}$ is a finitely generated
$\tk {\k}{\hat{\an {}{}}}\k[{\goth l}_{*}]$-module, $T_{i,l}$ is a closed subset of 
${\mathrm {Spec}}(\tk {\k}{\hat{\an {}{}}}\k[{\goth l}_{*}])$. Since ${\goth m}$ is
contained in all maximal ideal of $\tk {\k}{\hat{\an {}{}}}\k[{\goth l}_{*}]$, $T_{i,l}$
does not contain a maximal ideal by Corollary~\ref{cfc6}(ii). Then $T_{i,l}$ is empty and
$Z^{i}=B^{i}$, whence the proposition.
\end{proof}

\subsection{End of the proof of Theorem~\ref{t4int}} \label{fc7}
We can now complete the proof of Theorem~\ref{t4int}. 

We prove the theorem by induction on the dimension of ${\goth g}$. By
Proposition~\ref{ppp}(ii), the theorem is true for $\rg=1$. Suppose $\rg >1$ and
the theorem true for the simple algebras of rank smaller than $\rg$. By
Proposition~\ref{pfc5} and the induction hypothesis, for 
$k=1,\ldots,n$, $D_{k,{\goth p}}^{\bullet}({\goth g},\widehat{\bii})$ has no cohomology
of degree different from $\b g{}$ for all parabolic subalgebra ${\goth p}$ of ${\goth g}$
containing ${\goth b}$. So, by Theorem~\ref{tstp4}, ${\goth g}$ has Property $({\bf P})$.

\begin{appendix}
\section*{Appendix}

\section{Projective dimension and cohomology} \label{pdc}
Recall in this section classical results. Let $X$ be a Cohen-Macaulay irreducible 
affine algebraic variety and $S$ a closed subset of codimension $p$ of $X$. Let 
$P_{\bullet}$ be a complex of finitely generated projective $\k[X]$-modules whose length 
$l$ is finite and let $\varepsilon $ be an augmentation morphism of $P_{\bullet}$ whose 
image is $R$, whence an augmented complex of $\k[X]$-modules,
$$ \xymatrix{0 \ar[r] & P_{l} \ar[r] &
P_{l-1} \ar[r] & \cdots \ar[r] & P_{0} \ar[r]^{\varepsilon } & R \ar[r] & 0 }.$$
Set:
$$ {\cal P}_{\bullet} := \tk {\k[X]}{\an X{}{}}P_{\bullet}, \quad
{\cal R} := \tk {\k[X]}{\an X{}{}}R, \quad
{\cal K}_{0} := \tk {\k[X]}{\an X{}{}}{{\mathrm {Ker}}\,\varepsilon }$$
and denote by ${\cal K}_{i}$ the kernel of the morphism 
$\xymatrix{{\cal P}_{i}\ar[r] & {\cal P}_{i-1}}$ for $i$ positive integer.

\begin{lemma} \label{lpdc}
Suppose that $S$ contains the support of the homology of the augmented complex  
$P_{\bullet}$. 

{\rm (i)} For all positive integer $i<p-1$ and for all projective 
$\an X{}$-module ${\cal P}$, ${\rm H}^{i}(X\setminus S,{\cal P})$ is equal to $0$.

{\rm (ii)} For all nonnegative integer $j\leq l$ and for all positive integer 
$i<p-j$, the cohomology group 
${\rm H}^{i}(X\setminus S,{\cal K}_{l-j})$ is equal to zero.
\end{lemma}

\begin{proof}
(i) Let $i<p-1$ be a positive integer. Since the functor H$^{i}(X\setminus S,\bullet )$ 
commutes with the direct sum, it suffices to prove
${\mathrm {H}}^{i}(X\setminus S,\an X{})=0$. Since $S$ is a closed subset of $X$, 
we have the relative cohomology long exact sequence
$$ \xymatrix{\cdots \ar[r] & 
{\mathrm H}^{i}_{S}(X,\an X{}) \ar[r] & {\mathrm H}^{i}(X,\an X{})
\ar[r] & {\mathrm H}^{i}(X\setminus S,\an X{}) \ar[r] & 
{\mathrm H}^{i+1}_{S}(X,\an X{}) \ar[r] & \cdots }.$$
Since $X$ is affine, ${\mathrm {H}}^{i}(X,\an X{})$ is equal to zero and 
${\mathrm {H}}^{i}(X\setminus S,\an X{})$ is
isomorphic to ${\mathrm {H}}_{S}^{i+1}(X,\an X{})$. Since $X$ is Cohen-Macaulay, the 
codimension $p$ of $S$ in $X$ is equal to the depth of its ideal of definition in $\k[X]$ 
\cite[Ch. 6, Theorem 17.4]{Mat}. Hence, according to ~\cite[Theorem 3.8]{Gro}, 
${\mathrm {H}}_{S}^{i+1}(X,\an X{})$ and  ${\mathrm {H}}^{i}(X\setminus S,\an X{})$ are
equal to $0$ since $i+1<p$. 

(ii) Let $j$ be a nonnegative integer. Since $S$ contains the support of the homology of 
the complex $P_{\bullet}$, for all nonnegative integer $j$, we have the short exact 
sequence of $\an {X\setminus S}{}$-modules
$$ \xymatrix{0 \ar[r] & 
{\cal K}_{j+1}\left \vert \right. _{X\setminus S} \ar[r] & {\cal P}_{j+1}\left
\vert \right. _{X\setminus S} \ar[r] & 
{\cal K}_{j} \left \vert \right. _{X\setminus S}\ar[r] & 0 }$$
whence the long exact sequence of cohomology
$$ \xymatrix{\cdots \ar[r] & {\mathrm H}^{i}(X\setminus S,{\cal P}_{j+1})
\ar[r] & {\mathrm H}^{i}(X\setminus S,{\cal K}_{j}) \ar[r] & 
{\mathrm H}^{i+1}(X\setminus S,{\cal K}_{j+1}) \ar[r] & 
{\mathrm H}^{i+1}(X\setminus S,{\cal P}_{j+1}) \ar[r] & \cdots }.$$
Then, by (i), for $0<i<p-2$, the cohomology groups 
${\mathrm {H}}^{i}(X\setminus S,{\cal K}_{j})$ and 
${\mathrm {H}}^{i+1}(X\setminus S,{\cal K}_{j+1})$ are isomorphic 
since $P_{j+1}$ is a projective module. Since ${\cal P}_{i}=0$ for $i>l$, 
${\cal K}_{l-1}$ and ${\cal P}_{l}$ have isomorphic restrictions to $X\setminus S$. In 
particular, by (i), for $0<i<p-1$, 
${\mathrm {H}}^{i}(X\setminus S,{\cal K}_{l-1})$ equal zero.
Then, by induction on $j$, for $0<i<p-j$, 
${\mathrm {H}}^{i}(X\setminus S,{\cal K}_{l-j})$ is equal to zero.
\end{proof}

\begin{prop} \label{ppdc}
Let $R'$ be a $\k[X]$-module containing $R$. Suppose that the following conditions 
are satisfied:
\begin{list}{}{}
\item {\rm (1)} $p$ is at least $l+2$,
\item {\rm (2)} $X$ is normal,
\item {\rm (3)} $S$ contains the support of the homology of the augmented complex 
$P_{\bullet}$.
\end{list}

{\rm (i)} The complex $P_{\bullet}$ is a projective resolution of $R$ of length $l$.

{\rm (ii)} Suppose that  $R'$ is torsion free and that $S$ contains the support in $X$ of
$R'/R$. Then $R'=R$.
\end{prop}

\begin{proof}
(i) Let $j$ be a positive integer. We have to prove that 
${\mathrm {H}}^{0}(X,{\cal K}_{j})$ is the image of $P_{j+1}$. By Condition (3), the 
short sequence of $\an {X\setminus S}{}$-modules
$$ \xymatrix{0 \ar[r] & \left. {\cal K}_{j+1} \right \vert _{X\setminus S} \ar[r] & 
{\cal P}_{j+1} \left. \vert \right. _{X\setminus S} \ar[r] & 
\left. {\cal K}_{j} \right \vert _{X\setminus S}\ar[r] & 0 }$$ 
is exact, whence the cohomology long exact sequence
$$ \xymatrix{0 \ar[r] & {\mathrm {H}}^{0}(X\setminus S,{\cal K}_{j+1}) \ar[r] & 
{\mathrm {H}}^{0}(X\setminus S,{\cal P}_{j+1})
\ar[r] & {\mathrm {H}}^{0}(X\setminus S,{\cal K}_{j}) \ar[r] & 
{\mathrm H}^{1}(X\setminus S,{\cal K}_{j+1}) \ar[r] & \cdots }.$$ 
By Lemma~\ref{lpdc}(ii), ${\mathrm {H}}^{1}(X\setminus S,{\cal K}_{j+1})$ equals $0$ since
$1<p-l+j+1$, whence the short exact sequence 
$$ \xymatrix{0 \ar[r] & {\mathrm {H}}^{0}(X\setminus S,{\cal K}_{j+1}) \ar[r] & 
{\mathrm {H}}^{0}(X\setminus S,{\cal P}_{j+1})
\ar[r] & {\mathrm {H}}^{0}(X\setminus S,{\cal K}_{j}) \ar[r] & 0 }.$$ 
As the codimension of $S$ in $X$ is at least $2$ and $X$ is irreducible and
normal, the restriction morphism from $P_{j+1}$ to 
${\mathrm {H}}^{0}(X\setminus S,{\cal P}_{j+1})$ is an isomorphism. Let $\varphi $ be in 
${\mathrm {H}}^{0}(X,{\cal K}_{j})$. Then there exists an element $\psi $ of 
$P_{j+1}$ whose image $\psi '$ in ${\mathrm {H}}^{0}(X,{\cal K}_{j})$ has the same 
restriction to $X\setminus S$ as $\varphi $. Since $P_{j}$ is a projective module and 
$X$ is irreducible, $P_{j}$ is torsion free. Then $\varphi =\psi '$ since 
$\varphi -\psi '$ is a torsion element of $P_{j}$, whence the assertion.

(ii) Set ${\cal R}' := \tk {\k[X]}{\an X{}{}}R'$. Arguing as in (i), since $S$ 
contains the support of $R'/R$ and $1<p-l$, the short sequence
$$ \xymatrix{0 \ar[r] & {\mathrm {H}}^{0}(X\setminus S,{\cal K}_{0}) \ar[r] & 
{\mathrm {H}}^{0}(X\setminus S,{\cal P}_{0})
\ar[r] & {\mathrm {H}}^{0}(X\setminus S,{\cal R}') \ar[r] & 0 }$$ 
is exact. Moreover, the restriction morphism from $P_{0}$ to 
${\mathrm {H}}^{0}(X/S,{\cal P}_{0})$ is an isomorphism since the codimension of $S$ in 
$X$ is at least $2$ and $X$ is irreductible and normal. Let $\varphi $ be in $R'$. 
Then for some $\psi $ in $P_{0}$, $\varphi -\varepsilon (\psi )$ is a torsion element of 
$R'$. So $\varphi =\varepsilon (\psi )$ since $R'$ is torsion free, whence the assertion.
\end{proof}

\begin{coro} \label{cpdc}
Let $C_{\bullet}$ be a homology complex of finitely generated $\k[X]$-modules whose 
length $l$ is finite and positive. For $j=0,\ldots,l$, denote by $Z_{j}$ the space
of cycles of degree $j$ of $C_{\bullet}$. Suppose that the following conditions 
are satisfied:
\begin{list}{}{}
\item {\rm (1)} $S$ contains the support of the homology of the complex
$C_{\bullet}$,
\item {\rm (2)} for all $i$, $C_{i}$ is a submodule of a free module,
\item {\rm (3)} for $i=1,\ldots,l$, $C_{i}$ has projective dimension at most $d$,
\item {\rm (4)} $X$ is normal and $l+d\leq p-1$.
\end{list}
Then $C_{\bullet}$ is acyclic and for $j=0,\ldots,l$, $Z_{j}$ has projective dimension
at most $l+d-j-1$.
\end{coro}

\begin{proof}
Prove by induction on $l-j$ that the complex
$$ \xymatrix{0 \ar[r] & C_{l} \ar[r] & \cdots \ar[r] & C_{j+1}
\ar[r] & Z_{j} \ar[r] & 0 }$$
is acyclic and $Z_{j}$ has projective dimension at most $l+d-j-1$. For 
$j=l$, $Z_{j}$ is equal to zero since $C_{l}$ is torsion free by Condition (2) and 
$Z_{l}$ is a submodule of $C_{l}$, supported by $S$ by Condition (1). Suppose 
$j\leq l-1$ and the statement true for $j+1$. By Condition (3), $C_{j+1}$ has
a projective resolution $P_{\bullet}$ whose length is at most $d$ and whose terms are 
finitely generated. By induction hypothesis, $Z_{j+1}$ has a projective resolution 
$Q_{\bullet}$ whose length is at most $l+d-j-2$ and whose terms are finitely generated, 
whence an augmented complex $R_{\bullet}$ of projective modules whose length is 
$l+d-j-1$,
$$ \xymatrix{0 \ar[r] & Q_{l+d-j-2} \oplus P_{l+d-j-1} \ar[r] & 
\cdots  \ar[r] & Q_{0} \oplus P_{1}
\ar[r] & P_{0} \ar[r] &  Z_{j} \ar[r] & 0 }.$$
Denoting by $\dd $ the differentials of $Q_{\bullet}$ and $P_{\bullet}$, the restriction 
to $Q_{i}\oplus P_{i+1}$ of the differential of $R_{\bullet}$ is the map
$$(x,y) \mapsto (\dd x,\dd y +(-1)^{i} \delta (x)) ,$$ 
with $\delta $ the map which results from the injection of $Z_{j+1}$ into 
$C_{j+1}$. Since $P_{\bullet}$ and $Q_{\bullet}$ are projective resolutions, the complex 
$R_{\bullet}$ is a complex of projective modules having no homology 
of positive degree. Hence the support of the homology of the augmented complex 
$R_{\bullet}$ is contained in $S$ by Condition (1). Then, by Proposition~\ref{ppdc} and 
Condition (4), $R_{\bullet}$ is a projective resolution of $Z_{j}$ of length 
$l+d-j-1$ since $Z_{j}$ is a submodule of a free module by Condition (2), whence the 
corollary since $Z_{0}=C_{0}$ by definition.
\end{proof}

\begin{coro}\label{c2pdc}
Let
$$ \xymatrix{0 \ar[r] & E_{-1}\ar[r] & E_{0} \ar[r] & \cdots 
\ar[r] & E_{l} \ar[r] & 0 }$$
be a complex of finitely generated $\k[X]$-modules. Suppose that the following conditions are 
satisfied:
\begin{itemize}
\item [{\rm (1)}] $E_{-1}$ is projective and for $i=0,\ldots,l-1$, $E_{i}$ has projective
dimension at most $i$,
\item [{\rm (2)}] $S$ contains the support of the cohomology of this complex, 
\item [{\rm (3)}] for $i=0,\ldots,l$, $E_{i}$ is a submodule of a free module,
\item [{\rm (4)}] $X$ is normal and $p\geq l+2$.
\end{itemize}
Then the complex is acyclic and $E_{l}$ has projective dimension at most $l$.
\end{coro}

\begin{proof}
Prove the corollary by induction on $l$. For $l=0$, by Conditions (2), (3), (4), 
the arrow $\xymatrix{E_{-1}\ar[r] & E_{0}}$ is an isomorphism. Suppose the corollary true
for the integers smaller than $l$. Let $Z_{l-1}$ be the kernel of the arrow
$\xymatrix{E_{l-1} \ar[r] & E_{l}}$, whence the two complexes
$$ \xymatrix{0 \ar[r] & E_{-1}\ar[r] & E_{0} \ar[r] & \cdots 
\ar[r] & E_{l-2} \ar[r] & Z_{l-1} \ar[r] & 0 }$$
$$ \xymatrix{ 0 \longrightarrow Z_{l-1} \ar[r] & E_{l-1} \ar[r] & E_{l} \ar[r] & 0}.$$ 
By Condition (2), the support of the cohomology of these two complexes is contained in 
$S$. Then, by induction hypothesis, the first complex is acyclic and $Z_{l-1}$ has 
projective dimension at most $l-1$. As a result, arguing as in the proof of 
Corollary~\ref{cpdc}, we have a complex of $\k[X]$-modules
$$ \xymatrix{0 \ar[r] & P_{l} \ar[r] &
P_{l-1} \ar[r] & \cdots \ar[r] & P_{0} \ar[r] & E_{l} \ar[r] & 0 }$$
such that $\poi P0{,\ldots,}{l}{}{}{}$ are projective, the image of $P_{0}$ in 
$E_{l}$ is the image of the arrow $\xymatrix{E_{l-1} \ar[r] & E_{l}}$ and the support
of its homology is contained in $S$. Then, by Condition (4) and 
Proposition~\ref{ppdc}, it is acyclic so that $E_{l}$ has projective dimension at most $l$
and the complex
$$ \xymatrix{0 \ar[r] & E_{-1}\ar[r] & E_{0} \ar[r] & \cdots \ar[r] & E_{l} \ar[r] & 0 }$$
is acyclic. 
\end{proof}

Let
$$ \xymatrix{0 \ar[r] & M_{0} \ar[r] & M_{1} \ar[r] & M_{2} \ar[r] & 0 }$$ 
be a short exact sequence of $\k[X]$-modules.

\begin{lemma}\label{l2pdc}
Suppose that for $i=0,1,2$, $M_{i}$ has a finite projective dimension $d_{i}$. Then we 
have the inequalities
$$ d_{2} \leq \sup \{d_{0}+1,d_{1}\} \quad  \text{and} \quad
d_{0} \leq \sup \{d_{2}-1,d_{1}\} .$$
\end{lemma}

\begin{proof}
Let $N$ be a ${\k}[X]$-module. We have to prove 
Ext$^{j}(M_{2},N) = 0$ for $j$ bigger than $\sup \{d_{0}+1,d_{1}\}$ and 
Ext$^{j}(M_{1},N) = 0$ for $j$ bigger than $\sup \{d_{2}-1,d_{1}\}$. From the short 
exact sequence, we deduce the long exact sequence
$$ \xymatrix{\cdots \ar[r] & {\rm Ext}^{j}(M_{1},N) \ar[r] & {\rm Ext}^{j}(M_{0},N)
\ar[r] & {\rm Ext}^{j+1}(M_{2},N)\ar[r] & 
{\mathrm {Ext}}^{j+1}(M_{1},N) \ar[r] & \cdots  }.$$  
For $j+1>\sup \{d_{0}+1,d_{1}\}$, ${\rm Ext}^{j+1}(M_{1},N)=0$ and 
${\rm Ext}^{j}(M_{0},N)=0$, whence ${\rm Ext}^{j+1}(M_{2},N)=0$. For 
$j>\sup \{d_{2}-1,d_{1}\}$, ${\rm Ext}^{j}(M_{1},N)=0$ and 
${\rm Ext}^{j+1}(M_{2},N)=0$, whence ${\rm Ext}^{j}(M_{0},N)=0$. 
\end{proof}

\section{Some remarks about representations} \label{rep}
In this section, ${\goth g}$ is a semisimple Lie algebra, ${\goth p}$ is a parabolic
subalgebra of ${\goth g}$, containing ${\goth b}$, ${\goth l}$ is the reductive factor
of ${\goth p}$, containing ${\goth h}$, and ${\goth d}$ is the derived algebra of
${\goth l}$. Let ${\cal R}_{{\goth l}}$ the set of roots $\alpha $ such that
${\goth g}_{\alpha }$ is contained in ${\goth l}$ and ${\cal R}_{{\goth l},+}$ the
intersection of ${\cal R}_{{\goth l}}$ and ${\cal R}_{+}$. Denote by ${\cal P}_{\#}$ the
subset of elements of ${\cal P}({\cal R})$ whose restriction to
${\goth h}\cap {\goth d}$ is a dominant weight of the root system ${\cal R}_{{\goth l}}$
with respect to the positive root system ${\cal R}_{{\goth l},+}$.
 
Let $M$ be a rational ${\goth g}$-module. For $\lambda $ in ${\cal P}_{+}({\cal R})$,
denote by $M_{\lambda }$ the isotypic component of type $V_{\lambda }$ of the
${\goth g}$-module $M$. Let ${\cal P}_{M}$ be the subset of dominant weights $\lambda $
such that $M_{\lambda } \neq 0$.

\begin{lemma}\label{lrep}
The space $M$ is the direct sum of $M_{\lambda }, \; \lambda \in {\cal P}_{M}$.
\end{lemma}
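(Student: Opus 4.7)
The plan is to reduce the statement to Weyl's complete reducibility theorem for finite-dimensional representations of the semisimple Lie algebra $\mathfrak{g}$. First I would recall that a rational $\mathfrak{g}$-module (coming from a rational $G$-module structure, $G$ being the adjoint group) is locally finite: every element $m \in M$ generates a finite-dimensional $\mathfrak{g}$-submodule $M(m) \subset M$. This follows from the fact that on a rational $G$-module the orbit of any vector spans a finite-dimensional subspace, which is automatically $\mathfrak{g}$-stable.

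Next, I would invoke Weyl's theorem: since $\mathfrak{g}$ is semisimple, every finite-dimensional $\mathfrak{g}$-module is completely reducible, hence a direct sum of copies of simple modules $V_\lambda$ for $\lambda \in \mathcal{P}_+(\mathcal{R})$. Applying this to each $M(m)$, we see that every $m \in M$ lies in a finite sum of simple submodules of $M$. Consequently $M$ is the sum (a priori not direct) of its simple submodules, and each simple submodule is isomorphic to some $V_\lambda$ and therefore lies in $M_\lambda$. This shows $M = \sum_{\lambda \in \mathcal{P}_M} M_\lambda$.

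It remains to show the sum is direct. Suppose, for contradiction, that there is a nontrivial finite relation $m_{\lambda_1} + \cdots + m_{\lambda_r} = 0$ with pairwise distinct $\lambda_i \in \mathcal{P}_M$ and $m_{\lambda_i} \in M_{\lambda_i}$, not all zero. Each $m_{\lambda_i}$ lies in a finite-dimensional submodule $N_i \subset M_{\lambda_i}$, and by Weyl's theorem $N_i$ is a finite direct sum of copies of $V_{\lambda_i}$. The sum $N := N_1 + \cdots + N_r$ is a finite-dimensional $\mathfrak{g}$-module, hence by complete reducibility decomposes as a direct sum of its isotypic components, where by Schur's lemma the component of type $V_{\lambda_i}$ contains $N_i$. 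This forces each $m_{\lambda_i} = 0$, a contradiction.

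The main (very mild) obstacle is simply making precise the local finiteness built into the hypothesis ``rational $\mathfrak{g}$-module''; once this is in hand, Weyl's theorem and Schur's lemma do all the work, and no computation is required.
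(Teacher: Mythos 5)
Your proof is correct and follows essentially the same route as the paper: rationality gives local finiteness, Weyl's complete reducibility shows $M$ is the sum of its finite-dimensional simple submodules, and the isotypic decomposition makes the sum direct. The paper's own proof is just a terser version of exactly this argument.
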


\begin{proof}
As $M$ is a rational ${\goth g}$-module, $M$ is a union of ${\goth g}$-modules of
finite dimension. In particular, all simple ${\goth g}$-module contained in $M$ has
finite dimension. Hence $M_{\lambda }, \; \lambda \in {\cal P}_{M}$ is the set of
isotypic components of $M$. Moreover, $M$ is the direct sum of
$M_{\lambda }, \, \lambda \in {\cal P}_{M}$.
\end{proof}

For $\lambda $ in ${\cal P}_{M}$, denote by $\varpi _{\lambda }$ the canonical projection
$\xymatrix{M \ar[r] & M_{\lambda }}$. Let $N$ be a ${\goth g}$-submodule of $M$. For
$\lambda $ in ${\cal P}_{M}$, denote by $M_{\lambda ,+}$ the subspace of highest weight
vectors of $M_{\lambda }$. For the trivial action of ${\goth g}$ on $M_{\lambda ,+}$,
$\tk {\k}{V_{\lambda }}M_{\lambda ,+}$ is a ${\goth g}$-module. Set
$N_{\lambda ,+} := N\cap M_{\lambda ,+}$ and for $v$ in $M_{\lambda ,+}$, denote by
$M_{v}$ the ${\goth g}$-submodule of $M_{\lambda }$ generated by $v$.  

\begin{lemma}\label{l2rep}
 Let $\lambda $ be in ${\cal P}_{M}$.

{\rm (i)} The ${\goth g}$-module $N$ is the direct sum of
$N\cap M_{\gamma }, \, \gamma  \in {\cal P}_{M}$.  

{\rm (ii)} For $v$ in $M_{\lambda ,+}$, the ${\goth g}$-modules $V_{\lambda }$ and
$M_{v}$ are isomorphic.

{\rm (iii)} There exists a basis $v_{i}, \, i \in I_{\lambda }$ of $M_{\lambda ,+}$
satisfying the following condition: for some subset $I_{N,\lambda }$ of $I_{\lambda }$,
$v_{i}, \, i \in I_{N,\lambda }$ is a basis of $N_{\lambda ,+}$.

{\rm (iv)} For $i$ in $I_{\lambda }$, denote by $\tau _{\lambda ,i}$ an isomorphism of
${\goth g}$-modules $\xymatrix{V_{\lambda } \ar[r] & M_{v_{i}}}$. Then the linear map
$$ \xymatrix{ \tk {\k}{V_{\lambda }}M_{\lambda ,+} \ar[rr]^{\tau _{\lambda }} &&
M_{\lambda }}, \qquad v\tens v_{i} \longmapsto \tau _{\lambda ,i}(v) $$
is an isomorphism of ${\goth g}$-modules such that
$\tau _{\lambda }(\tk {\k}{V_{\lambda }}N_{\lambda ,+}) = N\cap M_{\lambda }$.
\end{lemma}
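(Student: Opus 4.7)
\smallskip
The plan is to prove the four assertions in the order they are stated, using Lemma~\ref{lrep} as the main input. For (i), since $M$ is a rational ${\goth g}$-module, it decomposes as $M=\bigoplus_{\gamma\in{\cal P}_M}M_{\gamma}$ by Lemma~\ref{lrep}. For each $\gamma $, the isotypic projection $\varpi _{\gamma }$ is a ${\goth g}$-module map (two distinct isotypic components have no nonzero ${\goth g}$-equivariant morphism between them). Hence for $j=1,2$ and $n\in N_{j}$, $\varpi _{\gamma }(n)\in N_{j}\cap M_{\gamma }$, which gives $N_{j}=\bigoplus_{\gamma }(N_{j}\cap M_{\gamma })$.

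For (ii), since $v\in M_{\lambda ,+}$ is a highest weight vector of weight $\lambda $, the submodule $M_{v}$ it generates is a nonzero quotient of the Verma module of highest weight $\lambda $. As $M_{v}\subset M_{\lambda }$ and $M_{\lambda }$ is isotypic of type $V_{\lambda }$, every simple subquotient of $M_{v}$ is isomorphic to $V_{\lambda }$; but the dimension of the $\lambda $-weight space of $M_{v}$ equals $1$ (its unique highest weight vector is $v$ up to scalar), forcing $M_{v}\simeq V_{\lambda }$. For (iii), I would argue purely linearly: $N_{1,\lambda ,+}\subset N_{2,\lambda ,+}\subset M_{\lambda ,+}$ are three subspaces of $M_{\lambda ,+}$, so pick a basis of $N_{1,\lambda ,+}$, extend it to a basis of $N_{2,\lambda ,+}$, then extend further to a basis of $M_{\lambda ,+}$; this yields the indexing sets $I_{\lambda ,1}\subset I_{\lambda ,2}\subset I_{\lambda }$.

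For (iv), I would first show $M_{\lambda }=\bigoplus_{i\in I_{\lambda }}M_{v_{i}}$. The sum $\sum_{i}M_{v_{i}}$ is a submodule whose $\lambda $-weight space contains every $v_{i}$, hence contains $M_{\lambda ,+}$; since the isotypic component $M_{\lambda }$ is generated as a ${\goth g}$-module by $M_{\lambda ,+}$ (each simple summand is generated by its highest weight line), the sum equals $M_{\lambda }$. For directness, if $M_{v_{i}}\cap \sum_{j\neq i}M_{v_{j}}\neq 0$, then by simplicity of $M_{v_{i}}\simeq V_{\lambda }$ (part (ii)) the intersection is all of $M_{v_{i}}$, so $v_{i}\in \sum_{j\neq i}M_{v_{j}}$; projecting onto the $\lambda $-weight space gives $v_{i}\in \sum_{j\neq i}\k v_{j}$, contradicting the linear independence of the basis $(v_{i})$. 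Thus $\tau _{\lambda }$ is a ${\goth g}$-module isomorphism as the direct sum of the $\tau _{\lambda ,i}$. The compatibility statement then reduces to observing that, for $j=1,2$, $N_{j}\cap M_{\lambda }$ is the isotypic component of type $V_{\lambda }$ of the ${\goth g}$-submodule $N_{j}$, hence is generated by $N_{j,\lambda ,+}=\mathrm{span}\{v_{i}\mid i\in I_{\lambda ,j}\}$, whose image under $\tau _{\lambda }$ is exactly $\bigoplus _{i\in I_{\lambda ,j}}M_{v_{i}}=\tau _{\lambda }(\tk{\k}{V_{\lambda }}N_{j,\lambda ,+})$.

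The only genuine subtlety lies in justifying (ii) and the directness in (iv); both hinge on the fact that the isotypic component $M_{\lambda }$ has $\lambda $-weight space of dimension $|I_{\lambda }|$ with $\{v_{i}\}$ a basis, together with the simplicity of $V_{\lambda }$. Everything else is formal bookkeeping with the isotypic decomposition.
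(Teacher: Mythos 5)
Your proof is correct and follows essentially the same route as the paper's: (i) via the isotypic decomposition of the rational module $N_{j}$, (ii) by observing that $M_{v}$ is an isotypic highest weight module with one-dimensional highest weight space, (iii) by extending bases, and (iv) by checking that $M_{\lambda }=\bigoplus _{i}M_{v_{i}}$ and that membership of $\tau _{\lambda }(V_{\lambda }\tens v_{i})$ in $N_{j}$ is governed by $i\in I_{\lambda ,j}$. You merely supply more detail than the paper does (the directness argument via $\lambda $-weight spaces in (iv), and the brief Verma-module detour in (ii), which is harmless but unnecessary given that $M_{v}\subset M_{\lambda }$ is already semisimple isotypic).
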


\begin{proof}
(i) As $N$ is a ${\goth g}$-submodule of $M$, it is rational. So, by
Lemma~\ref{lrep}(i), $N$ is the direct sum of its isotypic components, whence the
assertion since an isotypic component of $N$ is contained in the isotypic component
of $M$ of the same type.

(ii) As $v$ is in $M_{\lambda ,+}$, $M_{v}$ is a module of highest weight $\lambda $
and the space of highest weight vectors in $M_{v}$ is generated by $v$. Hence $M_{v}$ is 
simple and isomorphic to $V_{\lambda }$.

(iii) is straightforward. Moreover, if $N\cap M_{\lambda }=\{0\}$ then
$I_{N,\lambda }$ is empty.

(iv) By (ii), the isomorphisms $\tau _{\lambda ,i}$ does exist. As
$v_{i}, \, i \in I_{\lambda }$ is a basis of $M_{\lambda ,+}$, $M_{\lambda }$
is the direct sum of the subspaces $M_{v_{i}}, \, i \in I_{\lambda }$. Hence 
$\tau _{\lambda }$ is an isomorphism of ${\goth g}$-modules. Moreover, for $i$ in
$I_{\lambda }$, $\tau _{\lambda }(V_{\lambda }\tens v_{i})$ is contained in
$N$ if and only if $i$ is in $I_{N,\lambda }$, whence the assertion since $N$ is
a ${\goth g}$-module.
\end{proof}

Let $M'$ be a ${\goth l}$-submodule of $M$. For $\mu $ in ${\cal P}_{\#}$, denote by
$V'_{\mu }$ a simple ${\goth l}$-module of highest weight $\mu $ and $M'_{\mu }$ the
isotypic component of type $V'_{\mu }$ of $M'$. Denote by ${\cal P}_{M'}$ the subset of
elements $\mu $ of ${\cal P}_{\#}$ such that $M'_{\mu }\neq \{0\}$ and ${\cal P}_{M,M'}$
the subset of elements $(\lambda ,\mu )$ of ${\cal P}_{M}\times {\cal P}_{M'}$ such that 
$\varpi _{\lambda }(M'_{\mu }) \neq \{0\}$.

\begin{lemma}\label{l3rep}
{\rm (i)} The space $M'$ is the direct sum of $M'_{\mu }, \, \mu  \in {\cal P}_{M'}$.

{\rm (ii)} For $(\lambda ,\mu )$ in ${\cal P}_{M,M'}$, $V'_{\mu }$ is isomorphic to a
${\goth l}$-submodule of $V_{\lambda }$.
\end{lemma}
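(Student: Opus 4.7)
\medskip

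The plan is to deduce both parts from the rationality of $M$ as a ${\goth g}$-module together with the structural Lemma~\ref{l2rep}(iv).

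For (i), the first step is to show that $M'$ is a rational ${\goth l}$-module in the sense that every element lies in a finite-dimensional ${\goth l}$-submodule on which ${\goth h}$ acts semisimply. Given $v \in M'$, rationality of $M$ gives a finite-dimensional ${\goth g}$-submodule $N$ of $M$ containing $v$; as ${\goth l} \subset {\goth g}$, $N$ is ${\goth l}$-invariant, so the ${\goth l}$-module generated by $v$ is contained in $N$, hence finite-dimensional. Moreover ${\goth h}$ acts semisimply on $M$ (since $M$ is rational), hence on $M'$. The derived algebra ${\goth d}$ is semisimple, so by Weyl's theorem finite-dimensional ${\goth d}$-submodules are completely reducible; combined with semisimple ${\goth z}$-action this gives complete reducibility of finite-dimensional ${\goth l}$-submodules of $M'$. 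Decomposing each such submodule into simples and grouping by isomorphism class yields the direct sum decomposition
$$ M' = \bigoplus _{\mu \in {\cal P}_{M'}} M'_{\mu } .$$

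For (ii), fix $(\lambda ,\mu )$ in ${\cal P}_{M,M'}$. The projection $\varpi _{\lambda }$ is ${\goth g}$-equivariant, hence ${\goth l}$-equivariant, so $\varpi _{\lambda }(M'_{\mu })$ is a nonzero ${\goth l}$-submodule of $M_{\lambda }$ which is a ${\goth l}$-quotient of the ${\goth l}$-isotypic module $M'_{\mu }$. By complete reducibility of $M'_{\mu }$ as an ${\goth l}$-module, this quotient contains a simple submodule isomorphic to $V'_{\mu }$. Hence $V'_{\mu }$ embeds as an ${\goth l}$-submodule of $M_{\lambda }$.

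It then remains to replace $M_{\lambda }$ by $V_{\lambda }$. By Lemma~\ref{l2rep}(iv), $M_{\lambda }$ is isomorphic as a ${\goth g}$-module (hence as an ${\goth l}$-module) to $\tk {\k}{V_{\lambda }}M_{\lambda ,+}$, where $M_{\lambda ,+}$ is endowed with the trivial ${\goth g}$-action. As an ${\goth l}$-module this is the direct sum over a basis of $M_{\lambda ,+}$ of copies of $V_{\lambda }\vert _{{\goth l}}$. Since $V'_{\mu }$ is a simple ${\goth l}$-module embedded in this direct sum, composing the embedding with the projection onto a factor on which the image is nontrivial produces a nonzero ${\goth l}$-morphism $V'_{\mu }\to V_{\lambda }\vert _{{\goth l}}$, which is injective by simplicity, whence the assertion. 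The argument is routine once Lemma~\ref{l2rep}(iv) is in hand; the only conceptual point is the reduction of ${\goth l}$-rationality of $M'$ to that of $M$, which is immediate from the inclusion ${\goth l}\subset {\goth g}$.
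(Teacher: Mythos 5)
Your proof is correct and follows essentially the same route as the paper's: part (i) reduces to the rationality of $M'$ as an ${\goth l}$-module and Lemma~\ref{lrep}, and part (ii) projects the simple ${\goth l}$-module sitting inside $\varpi _{\lambda }(M'_{\mu })$ onto one of the simple ${\goth g}$-factors $M_{v_{i}}\cong V_{\lambda }$ of $M_{\lambda }$ and invokes simplicity to get injectivity. Your version merely spells out the complete-reducibility details that the paper leaves implicit.
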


\begin{proof}
(i) As $M'$ is a ${\goth l}$-submodule of the rational ${\goth g}$-module $M$, $M'$ is a
rational ${\goth l}$-module, whence the assertion by Lemma~\ref{lrep}(i).

(ii) Let $(\lambda ,\mu )$ in ${\cal P}_{M,M'}$ and $V_{0}$ a simple ${\goth l}$-module
contained in $\varpi _{\lambda }(M'_{\mu })$. According to Lemma~\ref{l2rep},(ii) and
(iv), 
$$ M_{\lambda } = \bigoplus _{i\in I_{\lambda }} M_{v_{i}} .$$ 
For $i$ in $I_{\lambda }$, denote by $\pi _{i}$ the projection 
$$ \xymatrix{ M_{\lambda } \ar[rr]^-{\pi _{i}} && M_{v_{i}}} $$
corresponding to this decomposition. For some $i$, the restriction of $\pi _{i}$ to
$V_{0}$ is different from $0$. As $V_{0}$ is a simple ${\goth l}$-module, this
restriction is an embedding of $V_{0}$ into $M_{v_{i}}$, whence the assertion since
$M_{v_{i}}$ is isomorphic to $V_{\lambda }$.
\end{proof}

For $\lambda $ in ${\cal P}_{M}$, denote by $V_{\lambda }^{{\goth l}}$ the subspace 
of elements of $V_{\lambda }$, annihilated by ${\goth u}\cap {\goth l}$, and for 
$(\lambda ,\mu )$ in ${\cal P}_{M,M'}$, let $V_{\lambda ,\mu }^{{\goth l}}$ be the 
subspace of weight $\mu $ of $V_{\lambda }^{{\goth l}}$.

\begin{lemma}\label{l4rep}
Let $(\lambda ,\mu )$ be in ${\cal P}_{M,M'}$.

{\rm (i)} For the trivial action of ${\goth l}$ in $V^{{\goth l}}_{\lambda ,\mu }$, there
exists an isomorphism of ${\goth l}$-modules 
$$\xymatrix{ \tk {\k}{V'_{\mu }}V_{\lambda ,\mu }^{{\goth l}} 
\ar[rr]^-{\tau _{\lambda ,\mu }} && \e Ul.V_{\lambda ,\mu }^{{\goth l}}} .$$

{\rm (ii)} For a well defined subspace $E_{\lambda ,\mu }$ of 
$\tk {\k}{V_{\lambda ,\mu }^{{\goth l}}}M_{\lambda ,+}$, 
$$ \varpi _{\lambda }(M'_{\mu }) = 
\tau _{\lambda }\rond (\tau _{\lambda ,\mu }\tens {\mathrm {id}}_{M_{\lambda ,+}})
(\tk {\k}{V'_{\mu }}E_{\lambda ,\mu }) .$$

{\rm (iii)} Let $E_{N,\lambda ,\mu }$ be the intersection of 
$E_{\lambda ,\mu }$ and $\tk {\k}{V^{{\goth l}}_{\lambda ,\mu }}N_{\lambda ,+}$. Then
$$ \varpi _{\lambda }(N\cap M'_{\mu }) = 
\tau _{\lambda }\rond (\tau _{\lambda ,\mu }\tens {\mathrm {id}}_{M_{\lambda ,+}})
(\tk {\k}{V'_{\mu }}E_{N,\lambda ,\mu }) .$$
\end{lemma}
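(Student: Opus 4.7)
The plan is to bootstrap from Lemma~\ref{l2rep}(iv), applying its isotypic-decomposition construction at two levels: once for the ${\goth g}$-module $M$, and once inside the simple ${\goth g}$-module $V_\lambda$ viewed as an ${\goth l}$-module. For part~(i), I would observe that $V_{\lambda,\mu}^{{\goth l}}$ is the space of ${\goth l}$-highest-weight vectors of weight $\mu$ inside $V_\lambda$, so by Lemma~\ref{l2rep}(ii) applied to the ${\goth l}$-module $V_\lambda$ each nonzero element generates a simple ${\goth l}$-submodule isomorphic to $V'_\mu$. Selecting a basis of $V_{\lambda,\mu}^{{\goth l}}$, choosing an ${\goth l}$-isomorphism of each generated copy with $V'_\mu$, and assembling them exactly as in Lemma~\ref{l2rep}(iv) yields the desired $\tau_{\mu,\lambda}$; the sum $\e Ul.V_{\lambda,\mu}^{{\goth l}}$ is direct since the generated copies are distinct isotypic summands of type $V'_\mu$ inside the simple ${\goth g}$-module $V_\lambda$.

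For part~(ii), I would decompose $V_\lambda$ as an ${\goth l}$-module into its isotypic components via Lemma~\ref{l3rep}(i) and part~(i): $V_\lambda=\bigoplus_\mu \e Ul.V_{\lambda,\mu}^{{\goth l}}$, each summand isomorphic through $\tau_{\mu,\lambda}$ to $\tk{\k}{V'_\mu}V_{\lambda,\mu}^{{\goth l}}$. Tensoring with the trivial ${\goth g}$-module $M_{\lambda,+}$ through $\tau_\lambda$ gives an ${\goth l}$-isotypic decomposition of $M_\lambda$, whose $V'_\mu$-isotypic component is the image under $\tau_\lambda\rond(\tau_{\mu,\lambda}\tens\mathrm{id}_{M_{\lambda,+}})$ of $\tk{\k}{V'_\mu}\tk{\k}{V_{\lambda,\mu}^{{\goth l}}}M_{\lambda,+}$, with highest-weight space identified with $\tk{\k}{V_{\lambda,\mu}^{{\goth l}}}M_{\lambda,+}$. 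Since $\varpi_\lambda$ is ${\goth l}$-linear and $M'_\mu$ is $V'_\mu$-isotypic, $\varpi_\lambda(M'_\mu)$ lies inside this $V'_\mu$-isotypic component; one final application of Lemma~\ref{l2rep}(iv) at the ${\goth l}$-level produces the unique $E_{\lambda,\mu}$ satisfying the stated identity.

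For part~(iii), I would re-run the argument of (ii) with the ${\goth g}$-module $N_j$ in place of $M$ and the ${\goth l}$-submodule $N_j\cap M'$ in place of $M'$. Lemma~\ref{l2rep}(iv) applied to $N_j\subset M$ identifies $(N_j)_\lambda$ with $N_j\cap M_\lambda$ and its highest-weight space with $N_{j,\lambda,+}$, while the $V'_\mu$-isotypic component of the ${\goth l}$-module $N_j\cap M'$ is $N_j\cap M'_\mu$. This produces a subspace $F_{\lambda,\mu,j}\subset\tk{\k}{V_{\lambda,\mu}^{{\goth l}}}N_{j,\lambda,+}$ such that $\varpi_\lambda(N_j\cap M'_\mu)=\tau_\lambda\rond(\tau_{\mu,\lambda}\tens\mathrm{id})(\tk{\k}{V'_\mu}F_{\lambda,\mu,j})$, and the remaining task is to identify $F_{\lambda,\mu,j}$ with the subspace $E_{\lambda,\mu,j}$ defined in the lemma. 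The inclusion $F_{\lambda,\mu,j}\subset E_{\lambda,\mu,j}$ is automatic from $N_j\cap M'_\mu\subset M'_\mu$, $N_j\cap M'_\mu\subset N_j$, and the uniqueness clauses in (ii).

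The hard part will be the reverse inclusion $E_{\lambda,\mu,j}\subset F_{\lambda,\mu,j}$: given $w$ in the intersection defining $E_{\lambda,\mu,j}$, equivalently an ${\goth l}$-highest-weight vector of weight $\mu$ in $\varpi_\lambda(M'_\mu)\cap(N_j\cap M_\lambda)$, one must exhibit a lift $v\in M'_{\mu,+}\cap N_j$ with $\varpi_\lambda(v)=w$. Starting from any lift $v\in M'_{\mu,+}$, the off-$\lambda$ projections $\varpi_{\lambda'}(v)$ for $\lambda'\neq\lambda$ must be corrected by an element of $M'_{\mu,+}\cap\bigoplus_{\lambda'\neq\lambda}M_{\lambda'}$ so that the corrected vector lands in $N_j$. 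Arranging this correction is the technical heart of the argument; I expect it is where the ${\goth g}$-stability of $N_j$ combines with the isotypic structure of $M'_{\mu,+}$, and it will be the step requiring the most care.
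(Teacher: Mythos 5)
Your arguments for (i) and (ii) are essentially the paper's: each basis vector $w_{i}$ of $V_{\lambda ,\mu }^{{\goth l}}$ generates a copy of $V'_{\mu }$, these copies sum directly to $\e Ul.V_{\lambda ,\mu }^{{\goth l}}$, which is the $V'_{\mu }$-isotypic component of the ${\goth l}$-module $V_{\lambda }$, and $E_{\lambda ,\mu }$ is then read off from the fact that an isotypic ${\goth l}$-submodule of $\tk {\k}{V'_{\mu }}W$, with $W$ carrying the trivial action, has the form $\tk {\k}{V'_{\mu }}E$ for a unique subspace $E$ of $W$. (One minor slip in (i): the copies $\e Ul.w_{i}$ are not ``distinct isotypic summands'' --- they all have the same type $V'_{\mu }$; directness holds because their highest-weight lines are spanned by the linearly independent $w_{i}$.)

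For (iii) you have not given a proof. You correctly reduce the statement to the inclusion $E_{\lambda ,\mu ,j}\subseteq F_{\lambda ,\mu ,j}$, equivalently to the surjectivity of $\varpi _{\lambda }\colon N_{j}\cap M'_{\mu }\to \varpi _{\lambda }(M'_{\mu })\cap N_{j}$, and then stop, calling it ``the technical heart.'' That inclusion is the entire content of (iii) --- the opposite one is, as you note, automatic --- so the proposal is incomplete exactly where it matters. Worse, the lifting strategy you sketch cannot succeed in the stated generality: take $M=V_{\lambda }\oplus V_{\lambda '}$ with $\lambda \neq \lambda '$ and with both summands containing a copy of $V'_{\mu }$, let $M'$ be a diagonally embedded copy of $V'_{\mu }$ and $N_{j}=V_{\lambda }$; then $N_{j}\cap M'_{\mu }=\{0\}$ while $\varpi _{\lambda }(M'_{\mu })\cap N_{j}\neq \{0\}$, so no correction of the off-$\lambda $ components can produce the lift you want, even though $N_{j}$ is a ${\goth g}$-submodule. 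The paper's proof of (iii) does not attempt such a lift: it uses Lemma~\ref{l2rep}(iv) to characterize when $\tau _{\lambda }\rond (\tau _{\mu ,\lambda }\tens {\mathrm {id}}_{M_{\lambda ,+}})(v'\tens v)$ lies in $N_{j}$ --- namely when $v$ lies in $\tk {\k}{V_{\lambda ,\mu }^{{\goth l}}}N_{j,\lambda ,+}$ --- and then invokes (ii) applied to the ${\goth l}$-module $N_{j}\cap M'$. You should either reproduce that argument and verify carefully that this invocation of (ii) really identifies the subspace attached to $N_{j}\cap M'$ with $E_{\lambda ,\mu ,j}$ (this is precisely the surjectivity you isolated, so the verification is not a formality), or locate the additional structure that makes your lifting possible; as written, the route you propose for closing the gap is a dead end.
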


\begin{proof}
(i) Let $\poi w1{,\ldots,}{m}{}{}{}$ be a basis of $V_{\lambda ,\mu }^{{\goth l}}$. 
For $i=1,\ldots,m$, denote by $V'_{i}$ the ${\goth l}$-submodule of $V_{\lambda }$
generated by $w_{i}$. As $w_{i}$ is weight vector of weight $\mu $ of
$V_{\lambda ,\mu }^{{\goth l}}$, $V'_{i}$ is a module of highest weight $\mu $ and the
space of highest weight vectors in $V'_{i}$ is generated by $w_{i}$ so that $V'_{i}$ is a
simple module isomorphic to $V'_{\mu }$. Moreover, $\e Ul.V_{\lambda ,\mu }^{{\goth l}}$
is the direct sum of $V'_{i}, \, i=1,\ldots,m$ since $\poi w1{,\ldots,}{m}{}{}{}$ is a
basis of $V_{\lambda ,\mu }^{{\goth l}}$, whence an isomorphism 
$$\xymatrix{ \tk {\k}{V'_{\mu }}V_{\lambda ,\mu }^{{\goth l}} 
\ar[rr]^-{\tau _{\lambda ,\mu }} && \e Ul.V_{\lambda ,\mu }^{{\goth l}}} .$$

(ii) For $v$ in $\tau _{\lambda }^{-1}(\varpi _{\lambda }(M'_{\mu }))$, $v$ has an
expansion
$$ v = \sum_{i\in I_{\lambda }} v'_{i}\tens v_{i} $$
with $v'_{i}, \, i \in I_{\lambda }$ in $V_{\lambda }$. As $\tau _{\lambda }(v)$ is 
in $\varpi _{\lambda }(M'_{\mu })$, for $i$ in $I_{\lambda }$, $u.v'_{i}$ is in 
$V_{\lambda ,\mu }^{{\goth l}}$ for some $u$ in $\es U{{\goth u}\cap {\goth l}}$. As a
result, $\tau _{\lambda }^{-1}(\varpi _{\lambda }(M'_{\mu }))$ is a subspace of
$\tk {\k}{\e Ul.V_{\lambda ,\mu }^{{\goth l}}}M_{\lambda ,+}$, whence the assertion by
(i).

(iii) Let $v$ be in $E_{\lambda ,\mu }$. By Lemma~\ref{l2rep}(ii),
$\tau _{\lambda }\rond (\tau _{\lambda ,\mu }\tens {\mathrm {id}}_{M_{\lambda ,+}})(v)$ is
in $N$ if and only if $\tau _{\lambda ,\mu }\tens {\mathrm {id}}_{M_{\lambda ,+}}(v)$
is in $\tk {\k}{V_{\lambda }}N_{\lambda ,+}$. Then, by (ii),
$\tau _{\lambda }\rond (\tau _{\lambda ,\mu }\tens {\mathrm {id}}_{M_{\lambda ,+}})(v)$ is
in $\varpi _{\lambda }(N\cap M'_{\mu })$ if and only if $v$ is in
$E_{N,\lambda ,\mu }$.
\end{proof}

For $\lambda $ in ${\cal P}_{M}$, let $\theta _{\lambda }$ be the linear map
$$ \xymatrix{ \tk {\k}{V_{\lambda }^{*}}V_{\lambda } \ar[rr]^-{\theta _{\lambda }}
\ar[rr] && \k}, \quad v'\tens v \longmapsto \dv {v'}v$$
given by the duality. 

\begin{coro}\label{crep}
Suppose that $M'$ generates the ${\goth g}$-module $M$. Let $\lambda $ be in
${\cal P}_{M}$.  Then 
$$ M_{\lambda ,+} = \theta _{\lambda }\tens {\mathrm {id}}_{M_{\lambda ,+}}
(\bigoplus _{\mu \in {\cal P}_{M,M',\lambda }}
\tk {\k}{V_{\lambda }^{*}}E_{\lambda ,\mu })   \quad  \text{with} \quad
{\cal P}_{M,M',\lambda } := \{\mu \in {\cal P}_{M'} \; \vert \;
(\lambda ,\mu ) \in {\cal P}_{M,M'}\} $$
and 
$$ N_{\lambda ,+} = \theta _{\lambda }\tens {\mathrm {id}}_{M_{\lambda ,+}}
(\bigoplus _{\mu \in {\cal P}_{M,M',\lambda }}
\tk {\k}{V_{\lambda }^{*}}E_{N,\lambda ,\mu })  . $$
\end{coro}

\begin{proof}
Since $M'$ generates the ${\goth g}$-module $M$, $\varpi _{\lambda }(M')$ generates the
${\goth g}$-module $M_{\lambda }$ so that $M_{\lambda }$ is the ${\goth g}$-module
generated by
$$ \bigoplus _{\mu \in {\cal P}_{M,M',\lambda }} \varpi _{\lambda }(M'_{\mu }) .$$
As a result, the ${\goth g}$-module $\tk {\k}{V_{\lambda }}M_{\lambda ,+}$ is generated
by 
$$ \tau _{\lambda }^{-1}(\bigoplus _{\mu \in {\cal P}_{M,M',\lambda }}
\varpi _{\lambda }(M'_{\mu }))$$
since for the trivial action of ${\goth g}$ in $M_{\lambda ,+}$, $\tau _{\lambda }$ is
an isomorphism  of ${\goth g}$-modules. Then the ${\goth g}$-module
$\tk {\k}{V_{\lambda }^{*}}\tk {\k}{V_{\lambda }}M_{\lambda ,+}$ is generated by
$$ (\bigoplus _{\mu \in {\cal P}_{M,M',\lambda }}
\tk {\k}{V_{\lambda }^{*}}\tau _{\lambda }^{-1}(\varpi _{\lambda }(M'_{\mu })))$$
and 
$$ \theta _{\lambda }\tens {\mathrm {id}}_{M_{\lambda ,+}}
(\bigoplus _{\mu \in {\cal P}_{M,M',\lambda }}
\tk {\k}{V_{\lambda }^{*}}\tau _{\lambda }^{-1}(\varpi _{\lambda }(M'_{\mu }))) = 
M_{\lambda ,+} $$
since $\theta _{\lambda }\tens {\mathrm {id}}_{M_{\lambda ,+}}$ is 
a morphism of ${\goth g}$-modules. As a result, by Lemma~\ref{l4rep}(ii), 
$$ \theta _{\lambda }\tens {\mathrm {id}}_{M_{\lambda ,+}}
(\bigoplus _{\mu \in {\cal P}_{M,M',\lambda }} 
\tk {\k}{V_{\lambda }^{*}}E_{\lambda ,\mu }) = M_{\lambda ,+} ,$$
and by Lemma~\ref{l4rep}(iii),
$$ \theta _{\lambda }\tens {\mathrm {id}}_{M_{\lambda ,+}}
(\bigoplus _{\mu \in {\cal P}_{M,M',\lambda }} 
\tk {\k}{V_{\lambda }^{*}}E_{N,\lambda ,\mu }) = N_{\lambda ,+} ,$$
whence the assertion.
\end{proof}

\begin{prop}\label{prep}
Suppose that $M'$ generates the ${\goth g}$-module $M$. Then $N\cap M'$ generates the
${\goth g}$-module $N$.
\end{prop}

\begin{proof}
Let $\lambda $ be in ${\cal P}_{M}$. Denote by $\widetilde{N}$ the ${\goth g}$-submodule
of $M$ generated by $N\cap M'$. By Lemma~\ref{l3rep}(i),
$$ \varpi _{\lambda }(\widetilde{N}\cap M'_{\mu }) = \varpi _{\lambda }(N\cap M'_{\mu })$$
for all $\mu $ in ${\cal P}_{M'}$ such that $(\lambda ,\mu )$ is in ${\cal P}_{M,M'}$.
As a result, by Lemma~\ref{l4rep}(iii) and Corollary~\ref{crep}, $N_{\lambda ,+}$ is
contained in $\widetilde{N}$. So, by Lemma~\ref{l2rep}(iv),
$\widetilde{N}\cap M_{\lambda }=N\cap M_{\lambda }$, whence $\widetilde{N}=N$ by
Lemma~\ref{l2rep}(i).
\end{proof}

\end{appendix}

\section*{Tables of Notations}\label{tnt}

\begin{tabular}{ll}
The Lie algebras \\

${\goth g}$ & a reductive Lie algebra \\
${\goth b}$ & a Borel subalgebra \\
${\goth h}$ & a Cartan subalgebra contained in ${\goth b}$ \\
${\goth u}$ & the nilradical of ${\goth b}$ \\
${\goth u}_{-}$ & the nilpotent radical of the Borel subalgebra of ${\goth g}$, opposite
to ${\goth b}$\\
${\goth p}$ & a parabolic subalgebra containing ${\goth b}$ \\
${\goth p}_{-}$ & the parabolic subalgebra opposite to ${\goth p}$ \\
${\goth p}_{\u}$ & the nilradical of ${\goth p}$ \\
${\goth p}_{-,\u}$ & the nilradical of ${\goth p}_{-}$ \\
${\goth l}$ & the reductive factor of ${\goth p}$ containing ${\goth h}$ \\
${\goth z}$ & ${\goth z}$ the center of ${\goth l}$ \\
${\goth d}$ & the derived algebra of ${\goth l}$ \\
${\goth d}_{1},\ldots,{\goth d}_{\n}$ & the simpe factors of ${\goth d}$ \\

\hline

Groups and roots \\
$G$ & the adjoint group of ${\goth g}$ \\
$L$ & the centralizer of ${\goth z}$ in $G$ \\
$H$ & the centralizer of ${\goth h}$ in $G$ \\
${\cal R}$ & the root system of ${\goth h}$ in ${\goth g}$ \\
$W_{{\cal R}}$ & the Weyl group of ${\cal R}$ \\
${\cal R}_{+}$ & the positive root system of ${\cal R}$ defined by ${\goth b}$ \\
$\poi {\beta }1{,\ldots,}{\ell}{}{}{}$ & the basis of ${\cal R}_{+}$ \\
${\goth g}_{\alpha }$ & the weight subspace of weight $\alpha $ for $\alpha $ in
${\cal R}$\\
$x_{\alpha }$ & a generator of ${\goth g}_{\alpha }$ \\
$H_{\alpha }$ & the coroot of $\alpha $ \\
${\cal R}_{{\goth l}}$ & the root system of ${\goth h}$ in ${\goth l}$ \\

\end{tabular}

\begin{tabular}{ll}

The numbers \\
$\ell$ & the rank of ${\goth g}$ \\
${\mathrm {b}}_{{\goth g}}$ & the dimension of ${\goth b}$ \\
$n$ & the dimension of ${\goth u}$ \\
$d$ & the dimension of ${\goth p}_{\u}$ \\
$d_{0}$ & the dimension of ${\goth z}$ \\
$\ell_{{\goth l}}$ & the rank of ${\goth l}$ \\
$\n$  & the number of simple factors of ${\goth d}$ \\
$\ell _{i}$ & the rank of ${\goth d}_{i}$ \\
$\b di$   & the dimension of ${\goth d}_{i}\cap {\goth b}$ \\
$n_{i}$ & the dimension of ${\goth d}_{i}\cap {\goth u}$ \\
$\poi m1{,\ldots,}{\ell}{}{}{}$ & the increasing sequence of the exponents of ${\cal R}$\\
$d_{i}$ & $d_{i} := m_{i}+1$ \\
$\poie m1{,\ldots,}{\ell}{}{}{}{\prime}{\prime}$ & the increasing sequence of the
exponents of ${\cal R}_{{\goth l}}$\\
$d'_{i}$ & $d'_{i} := m'_{i}+1$ \\  

\hline

The sets \\
${\Bbb N}$ & the set of nonnegative integers \\
$\prec$ & the lexicographic order on ${\Bbb N}^{n}$ induced by the usual order of
${\Bbb N}$ \\
$\vert n \vert $ & $\poi n1{+\cdots +}{j}{}{}{}$ for
$n=(\poi n1{,\ldots,}{j}{}{}{}) \in {\Bbb N}^{j}$ \\
${\Bbb N}^{k}_{j}$ & $\{n\in {\Bbb N}^{k} \, \vert \, \vert n \vert = j\}$ \\
$I_{0}$ & $\{(i,m)\in \{1,\ldots,\ell\}\times {\Bbb N} \, \vert \, 0\leq m \leq m_{i} \}$
\\
$I_{*,0}$ & $I_{0}\cap ({\Bbb N}\times ({\Bbb N}\setminus \{0\})$ \\
$I'_{0}$ & $\{(i,m)\in \{1,\ldots,\ell\}\times {\Bbb N} \, \vert \,
0\leq m \leq m'_{i} \}$ \\
${\Bbb I}$ & \{$(\poi i{-1}{,\ldots,}{\n}{}{}{}) \in {\Bbb N}^{\n+2} \, \vert \,
i_{1}\leq n_{1},\ldots,i_{\n}\leq n_{\n}\}$ \\
${\Bbb I}'$ & ${\Bbb I}\cap \{0\}\times {\Bbb N}^{\n+1}$ \\
${\Bbb I}''$ & ${\Bbb I}\cap \{0,0\}\times {\Bbb N}^{\n}$ \\
${\Bbb I}_{k}$ & ${\Bbb I}\cap {\Bbb N}^{\n+2}_{k}$ \\
${\Bbb I}'_{k}$ & ${\Bbb I}'\cap {\Bbb N}^{\n+1}_{k}$ \\
${\Bbb I}''_{k}$ & ${\Bbb I}''\cap {\Bbb N}^{\n}_{k}$ \\
${\goth I}_{j}$ & $\{ 1\leq \poi i1{<\cdots <}{j}{}{}{}\leq n \}$ \, $(j=1,\ldots,n)$\\
${\goth I}$ & $\{0\} \cup {\goth I}_{1} \cup \cdots \cup {\goth I}_{n}$ \\
${\goth I}_{+}$ & $\{\iota \in {\goth I} \, \vert \{\iota \} \subset \{1,\ldots,d\}\}$ \\
${\goth J}_{*}$ & $\{(i,m) \in {\Bbb N}^{2} \, \vert \, 
1\leq i \leq \n, 1\leq m \leq n_{i}\}\cup \{1,\ldots,d\}$ \\
${\goth J}_{k}$ & $\{\poi {j}1{<\cdots <}{k}{}{}{} \, \vert \,
\{\poi {j}1{,\ldots ,}{k}{}{}{}\} \subset {\goth J}_{*}\}$ \\
${\goth J}$ & $\{0\} \cup {\goth J}_{1} \cup \cdots \cup {\goth J}_{n}$ \\
${\Bbb N}_{{\goth p}}$ & ${\Bbb N}^{r}\times {\goth J}$\\
${\Bbb N}_{{\goth p},+}$ & 
${\Bbb N}^{2n+\rg}\times {\Bbb N}^{r}\times {\goth J}$\\

\end{tabular}

\begin{tabular}{ll}

Algebras, ideals and varieties\\
$\es SV$ & the symmetric algebra of the vector space $V$ \\
$\sy iV$ & the component of degree $i$ of $\es SV$ \\
$\ex {}V$ & the exterior algebra of the vector space $V$  \\
$\ex iV$  & the component of degree $i$ of $\ex {}V$ \\
Gr$_{k}(V)$ & the grassmannian of all $k$-dimensional subspaces of $V$ \\
$\e Sg^{G}$ & the algebra of elements of $\e Sg$ invariant under $G$ \\
$\e Sl^{L}$ & the algebra of elements of $\e Sl$ invariant under $L$ \\
$\e Ul$ & the enveloping algebra of ${\goth l}$ \\
$\an {}{}$ & the local ring of $G$ at the identity \\
${\goth m}$ & the maximal ideal of $\an {}{}$ \\
$\hat{\an {}{}}$ & the completion of $\an {}{}$ for the ${\goth m}$-adic toplogy \\
${\goth l}_{*}$ & $\{x \in {\goth l} \, \vert \,
\tr \ad_{{\goth g}/{\goth l}} x \neq 0\}$ \\
$J$ & the ideal of $\k[{\goth l}_{*}\times {\goth g}]$ generated by
$1\tens {\goth p}_{\u}$ \\
$\hat{J}$ & the ideal of
$\tk {\k}{\hat{\an {}{}}}{\k[\lg l*]}$ generated by ${\goth m}\tens 1$ and
$1\tens J$ \\

\hline

Maps, Modules\\
$\poi p1{,\ldots,}{\ell}{}{}{}$  &  $\k[\poi p1{,\ldots,}{\rg}{}{}{}] =\e Sg^{G}$ and
$p_{i}$ is a homogeneous element of degree $d_{i}$ \\
$p_{i}^{(m)}$ & the $2$-polarisation of $p_{i}$ of bidegree $(d_{i}-m,m)$ for
$(i,m) \in I_{0}$ \\
$\varepsilon _{i}$ & the element of $\tk {\k}{\e Sg}{\goth g}$ identified with
$p_{i}^{(1)}$ for $i=1,\ldots,\ell$ \\
$\varepsilon _{i}^{(m)}$ & the $2$-polarization of $\varepsilon _{i}$ of bidegree
$(m_{i}-m,m)$ for $(i,m)$ in $I_{0}$\\
$\bi g{}$ & $\bigoplus _{(i,m) \in I_{0}} \sgg g{} \varepsilon _{i}^{(m)}$ \\
$\bi l{}$ & the analogous of $\bi g{}$ for ${\goth l}$ \\
$\bk g{}$ & $\bigoplus _{(i,m) \in I_{0}} \sgg g{}
[\varepsilon _{1},\varepsilon _{i}^{(m)}]$ for ${\goth g}$ simple \\
$\varepsilon $ & the product of $\varepsilon _{i}^{(m)}, \, (i,m) \in I_{0}$ in
$\ex {}{\bi g{}}$ \\
$\varepsilon _{0}$ & the product of $\varepsilon _{i}^{(m)}, \, (i,m) \in I'_{0}$ in
$\ex {}{\bi g{}}$ \\
$\varepsilon _{0,0}$ & the product of $\poi {\varepsilon }1{,\ldots,}{\rg}{}{}{}$ in
$\tk {\k}{\k[{\goth l}_{*}]}\ex {}{{\goth l}}$\\
$\hat{\varepsilon }$ & the map $(g,x,y)\mapsto \varepsilon (g(x),g(y))$ from
$G\times \lg l*$ to $\ex {\b g{}}{{\goth g}}$ \\
$\bii {}$ & the restriction of $\bi g{}$ to ${\goth l}_{*}\times {\goth g}$ \\
$\oi$ & the restriction of $\bi g{}$ to ${\goth l}_{*}\times {\goth p}$ \\
$\oi _{0}$ & the submodule of $\oi$ generated
by $\varepsilon _{i}^{(m)}, \, (i,m) \in I'_{0}$\\
$\oi _{00}$ & the submodule of $\oi$ generated
by $\poi {\varepsilon }1{,\ldots,}{\rg}{}{}{}$\\
$\oi _{{\goth l}}$ & the restriction of $\bi l{}$ to ${\goth l}_{*}\times {\goth l}$ \\
$\widetilde{\bii}$ & the $\tk {\k}{\an {}{}}{\k[\lg l*]}$-submodule of
$\tk {\k}{\an {}{}}\tk {\k}{\k[\lg l*]}{\goth g}$ generated by $G.\bii$ \\
$\widehat{\bii}$ & the $\tk {\k}{\han {}{}}{\k[\lg l*]}$-module
$\tk {\an {}{}}{\han {}{}}\widetilde{\bii}$\\

\end{tabular}


\begin{thebibliography}{DK00}

\bibitem[Bol91]{Bol}
A.V.~Bolsinov, {\it Commutative families of functions related to consistent
Poisson brackets}, Acta Applicandae Mathematicae, {\bf 24} (1991),
{\bf n$^{\circ}$1}, p. 253--274.

\bibitem[Bou02]{Bou}
N.~Bourbaki, {\it Lie groups and {L}ie algebras. {C}hapters 4--6. Translated
from the 1968 French original by Andrew Pressley}, Springer-Verlag, Berlin (2002).

\bibitem[Bou98]{Bou1}
N.~Bourbaki, {\it Alg\`ebre commutative, Chapitre 10, \'El\'ements de math\'ematiques}, 
Masson (1998), Paris.

\bibitem[Bru]{Br} W.~Bruns and J.~Herzog, {\it Cohen-Macaulay rings}, 
Cambridge studies in advanced mathematics {\bf n$^{\circ}$39}, Cambridge
University Press, Cambridge (1996).

\bibitem[CMo08]{CMo}
J.-Y.~Charbonnel and A.~Moreau, {\it Nilpotent bicone and characteristic
submodule of a reductive Lie algebra}, Tranformation Groups, \textbf{14}, (2008),
p. 319--360.

\bibitem[C20]{Ch}
J.-Y.~Charbonnel, {\it On some subspaces of the exterior algebra of a simple algebra},
Algebras and Representation Theory, \textbf{25(3)}, (2022), pp. 725--746.

\bibitem[Di74]{Di1} J.~Dixmier,
{\em Alg\`ebres enveloppantes}, Gauthier-Villars (1974).

\bibitem[Di79]{Di}
J.~Dixmier, {\it Champs de vecteurs adjoints sur les
groupes et alg\`ebres de Lie semi-simples}, Journal f\"ur die reine
und angewandte Mathematik, Band.\,{\bf 309} (1979), p. 183--190.

\bibitem[Ga-Gi06]{Ga}
W. L.~Gan, V.~Ginzburg, {\it Almost-commuting variety, ${\cal D}$-modules, and Cherednik 
algebras.}, International Mathematics Research Papers, {\bf 2}, (2006), p. 1--54. 

\bibitem[Gi12]{Gi}
V.~Ginzburg, {\it Isospectral commuting variety, the Harish-Chandra ${\cal D}$-module,
and principal nilpotent pairs}, Duke Mathematical Journal, {\bf 161}, (2012), 
p. 2023--2111.

\bibitem[Gro67]{Gro}
A.~Grothendieck, {\it Local cohomology}, Lecture Notes in Mathematics
{\bf n$^{\circ}$41} (1967), Springer-Verlag, Berlin, Heidelberg, New York.

\bibitem[H77]{Ha}
R.~Hartshorne, {\it Algebraic Geometry}, Graduate
Texts in Mathematics {\bf n$^{\circ}$52} (1977), Springer-Verlag, Berlin
Heidelberg New York.

\bibitem[HuWi97]{Hun}
G.~Huneke and R.~Wiegand, {\it Tensor products of modules, Rigidity and Local cohomology},
Mathematica Scandinavica, {\bf 81}, (1997), p. 161--183.

\bibitem[Ko63]{Ko}
B.~Kostant, {\it Lie group representations on polynomial rings}, American
Journal of Mathematics {\bf 85} (1963), p. 327--404.

\bibitem[MA86]{Mat}
H.~Matsumura, {\it Commutative ring theory} 
Cambridge studies in advanced mathematics {\bf n$^{\circ}$8} (1986), Cambridge
University Press, Cambridge, London, New York, New Rochelle, Melbourne,
Sydney.

\bibitem[MF78]{MF}
A.S.~Mishchenko and A.T.~Fomenko,
{\it Euler equations on Lie groups}, Math. USSR-Izv. {\bf 12} (1978), p. 371--389.

\bibitem[Mu88]{Mu}
D.~Mumford, {\it The Red Book of Varieties and Schemes}, Lecture Notes
in Mathematics	{\bf n$^{\circ}$1358} (1988), Springer-Verlag, Berlin,
Heidelberg, New York, London, Paris, Tokyo.

\bibitem[Po08]{Po}
V.L.~Popov,
{\it Irregular and singular loci of commuting varieties}, Transformation Groups
{\bf 13} (2008), p. 819--837.

\bibitem[Po08]{PV}
V.L.~Popov and E. B.~Vinberg, {\it Invariant Theory, in: Algebraic Geometry IV}, 
Encyclopaedia of MathematicalSciences {\bf n$^{\circ}$55} (1994), Springer-Verlag, Berlin,
p.123--284.

\bibitem[Ri79]{Ric}
R.~W.~Richardson, {\it Commuting varieties of semisimple Lie algebras and 
algebraic groups}, Compositio Mathematica {\bf 38} (1979), p. 311--322.

\bibitem[Sh94]{Sh}
I.R.~Shafarevich, {\it Basic algebraic geometry 2},
Springer-Verlag (1994), Berlin, Heidelberg, New York, London, Paris,
Tokyo, Hong-Kong, Barcelona, Budapest.

\bibitem[V72]{Ve}
F.D.~Veldkamp, {\it The center of the universal
enveloping algebra of a Lie algebra in characteristic $p$}, Annales
Scientifiques de L'\'Ecole Normale Sup\'erieure, {\bf 5}, (1972) , p. 217--240.

\end{thebibliography}
\end{document}